\numberwithin{equation}{section}
\newcommand\myurl[1]{\url{#1}}
\newenvironment{customthm}[1]
{\innercustomthm}
{\endinnercustomthm}
\newtheorem{thm}{Theorem}[section]
\newtheorem{prop}[thm]{Proposition}
\newtheorem{cor}[thm]{Corollary}
\newtheorem{lem}[thm]{Lemma}
\theoremstyle{definition}
\newtheorem{define}[thm]{Definition}
\theoremstyle{remark}
\newtheorem{rem}[thm]{Remark}
\newtheorem{example}[thm]{Example}
\newcommand{\ve}[1]{\boldsymbol{\mathbf{#1}}}
\newcommand{\R}{\mathbb{R}}
\newcommand{\F}{\mathbb{F}}
\newcommand{\Z}{\mathbb{Z}}
\newcommand{\N}{\mathbb{N}}
\newcommand{\T}{\mathbb{T}}
\renewcommand{\d}{\partial}
\renewcommand{\subset}{\subseteq}
\renewcommand{\tilde}{\widetilde}
\renewcommand{\hat}{\widehat}
\newcommand{\iso}{\cong}
\DeclareMathOperator{\GL}{{GL}}
\DeclareMathOperator{\Hom}{{Hom}}
\DeclareMathOperator{\id}{{id}}
\DeclareMathOperator{\Int}{{int}}
\DeclareMathOperator{\Lef}{{Lef}}
\DeclareMathOperator{\red}{{red}}
\DeclareMathOperator{\Spin}{{Spin}}
\DeclareMathOperator{\Tors}{{Tors}}
\DeclareMathOperator{\coker}{{coker}}
\newcommand{\bF}{\mathbb{F}}
\newcommand{\bS}{\mathbb{S}}
\newcommand{\bT}{\mathbb{T}}
\newcommand{\cC}{\mathcal{C}}
\newcommand{\cD}{\mathcal{D}}
\newcommand{\cG}{\mathcal{G}}
\newcommand{\cH}{\mathcal{H}}
\newcommand{\cM}{\mathcal{M}}
\newcommand{\cN}{\mathcal{N}}
\newcommand{\cR}{\mathcal{R}}
\newcommand{\cS}{\mathcal{S}}
\newcommand{\cT}{\mathcal{T}}
\newcommand{\cW}{\mathcal{W}}
\newcommand{\cX}{\mathcal{X}}
\newcommand{\frS}{\mathfrak{S}}
\newcommand{\frs}{\mathfrak{s}}
\newcommand{\frt}{\mathfrak{t}}
\newcommand{\fru}{\mathfrak{u}}
\newcommand{\as}{\ve{\alpha}}
\newcommand{\bs}{\ve{\beta}}
\newcommand{\xs}{\ve{x}}
\newcommand{\ys}{\ve{y}}
\newcommand{\zs}{\ve{z}}
\newcommand{\SFH}{\mathit{SFH}}
\newcommand{\CF}{\mathit{CF}}
\newcommand{\HF}{\mathit{HF}}
\newcommand{\HFh}{\widehat{\mathit{HF}}}
\newcommand{\HFKh}{\widehat{\mathit{HFK}}}
\newcommand{\HFK}{\mathit{HFK}}
\renewcommand{\a}{\alpha}
\renewcommand{\b}{\beta}
\newcommand{\g}{\gamma}
\renewcommand{\S}{\Sigma}
\newcommand{\x}{\mathbf{x}}
\newcommand{\y}{\mathbf{y}}
\newcommand{\tD}{\tilde{\cD}}
\newcommand{\PD}{\mathit{PD}}
\newcommand{\SW}{\mathit{SW}}
\DeclareMathOperator{\MOD}{{\mathsf{Mod}}}
\newcommand{\ufrs}{\underline{\frs}}
\newcommand{\omegas}{\ve{\omega}}
\DeclareMathOperator{\stab}{{stab}}
\DeclareMathOperator{\diff}{{diff}}
\newcommand{\dotsim}{\mathrel{\dot{\simeq}}}
\newcommand{\sca}{\a}
\newcommand{\scb}{\b}
\title{Concordance surgery and the Ozsv\'ath--Szab\'o 4-manifold invariant}
\author{Andr\'as Juh\'asz}%
\address{Mathematical Institute, University of Oxford, Andrew Wiles Building,
	Radcliffe Observatory Quarter, Woodstock Road, Oxford, OX2 6GG, UK}%
\email{juhasza@maths.ox.ac.uk}%
\author{Ian Zemke}
\address{Department of Mathematics\\Princeton University\\  Princeton, NJ 08544,
	USA}
\email{izemke@math.princeton.edu}
\begin{document}
	
	\subjclass[2010]{57R58; 57R55; 57M27}%
	\keywords{Heegaard Floer homology, 4-manifolds, concordance}
	\begin{abstract}
		We compute the effect of concordance surgery, a generalization of knot surgery
		defined using a
		self-concordance of a knot, on the Ozsv\'ath--Szab\'o 4-manifold invariant.
		The formula involves the graded Lefschetz number of the concordance map on
		knot Floer homology.
		The proof uses the sutured Floer TQFT, and a version of sutured Floer homology
		perturbed by a 2-form.
	\end{abstract}
	
	\maketitle
	
	\section{Introduction}
	
	Let $X$ be a smooth, connected, closed, and oriented 4-manifold with $b_2^+(X)
	\ge 2$.
	Suppose that $T \subset X$ is a smoothly embedded, homologically essential torus
	with trivial self-intersection, and let $K \subset S^3$ be a knot.
	Fintushel and Stern~\cite{FSKnotSurgery} defined the \emph{knot surgery}
	operation on $X$,
	resulting in the 4-manifold $X_K$. This is obtained by gluing $X \setminus N(T)$
	and $S^1 \times (S^3 \setminus N(K))$ via an orientation-reversing
	diffeomorphism
	of their boundaries that maps a meridian of $T$ to a longitude of $K$. They
	showed that
	\begin{equation}\label{eqn:FS}
	\SW(X_K) = \Delta_K(z) \cdot \SW(X),
	\end{equation}
	where $\SW$ denotes the Seiberg--Witten invariant,
	and $\Delta_K(z)$ is the symmetrized Alexander polynomial of $K$. The variable
	$z$ corresponds to $\exp(2[T])$, where $[T]$ is the homology class induced by $T$ in $H_2(X_K)$.
	
	If $\pi_1(X \setminus T) = 1$, then $X$ and $X_K$ are simply-connected and have
    the same intersection form, and are hence homeomorphic by
	Freedman's theorem. Note that every symmetric integral Laurent polynomial $p(z)$
	satisfying $p(1) = \pm 1$ is the Alexander polynomial of a knot in $S^3$.
	Consequently, if $\SW(X) \neq 0$, then we obtain infinitely
	many pairwise non-diffeomorphic smooth structures on $X$.
	When $X$ is the K3 surface, $\SW(X) = 1$, and hence
	we obtain a different smooth structure on $X$ for every such Laurent polynomial.
	
	Mark~\cite{Mark}*{Theorem~3.1} obtained a result analogous to
	equation~\eqref{eqn:FS} for the Ozsv\'ath--Szab\'o 4-manifold invariant~\cite{OSTriangles},
	which is expected to coincide with the Seiberg--Witten invariant.
	For a closed 4-manifold $X$ with $b_2^+(X)\ge 2$, Ozsv\'{a}th and Szab\'{o}'s
    invariant takes the form of a map
	\[
	\Phi_X \colon  \Spin^c(X)\to \bF_2.
	\]
	We write $\Phi_{X,\frs}$ for the value of $\Phi_X$ on $\frs$.
	It is convenient to organize the invariants of different $\Spin^c$ structures
	into a single polynomial.  Recall that
	$\Spin^c(X)$ is an affine space over $H^2(X)$, so the difference of two
	$\Spin^c$ structures is a well-defined cohomology class. If
	$\ve{b} = (b_1,\dots, b_n)$ is a basis of $H^2(X;\R)$,
    we can arrange the 4-manifold invariant into the element
	\[
	\Phi_{X; \ve{b}} := \sum_{\frs \in \Spin^c(X)} \Phi_{X,\frs} \cdot
	z_1^{\langle\, i_*(\frs-\frs_0) \cup b_1, [X] \,\rangle} \cdots
	z_n^{\langle\, i_*(\frs-\frs_0) \cup b_n, [X] \,\rangle}
	\]
	of the $n$-variable Novikov ring over $\bF_2$, where $\frs_0$ is some choice of base $\Spin^c$ structure on $X$,
    and $i_* \colon H^2(X) \to H^2(X; \R)$ is induced by the map of coefficients $\Z \hookrightarrow \R$.
	If $H^2(X)$ is torsion-free, then $\Phi_{X; \ve{b}}$ completely encodes the map $\Phi_X$.
	It is natural to view $\Phi_{X;\ve{b}}$ as a perturbed version of the mixed invariant;
	 see Proposition~\ref{prop:perturbed-mixed-invariant}.
	
	\emph{Concordance surgery} is a generalization of knot surgery
	due to Fintushel and Stern; see Akbulut~\cite{Akbulut}*{Section~2} and Tange~\cite{TangeConcordance}.
	Let $K$ be a knot in a homology 3-sphere $Y$
	(note that Akbulut only considered the case $Y = S^3$). Given a self-concordance
	$\cC = (I \times Y, A)$ from $(Y, K)$ to itself, we can construct a
	4-manifold $X_{\cC}$, as follows. We glue the ends of $A$
	together to form a 2-torus $T_{\cC}$ embedded in $S^1 \times Y$. After
	removing a neighborhood of $T_{\cC}$, we get a 4-manifold $W_{\cC}$ with
	boundary~$\bT^3$. Viewing $N(T)$ as $T\times D^2$,
	we pick any orientation-preserving diffeomorphism
	$\phi \colon \d (X\setminus N(T))\to \d N(T_{\cC})$ that sends
	$[\{p\}\times \d D^2]$ to $[\{q\} \times \ell_K]$, where
    $p \in T$, $q \in S^1$, and $\ell_K$ is a longitude of $K$.
	We write $X_{\cC}$ for any manifold constructed as the union
	\[
	X_{\cC} := (X\setminus N(T)) \cup_{\phi} W_{\cC}.
	\]
	Fintushel and Stern asked in the late 90s whether a formula similar to
	equation~\eqref{eqn:FS}
	relates $\SW(X)$ and $\SW(X_{\cC})$; see Akbulut~\cite{Akbulut}*{Remark~2.2}.

	Our main result gives a formula relating the Ozsv\'ath--Szab\'o 4-manifold
	invariants
	of $X$ and $X_{\cC}$ in terms of the graded Lefschetz number of the concordance
	map
	\[
	\hat{F}_{\cC} \colon \HFKh(Y, K) \to \HFKh(Y, K)
	\]
	defined by the first author~\cite{JCob}. This map preserves the Alexander and
	Maslov
	gradings~\cite{JMComputeCobordismMaps}*{Theorem~5.18}. The graded Lefschetz
	number is the polynomial
	\[
	\Lef_z(\cC) := \sum_{i \in \Z} \Lef \left(\hat{F}_{\cC}|_{\HFKh(Y, K,i)} \colon
	\HFKh(Y, K,i) \to \HFKh(Y, K,i) \right) \cdot z^i.
	\]
	We note that the concordance map $\hat{F}_{\cC}$ on knot Floer homology depends
	on some extra decorations
	that we are suppressing from the notation. Nonetheless, we will see that the
	graded Lefschetz number is
	independent of these decorations.
	
	If $[T] \neq 0 \in H_2(X;\R)$, then we can pick a basis
    $\ve{b} = (b_1,\dots, b_n)$ of $H^2(X; \R)$, such that
	\begin{equation}
	\langle\, b_1, [T] \,\rangle = 1 \text{ and } \langle\, b_i, [T] \,\rangle = 0 \text{ for } i > 1.
	\label{eq:basisof2-forms1}
	\end{equation}
    There are natural isomorphisms $H^2(X; \R) \cong H^2(X_\cC; \R)$
    and $\Spin^c(X) \cong \Spin^c(X_\cC)$.
    By a slight abuse of notation, we will use the same notation for corresponding second cohomology classes and $\Spin^c$ structures on $X$ and $X_\cC$. In particular, the base $\Spin^c$ structure $\frs_0$ on $X$ corresponds to a base $\Spin^c$ structure $\frs_0$ on $X_\cC$, and we define the 4-manifold invariants $\Phi_{X;\ve{b}}$ and $\Phi_{X_\cC;\ve{b}}$ using this correspondence.
	We now state our main result:
	
	\begin{thm}\label{thm:1}
		Let $X$ be a closed, oriented 4-manifold such that $b_2^+(X) \ge 2$.
		Suppose that $T$ is a smoothly embedded 2-torus in $X$ with trivial
		self-intersection,
		such that $[T] \neq 0 \in H_2(X;\R)$. Furthermore, let $\ve{b} =
		(b_1,\dots, b_n)$ be a basis of $H^2(X; \R)$ satisfying equation~\eqref{eq:basisof2-forms1}.
		If $\cC$ is a self-concordance of $(Y,K)$, where $Y$ is a homology 3-sphere, then
		\[
		\Phi_{X_{\cC}; \ve{b}} = \Lef_{z_1}(\cC) \cdot \Phi_{X; \ve{b}}.
		\]
	\end{thm}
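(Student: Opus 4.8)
The plan is to decompose $X_{\cC}$ along the three-torus $T^3 = \d N(T)$ and to compute $\Phi_{X_{\cC};\ve{b}}$ as a pairing in a $2$-form--perturbed Heegaard Floer homology of $T^3$. By Proposition~\ref{prop:perturbed-mixed-invariant}, both $\Phi_{X;\ve{b}}$ and $\Phi_{X_{\cC};\ve{b}}$ can be computed as perturbed mixed invariants, using a generic closed $2$-form $\omega$ adapted to the basis $\ve{b}$. Because $\langle b_1,[T]\rangle = 1$ and $\langle b_j,[T]\rangle = 0$ for $j>1$, the relevant period of $\omega$ along $T^3$ is recorded by $z_1$ alone, and the perturbed group $\underline{\HF}^+(T^3;\omega)$ in the torsion $\Spin^c$ structure is (a perturbation of the twisted Heegaard Floer homology of $T^3$ computed by Ozsv\'ath--Szab\'o, hence) free of rank one over the Novikov ring $\Lambda$ over $\bF_2$. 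Writing $\Psi_Z$ for the perturbed relative invariant of a $4$-manifold $Z$ with $\d Z = T^3$, the composition law for the perturbed mixed invariant --- available along the torsion cut $T^3$ precisely because of the perturbation --- yields
\[
\Phi_{X_{\cC};\ve{b}} = \langle\, \Psi_{X\setminus N(T)},\, \Psi_{W_{\cC}}\,\rangle \qquad\text{and}\qquad \Phi_{X;\ve{b}} = \langle\, \Psi_{X\setminus N(T)},\, \Psi_{T^2\times D^2}\,\rangle ,
\]
the second identity using that $X = (X\setminus N(T))\cup_{T^3}(T^2\times D^2)$ and that $T^2\times D^2 = W_{\cC_0}$ for the trivial self-concordance $\cC_0$ of the unknot in $S^3$. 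Since the pairing is $\Lambda$-bilinear, Theorem~\ref{thm:1} reduces to the single identity
\[
\Psi_{W_{\cC}} = \Lef_{z_1}(\cC)\cdot\Psi_{T^2\times D^2} \qquad\text{in } \underline{\HF}^+(T^3;\omega)\iso\Lambda .
\]

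To prove this identity I would realize $W_{\cC}$ as the self-gluing of the concordance complement cobordism. Cutting $W_{\cC}$ along a copy of the sutured knot complement $Y(K) = Y\setminus N(K)$ with its two oppositely oriented meridional sutures reverses the identification of the two ends of $\cC$ and produces $C_{\cC} := (I\times Y)\setminus N(A)$, regarded as a cobordism from $Y(K)$ to $Y(K)$ in the sutured cobordism category; closing up the $\d N(K)\times I$ portion of $\d C_{\cC}$ is exactly what creates the boundary $T^3$ of $W_{\cC}$. By construction the sutured Floer TQFT assigns to $C_{\cC}$ the concordance map $\hat{F}_{\cC}\colon \SFH(Y(K)) = \HFKh(Y,K)\to\HFKh(Y,K)$ of \cite{JCob}, which preserves the Alexander grading \cite{JMComputeCobordismMaps}*{Theorem~5.18}. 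The crucial input is a self-gluing (trace) formula for the perturbed sutured TQFT: the perturbed relative invariant of a cobordism obtained by identifying two matching ends equals the trace of the induced endomorphism, with the $2$-form period over the resulting $T^3$ recording the Alexander grading. As the graded Lefschetz number equals the ordinary trace over $\bF_2$, applying this to $C_{\cC}$ gives $\Psi_{W_{\cC}} = \sum_{i} \tr\!\big(\hat{F}_{\cC}|_{\HFKh(Y,K,i)}\big)z_1^i = \Lef_{z_1}(\cC)$ in the normalization in which the generator of $\underline{\HF}^+(T^3;\omega)$ is the image of $\id_{\bF_2} = \id_{\SFH}$ of the meridionally sutured solid torus; running the same formula for $\cC_0$ gives $\Psi_{T^2\times D^2} = \tr(\id_{\bF_2}) = 1$ in that normalization (recovering, for trivial $\cC$, the Ozsv\'ath--Szab\'o analogue of~\eqref{eqn:FS}). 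Combining the two computations proves the identity. Moreover, since the left-hand side is manifestly an invariant of the smooth $4$-manifold $W_{\cC}$, which does not see the decorations used to define $\hat{F}_{\cC}$, this also establishes the claimed independence of $\Lef_{z_1}(\cC)$ from those decorations.

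The main obstacle is the self-gluing/trace formula for the perturbed sutured TQFT, and this is where I expect essentially all of the work to concentrate. It requires, first, a $2$-form--perturbed refinement of sutured Floer homology and of the cobordism maps of \cite{JCob}, complete with naturality and a composition law in which the perturbation is transported along the gluing, together with the computation that the perturbed invariant of $T^3$ (equivalently, of $T^3$ with two parallel sutures) is free of rank one over $\Lambda$. Second, one must show that closing up a sutured cobordism along a pair of matching, product-like ends induces the categorical trace on invariants: a general cobordism cannot be glued to itself inside a TQFT, so this step has to exploit the specific $\d N(K)\times I$ product structure along which the gluing occurs, a handle or Heegaard-triple model for that gluing, and a tensor-with-dual (K\"unneth) argument to extract the partial trace, all compatibly with the perturbing $2$-form. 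More routine, but still needing attention, are: checking that $[T]\in H_2(X_{\cC})$ matches the Alexander grading on $\HFKh(Y,K)$ under the decomposition, so that only $z_1$ (and none of the $z_j$ with $j>1$) appears; verifying $b_2^+(X\setminus N(T))\ge 1$ and that the perturbed composition law is valid along $T^3$; and confirming $X_{\cC_0}\cong X$ for the trivial unknot concordance.
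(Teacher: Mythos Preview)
Your proposal is correct and follows essentially the same route as the paper: reformulate $\Phi_{X;\ve{b}}$ via the perturbed mixed invariant, split off $W_{\cC}$ along $\bT^3$, use that the perturbed $\HF^+(-\bT^3;\Lambda_\tau)\cong\Lambda$, and then compute the relative invariant of $W_{\cC}$ as a trace in the sutured TQFT by decomposing it into cotrace, the concordance cobordism, a perturbed identity (which inserts the Alexander grading via Lemma~\ref{lem:Alexander-grading}), and trace. Two points of precision worth noting: the paper does \emph{not} use $\bT^3$ as the admissible cut for the mixed invariant (it cannot be, since $b_2^+(N(T))=0$) but first cuts along $N=\d N(Q)$ for a surface $Q\subset X\setminus T$ of positive self-intersection and only then applies the $\HF^+$ composition law along $\bT^3$; and the bridge between the closed $3$-manifold map $F^+_{W_{\cC};\omega_{\cC}}$ and the sutured trace computation is made concrete by evaluating on the perturbed contact class $\hat{c}(\xi_{\cC};\tau_{\cC})$, which is exactly the image of $1\in\SFH(\emptyset)$ under the gluing map (Proposition~\ref{prop:computehatmapsusingsuturedmaps}).
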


	If $\cC$ is the product concordance $(I \times Y, I \times K)$, then
	$\hat{F}_{\cC}$ is the
	identity of $\HFKh(Y, K)$, so $\Lef_z(\cC)$ is the graded Euler characteristic
	of $\HFKh(Y, K)$,
	which is $\Delta_K(t)$. Hence, as a special case, we recover the formula of
	Mark~\cite{Mark}*{Theorem~3.1};
	i.e., the Heegaard Floer version of the Fintushel--Stern knot surgery formula.

	When $\pi_1(X \setminus T) = 1$ and $Y = S^3$, the manifold $X_{\cC}$
	is homeomorphic to $X$.  In contrast, we have the following corollary to Theorem~\ref{thm:1},
	which we prove in Section~\ref{sec:diffeomorphism-types}:
	
	\begin{cor}\label{cor:not-diffeomorphic}
		If $\Lef_z(\cC)\neq 1$ and $\Phi_{X;\ve{b}}\neq 0$, the
	4-manifold $X_{\cC}$ is not diffeomorphic to $X$.
	\end{cor}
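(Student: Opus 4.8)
The plan is to suppose that $X_\cC$ is diffeomorphic to $X$ and to derive a contradiction from Theorem~\ref{thm:1}. Throughout we use the natural isomorphisms recalled before Theorem~\ref{thm:1} to identify $H^2(X_\cC;\R)$ with $H^2(X;\R)$, the integral lattices, the $\Spin^c$ structures, and the classes $[T_\cC]$ and $[T]$. For a closed oriented $4$‑manifold $Z$ with $b_2^+(Z)\ge 2$, let
\[
P_Z:=\operatorname{conv}\bigl\{\,c_1(\frs)\in H^2(Z;\R):\Phi_{Z,\frs}\neq 0\,\bigr\}
\]
be the convex hull of the Chern classes of the basic $\Spin^c$ structures; since $\Phi_Z$ is a finitely supported invariant of $Z$, this is a lattice polytope. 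First I would record that a diffeomorphism $f\colon X_\cC\to X$ induces, via $f^*$ on second cohomology together with the naturality of $\Phi$ and of $c_1$ under pullback, a lattice automorphism $\Psi$ of $H^2(X;\Z)$ with $\Psi(P_X)=P_{X_\cC}$; in particular $P_X$ and $P_{X_\cC}$ contain the same number of integral points.

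Next I would convert Theorem~\ref{thm:1} into a description of $P_{X_\cC}$. We may assume $\Lef_z(\cC)\neq 0$, as otherwise $\Phi_{X_\cC}\equiv 0\neq\Phi_X$; write $\Lef_z(\cC)=\sum_{j=m}^{M}c_jz^j$ with $c_m=c_M=1$ and $c_j\in\bF_2$. In the polynomial $\Phi_{X;\ve{b}}$ the monomial indexed by a basic $\frs$ determines $c_1(\frs)$ up to a fixed translation, and multiplying that monomial by $z_1$ changes $c_1(\frs)$ to $c_1(\frs)+2\,\PD[T]$, since $\langle b_1,[T]\rangle=1$ and $\langle b_i,[T]\rangle=0$ for $i>1$. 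Translating into the $c_1$‑picture the standard fact that the Newton polytope of a product of polynomials over a field is the Minkowski sum of the Newton polytopes of the factors — valid over $\bF_2$ because the coefficient of the product at a vertex of the Minkowski sum is a product of nonzero leading coefficients — Theorem~\ref{thm:1} gives
\[
P_{X_\cC}=P_X+[2m,2M]\cdot\PD[T],
\]
the Minkowski sum of $P_X$ with a lattice segment directed along the isotropic class $\PD[T]$. Since $T$ is homologically essential we have $\PD[T]\neq 0$, and since $\Lef_z(\cC)$ is symmetric in $z$ (by the conjugation symmetry of knot Floer homology, together with the decoration‑independence noted above) and is not $1$, its support is not a single point, so $m<M$ and the segment $S:=[2m,2M]\cdot\PD[T]$ is nondegenerate.

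Finally I would combine the two ingredients: $\Psi(P_X)=P_X+S$ and $\Psi$ preserves the number of integral points, so $P_X$ and $P_X+S$ contain equally many. But $P_X$ is nonempty since $\Phi_{X;\ve{b}}\neq 0$, and Minkowski‑summing a nonempty lattice polytope $P$ with a nondegenerate lattice segment $[0,\vec s]$ strictly increases the number of integral points: $P+[0,\vec s]\supseteq P\cup(P+\vec s)$, the set $P+\vec s$ is a translate of $P$ hence has the same number of integral points as $P$, and $P+\vec s\not\subseteq P$ because $P$ is bounded and $\vec s\neq 0$, so $P\cup(P+\vec s)$ — and a fortiori $P+[0,\vec s]$ — contains strictly more integral points than $P$. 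This contradiction proves that $X_\cC$ is not diffeomorphic to $X$. The step I expect to demand the most care is the extraction of the \emph{diffeomorphism‑invariant} identity $P_{X_\cC}=P_X+S$ from Theorem~\ref{thm:1}: one must pass from the polynomial $\Phi_{Z;\ve{b}}$, which depends on the auxiliary basis $\ve{b}$ and base $\Spin^c$ structure and on which a diffeomorphism acts opaquely, to the intrinsic polytope $P_Z$, and one must control the $\bF_2$‑coefficient cancellations away from the vertices of the Newton polytope; the harmless effect of $2$‑torsion in $H^2(X)$ is dealt with by phrasing everything in terms of supports inside $H^2(Z;\R)$ from the outset.
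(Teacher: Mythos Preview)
Your approach is genuinely different from the paper's and, modulo one repair, it works. The paper argues in the group ring $\bF_2[\Z^n]$: a diffeomorphism $\phi$ induces a ring automorphism $e^{M(\phi_*)}$ with $\Phi_{X_\cC;\ve{b}}=e^{M(\phi_*)}\cdot\Phi_{X;\ve{b}}$, and since $\bF_2[\Z^n]$ is a UFD and $e^{M(\phi_*)}$ preserves the number of irreducible factors, this contradicts $\Phi_{X_\cC;\ve{b}}=\Lef_{z_1}(\cC)\cdot\Phi_{X;\ve{b}}$ when $\Lef_z(\cC)\neq 1$. Your lattice-point count on Newton polytopes is a pleasant alternative that avoids unique factorisation altogether.

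The gap is in your passage from Theorem~\ref{thm:1} to $P_{X_\cC}=P_X+S$. You defined $P_Z=\operatorname{conv}\{c_1(\frs):\Phi_{Z,\frs}\neq 0\}$, which is manifestly diffeomorphism-invariant, but Theorem~\ref{thm:1} only constrains the polynomial $\Phi_{X_\cC;\ve{b}}$, i.e.\ the mod-$2$ sums $\sum_{\frs\mapsto c}\Phi_{X_\cC,\frs}$ over each fibre of $\Spin^c(X_\cC)\to H^2(X_\cC;\R)$. If $H^2(X)$ has $2$-torsion these sums can vanish while individual $\Phi_{X_\cC,\frs}$ do not, so you cannot read off the set of basic classes of $X_\cC$ --- and hence $P_{X_\cC}$ --- from the polynomial identity. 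Your closing remark that ``phrasing everything in terms of supports inside $H^2(Z;\R)$'' handles this does not: with your definition of $P_Z$, the supports you need are those of the individual $\Phi_{Z,\frs}$, not of the polynomial.

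The fix is to replace $P_Z$ by the Newton polytope $Q_Z$ of $\Phi_{Z;\ve{b}}$ itself (for an integral basis $\ve{b}$ as in the paper's proof). Then $Q_{X_\cC}=Q_X+[m,M]\cdot e_1$ is immediate from Theorem~\ref{thm:1} and the Minkowski-sum property of Newton polytopes over a domain. For the diffeomorphism step you now need the short computation the paper carries out, showing that $\Phi_{X_\cC;\ve{b}}$ and $\Phi_{X;\ve{b}}$ differ by a $\GL_n(\Z)$ change of variables times a monomial; this gives $|Q_{X_\cC}\cap\Z^n|=|Q_X\cap\Z^n|$, and your Minkowski-sum lattice-point inequality (which is correct: a vertex of $Q_X$ maximising a linear functional in the $e_1$-direction, translated by $(M-m)e_1$, lies in $Q_{X_\cC}\setminus Q_X$) finishes the argument. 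With that adjustment your proof is complete and arguably more geometric than the paper's; the paper's UFD argument is in turn slightly shorter once the ring-automorphism identity is in hand.
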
	
	
	Since $\Lef_z(\cC)$ is always symmetric and satisfies $\Lef_z(\cC)(1) = \pm 1$,
	it is unclear whether, using concordance surgery, we obtain any smooth structures
    not arising from knot surgery. Nonetheless, in \cite{JMZExotic}, we use the techniques of this paper
    to produce infinite families of exotic orientable surfaces in $B^4$.

	We note that the proofs of the knot surgery formula~\eqref{eqn:FS}
	due to Fintushel and Stern for the Seiberg--Witten invariant,
	and to Mark for the Ozsv\'ath--Szab\'o invariant, are based on the skein
	relation for the Alexander polynomial, and hence are only well-suited to
	knots in $S^3$. 	Our theorem applies to a more general setting,
	where $K$ is allowed to be a null-homologous knot in an arbitrary homology 3-sphere
	$Y$. Our proof of Theorem~\ref{thm:1} also extends to the situation
	where we consider a self-concordance $(W,\cC)$ of a pair $(Y,K)$, such that $W$ is an
	integer homology cobordism from $Y$ to itself, though we restrict to the setting
	where $W = I \times Y$ to simplify the notation. The key technical advancement that led
	to this proof is our previous computation
	of the sutured Floer trace and cotrace cobordism maps
	~\cite{JuhaszZemkeContactHandles}*{Theorem~1.1}.

	Our Theorem~\ref{thm:1} could be used to construct
	exotic smooth structures on 4-manifolds with non-trivial fundamental group.
	Suppose that $\pi_1(X \setminus T) = 1$.
	If $\Phi_{X; \ve{b}} \neq 0$, and $K$ and $K'$ are knots in a homology 3-sphere $Y$
	such that $X_K$ and $X_{K'}$ are homeomorphic
	and $\Phi_{X; \ve{b}} \cdot \Delta_K(z)$ and $\Phi_{X;	\ve{b}} \cdot \Delta_{K'}(z)$
	are not equivalent under the action of automorphisms of $H_2(X)$, then $X_K$ and
	$X_{K'}$ are non-diffeomorphic
	4-manifolds with fundamental group~$\pi_1(Y)/\langle [K] \rangle$,
	where $\langle [K] \rangle$ is the normal subgroup of $\pi_1(Y)$ generated by $K$.
		
	After proving Theorem~\ref{thm:1},
	we give an account of the naturality and functoriality of the perturbed versions
    of sutured Floer homology and Heegaard Floer homology,	
	since these are more subtle than in the unperturbed setting,
	and many details are only sketched in the literature.
		
	Finally, we note that it might be possible to carry out our argument
	for the Seiberg--Witten invariant using the work of Zhenkun Li \cite{ZhenkunSM1}
	to construct gluing and cobordism maps for Kronheimer and
	Mrowka's sutured monopole Floer homology \cite{Kronheimer-Mrowka-Sutured-Monopole}.
	A key technical step which has not yet been completed in this program
	is the computation of the induced maps by the trace and cotrace cobordisms,
	which we performed in the setting of sutured Floer homology
	in \cite{JuhaszZemkeContactHandles}*{Theorem~1.1}.

\subsection{Organization}
	
In Sections~\ref{sec:perturbed} and \ref{sec:perturbed-closed-3-manifolds}, we
give an overview of the construction of the perturbed Floer homology groups, and
the perturbed cobordism maps, and we state the properties that are most relevant
to the proof of Theorem~\ref{thm:1}. In Section~\ref{sec:4-manifold-invariant},
we give some background on the Ozsv\'{a}th--Szab\'{o} 4-manifold invariant.  In
Section~\ref{sec:proof-of-concordance-surgery-formula}, we prove
Theorem~\ref{thm:1}. In Sections~\ref{sec:naturality} and
\ref{sec:cob-construct}, we give a proof of the naturality of the perturbed
sutured Floer groups, the well-definedness of the cobordism maps, and also
several useful properties.
	
\subsection{Acknowledgements}
	
We would like to thank Ciprian Manolescu, Thomas Mark, and Zolt\'an Szab\'o for
helpful discussions,
and Ronald Fintushel and Ronald Stern for their comments on the history of this
problem.
We would also like to thank an anonymous referee for a very careful reading and
helpful suggestions.
The first author was supported by a Royal Society Research Fellowship,
and the second author by an NSF Postdoctoral Research Fellowship (DMS-1703685).
This project has received funding from the European Research Council (ERC)
under the European Union's Horizon 2020 research and innovation programme
(grant agreement No 674978).

\section{Perturbing sutured Floer homology by a 2-form}
\label{sec:perturbed}

	Ozsv\'ath and Szab\'o~\cite{genusbounds}*{Section~3.1} defined a version of
	Heegaard Floer homology for closed 3-manifolds perturbed by a second cohomology class,
	which we now extend to sutured manifolds. The unperturbed
    version of sutured Floer homology was defined by the first author~\cite{JDisks},
    and its naturality was shown by Thurston and the authors~\cite{JTNaturality}.

	Let $\Lambda$ denote the Novikov ring over $\F_2$ in a single
	variable~$z$. Its elements are formal sums $\sum_{x \in \R} n_x z^x$,
	where $n_x \in \F_2$, and the set
	\[
	\{\, x \in (-\infty,c] : n_x \neq 0 \,\}
	\]
	is finite for every $c \in \R$. Note that $\Lambda$ is a field.
	
	Suppose that $(M,\g)$ is a balanced sutured manifold, and $\omega$ is a
	closed 2-form on $M$. Then $\omega$ induces an action of $\bF_2[H^1(M,\d M)]
	\iso
	\bF_2[H_2(M)]$ on $\Lambda$, via the formula
	\[
	e^a \cdot z^x = z^{x + \int_a \omega}
	\]
	for $x \in \R$ and $a\in H_2(M)$. We denote by $\Lambda_\omega$ the ring $\Lambda$
	viewed as a module over $\bF_2[H^1(M, \d M)]$.

    For a sutured manifold $(M,\gamma)$, equipped with a closed 2-form $\omega$ and a relative
	$\Spin^c$ structure $\underline{\frs}$, we  write
	$\SFH(M,\gamma,\underline{\frs};\Lambda_{\omega})$
	for the perturbed sutured Floer homology, which we describe in this section.
	Using the terminology of Baldwin and Sivek~\cite{BaldwinSivekContactInvariant},
	the most natural category for
	$\SFH(M,\g,\underline{\frs};\Lambda_\omega)$ is
	the category of \emph{projective transitive systems}.
	See Section~\ref{sec:transitive-system} for a precise definition.
	We state the following version of naturality for perturbed sutured
	Floer homology:

	\begin{thm}\label{thm:naturality-perturbed}
        Suppose $(M,\g)$ is a sutured
        manifold, and $\omega$ is a closed 2-form on $M$.
		\begin{enumerate}
			\item
			If 	$\underline{\frs}\in \Spin^c(M,\gamma)$, then
			$\SFH(M,\gamma,\underline{\frs};\Lambda_\omega)$
			forms a projective transitive system of $\Lambda$-modules, indexed by the set
			of pairs $(\cH,J)$,
			where $\cH$ is an admissible diagram for $(M,\gamma)$, and $J$ is a generic
			almost complex structure.
			\item If $\omega=d \eta$ for a 1-form $\eta$, then
			$\SFH(M,\gamma;\Lambda_{\omega})$
			(the sum over all $\Spin^c$ structures) forms a projective transitive system
			of $\Lambda$-modules, indexed by the set of pairs $(\cH,J)$, as above.
		\end{enumerate}
	\end{thm}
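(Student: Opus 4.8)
The plan is to adapt the proof of naturality for unperturbed sutured Floer homology of Thurston and the authors \cite{JTNaturality} to the perturbed setting, carrying the $\omega$-weights through every construction and keeping track of an unavoidable overall ambiguity by a unit of $\Lambda$ --- which is exactly why the result is a \emph{projective} transitive system rather than an honest one. First I would fix, once and for all and coherently, for each admissible diagram $\cH = (\Sigma,\as,\bs)$ for $(M,\gamma)$ and each generic almost complex structure $J$, a system of reference homotopy classes $\phi_0(\x,\y) \in \pi_2(\x,\y)$, and define $\CF(\cH,J,\underline{\frs};\Lambda_\omega)$ to be the free $\Lambda$-module on the intersection points representing $\underline{\frs}$, with differential
\[
\d\x = \sum_{\y}\ \sum_{\substack{\phi\in\pi_2(\x,\y)\\ \mu(\phi)=1}} \#\widehat{\cM}(\phi)\cdot z^{\int_\phi\omega}\cdot\y ,
\]
where $\int_\phi\omega$ is the integral of $\omega$ over the domain of $\phi$, measured against $\phi_0(\x,\y)$ and interpreted through the $\bF_2[H^1(M,\d M)]$-module structure on $\Lambda_\omega$. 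Admissibility of $\cH$ makes each coefficient a well-defined element of $\Lambda$, and $\d^2 = 0$ follows from the usual count of the ends of one-dimensional moduli spaces, using that $\omega$ is closed so that the weight of a broken flowline is the product of the weights of its two pieces.

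Next I would define the elementary equivalences. For each elementary Heegaard move --- isotopy of the attaching curves, handleslide, index-one/two stabilization, change of the auxiliary sutured data --- and for each change of $J$, I would introduce the corresponding perturbed map: the perturbed continuation map, the perturbed holomorphic-triangle map in the handleslide case, and the perturbed stabilization map, each built from the unperturbed one by inserting the weight $z^{\int_\psi\omega}$, where $\omega$ is pulled back to the relevant triangle (or higher polygon) cobordism and evaluated on the domain of the counted polygon $\psi$. Since the weight of a polygon depends on a choice of reference polygon class, each of these maps is defined only up to multiplication by a unit $z^c \in \Lambda^{\times}$. One then checks, exactly as in the unperturbed case, that each is a chain map; since $\Lambda$ is a field, it induces an isomorphism on homology, being the $\Lambda$-linearization of the corresponding unperturbed equivalence up to the unit $z^c$.

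The substance of the argument is transitivity: one must show that any two zig-zags of elementary equivalences with the same endpoints $(\cH,J)$ and $(\cH',J')$ induce the same morphism up to a unit of $\Lambda$. I would import the entire scheme of \cite{JTNaturality}: distant moves commute; consecutive handleslides compose according to the associativity of the triangle maps (which forces the introduction of perturbed quadrilateral maps and a count of the ends of the associated moduli spaces); a stabilization can be slid past any other move after an isotopy; and a loop of moves acts trivially up to the relevant unit. Each of these diagrams now commutes only up to a unit, and the new work is to verify that the resulting unit is \emph{well-defined}: as the data (almost complex structures, isotopies, and the auxiliary choices in the polygon maps) varies over the contractible parameter spaces used in \cite{JTNaturality}, the discrepancy of two systems of weights around a loop equals the pairing of $\omega$ with a $2$-cycle that depends continuously on the data, hence is locally constant, hence constant. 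This bookkeeping of $\omega$-periods is the step I expect to be the main obstacle. Granting it, the complexes $\CF(\cH,J,\underline{\frs};\Lambda_\omega)$ together with the classes of the elementary equivalences assemble into a projective transitive system of $\Lambda$-modules, which is part (1).

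For part (2), assume $\omega = d\eta$, so that $[\omega] = 0$ in de Rham cohomology and hence $\int_a\omega = 0$ for every $2$-cycle $a$ in $M$. Then the $\bF_2[H^1(M,\d M)]$-action on $\Lambda_\omega$ is trivial; moreover every weight $z^{\int_\phi\omega}$ in the differential, and --- since $\omega$ is exact --- every weight in the polygon maps, is independent of the choice of reference class. Thus the complexes and all the elementary equivalences are defined without any choices beyond those already present in the unperturbed theory, and they are simultaneously compatible with the splitting over $\Spin^c(M,\gamma)$. One may therefore sum over $\underline{\frs}$ and repeat the transitivity argument of \cite{JTNaturality} with only cosmetic changes (tensored up to $\Lambda$) to conclude that $\SFH(M,\gamma;\Lambda_\omega)$ forms a projective transitive system of $\Lambda$-modules indexed by the pairs $(\cH,J)$.
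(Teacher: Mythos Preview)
Your plan follows the right general template—adapt the \cite{JTNaturality} naturality machine to the perturbed setting and control the resulting unit ambiguity—but there are two concrete gaps.

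First, your device of ``fixing once and for all and coherently a system of reference homotopy classes $\phi_0(\x,\y)$'' is not made precise, and there is no canonical way to do it across different diagrams. The paper instead uses a coning construction: after choosing compressing disks $D_{\sca}$, $D_{\scb}$ for the two handlebodies, every domain $\cD(\phi)$ is coned off to an honest 2-chain $\tilde{\cD}(\phi)$ in $M$, and one sets $A_\omega(\phi) = \int_{\tilde{\cD}(\phi)}\omega$. This makes the weights concrete and localizes the ambiguity in the choice of compressing disks, which is then tracked explicitly (see Example~\ref{rem:projective-monodromy}).

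Second, and more seriously, you omit the diffeomorphism edges. In the \cite{JTNaturality} framework the graph of isotopy diagrams has, in addition to $\alpha$- and $\beta$-equivalences and stabilizations, a set $\cG_{\diff}^0$ of edges labelled by diffeomorphisms of $(M,\gamma)$ isotopic to the identity. The perturbed map for such an edge requires \emph{choosing} an isotopy $(\phi_t)_{t\in I}$ from $\id$ to $\phi_1$, sweeping the cone $\gamma_{\xs}$ through $\phi_t$ to a 2-chain $\Gamma_{\xs,\phi_t}$, and weighting by $z^{\int_{\Gamma_{\xs,\phi_t}}\omega}$. The key lemma (Lemma~\ref{lem:no-monodromy-spinc}) is that this map is independent of the isotopy up to an overall $z^x$ \emph{on each $\Spin^c$ structure separately}, or on the whole complex if $[\omega]=0$; there are explicit examples (Lemma~\ref{lem:monodromy}, Example~\ref{ex:T^3}) where the total map has projectively nontrivial monodromy around a loop of diffeomorphisms when $[\omega]\neq 0$. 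This lemma is precisely what distinguishes parts (1) and (2) of the theorem, so your explanation of part (2) via ``weights independent of reference class'' misses the point: the obstruction to summing over $\Spin^c$ structures lives in the diffeomorphism maps, not in the polygon counts. Finally, the simple handleswap loop of \cite{JTNaturality} needs its own argument (Lemma~\ref{lem:perturbed-handleswap}), combining a stabilization-type degeneration with the observation that the relevant triangle classes on the genus-2 piece all have the same $\omega$-area; your sketch does not isolate this.
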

	
    We will prove Theorem~\ref{thm:naturality-perturbed} in
	Section~\ref{sec:naturality}, though we describe the construction of the
	perturbed groups
	in Section~\ref{sec:perturbed-complexes}.
	
	\begin{rem}
    Our construction of $\SFH(M,\gamma;\Lambda_{\omega})$ gives neither
	a genuine transitive system when we restrict to a single $\Spin^c$ structure on $M$, nor
	a projective transitive system when we sum over all $\Spin^c$ structures.
	See Example~\ref{rem:projective-monodromy} and Lemma~\ref{lem:monodromy} for
	counterexamples.
	\end{rem}
	
	\subsection{Transitive systems and their morphisms}
	\label{sec:transitive-system}
	\begin{define} Suppose that $\cC$ is a category and $I$ is a set. A
		\emph{transitive system in $\cC$, indexed by $I$}, is a collection of objects
		$(X_i)_{i\in I}$, as well as a distinguished morphism
		$\Psi_{i\to j}\colon X_i\to X_j$ for each $(i,j)\in I\times I$, such that
		\begin{enumerate}
			\item $\Psi_{j\to k}\circ \Psi_{i\to j}= \Psi_{i\to k}$, and
			\item $\Psi_{i\to i}=\id_{X_i}$.
		\end{enumerate}
	\end{define}

	\begin{example}\label{ex:categories}  Transitive systems in the following
		categories are important to
		our present paper.
		\begin{enumerate}[label=(T-\arabic*), ref=T-\arabic*]
			\item\label{transitive-1} The category
			$\cC = \cR\text{--}\MOD$ of left modules over a ring $\cR$. The morphism set
			$\Hom_{\cC}(X_1,X_2)$ is equal to the set
			$\Hom_{\cR}(X_1,X_2)$ of $\cR$-module homomorphisms from $X_1$ to $X_2$.
			\item \label{transitive-4} The projectivized category of $\Lambda$-modules
			$\cC=\mathsf{P}(\Lambda\text{--}\MOD)$. The objects are $\Lambda$-modules
			and the morphism
			set $\Hom_{\cC}(X_1,X_2)$ is the projectivization of
			$\Hom_{\Lambda}(X_1,X_2)$
			under the action of elements of $\Lambda$ of the form $z^x\in \Lambda$.
			\item\label{transitive-2} The homotopy category
			$\cC=\mathsf{K}(\cR\text{--}\MOD)$
			of chain complexes over the ring $\cR$. The objects are chain complexes over
			$\cR$. If
			$X_1$ and $X_2$
			are two chain complexes, the set of $\cR$-module homomorphisms
			$\Hom_{\cR}(X_1,X_2)$ is a
			chain complex with differential $\d_{\Hom}(f) = f\circ \d_{X_1}-\d_{X_2}\circ f$
            for $f \in \Hom_{\cR}(X_1,X_2)$. The morphism set
			$\Hom_{\cC}(X_1,X_2)$ in $\cC$
			is the homology $H_*(\Hom_{\cR}(X_1,X_2))$.
			Equivalently, $\Hom_{\cC}(X_1,X_2)$ is the set of chain maps modulo chain
			homotopy.
			\item\label{transitive-3} The projectivized homotopy category
			$\cC=\mathsf{P}(\mathsf{K}(\Lambda\text{--}\MOD))$.
			The objects of $\cC$ are chain complexes over $\Lambda$. The morphism set
			$\Hom_{\cC}(X_1,X_2)$ is the projectivization of
			$H_*(\Hom_{\Lambda}(X_1,X_2))$
			under the action of elements of $\Lambda$ of the form $z^x$.
		\end{enumerate}
	\end{example}

	The categories in \eqref{transitive-1} and \eqref{transitive-2}
	are preadditive (i.e., the morphism sets are abelian groups), while the
	categories in ~\eqref{transitive-4} and \eqref{transitive-3} are not.
	In these latter categories, composition of projective morphisms is
	well-defined, though addition of morphisms is not.
	
	Following the terminology of Baldwin and Sivek
	\cite{BaldwinSivekContactInvariant},
	we call a transitive system over  one of the categories~\eqref{transitive-4}
	and \eqref{transitive-3} a \emph{projective transitive system}.
    In category~\eqref{transitive-4}, given morphisms
    $f$, $g \in \Hom_{\Lambda}(X_1, X_2)$,
    we will use the notation $f \doteq g$ if $f = z^x \cdot g$ for some $x \in \R$.
    Similarly, in case~\eqref{transitive-3}, given chain maps
    $\phi$, $\psi \in H_*(\Hom_{\Lambda}(X_1, X_2))$,
    we write $\phi \dotsim \psi$ if $\phi \simeq z^x \cdot \psi$ for some $x \in \R$,
    where $\simeq$ denotes chain homotopy equivalence. If $\phi\dotsim \psi$,
    we say $\phi$ and $\psi$ are \emph{projectively equivalent}.
	Finally, if $X$ is a $\Lambda$-module and $a$, $b\in X$, we write $a\doteq b$
    if $a=z^x\cdot b$ for some $x\in \R$.
	
    There is a natural notion of morphism between transitive systems:
	
    \begin{define}
		If $(C_i)_{i\in I}$ and $(D_j)_{j\in J}$ are two transitive systems in the
		category $\cC$, a \emph{morphism of transitive systems} is a collection of morphisms
		\[
		F_{(i,j)} \colon C_i\to D_j
		\]
		in $\cC$ such that
		\[
		\Psi_{j\to j'}\circ  F_{(i,j)}\circ \Psi_{i'\to i}= F_{(i',j')}
		\]
		for all $i$, $i'\in I$ and $j$, $j'\in J$.
	\end{define}

	\begin{rem}\label{rem:1-map-determines-morphism}
        If $f\colon C_{i_0} \to D_{j_0}$ is an element of
		$\Hom_{\cC}(C_{i_0},D_{j_0})$ for some fixed $i_0\in I$ and $j_0\in J$, then $f$
		induces a unique morphism $F_{(i,j)}$ of transitive systems from $(C_i)_{i\in
			I}$ to $(D_j)_{j\in J}$, given by
		\[
		F_{(i,j)}=\Psi_{j_0\to j}\circ f\circ \Psi_{i\to i_0}.
		\]
	\end{rem}		
	
	If $\cC$ is a category, then the collection of transitive systems over $\cC$
	itself forms a category, for which we write $\cT(\cC)$. Hence, we can define
	a transitive system of transitive systems over $\cC$.

	\begin{rem}\label{rem:system-of-systems}
		If $\cX=((X_{ij})_{j\in J_i})_{i\in I}$ is a transitive system in $\cT(\cC)$,
		we may naturally view $\cX$ as a transitive system over $\cC$ indexed by
		$K:=\bigcup_{i\in I} J_i.$
	\end{rem}

	\subsection{The perturbed chain complexes}
    \label{sec:perturbed-complexes}	
	In this section, we define the perturbed sutured Floer complexes.
    We use the cylindrical reformulation of Heegaard Floer homology, due to
    Lipshitz~\cite{LipshitzCylindrical}.
	Suppose $(M,\gamma)$ is a balanced sutured manifold with a closed 2-form
	$\omega$.
	If $\cH=(\Sigma,\as,\bs)$ is an admissible diagram, we pick an almost
	complex structure on $\Sigma\times I\times \R$ that is tamed by
	the split symplectic form. The surface $\Sigma$ splits $M$ into two sutured
	compression bodies, for which we write $U_{\sca}$ and $U_{\scb}$. We let
	$D_{\sca}$ and $D_{\scb}$ be two choices of compressing disks for $U_{\sca}$
	and $U_{\scb}$, equipped with radial foliations, such that $D_{\sca}$
	intersects $\Sigma$ along $\as$, and similarly for $D_{\scb}$.
	
	A homotopy class $\phi\in \pi_2(\xs,\ys)$ of disks determines a 2-chain
	$\cD(\phi)$ on $\Sigma$, which has boundary on $\as\cup\bs$.
	We cone $\cD(\phi)$ along the compressing disks $D_{\sca}$ and $D_{\scb}$
	to obtain a 2-chain $\tilde{\cD}(\phi)$. We note
	that the 2-chain $\tilde{\cD}(\phi)$ depends on the choice of
	radial foliations on $D_{\a}$ and $D_{\b}$.
	The 2-chain $\tilde{\cD}(\phi)$ is closed
	if and only if $\ve{x}=\ve{y}$.
	
	We define
	\[
	A_\omega(\phi) := \int_{\tD(\phi)} \omega.
	\]
	When the choice of $\omega$ is clear from the context, we just write $A(\phi)$.
	
	There is a map $H \colon \pi_2(\xs,\xs) \to H_2(M)$, obtained by coning
	off the periodic domain $\cD(\phi)$ for $\phi \in \pi_2(\xs,\xs)$;
	see \cite{JDisks}*{Definition~3.9}. In particular,
	\[
	H(\phi) = \left[\tD(\phi)\right].
	\]

	The chain complex $\CF(\cH, \underline{\frs};\Lambda_{\omega})$
	is the free $\Lambda$-module generated by intersection points
	$\xs\in \bT_{\sca}\cap \bT_{\scb}$ which satisfy $\frs(\xs)=\underline{\frs}$.
	The differential is given by  counting holomorphic
	curves in $\Sigma\times I\times \R$ via the formula
	\[
	\d \ve{x} := \sum_{\ys \in \bT_{\sca} \cap \bT_{\scb}} \sum_{\substack{\phi \in
			\pi_2(\xs,\ys)\\
			\mu(\phi) = 1}} \left(|\cM(\phi)/\R| \mod 2 \right) \cdot z^{A(\phi)}
	\cdot
	\ys
	\]
	for $\ve{x} \in \bT_{\sca} \cap \bT_{\scb}$. The fact that $\d^2 = 0$ follows
	by analyzing the ends of the 1-dimensional moduli spaces $\cM(\phi)/\R$ for
	classes $\phi$ with Maslov index 2.
	We set
	\[
	\SFH(\cH,\underline{\frs};\Lambda_{\omega}) :=
	H_*\left(\CF(\cH,\underline{\frs};\Lambda_{\omega}), \d \right).
	\]
	The group $\SFH(\cH,\underline{\frs};\Lambda_{\omega})$ also depends
	on $J$ and the compressing disks, though we omit the extra data from the
	notation.

    \subsection{Perturbed sutured cobordism maps}\label{sec:sutured-cobordism-maps}

	In \cite{JCob}, the first author defined a notion of cobordism between
	sutured manifolds, and constructed functorial cobordism maps.
	
	\begin{define}\label{def:sutured-cobordism}
    A \emph{cobordism of sutured manifolds}
	\[
	\cW=(W,Z,[\xi])\colon (M_0,\g_0)\to (M_1,\g_1)	
	\]
	is a triple such that
	\begin{enumerate}
		\item $W$ is a compact, oriented 4-manifold with boundary,
		\item $Z$ is a compact, codimension-0 submanifold with boundary
		 of $\d W$, and $\d W\setminus \Int (Z)=-M_0\sqcup M_1$,
		\item $[\xi]$ is an equivalence class of positive contact structures
		 on $Z$ (see \cite{JCob}*{Definition~2.3}),
         such that $\d Z$ is a convex surface with dividing
		 set $\gamma_i$ on $\d M_i$, for $i\in \{0,1\}$.
	\end{enumerate}
	\end{define}

	In Section~\ref{sec:cob-construct}, we will define perturbed versions
	of the sutured manifold	cobordism maps. If $\cW = (W,Z,[\xi])$
	is a sutured manifold cobordism
	from $(M_0,\g_0)$ to $(M_1,\g_1)$, and $\omega$ is a closed 2-form on $W$,
	then we will define a chain map
	\[
	F_{\cW;\omega} \colon \SFH(M_0,\g_0; \Lambda_{\omega|_{M_0}}) \to \SFH(M_1,\g_1;
	\Lambda_{\omega|_{M_1}}),
	\]
	which is only well-defined up to an ambiguity described in
	Proposition~\ref{prop:perturbedcobordismmapswelldefined}.

	If $\cH$ is a Heegaard diagram for $(M,\g)$, we can view
	\[
	\SFH(\cH; \Lambda_{\omega})=\bigoplus_{\ufrs\in \Spin^c(M, \g)}
	\SFH(\cH,\ufrs;\Lambda_{\omega}).	
	\]
	Consequently, there are inclusion and projection maps
	\[
	i_{\ufrs}\colon \SFH(\cH,\ufrs;\Lambda_{\omega}) \to
	\SFH(\cH;\Lambda_{\omega})\quad \text{and} \quad \pi_{\ufrs}\colon
	\SFH(\cH;\Lambda_{\omega})\to \SFH(\cH,\ufrs; \Lambda_{\omega}).
	\]

	\begin{prop}\label{prop:perturbedcobordismmapswelldefined}
		Suppose $\cW=(W,Z,[\xi])\colon (M_0,\g_0) \to (M_1,\g_1)$ is a sutured manifold
		cobordism, and $\omega$ is a closed 2-form on $W$.
		\begin{enumerate}
			\item\label{prop:well-def:restricted}  If $\underline{\frs}_i \in
			\Spin^c(M_i,\g_i)$ for $i \in \{0,1\}$, then the map
			\[
			\pi_{\ufrs_1} \circ F_{\cW;\omega} \circ i_{\ufrs_0} \colon \SFH(M_0,\g_0,\ufrs_0;
			\Lambda_{\omega|_{M_0}}) \to \SFH(M_1,\g_1,\ufrs_1; \Lambda_{\omega|_{M_1}})
			\]
			is well-defined up to an overall factor of $z^x$, for $x\in \R$.
			\item\label{prop:well-def:total} More generally, if $[\omega|_{M_1}]=0$, then
			$F_{\cW;\omega}\circ i_{\ufrs_0}$ is
			well-defined up to an overall factor of $z^x$. If $[\omega|_{M_0}]=0$, then
			$\pi_{\ufrs_1} \circ F_{\cW;\omega}$ is well-defined up to a factor of $z^x$.
			If $[\omega|_{M_0}]=0$ and $[\omega|_{M_1}]=0$, then the total map
			$F_{\cW;\omega}$ is well-defined up to an overall factor of $z^x$.
		\end{enumerate}
	\end{prop}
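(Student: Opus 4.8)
The plan is to trace through the construction of $F_{\cW;\omega}$ in Section~\ref{sec:cob-construct} and identify exactly which choices in the construction affect the map, and then show each such ambiguity contributes only an overall scalar $z^x$ with $x\in\R$ (and, under the stated hypotheses on $[\omega|_{M_i}]$, that this scalar is uniform across $\Spin^c$ structures). The unperturbed sutured cobordism map of \cite{JCob} is built by decomposing $\cW$ into elementary pieces (handle attachments, together with the trivial product cobordisms carrying the contact structure data), and the perturbed map is defined analogously by inserting the weights $z^{A_\omega(\phi)}$ into the holomorphic curve counts, exactly as in the differential. So I would first isolate the sources of non-uniqueness: (i) the choice of almost complex structures, compressing disks, and admissible diagrams representing each piece; (ii) the choice of cobordism decomposition itself; and (iii) the choice of primitives/trivializations used to normalize the weight $z^{A_\omega}$ on each piece, since $A_\omega(\phi)$ is only defined once one fixes how to cone off the $2$-chain $\cD(\phi)$ relative to $\omega$.

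First I would handle (i) and (ii) exactly as in \cite{JCob}, but keeping track of the $z$-weights. The key point is that the continuation maps and handle-slide maps relating two choices are themselves holomorphic-curve counts with $z$-weights, and one has to check that the relevant triangle-counting (associativity) arguments go through over $\Lambda_\omega$; this is already implicit in the proof of Theorem~\ref{thm:naturality-perturbed}, where one shows $\SFH(\cH,\ufrs;\Lambda_\omega)$ forms a projective transitive system, i.e., the transition maps are well-defined up to $z^x$. So on the level of $\SFH(-,\ufrs;\Lambda_\omega)$, changing the auxiliary data changes $F_{\cW;\omega}$ precisely by pre- and post-composition with such $z^x$-ambiguous isomorphisms, hence by an overall $z^x$; this is part~\eqref{prop:well-def:restricted}. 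The main content beyond naturality is (iii): when $\omega$ is exact on $M_0$ or $M_1$, the $2$-form contributes no genuine monodromy around the periodic domains living in the boundary, so the weight ambiguity on that side collapses to a single scalar independent of $\ufrs$ — this is what upgrades part~\eqref{prop:well-def:restricted} to part~\eqref{prop:well-def:total}. Concretely, if $[\omega|_{M_0}]=0$ then the action $A_\omega$ of $H_2(M_0)$ on $\Lambda$ is trivial, so the various $\Spin^c$-summands of $\SFH(M_0,\g_0;\Lambda_{\omega|_{M_0}})$ are glued together only by actual $\Lambda$-module maps with no per-summand $z$-ambiguity, and the projection/inclusion maps $i_{\ufrs_0}$, $\pi_{\ufrs_1}$ intertwine the cobordism map with a single global scalar.

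I expect the main obstacle to be (iii): carefully formulating in what sense $A_\omega(\phi)$, and hence $F_{\cW;\omega}$, depends on a choice of how $\omega$ is extended/trivialized over the compressing disks and the handle regions, and showing that two such choices differ by a function of $\phi$ that is \emph{constant on $\pi_2(\xs,\ys)$ modulo periodic domains with boundary in the interior} — so that the difference factors as a single overall power of $z$ rather than something that varies with the homotopy class (which would spoil chain-map-ness up to scalar) or with $\ufrs$. This amounts to a cohomological bookkeeping argument: the discrepancy is measured by the pairing of $[\omega]$ with a relative homology class in $H_2(W, M_0 \sqcup M_1)$, and invoking the long exact sequence of the pair together with the hypothesis $[\omega|_{M_i}]=0$ shows this pairing depends only on the $\Spin^c$-independent part of the data. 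Once this is pinned down, the three cases in part~\eqref{prop:well-def:total} follow by applying the argument on the incoming end, the outgoing end, or both.
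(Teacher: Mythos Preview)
Your approach to part~\eqref{prop:well-def:restricted} is essentially the paper's: decompose $\cW$ into a contact gluing followed by handle attachments, and verify that the perturbed handle maps are independent of auxiliary choices (diagram, gradient-like vector field) up to projective equivalence, using associativity and Stokes' theorem; independence from the Morse function then comes from Cerf theory (index $1/2$ and $2/3$ birth-deaths), exactly as in the unperturbed case. This part is fine, though you should be explicit that the Cerf argument is what handles your source of ambiguity (ii).

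There is a genuine gap in your argument for part~\eqref{prop:well-def:total}. You argue that if $[\omega|_{M_0}]=0$ then the $\Spin^c$ summands on the incoming end are glued by maps with no per-summand ambiguity. But recall that the cobordism map factors through the contact gluing map $\Phi_{\xi;\omega|_{M_0\cup Z}}$, whose domain is $\SFH(M_0\cup Z,\g_0';\Lambda_{\omega|_{M_0\cup Z}})$, not $\SFH(M_0,\g_0;\Lambda_{\omega|_{M_0}})$. The hypothesis $[\omega|_{M_0}]=0$ does \emph{not} imply $[\omega|_{M_0\cup Z}]=0$, so the total gluing map need not be projectively well-defined, and your bookkeeping argument using $H_2(W,M_0\sqcup M_1)$ does not see this obstruction. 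The paper circumvents this by \emph{turning the cobordism around}: view $\cW$ instead as $\tilde{\cW}\colon \emptyset\to (-M_0,\g_0)\sqcup (M_1,\g_1)$, apply the already-established case (the incoming end is now empty, with a unique $\Spin^c$ structure) to obtain a well-defined element of $\SFH(-M_0)\otimes \SFH(M_1)$, and then dualize. Showing that this agrees with the direct definition when $[\omega|_{M_0\cup Z}]=0$ requires the computation of the sutured trace and cotrace maps from \cite{JuhaszZemkeContactHandles}*{Theorem~1.1}, which is not a formality.

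Your item (iii) is also not quite the right framing: the $\omega$-area $A_\omega(\phi)$ is defined by integrating over the coned-off domain $\tilde{\cD}(\phi)$, and the relevant ambiguity comes from the choice of compressing disks (already absorbed into the projective transitive system) and, more seriously, from the isotopy used to define the map associated to a diffeomorphism in $\cG_{\diff}^0$; see Lemma~\ref{lem:monodromy} for why this genuinely fails to be projectively trivial across $\Spin^c$ structures when $[\omega]\neq 0$. There is no ``choice of primitive'' entering the definition of the handle maps themselves.
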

	
	The main idea of the construction
	is to incorporate the coning construction
	of Ozsv\'{a}th and Szab\'{o}~\cite{genusbounds} at each step
	of the construction of the
	unperturbed sutured cobordism maps in~\cite{JCob}.
	In Section~\ref{sec:cob-construct},
	we describe the construction in detail, and prove
	Proposition~\ref{prop:perturbedcobordismmapswelldefined}.
	We note that, to define the total cobordism map in part~\eqref{prop:well-def:total}
	of Proposition~\ref{prop:perturbedcobordismmapswelldefined},
	we use our formula for the sutured trace cobordism map
	\cite{JuhaszZemkeContactHandles}*{Theorem~1.1}; see Section~\ref{sec:def-general-cob}.
	In Section~\ref{sec:composition-law},
    we will prove the following  composition law for the perturbed sutured cobordism maps:	
	
	\begin{prop}\label{prop:composition-law}
        Suppose the sutured manifold cobordism $\cW=(W,Z,[\xi])$ decomposes
		as $\cW_2 \circ \cW_1$, where
		\[
		\cW_1 = (W_1, Z_1, [\xi_1])\colon (M_0,\g_0) \to (M_1,\g_1)\quad \text{and} \quad
		\cW_2 = (W_2, Z_2, [\xi_2])\colon (M_1,\g_1)\to (M_2,\g_2).
		\]
		Let $\omega$ be a closed 2-form on $W$, and write $\omega_1 = \omega|_{W_1}$
		and $\omega_2 = \omega|_{W_2}$.
		\begin{enumerate}
			\item \label{prop:comp-law-total} If $[\omega]$ restricts trivially to $M_0$,
			$M_1$, and $M_2$, then
			\[
			F_{\cW;\omega} \doteq F_{\cW_2;\omega_2} \circ F_{\cW_1;\omega_1}.
			\]
			\item \label{prop:comp-law-restricted} More, generally, if $[\omega]$
			restricts trivially to $M_1$ and $M_2$, and $\ufrs_0 \in
			\Spin^c(M_0,\g_0)$, then
			\[
			F_{\cW;\omega}\circ i_{\ufrs_0} \doteq F_{\cW_2;\omega_2}\circ
			F_{\cW_1;\omega_1}\circ i_{\ufrs_0}.
			\]
            Similar formulas hold if $[\omega]$ restricts trivially to both $M_0$ and
			$M_1$, or to just $M_1$.
		\end{enumerate}
	\end{prop}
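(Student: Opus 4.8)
The plan is to reduce the composition law for the perturbed maps to the known composition law for the unperturbed sutured cobordism maps of \cite{JCob}, tracking the effect of the coning construction on the Novikov exponents. The perturbed cobordism map $F_{\cW;\omega}$ is built by the same sequence of elementary pieces (handle attachments, trace and cotrace cobordisms, triangle maps) as the unperturbed $F_\cW$, with each holomorphic curve count weighted by $z^{A_\omega(\phi)}$ as in Section~\ref{sec:perturbed-complexes}. The first step is therefore to recall the decomposition of $\cW$ into elementary cobordisms provided by the construction in \cite{JCob}, and to observe that a decomposition $\cW = \cW_2 \circ \cW_1$ can be refined to a single sequence of elementary cobordisms that passes through the level $(M_1,\g_1)$. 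Applying the functoriality of the \emph{unperturbed} construction at the level of this common refinement gives the unweighted composition law on the nose; the issue is only the $z$-weights.

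Next I would handle the weights additively. The exponent $A_\omega(\phi)$ is defined by integrating $\omega$ over the coned 2-chain $\tilde\cD(\phi)$, and this is additive under concatenation of homotopy classes: $A_\omega(\phi_1 * \phi_2) = A_\omega(\phi_1) + A_\omega(\phi_2)$, up to a correction governed by the class $H(\phi)\in H_2$ of any periodic piece. Since $[\omega]$ restricts trivially to the intermediate 3-manifold $M_1$ (and to $M_2$, and in part~\eqref{prop:comp-law-total} also to $M_0$), the ambiguity in how one splits a 2-chain across the $M_1$ level pairs trivially with $\omega$, so the exponents of $F_{\cW_2;\omega_2}\circ F_{\cW_1;\omega_1}$ and of $F_{\cW;\omega}$ agree up to a single global shift $z^x$ coming from the choice of reference intersection points and the collar identifications. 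This is exactly the indeterminacy recorded in Proposition~\ref{prop:perturbedcobordismmapswelldefined}, so the conclusion is an equality of projective morphisms, i.e. $F_{\cW;\omega}\doteq F_{\cW_2;\omega_2}\circ F_{\cW_1;\omega_1}$. For part~\eqref{prop:comp-law-restricted}, one precomposes with $i_{\ufrs_0}$ throughout; the hypothesis that $[\omega]$ restricts trivially to $M_1$ and $M_2$ is precisely what is needed for $F_{\cW_j;\omega_j}$ to be defined (up to $z^x$) as maps on the full $\SFH$, per Proposition~\ref{prop:perturbedcobordismmapswelldefined}\eqref{prop:well-def:total}, and the same exponent-additivity argument applies. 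The variants (trivial on $M_0$ and $M_1$, or on $M_1$ alone) are obtained by postcomposing with $\pi_{\ufrs_2}$ or by combining the two halves, with no new ideas.

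The main obstacle I expect is bookkeeping the coning construction across the gluing. Concretely, when $\cW$ is cut along $M_1$, a 2-chain $\tilde\cD(\phi)$ for a class $\phi$ on all of $\cW$ does not canonically split as a sum of 2-chains for $\cW_1$ and $\cW_2$: one must choose how it meets $M_1$, and different choices differ by 2-cycles supported near $M_1$. Controlling the resulting exponents requires knowing that these 2-cycles pair trivially with $\omega$, which is where $[\omega|_{M_1}] = 0$ enters, together with a careful treatment of the contact-geometric gluing data $[\xi]$ and the collar regions where the trace and cotrace cobordisms of \cite{JuhaszZemkeContactHandles}*{Theorem~1.1} are inserted. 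Making this precise will likely occupy the bulk of Section~\ref{sec:composition-law}, but it is structurally parallel to the unperturbed argument in \cite{JCob}, and once the exponent-additivity is established the projective equality follows formally.
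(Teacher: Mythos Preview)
Your approach---adapt the unperturbed composition law from \cite{JCob}*{Theorem~11.3} while tracking the $\omega$-areas of coned domains---is exactly what the paper does to establish the restricted version, equation~\eqref{eq:restricted-composition-law}, under the hypothesis that $[\omega]$ vanishes on $M_1\cup Z_2$ and $M_2$. That portion of your plan is correct and matches the paper.

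The gap is in part~\eqref{prop:comp-law-total}. You write that ``$F_{\cW;\omega}$ is built by the same sequence of elementary pieces as the unperturbed $F_{\cW}$,'' and then track exponents through that sequence. But the total perturbed map is \emph{not} defined this way: the direct construction via equation~\eqref{eq:spinc-restricted-cobordism-definition} begins with the contact gluing map $\Phi_{\xi;\omega|_{M_0\cup Z}}$, and for this to be well-defined on the total complex one needs $[\omega|_{M_0\cup Z}]=0$, not merely $[\omega|_{M_0}]=0$. Since $Z$ may carry second cohomology, the hypotheses of part~\eqref{prop:comp-law-total} do not guarantee this. Section~\ref{sec:def-general-cob} therefore defines the total map by turning $\cW$ around to $\tilde{\cW}\colon \emptyset\to (-M_0,\g_0)\sqcup (M_2,\g_2)$ and dualizing $F_{\tilde{\cW};\omega}(1)$. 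The paper's proof of part~\eqref{prop:comp-law-total} accordingly decomposes $\tilde{\cW}$ as $(\tilde{\cW}_2\sqcup \tilde{\cW}_1)$ followed by $\mathsf{Id}\sqcup \mathsf{Tr}_{(M_1,\g_1)}\sqcup \mathsf{Id}$, applies the restricted composition law (your argument) to \emph{that} decomposition---legitimate because the domain is $\emptyset$---and then invokes the trace computation \cite{JuhaszZemkeContactHandles}*{Theorem~1.1} to dualize the result back to $F_{\cW_2;\omega_2}\circ F_{\cW_1;\omega_1}$. You anticipated that the trace and cotrace would appear, but the point you are missing is that they enter through the very \emph{definition} of $F_{\cW;\omega}$, not as a bookkeeping device inside an otherwise direct argument.
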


\subsection{Alexander gradings and perturbations on cylinders}
	
	We now state a simple formula for the sutured cobordism map for a perturbation
	of the identity cobordism of a knot complement, which we need for our proof of
	Theorem~\ref{thm:1}.
	
	Suppose that $K$ is a knot in an integer homology sphere $Y$.
    Let $Y(K)$ denote $Y\setminus N(K)$, decorated with two
	oppositely oriented meridional sutures.	
	A sutured Heegaard diagram $(\Sigma,\as,\bs)$ for $Y(K)$ is
	equivalent to a doubly-pointed diagram for $(Y,K)$: To obtain a doubly-pointed
	diagram from $(\Sigma,\as,\bs)$, we collapse each of the boundary components of
	$\Sigma$ to a basepoint. We let $w$ denote the point where $K$ intersects
	$\Sigma$ negatively, and $z$ denotes the point where $K$ intersects $\Sigma$ positively.	
	There is a tautological isomorphism
	\[
	\HFKh(Y,K)\iso \SFH(Y(K)),
	\]
	since the generators and differential coincide.

	The relative Alexander grading on $\HFKh(Y,K)$ is given as follows. If
	$\xs$, $\ys \in \bT_{\sca} \cap \bT_{\scb}$, then we pick a class $\phi \in
	\pi_2(\xs,\ys)$ on $(\Sigma,\as,\bs,w,z)$ (possibly going over $w$ and $z$).
    The relative Alexander grading is given by the formula
	\[
	A(\xs,\ys)=n_{z}(\phi)-n_{w}(\phi).
	\]
	The relative Alexander grading admits an absolute lift, which can be specified
	by a symmetry requirement on $\HFKh(Y,K)$; see \cite{OSKnots}*{Section~3.5}.
	
    Let $\cS_K$ be a Seifert surface of $K$.
	Let
    \[
    \omega_{\cS_K} \in \Omega^2(I\times Y(K), \d I\times Y(K))
    \]
    be a closed 2-form dual to $\{\tfrac{1}{2}\} \times \cS_K$ under
    Poincar\'{e}--Lefschetz duality
	\[
	H^2(I\times Y(K), \d I\times Y(K)) \iso H_2(I\times Y(K), I\times \d Y(K)).
	\]
	By definition, $\omega_{\cS_K}$ vanishes on $\d I\times Y(K)$.
	
	\begin{lem}\label{lem:Alexander-grading}
		Up to an overall factor of $z^\a$, the map $F_{I\times Y(K);\omega_{\cS_K}}$
        is given by
		\[
		F_{I\times Y(K);\omega_{\cS_K}}(z^x\cdot \xs)=z^{x-A(\xs)} \cdot \xs,
		\]
		where $A(\xs)$ denotes the Alexander grading.
	\end{lem}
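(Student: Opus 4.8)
The plan is to compute the action of the perturbed cobordism map for the product cobordism $I \times Y(K)$, equipped with the 2-form $\omega_{\cS_K}$, directly on the level of a Heegaard diagram, and to identify the exponent of the resulting monomial with the Alexander grading. First I would fix a doubly-pointed Heegaard diagram $(\Sigma, \as, \bs, w, z)$ for $(Y,K)$, which via the tautological isomorphism $\HFKh(Y,K) \iso \SFH(Y(K))$ also serves as an admissible sutured diagram for $Y(K)$ with its two meridional sutures. The product cobordism $I \times Y(K)$ is built from the diagram $\cH$ by a sequence of elementary moves (in the sense of \cite{JCob}), and the unperturbed cobordism map $F_{I\times Y(K)}$ is the identity; the perturbed map $F_{I\times Y(K);\omega_{\cS_K}}$ is obtained, following the recipe of Section~\ref{sec:cob-construct}, by inserting the coning construction and weighting each holomorphic curve count by $z^{A_{\omega_{\cS_K}}(\phi)}$. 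Since we are allowed to work up to an overall factor of $z^\alpha$ (and up to the projective ambiguity of Proposition~\ref{prop:perturbedcobordismmapswelldefined}), it suffices to understand how the perturbation scales each generator $\xs$ relative to a fixed reference generator.

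The key computation is the evaluation of $A_{\omega_{\cS_K}}$ on the classes of disks involved. The central claim will be: for $\xs, \ys \in \bT_\as \cap \bT_\bs$ and $\phi \in \pi_2(\xs,\ys)$ on $(\Sigma,\as,\bs)$, the cone $\tilde{\cD}(\phi)$ pairs with the Poincar\'e--Lefschetz dual of $\{\tfrac12\} \times \cS_K$ to give
\[
A_{\omega_{\cS_K}}(\phi) = \int_{\tilde{\cD}(\phi)} \omega_{\cS_K} = n_z(\phi) - n_w(\phi) = A(\ys,\xs) = A(\xs) - A(\ys),
\]
up to a global constant. The point is that $\cS_K$, pushed into $I \times Y(K)$ at the level $\{\tfrac12\}$, is a properly embedded surface whose algebraic intersection with the 2-chain $\tilde{\cD}(\phi)$ is exactly recorded by the difference of local multiplicities of $\cD(\phi)$ at the two basepoints $z$ and $w$ — because $z$ and $w$ sit on opposite sides of (a pushoff of) $\cS_K$ in $\Sigma$, so a Seifert surface intersects the knot once and intersects $\Sigma$ along a curve separating $z$ from $w$, and the coning pieces over the compressing disks $D_\as$, $D_\bs$ contribute nothing since $\omega_{\cS_K}$ is supported away from them. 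Granting this, if I set $z^0$ as the weight attached to a fixed basepoint generator $\xs_0$ and propagate through the differential and the elementary-move maps, each generator $\xs$ acquires the weight $z^{-A(\xs)}$ relative to $\xs_0$ — the sign being dictated by whether $\omega_{\cS_K}$ is dual to $\{\tfrac12\}\times\cS_K$ with $\cS_K$ oriented compatibly with $z$ over $w$ — which is exactly the asserted formula $F_{I\times Y(K);\omega_{\cS_K}}(z^x \cdot \xs) = z^{x - A(\xs)}\cdot \xs$, modulo the overall $z^\alpha$.

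I would organize the argument in three steps: (i) reduce, using Remark~\ref{rem:1-map-determines-morphism} and Proposition~\ref{prop:perturbedcobordismmapswelldefined}, to computing the perturbed map on a single diagram; (ii) prove the intersection-number identity $A_{\omega_{\cS_K}}(\phi) = n_z(\phi) - n_w(\phi)$ by choosing a concrete geometric representative of the Poincar\'e--Lefschetz dual class, namely a tubular neighborhood of $\{\tfrac12\} \times \cS_K$ with $\omega_{\cS_K}$ a bump 2-form transverse to it, and computing the pairing with $\tilde{\cD}(\phi)$ summand by summand; and (iii) conclude that the perturbation rescales the generator $\xs$ by $z^{-A(\xs)}$ up to the allowed global monomial. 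The main obstacle is step (ii): one must be careful about how $\cS_K$ meets the sutured compression bodies $U_\sca$ and $U_\scb$ and the compressing disks $D_\sca$, $D_\scb$ used in the coning, and verify that the contributions of the conical pieces $\tilde{\cD}(\phi) \setminus \cD(\phi)$ to $\int \omega_{\cS_K}$ vanish — equivalently, that one may isotope $\cS_K$ (rel its boundary behaviour, keeping the form vanishing on $\d I \times Y(K)$) to be disjoint from $D_\sca \cup D_\scb$. This is a standard but slightly delicate transversality-and-support argument, and it is where the hypothesis that $\omega_{\cS_K}$ vanishes on $\d I \times Y(K)$ is used. Once that is in place, the identification of the exponent with $n_z(\phi)-n_w(\phi)$, hence with the relative Alexander grading, is immediate from the definition of $A(\xs,\ys)$ given above.
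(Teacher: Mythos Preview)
Your proposal misidentifies how the perturbed cylinder map is actually defined. For a special cobordism with no Morse critical points, the map $F_{I\times Y(K);\omega_{\cS_K}}$ is \emph{not} built by weighting holomorphic curve counts; by definition (Section~\ref{sec:cylinders}, equation~\eqref{eq:perturbed-map-for-cylinders}) it sends $z^x\cdot\xs$ to $z^{x+\int_{\Gamma_{\xs}}\omega_{\cS_K}}\cdot \xs$, where $\Gamma_{\xs}=I\times\gamma_{\xs}$ is the $2$-chain in $I\times Y(K)$ swept out by the cone $\gamma_{\xs}$ under the product flow. Your central quantity $A_{\omega_{\cS_K}}(\phi)=\int_{\tilde{\cD}(\phi)}\omega_{\cS_K}$ is therefore not the object that governs the map; indeed it is not even well-posed here, since $\tilde{\cD}(\phi)$ is a $2$-chain in $Y(K)$ (or in $Y$, if you allow $\phi$ to cross $w$ and $z$), while $\omega_{\cS_K}$ is a $2$-form on $I\times Y(K)$ that vanishes on both ends. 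Your ``propagate through the differential'' idea also fails for the same reason: since $\omega_{\cS_K}|_{\{i\}\times Y(K)}=0$, the differentials on the two ends are unperturbed, so they cannot detect the Alexander grading. Consequently your step~(ii) and your ``main obstacle'' (isotoping $\cS_K$ off the compressing disks) are aimed at the wrong target.

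The paper's proof begins from the correct formula and reduces to the relative statement $\int_{\Gamma_{\xs}-\Gamma_{\ys}}\omega_{\cS_K}=-A(\xs,\ys)$. Poincar\'{e}--Lefschetz duality turns the left side into the algebraic intersection $\#\big((\gamma_{\xs}-\gamma_{\ys})\cap\cS_K\big)$. The step you are missing is the Leibniz rule for intersections: choosing any class $\phi\in\pi_2(\xs,\ys)$ on the doubly-pointed diagram, one has $\d\tilde{\cD}(\phi)=\gamma_{\ys}-\gamma_{\xs}$, and hence
\[
\#\big((\gamma_{\xs}-\gamma_{\ys})\cap\cS_K\big)=-\#\big(\d\tilde{\cD}(\phi)\cap\cS_K\big)=-\#\big(\tilde{\cD}(\phi)\cap\d\cS_K\big)=-\#\big(\tilde{\cD}(\phi)\cap K\big)=-(n_z(\phi)-n_w(\phi)).
\]
This converts an intersection with the Seifert surface into an intersection with the knot, which is where $n_z-n_w$ genuinely enters; no transversality argument for $\cS_K$ against the compressing disks is required.
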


We will prove Lemma~\ref{lem:Alexander-grading} at the end of
Section~\ref{sec:cylinders}.

\subsection{Changing the 2-form on $W$}

We now state another result which will be helpful for proving Theorem~\ref{thm:1}:

\begin{lem}\label{lem-modify-2-form-on-interior}
Suppose that
$\cW=(W,Z,[\xi])\colon (M_0,\g_0)\to (M_1,\g_1)$ is a sutured manifold cobordism,
$\omega$ is a closed 2-form  on $W$, and $\eta$ is a 1-form that vanishes on a
neighborhood of $M_0$ and $M_1$. If $[\omega]$ vanishes on $M_0 \cup M_1$, then
\[
F_{\cW;\omega}\dot{=}F_{\cW;\omega+d\eta}.
\]
If $[\omega]$ is non-vanishing on $M_0$ and $M_1$, then the above equation holds
when restricted to fixed $\Spin^c$ structures on $M_0$ and $M_1$.
\end{lem}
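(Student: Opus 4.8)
The plan is to reduce the statement to a comparison at the level of the chain complexes and maps built from a single cobordism decomposition, and then to track how replacing $\omega$ by $\omega + d\eta$ changes each of the local pieces. Recall from Section~\ref{sec:cob-construct} (which we are allowed to invoke) that $F_{\cW;\omega}$ is assembled by decomposing $\cW$ into elementary pieces — contact handle attachments, trace and cotrace cobordisms, and a closed $4$-dimensional piece handled by the coning construction of Ozsv\'ath--Szab\'o — and that on each piece the perturbation enters only through the exponents $z^{A(\phi)}$, where $A(\phi) = \int_{\tD(\phi)} \omega$ for the relevant coned $2$-chain. So the first step is to observe that it suffices to prove the following local statement: for each elementary cobordism in the decomposition, and for each Heegaard-diagrammatic count used to define its map, replacing $\omega$ by $\omega + d\eta$ changes the exponent $A(\phi)$ by a quantity that depends only on the endpoints of $\phi$ (the generators $\xs, \ys$), not on the homotopy class $\phi$ itself. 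Once that is established, the change of $2$-form amounts to conjugating each local map by a diagonal $\Lambda$-module automorphism $\xs \mapsto z^{c(\xs)}\cdot \xs$, and these conjugations telescope through the composition, leaving only an overall factor of $z^x$ — which is exactly the ambiguity already present in $F_{\cW;\omega}$ by Proposition~\ref{prop:perturbedcobordismmapswelldefined}.

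The second step is the local computation itself. Fix a piece $W'$ of the decomposition with incoming and outgoing sutured manifolds $M'$ and $M''$, and a class $\phi \in \pi_2(\xs,\ys)$ contributing to the associated map. The coned $2$-chain $\tD(\phi)$ has boundary lying on $M' \cup M''$ together with the compressing disks, and for $\phi, \phi' \in \pi_2(\xs,\ys)$ the difference $\tD(\phi) - \tD(\phi')$ is a closed $2$-cycle in $W'$ whose class lies in the image of $H_2$ of the boundary pieces, by the usual index/domain bookkeeping. Since $\eta$ vanishes on a neighborhood of $M_0$ and $M_1$ — and, more to the point, on the parts of $\partial W'$ that get glued into $M_0$ or $M_1$ in the closed-up picture — Stokes' theorem gives $\int_{\tD(\phi)} d\eta = \int_{\partial \tD(\phi)} \eta$, and the boundary integral over the portions where $\eta$ vanishes drops out. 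What remains is a sum of integrals of $\eta$ over the ``vertical'' boundary arcs of $\tD(\phi)$ lying over $\xs$ and $\ys$, i.e. over the traces of the generators; this manifestly depends only on $\xs$ and $\ys$. Hence $A_{\omega + d\eta}(\phi) - A_\omega(\phi) = c(\ys) - c(\xs)$ for a function $c$ on generators, independent of $\phi$, which is precisely the diagonal-conjugation statement. For the contact-gluing and trace/cotrace pieces one argues identically, using that $\eta$ is supported away from the convex boundary $Z$ where those constructions live; here one cites \cite{JuhaszZemkeContactHandles}*{Theorem~1.1} for the explicit form of the trace map so that the conjugation can be checked directly on the stated formula.

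The third and final step is bookkeeping: compose the conjugated elementary maps and observe that the conjugating automorphisms on the intermediate sutured manifolds cancel in pairs (the outgoing conjugation of one piece is the incoming conjugation of the next, because $\eta$ and its restriction to the shared boundary $3$-manifold are the same in both), so the composite $F_{\cW;\omega+d\eta}$ differs from $F_{\cW;\omega}$ by conjugation by the diagonal automorphisms on $M_0$ and $M_1$ only. But $\eta$ vanishes near $M_0$ and $M_1$ by hypothesis, so those two boundary automorphisms are trivial, except possibly for a global scalar $z^x$ coming from the freedom in choosing base exponents — and when $[\omega]$ is nonzero on $M_0$ or $M_1$, the boundary automorphisms need not be literally trivial but are still scalar on each $\Spin^c$-summand, giving the restricted statement. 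This yields $F_{\cW;\omega}\doteq F_{\cW;\omega+d\eta}$ in the appropriate sense. I expect the main obstacle to be the second step for the trace and cotrace cobordisms: unlike the ordinary Heegaard-diagram differentials, those maps are defined through the contact-handle/gluing machinery of \cite{JCob} and \cite{JuhaszZemkeContactHandles}, so one must verify carefully that the coning construction there interacts with $d\eta$ purely through a boundary term supported away from the vertical generator-traces, and in particular that no ``interior'' contribution survives; this is where the hypothesis that $\eta$ is supported in the interior away from $M_0, M_1$ (and, implicitly, compatibly with $Z$) is doing real work.
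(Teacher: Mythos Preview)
Your proposal is correct and takes essentially the same approach as the paper: the paper stacks projectively commutative squares relating $F_{\cdot;\omega}$ and $F_{\cdot;\omega+d\eta}$ on each elementary piece via the transition maps $\Psi_{\omega_i \to \omega_i + d\eta_i;\eta_i}$ of Section~\ref{sec:change-2-form} (which are precisely your diagonal conjugations $\xs \mapsto z^{c(\xs)}\cdot\xs$, established by the same Stokes argument), and then uses $\eta|_{M_0}=\eta|_{M_1}=0$ to see that the outermost horizontal maps are the identity. Your worry about trace and cotrace pieces is moot in the paper's setup, since the elementary pieces are contact-handle attachments and $4$-dimensional $1$-, $2$-, and $3$-handle maps rather than trace/cotrace cobordisms.
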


We will prove Lemma~\ref{lem-modify-2-form-on-interior} in
Section~\ref{sec:change-2-formW}.

\section{Perturbed Heegaard Floer homology of closed 3-manifolds}
	\label{sec:perturbed-closed-3-manifolds}
    We review some background on Heegaard Floer homology, due to Ozsv\'{a}th and
	Szab\'{o} \cite{OSDisks} \cite{OSTriangles}. To a closed 3-manifold $Y$ with
	a $\Spin^c$ structure $\frs$, Ozsv\'{a}th and Szab\'{o} assign
	$\bF_2[U]$-modules $\HF^-(Y,\frs)$, $\HF^\infty(Y,\frs)$, and $\HF^+(Y,\frs)$
	that fit into a long exact sequence
	\begin{equation}
	\cdots \xrightarrow{\delta} \HF^-(Y,\frs)\to HF^\infty(Y,\frs)\to
	\HF^+(Y,\frs)\xrightarrow{\delta} \HF^-(Y,\frs)\to \cdots
	\label{eq:exactsequence}
	\end{equation}
    There is also an
	$\bF_2$-vector space $\HFh(Y,\frs)$.
	
	If $W$ is a cobordism from $Y_0$ to
	$Y_1$, and $\frs \in \Spin^c(W)$ restricts to $\frs_0$ on $Y_0$
	and to $\frs_1$ on $Y_1$, then there are maps
	\[
	F_{W,\frs}^\circ \colon \HF^\circ(Y_0,\frs_0) \to \HF^\circ(Y_1,\frs_1)
	\]
	for $\circ \in \{-,\infty,+,\wedge\}$ that commute with the maps in the long
	exact sequence in equation~\eqref{eq:exactsequence}.

	If $\omega$ is a closed 2-form on $Y$,
	Ozsv\'{a}th and Szab\'{o} \cite{genusbounds} described an
	$\bF_2[H^1(Y)]$-module denoted	$\HF^\circ(Y,\frs; \Lambda_{\omega})$,
	using the same coning
	procedure we described in Section~\ref{sec:perturbed-complexes}.
	Similarly, if
	$\ve{\omega} = (\omega_1,\dots, \omega_n)$ is an $n$-tuple of closed 2-forms on $Y$,
	we can define the $\bF_2[H^1(Y)]$-module $\HF^{\circ}(Y,\frs;
	\Lambda_{\omegas})$, which is also a $\Lambda_n[U]$-module, where $\Lambda_n$
    is the $n$-variable Novikov ring over $\bF_2$. In this section, we focus on perturbing
    by a single 2-form, to simplify the notation.
		
    Ozsv\'ath and Szab\'o~\cite{genusbounds} defined perturbed versions of their
	cobordism maps (and more generally, fully twisted versions in \cite{OSTriangles}).
	The naturality and functoriality results described above for sutured Floer
	homology have analogues for the perturbed versions of the closed
	3-manifold invariants, which we state here.
	
	\begin{thm}
		\begin{enumerate}
			\item Suppose $Y$ is a closed 3-manifold with a chosen basepoint and a closed
			2-form $\omega$.
			If $\frs\in \Spin^c(Y)$ and $\circ \in \{-,\infty,+, \wedge \}$,
            then $\HF^\circ(Y,\frs;\Lambda_{\omega})$
            forms a projective transitive system of $\Lambda[U]$-modules,
			indexed by the set of pairs $(\cH,J)$, where $\cH$ is an $\frs$-admissible
			diagram of $Y$, and $J$ is a generic almost complex structure.
			\item Suppose $W$ is a connected, oriented cobordism from $Y_0$ to $Y_1$,
            with a chosen path connecting the basepoints of $Y_0$ and $Y_1$,
			a $\Spin^c$ structure $\frs\in \Spin^c(W)$,
			and a closed 2-form $\omega$ on $W$.
            Then the cobordism map
			\[
			F^\circ_{W,\frs;\omega}\colon
\HF^\circ(Y_0,\frs|_{Y_0};\Lambda_{\omega|_{Y_0}}) \to
			\HF^\circ(Y_1,\frs|_{Y_1};\Lambda_{\omega|_{Y_1}})
			\]
			due to Ozsv\'ath and Szab\'o~\cite{genusbounds}
            is well-defined up to overall multiplication by $z^x$ for $x \in
\R$.
		\end{enumerate}
	\end{thm}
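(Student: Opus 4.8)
The plan is to obtain both statements by running the naturality machinery of \cite{JTNaturality} and the cobordism-map construction of Ozsv\'ath--Szab\'o~\cite{OSTriangles} with every holomorphic curve count weighted by $z^{A_\omega(\cdot)}$, exactly as in \cite{genusbounds}, but now phrased in the language of projective transitive systems. The $\bF_2[U]$-action merely records multiplicities at the basepoint, is untouched by the area weighting, and so makes every map $\Lambda[U]$-linear automatically. The single new phenomenon compared with the unperturbed theory is that the weights $z^{A_\omega(\phi)}$ depend on auxiliary coning data (compressing disks, a choice of $2$-chain Poincar\'e dual to $\omega$, and, for cobordisms, the path between basepoints), and each such choice is pinned down only up to translating all the $A_\omega$'s; this is exactly why one lands in a \emph{projective}, rather than a genuine, transitive system. (For the flavor $\circ=\wedge$ the statement of part~(1) can alternatively be read off from Theorem~\ref{thm:naturality-perturbed} applied to the complement of a ball in $Y$ equipped with a single suture, so the content is really in $\circ\in\{-,\infty,+\}$ and in the cobordism maps.)

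For part~(1), fix a pointed $\frs$-admissible diagram $\cH$ and a generic almost complex structure $J$. As in Section~\ref{sec:perturbed-complexes}, the perturbed differential is $\partial\x=\sum_\y\sum_{\mu(\phi)=1}\#(\cM(\phi)/\R)\,z^{A_\omega(\phi)}\,\y$, and $\partial^2=0$ follows from the usual Maslov-index-$2$ degeneration argument together with additivity of the area under juxtaposition, $A_\omega(\phi_1\ast\phi_2)=A_\omega(\phi_1)+A_\omega(\phi_2)$. Changing the coning data replaces $A_\omega(\phi)$ by $A_\omega(\phi)+g(\y)-g(\x)$ for some function $g$ on generators, so $\x\mapsto z^{g(\x)}\x$ is a chain isomorphism, canonical up to the overall factor $z^x$ coming from the freedom to fix $g$ on one generator. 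Next, each elementary Heegaard move---isotopy, handleslide, stabilization, and change of $J$---acquires a perturbed companion by weighting the corresponding continuation, triangle, or stabilization map by $z^{A_\omega(\cdot)}$, where $A_\omega$ of a triangle or higher polygon is again defined by a coning; additivity of $A_\omega$ on polygons gives the chain-map property. The unperturbed invariance relations of \cite{JTNaturality}---associativity of triangle maps, the pentagon relation, stabilization compatibility, and the simple-handleswap identity---all survive the weighting, now holding up to a power of $z$, so $(\cH,J)\mapsto\HF^\circ(\cH,J;\frs;\Lambda_\omega)$ together with the perturbed move maps is a projective transitive system of $\Lambda[U]$-modules.

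For part~(2), fix a handle decomposition of $W$ compatible with the chosen basepoint path and write the cobordism map as a composite of $1$-, $2$-, and $3$-handle maps. As in \cite{OSTriangles}, the $1$- and $3$-handle maps are built from connected sum with $S^1\times S^2$ and a distinguished generator of its Floer homology, and their perturbed versions are well-defined up to $z^x$; the $2$-handle maps are holomorphic-triangle maps for a bouquet of attaching curves, weighted by $z^{A_\omega(\psi)}$ with $A_\omega(\psi)$ computed by coning the triangle domain into $W$. Independence of the composite from the chosen decomposition is the perturbed analogue of the \cite{OSTriangles}/\cite{JTNaturality} argument: two decompositions differ by handle creations, cancellations, handleslides, and isotopies via Cerf theory, and under each such modification the composite of perturbed handle maps changes by $z^x$, using associativity of $A_\omega$ on triangles and quadrilaterals. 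Because we have fixed $\frs\in\Spin^c(W)$, there is no $H^1$-module indeterminacy---contrast part~\eqref{prop:well-def:total} of Proposition~\ref{prop:perturbedcobordismmapswelldefined}---and only the overall factor $z^x$ survives.

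The main obstacle is the bookkeeping inside the perturbed strong-Heegaard-invariance and cobordism-invariance packages: one must verify that every composition of distinguished maps in the transitive system, and every square relating two handle decompositions, holds not on the nose but only up to an \emph{a priori} uncontrolled power of $z$, and then that projectivizing restores an honest transitive system. The two delicate points are (i) that the area of a polygon coned into a cobordism is well-defined only modulo periods of $[\omega]$, so that the relevant identities become identities of $\Lambda$-module maps only after the coning data on the ends have been matched up correctly, and (ii) the perturbed simple-handleswap relation, where one must check that the two resolutions of a swap differ by a single factor $z^x$ and not by a generator-dependent weight. Neither changes the analytic input of \cite{genusbounds}: once the weights are shown additive and the boundary data matched, each identity reduces to its unperturbed counterpart in \cite{OSTriangles} and \cite{JTNaturality}.
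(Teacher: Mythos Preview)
Your proposal is correct and follows essentially the same approach as the paper. The paper does not give a standalone proof of this theorem; instead it remarks that the construction is analogous to what it carries out in detail for sutured Floer homology in Sections~\ref{sec:naturality} and~\ref{sec:cob-construct}, namely weighting every holomorphic curve count by $z^{A_\omega(\cdot)}$, running the naturality framework of \cite{JTNaturality} (including the perturbed handleswap check), and verifying that the Ozsv\'ath--Szab\'o handle maps and Cerf-theory invariance argument from \cite{OSTriangles} go through projectively---which is exactly what you outline.
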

	
	Ozsv\'{a}th and Szab\'{o}'s construction of the perturbed cobordism maps is
    similar to the construction we describe in Section~\ref{sec:cob-construct} for
    sutured Floer homology. One important difference is how the maps are associated to
	$\Spin^c$ structures on $W$. If $W$ is decomposed as $W_1 \cup W_2 \cup W_3$,
	where $W_i$ is an index $i$ handle cobordism, then the restriction map
    $\Spin^c(W) \to \Spin^c(W_2)$ is an isomorphism.
    If $(\Sigma,\as,\bs,\bs',w)$ is a triple for
	the 2-handle attachment, Ozsv\'{a}th and Szab\'{o}
    \cite{OSTriangles}*{Section~8.1.4} define a map
	\[
	\frs_{w} \colon \pi_2(\xs,\ys,\zs)\to \Spin^c(W_2).
	\]
	The map $F^\circ_{W,\frs;\omega}$ counts only triangles with $\frs_w(\psi) = \frs|_{W_2}$. 
    Note that this construction differs slightly from the $\Spin^c$ restricted versions of the perturbed 
    sutured cobordism maps we gave in Section~\ref{sec:sutured-cobordism-maps}, 
    which took the form $\pi_{\ufrs_1} \circ F_{\cW;\omega} \circ i_{\ufrs_0}$.
	
	The $\Spin^c$ composition law is slightly subtle in the perturbed setting,
    since we are working in a projectivized category; see~Example~\ref{ex:categories}.
    The morphism sets in a projectivized category are not abelian groups, so sums of
	maps are not well-defined. Nonetheless, a $\Spin^c$ composition law can still
	be stated, as we now describe.
	
	Suppose that $\frS \subset \Spin^c(W)$ is a subset of $\Spin^c$ structures.
	We suppose that each $\frs \in \frS$ has the
	same restriction to $\d W$, unless $[\omega|_{\d W}] = 0$.
    If $\circ\in \{-,\infty\}$, we must also assume that there are only finitely many
    $\frs \in \frS$ such that $F^\circ_{W,\frs;\omega} \neq 0$. In this
	situation, we may define a cobordism map
	\[
	F^\circ_{W,\frS;\omega} \colon \HF^\circ(Y_0;\Lambda_{\omega|_{Y_0}}) \to
	\HF^\circ(Y_1;\Lambda_{\omega|_{Y_1}}),
	\]
    which is well-defined up to multiplication by $z^x$ for some $x \in \R$.
	The 2-handle portion of the map $F^\circ_{W,\frS;\omega}$
	counts triangles such that $\frs_w(\psi)$ is the restriction
    of an element of $\frS$.
	
	By construction, we may find representatives
	of the maps $F^\circ_{W,\frs;\omega}$ for
	$\frs \in \frS$ such that
	\[
	F^\circ_{W,\frS;\omega} \doteq \sum_{\frs\in \frS} F^\circ_{W,\frs;\omega}.
	\]
	The proof of the composition law given by Ozsv\'{a}th and Szab\'{o}
	\cite{OSTriangles}*{Theorem~3.4} extends to give the following:

	\begin{prop}\label{prop:composition}
        Suppose $W$ is a cobordism which decomposes as $W_2 \circ W_1$. Suppose further
		that $\omega$ is a closed 2-form on $W$, and
		$\frS_1 \subset \Spin^c(W_1)$ and $\frS_2 \subset \Spin^c(W_2)$
        are subsets as above. Let
        \[
        \frS(W,\frS_1,\frS_2) = \{\, \frs \in \Spin^c(W) :
         \frs|_{W_1} \in \frS_1 \text{ and } \frs|_{W_2} \in \frS_2 \,\}.
        \]
        Then
		\[
		F^\circ_{W,\frS(W,\frS_1,\frS_2);\omega}
		\doteq F^\circ_{W_2, \frS_2; \omega|_{W_2}} \circ
		F^\circ_{W_1, \frS_1; \omega|_{W_1}}.
		\]
	\end{prop}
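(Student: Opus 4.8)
The plan is to adapt the proof of the unperturbed composition law, \cite{OSTriangles}*{Theorem~3.4}, while tracking the area weights $z^{A(\psi)}$ attached to each holomorphic polygon $\psi$ by the coning construction. First I would fix a Heegaard multi-diagram that simultaneously subordinates handle decompositions of $W_1$, of $W_2$, and of the composite $W = W_2 \circ W_1$, exactly as in \cite{OSTriangles}; admissibility is arranged as in the unperturbed case, and a representative of $[\omega]$ supported away from the basepoint arcs is fixed once and for all so that $A(\psi) = \int_{\tilde{\cD}(\psi)} \omega$ is defined for every polygon class on this diagram. With these choices, each of $F^\circ_{W_1,\frS_1;\omega|_{W_1}}$, $F^\circ_{W_2,\frS_2;\omega|_{W_2}}$, and $F^\circ_{W,\frS(W,\frS_1,\frS_2);\omega}$ has an honest chain-level representative (the $1$- and $3$-handle portions contribute no essential difficulty beyond the unperturbed case, since the perturbation acts on them by a controlled scalar), and it suffices to prove the stated identity for these representatives up to an overall factor of $z^x$ before passing to the projectivized category; this reduction is forced on us, since sums of projective morphisms are not defined.

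The analytic heart is the associativity of the perturbed holomorphic triangle maps. As in \cite{OSTriangles}, one counts ends of the $1$-dimensional moduli spaces of index-$1$ holomorphic quadrilaterals on the relevant Heegaard quadruple, so that the perturbed quadrilateral count defines a chain homotopy between the two ways of composing triangle maps. The only new point is the behavior of the weights under the two degenerations. When a quadrilateral breaks into a juxtaposition $\psi_1 \ast \psi_2$ of triangles along an intermediate attaching-curve system, the coned-off $2$-chains satisfy $\tilde{\cD}(\psi_1\ast\psi_2) = \tilde{\cD}(\psi_1) + \tilde{\cD}(\psi_2)$, hence $A(\psi_1\ast\psi_2) = A(\psi_1) + A(\psi_2)$ and $z^{A(\psi_1\ast\psi_2)} = z^{A(\psi_1)} z^{A(\psi_2)}$, so the weighted quadrilateral count factors precisely as the weighted composition of the two triangle maps. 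When a holomorphic disk bubbles off at a corner, the same additivity $A(\phi \ast \psi) = A(\phi) + A(\psi)$ holds, and the resulting terms cancel against the perturbed differential, exactly as $\partial^2 = 0$ is invoked in the unperturbed argument. Thus perturbed associativity holds up to the unavoidable overall factor $z^x$.

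Finally I would do the $\Spin^c$ bookkeeping. By definition $\frS(W,\frS_1,\frS_2)$ consists of the $\Spin^c$ structures on $W$ restricting into $\frS_1$ and $\frS_2$, and by the standard additivity of the $\Spin^c$-structure assignments $\frs_w$ under juxtaposition of polygons, a configuration $\psi_1\ast\psi_2$ in $W$ induces a $\Spin^c$ structure lying in $\frS(W,\frS_1,\frS_2)$ if and only if $\psi_1$ contributes to $F^\circ_{W_1,\frS_1;\omega|_{W_1}}$ and $\psi_2$ contributes to $F^\circ_{W_2,\frS_2;\omega|_{W_2}}$. Summing the perturbed associativity identity over the relevant families of $\Spin^c$ structures, and using the observation recorded before Proposition~\ref{prop:composition} that $F^\circ_{W,\frS;\omega} \doteq \sum_{\frs\in\frS} F^\circ_{W,\frs;\omega}$ for suitable representatives (together with, when $\circ = -$, the hypothesis that only finitely many summands are nonzero so that these sums are defined), yields the claimed formula. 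I expect the main obstacle to be organizational rather than analytic: controlling the various projective ambiguities so that a single factor $z^x$ absorbs all of them, which is exactly why the entire argument must be carried out with fixed chain-level representatives on a fixed admissible multi-diagram and projectivized only at the very end.
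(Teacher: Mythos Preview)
Your proposal is correct and follows exactly the approach the paper indicates: the paper does not give a detailed proof of this proposition, stating only that ``The proof of the composition law given by Ozsv\'{a}th and Szab\'{o} \cite{OSTriangles}*{Theorem~3.4} extends to give the following,'' and your sketch spells out precisely how that extension goes---tracking the additivity $A(\psi_1\ast\psi_2)=A(\psi_1)+A(\psi_2)$ through the quadrilateral associativity argument and handling the $\Spin^c$ bookkeeping and projective ambiguity at the chain level before projectivizing.
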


    We have the following analogue of Lemma~\ref{lem-modify-2-form-on-interior}:

    \begin{lem}\label{lem-modify-2-form-on-interior-closed}
    Suppose that $W \colon Y_0 \to Y_1$ is a cobordism of 3-manifolds,
    $\frS \subset \Spin^c(W)$ is a set of $\Spin^c$ structures as above,
    $\omega$ is a closed 2-form  on $W$, and $\eta$ is a 1-form that vanishes on a
    neighborhood of $Y_0$ and $Y_1$. If $[\omega]$ vanishes on $Y_0 \cup Y_1$, then
    \[
    F^\circ_{W, \frS; \omega} \dot{=} F_{W, \frS; \omega + d\eta}.
    \]
    If $[\omega]$ is non-vanishing on $Y_0$ and $Y_1$, then the above equation holds
    when restricted to fixed $\Spin^c$ structures on $Y_0$ and $Y_1$.
    \end{lem}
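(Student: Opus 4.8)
The plan is to reduce the statement to the behaviour of the individual building blocks of $F^\circ_{W,\frS;\omega}$ under the replacement $\omega\mapsto\omega+d\eta$, and then to observe that the resulting discrepancies telescope. Decompose $W$ as $W_1\cup W_2\cup W_3$, where $W_i$ is built from index-$i$ handles, and set $N_0=Y_0$, $N_1=\d_+W_1$, $N_2=\d_+W_2$, $N_3=Y_1$; let $\frS_i$ be the image of $\frS$ under $\Spin^c(W)\to\Spin^c(W_i)$. By the composition law (Proposition~\ref{prop:composition}) and the construction of the perturbed cobordism maps reviewed above (compare Section~\ref{sec:cob-construct} and \cite{genusbounds,OSTriangles}), $F^\circ_{W,\frS;\omega}$ is projectively equal to a suitable composite of the perturbed handle maps $F^\circ_{W_i,\frS_i;\omega|_{W_i}}$ together with the auxiliary diagram-change isomorphisms appearing in the construction. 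Since $\doteq$ is preserved by composition, it is enough to understand the effect of $\omega\mapsto\omega+d\eta$ on each factor.

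For a handle cobordism or stabilization $P$ with incoming end $N$, outgoing end $N'$, and closed $2$-forms $\tau$, $\tau+d\nu$ on $P$, Stokes' theorem gives, for every disk or triangle class $\psi$ contributing to the associated map,
\[
A_{\tau+d\nu}(\psi)-A_\tau(\psi)=\int_{\tD(\psi)}d\nu=\int_{\d\tD(\psi)}\nu .
\]
If $\psi$ and $\psi'$ have the same inputs and outputs, then $\psi-\psi'$ cones off to a closed surface, so the left-hand side is independent of $\psi$; by additivity of $A$ under concatenation it equals $f_{N'}(\ys)-f_N(\xs)+c$, where $\xs,\ys$ are the moving generators, $c$ is a constant (for the index-$2$ map, the $\nu$-period of the boundary arc at the fixed top vertex; for the index-$1$ and index-$3$ maps, the entire discrepancy), and $f_N$ is the function on generators of $\CF^\circ(N)$ determined, on each $\Spin^c$ summand up to an additive constant, by $f_N(\ys)-f_N(\xs)=\int_{\d\tD(\phi)}\nu|_N$ for $\phi\in\pi_2(\xs,\ys)$. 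The $\Spin^c$ refinement $\frs_w(\psi)=\frs|_{W_2}$ and the $U$-weights $U^{n_w(\psi)}$ are irrelevant here, since $A$ does not see them and the $U$-action is $\Lambda$-central, so all four flavours $\circ$ behave identically. Writing $\Theta_N=\diag(z^{f_N(\xs)})$ for the resulting canonical chain isomorphism $\CF^\circ(N;\Lambda_{\tau|_N})\to\CF^\circ(N;\Lambda_{(\tau+d\nu)|_N})$ — a direct check shows it intertwines the two perturbed differentials — one gets $F^\circ_{P;\tau+d\nu}\doteq\Theta_{N'}\circ F^\circ_{P;\tau}\circ\Theta_N^{-1}$, and the diagram-change isomorphisms are conjugated by the appropriate $\Theta_N$'s in the same way. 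Applying this with $\tau=\omega|_{W_i}$, $\nu=\eta|_{W_i}$ and composing, the factors $\Theta_{N_1}$ and $\Theta_{N_2}$ on the internal $3$-manifolds cancel in consecutive pairs, leaving
\[
F^\circ_{W,\frS;\omega+d\eta}\doteq\Theta_{Y_1}\circ F^\circ_{W,\frS;\omega}\circ\Theta_{Y_0}^{-1}.
\]
Since $\eta\equiv 0$ near $Y_0$ and $Y_1$, we have $(\omega+d\eta)|_{Y_i}=\omega|_{Y_i}$ and $\eta|_{Y_i}=0$, so $f_{Y_i}$ may be taken identically zero and $\Theta_{Y_i}=\id$, whence $F^\circ_{W,\frS;\omega+d\eta}\doteq F^\circ_{W,\frS;\omega}$. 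When $[\omega|_{\d W}]\neq 0$ the convention defining $F^\circ_{W,\frS;\omega}$ fixes the restrictions of the elements of $\frS$ to $Y_0$ and $Y_1$, so the identity just obtained is precisely the claimed one between the corresponding fixed $\Spin^c$ summands.

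The real difficulty is not any single computation but the bookkeeping needed to make the telescoping legitimate inside the projectivized homotopy category in which the perturbed cobordism maps live: one must choose representatives of the maps $F^\circ_{W_i,\frS_i;\cdot}$, of the diagram-change isomorphisms, and of the $\Theta_{N_i}$ in a mutually compatible way (matching the reference classes and base generators entering the coning construction) so that the cancellations $\Theta_{N_i}^{-1}\circ\Theta_{N_i}=\id$ hold on the nose, and not merely up to a $z^x$ that might depend on the $\Spin^c$ summand; and one must check that each $\Theta_N$ is compatible with the naturality isomorphisms of the construction, i.e.\ defines a morphism of the relevant projective transitive system, so that it may be slid freely through the composite. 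This is the closed-manifold counterpart of Lemma~\ref{lem-modify-2-form-on-interior}, whose proof in Section~\ref{sec:change-2-formW} runs along the same lines in the sutured category. One may also argue more directly through the universally twisted theory: $F^\circ_{W,\frS;\omega}$ is obtained from the $\omega$-independent universally twisted cobordism map by a coefficient specialization determined by the periods of $\omega$ over $H_2(Y_0)$, $H_2(Y_1)$, and $H_2(W)$, and $d\eta$ has vanishing periods over all of these, since $d\eta|_{Y_i}=0$ and $d\eta$ is exact on $W$; so the two specializations agree up to the inherent base-point ambiguity, which gives $F^\circ_{W,\frS;\omega}\doteq F^\circ_{W,\frS;\omega+d\eta}$ at once.
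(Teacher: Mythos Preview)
Your proposal is correct and follows essentially the same route as the paper. The paper's own proof of this lemma simply points to the sutured analogue, Lemma~\ref{lem-modify-2-form-on-interior}, whose proof in Section~\ref{sec:change-2-formW} is precisely your telescoping argument: your maps $\Theta_N=\diag(z^{f_N(\xs)})$ are exactly the paper's change-of-form isomorphisms $\Psi_{\omega_i\to\omega_i+d\eta_i;\eta_i}$ from Section~\ref{sec:change-2-form}, and the paper stacks the same Stokes-theorem commutative squares (one for each Heegaard move or handle map) to reach $F_{\cW;\omega+d\eta}\doteq\Psi_{\cdots;\eta_n}\circ F_{\cW;\omega}\circ\Psi_{\cdots;\eta_1}^{-1}$, then observes that $\eta_1=\eta_n=0$ forces the outer maps to be the identity. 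Your aside about the bookkeeping difficulty---that the intermediate $\Theta$'s must cancel on the nose rather than merely up to a $\Spin^c$-dependent $z^x$---is exactly what the paper sidesteps by using literally the same explicit map $\Psi_{\omega_i\to\omega_i+d\eta_i;\eta_i}$ as the bottom of one square and the top of the next.
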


    \begin{proof}
      This can be shown similarly to Lemma~\ref{lem-modify-2-form-on-interior};
      see Section~\ref{sec:change-2-formW}.
    \end{proof}

    \begin{lem}\label{lem:perturbed-coincide}
    Let $W$ be a cobordism from $Y_0$ to $Y_1$,
    and $\omega$ a closed 2-form on $W$ that vanishes on $\d W$.
    Furthermore, let $\frS \subset \Spin^c(W)$ be a set of $\Spin^c$ structures.
    If $\circ \in  \{-,\infty\}$, we also assume there are only finitely
    many $\frs \in \frS$ for which $F^\circ_{W, \frs} \neq 0$.
    If $\frs_0 \in \Spin^c(W)$ is an arbitrary base $\Spin^c$ structure, then
    \begin{equation}\label{eqn:normalization}
    F_{W, \frS; \omega}^\circ \doteq
	\sum_{\frs \in \frS} z^{\langle i_*(\frs-\frs_0) \cup [\omega],
    [W, \d W] \rangle} \cdot F_{W,\frs}^\circ.
    \end{equation}
    \end{lem}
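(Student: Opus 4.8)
The plan is to reduce the statement to the case of a single $\Spin^c$ structure, where both sides are already understood from the construction of the perturbed maps. Recall that $F^\circ_{W,\frS;\omega}$ is built by counting holomorphic triangles $\psi$ in the 2-handle piece with $\frs_w(\psi)$ restricting from an element of $\frS$, with each triangle weighted by $z^{A_\omega(\psi)}$ where $A_\omega$ is the analogue of $A_\omega(\phi)$ from Section~\ref{sec:perturbed-complexes}. The key point is a normalization identity: for a triangle $\psi$ contributing to the $\frs$-summand, the weight $z^{A_\omega(\psi)}$ differs from the weight one would assign in the \emph{unperturbed-then-twisted} normalization by exactly the factor $z^{\langle i_*(\frs-\frs_0)\cup[\omega],[W,\d W]\rangle}$. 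So first I would make precise the relation between the 2-chain $\tilde\cD(\psi)$ coned off from a triangle and the $\Spin^c$ structure $\frs_w(\psi)$: if $\psi$ and $\psi'$ are two triangle classes with the same vertices but $\frs_w(\psi)-\frs_w(\psi') = \frh\in H^2(W_2)\cong H^2(W)$, then $A_\omega(\psi)-A_\omega(\psi') = \langle i_*(\frh)\cup[\omega],[W,\d W]\rangle$, since their difference is (the image of) a periodic class whose coned 2-chain is Poincaré--Lefschetz dual to $i_*(\frh)$, paired against $[\omega]$. This is the exponentiated version of the familiar first Chern class / divisor dictionary for twisted coefficients.

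Given this, the argument runs as follows. Using the representatives furnished by the construction, we have $F^\circ_{W,\frS;\omega}\doteq\sum_{\frs\in\frS}F^\circ_{W,\frs;\omega}$ (stated just before Proposition~\ref{prop:composition}), so it suffices to identify each summand $F^\circ_{W,\frs;\omega}$, up to the projective ambiguity $\doteq$, with $z^{\langle i_*(\frs-\frs_0)\cup[\omega],[W,\d W]\rangle}\cdot F^\circ_{W,\frs}$. Here $F^\circ_{W,\frs}$ is the unperturbed cobordism map, which makes sense as a genuine $\bF_2[U]$-linear map and which agrees with $F^\circ_{W,\frs;\omega}$ after setting all $z$-exponents to $0$; equivalently, $F^\circ_{W,\frs}$ counts the same triangles, each with weight $1$. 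Fix one triangle class $\psi_0$ with $\frs_w(\psi_0)=\frs|_{W_2}$ as a reference; then every contributing triangle $\psi$ satisfies $A_\omega(\psi) = A_\omega(\psi_0) + (\text{a periodic-class correction that is zero since }\frs_w(\psi)=\frs_w(\psi_0))$, so the perturbed map is $z^{A_\omega(\psi_0)}$ times the unperturbed one. It remains to compute the single number $A_\omega(\psi_0)$ modulo the freedom to rechoose $\psi_0$ (which only shifts it by the full ambiguity already present). By the displayed difference formula applied to a reference class $\psi_{0,0}$ for the base structure $\frs_0$, we get $A_\omega(\psi_0) \doteq \langle i_*(\frs-\frs_0)\cup[\omega],[W,\d W]\rangle$, which is exactly the claimed exponent. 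Summing over $\frs\in\frS$ (a finite sum, using the hypothesis in the $\circ=-$ case) and noting that addition of the $F^\circ_{W,\frs}$ is legitimate because these are genuine module maps, we obtain equation~\eqref{eqn:normalization}.

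Two points need care. First, the pairing $A_\omega(\psi)$ is defined using the coned 2-chain relative to fixed compressing/attaching disks, and I must check that the periodic-domain-to-cohomology correspondence used above is the same one that governs $\frs_w$; this is essentially the content of \cite{OSTriangles}*{Section~8.1.4} combined with the coning procedure of \cite{genusbounds}, but the bookkeeping relating $H_2(W,\d W)$, periodic domains, and $\frs_w(\psi)-\frs_w(\psi')$ should be spelled out. Second, because $\omega$ vanishes on $\d W$ one has a well-defined pairing $\langle i_*(\frs-\frs_0)\cup[\omega],[W,\d W]\rangle$ with $[W,\d W]$ the relative fundamental class, and one should confirm the cup product lands in $H^4(W,\d W;\R)$ so that evaluation makes sense — this uses $[\omega]\in H^2(W,\d W;\R)$, which is exactly where the hypothesis $[\omega|_{\d W}]=0$ enters. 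The main obstacle is the first point: getting the sign-free ($\bF_2$, but still combinatorially delicate) identification of the exponent $A_\omega(\psi_0)$ with the cohomological pairing, i.e.\ verifying that the $z$-exponent recorded by the holomorphic-triangle count is precisely the evaluation of $i_*(\frs_w(\psi)-\frs_0)\cup[\omega]$ on the relative fundamental class. Everything else is formal manipulation within the projectivized category, already licensed by the results quoted earlier in this section.
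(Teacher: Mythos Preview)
Your approach is essentially the paper's --- identifying $A_\omega(\psi)$ with the cohomological pairing via Poincar\'e duality of coned-off domains --- but you are missing the reduction step that makes the argument work.

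The gap is this: you state your difference formula only for triangles $\psi, \psi'$ \emph{with the same vertices}, in which case $\tilde{\cD}(\psi)-\tilde{\cD}(\psi')$ is a closed 2-cycle. But the triangles contributing to $F^\circ_{W,\frs}$ have varying vertices $(\xs,\Theta_{\scb,\scb'},\ys)$, so when you fix $\psi_0$ and assert that every contributing $\psi$ has $A_\omega(\psi)=A_\omega(\psi_0)$, the difference $\tilde{\cD}(\psi)-\tilde{\cD}(\psi_0)$ is only a \emph{relative} 2-cycle, with boundary in $\d W_{\sca,\scb,\scb'}$. Its $\omega$-integral is not determined by $\frs_w(\psi)-\frs_w(\psi_0)$ unless $\omega$ vanishes on that boundary --- which includes the intermediate 3-manifolds between the handle pieces, not just $Y_0\cup Y_1$. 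Moreover, you never address the 1-handle and 3-handle pieces of the cobordism map, which carry their own $\omega$-weights $\int_{\Gamma_{\xs}}\omega$ and are not part of the triangle count at all.

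The paper fixes both issues at once by first invoking Lemma~\ref{lem-modify-2-form-on-interior-closed}: since $H^2(W_1,Y_0;\R)=H^2(W_3,Y_1;\R)=H^2(W_0;\R)=0$ and $\omega|_{\d W}=0$, one has $[\omega|_{W_1}]=[\omega|_{W_3}]=[\omega|_{W_0}]=0$, so $\omega$ may be replaced by $\omega-d\eta$ vanishing on all of $W\setminus\Int(W_{\sca,\scb,\scb'})$ without changing $F^\circ_{W,\frS;\omega}$ projectively. After this reduction the 1- and 3-handle maps are literally unperturbed, and $\omega$ vanishes on $\d W_{\sca,\scb,\scb'}$, so $\tilde{\cD}(\psi)-\tilde{\cD}(\psi')$ represents a well-defined class in $H_2(W_2,\d W_2)$ whose $\omega$-integral depends only on $\frs_w(\psi)-\frs_w(\psi')$, regardless of the vertices. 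Your argument then goes through.
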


    We will prove
    Lemma~\ref{lem:perturbed-coincide} in Section~\ref{sec:normalization}.

	\begin{rem}
	As a consequence of Lemma~\ref{lem:perturbed-coincide},
	if $\omega$ is a closed 2-form on $W$ that vanishes on $\d W$, then $F_{W,\frs;\omega}^\circ\doteq F_{W,\frs}^\circ.$
	We note that it is natural to normalize the perturbed maps in this situation by defining
	\[
	F_{W,\frs; \omega}^\circ := z^{\langle\, i_*(\frs-\frs_0) \cup [\omega],
		[W,\d W] \,\rangle} \cdot  F_{W,\frs}^\circ,
	\]
	and
	\[
	F_{W; \omega}^\circ = \sum_{\frs \in \Spin^c(W)}  F_{W,\frs; \omega}^\circ =
	\sum_{\frs\in \Spin^c(W)} z^{\langle i_*(\frs-\frs_0) \cup [\omega], [W, \d W]
		\rangle}
	\cdot F_{W,\frs}^\circ,
	\]
	for $\circ\in \{\wedge,+\}$. For $\circ\in \{-,\infty\}$,
	we may take this convention in the
	case when $F_{W,\frs}^\circ$ is non-vanishing for only finitely many $\frs$.
	It is straightforward to see that this normalization convention
	is compatible with the composition law.
	\end{rem}

\section{Background on the Ozsv\'ath--Szab\'o mixed invariants}
	\label{sec:4-manifold-invariant}
		
	For a closed 4-manifold $X$ with $b_2^+(X)\ge 2$, Ozsv\'{a}th and Szab\'{o}
	defined a map
	\[
	\Phi_X \colon  \Spin^c(X)\to \bF_2.
	\]
	We write $\Phi_{X,\frs}$ for the value of $\Phi_X$ on $\frs$. The map $\Phi_X$ is
	referred to as the \emph{mixed invariant} of $X$,
	because it uses both $\HF^+$ and $\HF^-$.
	
	The map $\Phi_X$ is defined by picking a connected,
	codimension one submanifold
	$N \subset X$ that cuts $X$ into two pieces, $W_1$ and $W_2$, such that
	$b_2^+(W_i) > 0$, and such that the restriction map
	\[
	H^2(X) \to H^2(W_1) \oplus H^2(W_2)
	\]
	is an injection. Such a cut is called \emph{admissible}.
	If we view $W_1$ as a cobordism from $S^3$ to $N$, and $W_2$ as
	a cobordism from $N$ to $S^3$, the maps $F_{W_1,\frs|_{W_1}}^\infty$ and
	$F_{W_2,\frs|_{W_2}}^\infty$
	vanish \cite{OSTriangles}*{Lemma~8.2}. Consequently, $F_{W_1,\frs_1}^-$ may
	be factored to
	have codomain
	\[
	\HF^-_{\red}(N,\frs|_N):=\ker\left(\HF^-(N,\frs|_N)\to
	\HF^\infty(N,\frs|_N)\right),
	\]
	and $F_{W_2,\frs_2}^+$ may be factored to have domain
	\[
	\HF^+_{\red}(N,\frs|_N):=\coker\left(\HF^\infty(N,\frs_N)\to
	\HF^+(N,\frs|_N)\right).
	\]
	The boundary map $\delta$ in the long exact sequence~\eqref{eq:exactsequence}
    induces an isomorphism between
	$\HF^+_{\red}(N,\frs|_N)$ and $\HF^-_{\red}(N,\frs|_N)$.
	
	The invariant $\Phi_{X,\frs}$ is defined as the coefficient
	of the bottom-graded generator $\Theta_+$ of $\HF^+(S^3)$ in the expression
	\[
	\left(F_{W_2,\frs|_{W_2}}^+\circ \delta^{-1} \circ
	F_{W_1,\frs|_{W_1}}^-\right)(1),
	\]
	where $1$ denotes the top-graded generator of $\HF^-(S^3) \iso \bF_2[U]$.
	Ozsv\'ath and Szab\'o prove that this is independent of the admissible cut $N$.
	
	We now describe how to compute the mixed invariants using
	the perturbed cobordism maps.
    To do that, we will need the following two results:

    \begin{lem}\label{lem:cut}
    Let $X$ be a closed, oriented 4-manifold with $b_2^+(X) \ge 2$, and let $b \in H^2(X)$.
    Given an admissible cut $X = W_1 \cup_N W_2$, there is a closed 2-form $\omega$ on $X$ such that
    \begin{enumerate}
      \item $[\omega] = b \in H^2(X; \R)$, and
      \item $\omega|_N = 0$.
    \end{enumerate}
    \end{lem}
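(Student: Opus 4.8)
The plan is to build $\omega$ by choosing a de Rham representative of $b$ that is supported away from $N$. Since $N\subset X$ is a connected codimension-one submanifold, a tubular neighborhood of $N$ is diffeomorphic to $N\times(-1,1)$, and $X\setminus N$ is the disjoint union $W_1^\circ\sqcup W_2^\circ$ of the interiors of the two cobordisms $W_1$ and $W_2$. First I would pick any closed 2-form $\omega_0$ on $X$ with $[\omega_0]=i_*(b)\in H^2(X;\R)$; this exists because de Rham cohomology computes $H^2(X;\R)$. Then the restriction $[\omega_0|_N]\in H^2(N;\R)$ is some class, and I want to modify $\omega_0$ by an exact form so that the new representative vanishes identically on a neighborhood of $N$.

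The key step is a relative de Rham argument. Consider the long exact sequence of the pair $(X, N)$, or more conveniently work directly: let $U=N\times(-1,1)$ be the tubular neighborhood and let $r\colon U\to N$ be the projection. On $U$, the form $\omega_0|_U$ is cohomologous to $r^*(\omega_0|_N)$ (since $U$ deformation retracts onto $N$), so there is a 1-form $\alpha$ on $U$ with $\omega_0|_U - r^*(\omega_0|_N) = d\alpha$. Now the pullback form $r^*(\omega_0|_N)$ is closed on $U$ but need not extend over $X$; however, I only need $\omega|_N=0$, not that $\omega$ vanishes on all of $N\times(-1,1)$. So instead I would argue as follows: the inclusion $N\hookrightarrow X$ factors through $W_1$, and $H^2(X;\R)\to H^2(W_1;\R)\oplus H^2(W_2;\R)$ is injective by admissibility — but this does not force $b|_N=0$. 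So vanishing on $N$ genuinely requires using an exact correction. Take a bump function $\chi\colon X\to[0,1]$ equal to $1$ on a smaller neighborhood $N\times(-\tfrac12,\tfrac12)$ and supported in $U$, and set $\omega := \omega_0 - d(\chi\,\beta)$, where $\beta$ is a 1-form on $U$, extended by zero, chosen so that $d(\chi\beta)$ agrees with $\omega_0$ on $N\times(-\tfrac12,\tfrac12)$. To produce such $\beta$: on $U\cong N\times(-1,1)$ with coordinate $t$, write $\omega_0|_U = dt\wedge\eta_t + \mu_t$ where $\eta_t\in\Omega^1(N)$, $\mu_t\in\Omega^2(N)$ depend smoothly on $t$; closedness gives $\partial_t\mu_t = d_N\eta_t$. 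Then $\mu_t = \mu_0 + d_N\!\int_0^t\eta_s\,ds$, so $\omega_0|_{N\times\{t\}}$ represents a class independent of $t$, namely $[\omega_0|_N]$. The point is then that $\omega_0|_N = \mu_0$, and I can cancel it: since I am free to choose the representative $\omega_0$, I would instead just require at the outset that $\mu_0 = \omega_0|_N$ be exact on $N$ — but that is exactly the obstruction $b|_N\in H^2(N;\R)$, which need not vanish.

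This means the honest approach must allow $\omega$ to be nonzero on $U\setminus N$ while vanishing precisely on the slice $N\times\{0\}$. Concretely: set $\omega = \omega_0 - d(f\cdot\alpha)$ where $\alpha$ is the primitive of $\omega_0|_U - r^*\mu_0$ on $U$ and $f$ a cutoff, arranged so that near $N$ we have $\omega = r^*\mu_0$ in a collar; then further subtract $d(g(t)\,\xi)$ with $\xi$ a primitive of $\mu_0$ on $N$ pulled back, with $g(0)=$ appropriate value — but $\mu_0$ primitive exists only if $b|_N=0$. The resolution, and I believe this is the intended one, is: \emph{the class $b|_N$ does vanish in the relevant setting, or rather we do not need it to.} Re-examining, $N$ is $S^3\#(\cdots)$-type or a general 3-manifold, and $\omega|_N=0$ as a form is achievable because we may choose $\omega$ to have support disjoint from $N$ entirely: simply take $\omega_0$ and, using that $[\omega_0]$ restricted to the collar $U$ is represented by a form pulled back from $N$, modify by an exact form to make $\omega$ vanish on $U$ provided $[\omega_0|_U]=0$ in $H^2(U;\R)$; when $b|_N\neq 0$ this fails, so one instead concentrates $\omega$ in $W_1$ only, i.e. picks $\omega_0$ supported in $W_1^\circ$ representing the image of $b$ under $H^2(X;\R)\to H^2(X;\R)$ — possible iff $b$ comes from $H^2_c(W_1^\circ;\R)\to H^2(X;\R)$, which need not hold either.

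Given these tensions, the cleanest correct plan, which I would write up, is: \textbf{(i)} choose a smooth 1-form $\theta$ on the collar $N\times(-1,1)$ and a closed 2-form $\omega_0$ representing $i_*(b)$; \textbf{(ii)} on the collar, use the fibration structure to write $\omega_0 = d(t\,r^*\eta) + r^*\mu + d\alpha$ after a preliminary exact modification, reducing to the case $\omega_0|_U = r^*\mu$ with $\mu$ a closed 2-form on $N$ with $[\mu]=b|_N$; \textbf{(iii)} now pick a \emph{bump 3-form} argument: since we only need the 2-form to vanish on the single 3-dimensional slice $N\times\{0\}$, replace $r^*\mu$ on the collar by $d(h(t)\,r^*\mu')$ is not valid, so instead note $r^*\mu$ restricted to $N\times\{0\}$ equals $\mu$, and we kill it by subtracting $d(\psi)$ where $\psi$ is chosen on the collar with $d\psi|_{N\times\{0\}} = \mu$; such $\psi$ exists on the collar (not on $N$!) because $H^2(N\times(-1,1), N\times\{0\})$ vanishes — any closed 2-form on a collar that we wish to trivialize along the central slice can be written as $d\psi$ with $\psi$ vanishing suitably at the ends after multiplying by a cutoff in $t$. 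The main obstacle is precisely this step: producing a global primitive on the collar whose differential matches $r^*\mu$ on the central slice while the correction remains compactly supported in the collar so it extends by zero over $X$. I expect this to follow from the relative Poincaré lemma for the pair (collar, central slice) together with a cutoff in the $t$-direction, and once it is in place, setting $\omega := \omega_0 - d(\text{cutoff}\cdot\psi)$ gives a closed 2-form on $X$ with $[\omega]=b$ in $H^2(X;\R)$ and $\omega|_N = 0$, completing the proof.
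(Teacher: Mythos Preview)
There is a genuine gap. Your final plan hinges on ``producing a global primitive on the collar whose differential matches $r^*\mu$ on the central slice,'' with the correction compactly supported in the collar. But this cannot work unless $\mu$ is already exact on $N$: if $\psi$ is any $1$-form on the collar and $i\colon N\hookrightarrow N\times(-1,1)$ is the inclusion of the central slice, then $i^*(d\psi)=d(i^*\psi)$ is exact on $N$. More generally, the restriction to $N$ of any exact form on $X$ is exact on $N$, so no exact modification of $\omega_0$ can change the cohomology class of $\omega_0|_N$. Your obstruction $[\mu]=b|_N\in H^2(N;\R)$ is real, and you cannot evade it by a relative Poincar\'e lemma on the collar.

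What you missed is that admissibility \emph{does} force $b|_N=0$. You observed that admissibility means $H^2(X;\R)\to H^2(W_1;\R)\oplus H^2(W_2;\R)$ is injective, then stopped. Push one step further with Mayer--Vietoris: injectivity of that map is equivalent to vanishing of the coboundary $\delta\colon H^1(N;\R)\to H^2(X;\R)$. By Poincar\'e duality this coboundary is identified with the inclusion-induced map $H_2(N;\R)\to H_2(X;\R)$, so that map is zero. Dualizing, the restriction $H^2(X;\R)\to H^2(N;\R)$ is zero, hence $b|_N=0$. Now write $\varphi|_N=d\eta$ for some $\eta\in\Omega^1(N)$, pull $\eta$ back via the projection $p\colon\nu(N)\to N$, multiply by a bump function $f$ supported in $\nu(N)$ and equal to $1$ near $N$, and set $\omega=\varphi-d(f\cdot p^*\eta)$. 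This is exactly the paper's argument, and it is essentially the one you abandoned in your second paragraph after (incorrectly) concluding the obstruction ``need not vanish.''
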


    \begin{proof}
        Choose $\varphi \in \Omega^2(X)$ such that $[\varphi] = b$.
        Since $N$ gives an admissible cut, the coboundary map $H^1(N) \to H^2(X)$ is zero.
		This is Poincar\'{e} dual to the inclusion $H_2(N) \to H_2(X)$,
		so this is trivial as well. Hence, the
		restriction map from $H^2(X;\R)$ to $H^2(N;\R)$ is trivial. In particular,
		$[\varphi|_N] = 0$ in $H^2(N;\R)$, and so there is a 1-form $\eta \in \Omega^1(N)$
        such that $\varphi|_N = d\eta$.

        Let $\nu(N)$ be a tubular neighborhood of $N$ in $X$,
        and write $p \colon \nu(N) \to N$ for the projection.
        Choose a smooth function $f$ on $X$ that is $0$ outside $\nu(N)$,
        and is $1$ on a neighborhood of $N$ contained in the interior of $\nu(N)$.
        We define
        \[
        \omega := \varphi - d(f \cdot p^*\eta).
        \]
        Then $\omega$ satisfies the required conditions.
    \end{proof}

    \begin{lem}\label{lem:finiteness}
        Let $X$ be a closed, oriented 4-manifold with $b_2^+(X) > 1$,
        and let $X = W_1 \cup_N W_2$ be an admissible cut.
        If $\omegas$ is a tuple of closed 2-forms on $X$ that vanish on $N$, then
        $F_{W_1, \frt; \omegas|_{W_1}}^-$ and $F_{W_2,\fru;\omegas|_{W_2}}^+$ are non-zero for only finitely
        many $\frt \in \Spin^c(W_1)$ and $\fru\in \Spin^c(W_2)$.
    \end{lem}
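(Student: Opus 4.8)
The plan is to peel off the perturbation, reducing the statement to a finiteness property of the ordinary Ozsv\'ath--Szab\'o cobordism maps that is built into their construction. \emph{Step 1: reduction to the unperturbed maps.} View $W_1$ as a cobordism from $S^3$ to $N$ and $W_2$ as a cobordism from $N$ to $S^3$. For each component $\omega_i$ of $\omegas$, the class $[\omega_i|_{W_1}]$ vanishes on $\partial W_1 = S^3 \sqcup N$: on $N$ it vanishes by hypothesis, and on $S^3$ because $H^2(S^3;\R) = 0$; likewise $[\omega_i|_{W_2}]$ vanishes on $\partial W_2$. Applying the remark following Lemma~\ref{lem:perturbed-coincide} one 2-form at a time (equivalently, its multivariable analogue) gives
\[
F^-_{W_1,\frt;\omegas|_{W_1}} \doteq z_1^{a_1(\frt)}\cdots z_n^{a_n(\frt)}\cdot F^-_{W_1,\frt}
\quad\text{and}\quad
F^+_{W_2,\fru;\omegas|_{W_2}} \doteq z_1^{b_1(\fru)}\cdots z_n^{b_n(\fru)}\cdot F^+_{W_2,\fru}
\]
for suitable exponents. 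Since monomials are units in the Novikov ring, $F^-_{W_1,\frt;\omegas|_{W_1}}$ is nonzero precisely when $F^-_{W_1,\frt}$ is, and similarly for $W_2$, so it suffices to prove the lemma for the unperturbed maps.

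\emph{Step 2: finiteness of the unperturbed maps.} Because $b_2^+(W_1) > 0$, the map $F^\infty_{W_1,\frt}$ vanishes for every $\frt$ by \cite{OSTriangles}*{Lemma~8.2}, so $F^-_{W_1,\frt}$ factors through $\HF^-_{\red}(N,\frt|_N)$; since the latter group is nonzero for only finitely many $\frt|_N \in \Spin^c(N)$ and is finite-dimensional over $\bF_2$, a grading-shift comparison — using that $F^-_{W_1,\frt}$ moves the Maslov grading by $\tfrac{1}{4}\bigl(c_1(\frt)^2 - 2\chi(W_1) - 3\sigma(W_1)\bigr)$ — shows that $F^-_{W_1,\frt}\neq 0$ forces $\frt|_N$ into this finite set and $c_1(\frt)^2$ into a bounded range once $\frt|_N$ is fixed. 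This alone is not enough, since for an indefinite intersection form infinitely many $\frt$ can share a bounded value of $c_1(\frt)^2$. The decisive input is that, computing $F^-_{W_1,\frt}$ in a strongly admissible triple Heegaard diagram, the two-handle portion of the cobordism map counts Maslov-index-zero holomorphic triangles $\psi$ with $\frs_w(\psi)$ the restriction of $\frt$ to the two-handle cobordism, and strong admissibility forces only finitely many homotopy classes of such triangles to admit holomorphic representatives \cite{OSTriangles}. As $\frs_w$ is constant on homotopy classes, $\Spin^c(W_1)$ is identified with the $\Spin^c$ structures on the two-handle cobordism, and the one- and three-handle maps do not further constrain the $\Spin^c$ structure, only finitely many $\frt\in\Spin^c(W_1)$ give $F^-_{W_1,\frt}\neq 0$. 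The argument for $W_2$ and $F^+_{W_2,\fru}$ is identical, using a strongly admissible triple diagram for $W_2$ and the dual factorization through $\HF^+_{\red}(N,\fru|_N)$.

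I expect the only genuinely new point to be Step 1, which is routine given Lemma~\ref{lem:perturbed-coincide}; the substance of Step 2 is the strong-admissibility finiteness already present in \cite{OSTriangles}. The main obstacle — and the reason the grading argument does not suffice on its own — is exactly the indefinite case in Step 2, which is handled by invoking that established triangle-count finiteness rather than by a soft lattice-theoretic argument; the remaining care in the write-up is just to check that the monomial ambiguities from Step 1 never affect vanishing.
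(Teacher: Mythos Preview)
Your Step~1 is exactly the paper's first move: invoke Lemma~\ref{lem:perturbed-coincide} to reduce to the unperturbed maps. For Step~2, the paper is considerably more direct. It observes (as you do) that $F^-_{W_1,\frt}$ lands in $\HF^-_{\red}(N)$, then picks $d$ with $U^d\cdot\HF^-_{\red}(N)=0$, and simply cites \cite{OSTriangles}*{Theorem~3.3}, which says that $F^-_{W_1,\frt}(1)\notin U^d\cdot\HF^-(N)$ for only finitely many $\frt$; combined with the standard fact that $\HF^-_{\red}(N,\frs)\neq 0$ for only finitely many $\frs$, this finishes. Your detour through the grading-shift formula and the explicit admissibility-based triangle count is essentially an unpacking of the proof of that cited theorem, so your argument is correct in substance but carries redundant scaffolding: once you invoke the triangle-count finiteness, the grading discussion is unnecessary, and conversely the grading discussion alone (as you correctly note) does not close the argument. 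One small terminological point: the finiteness of index-0 positive triangle classes with fixed vertices follows already from \emph{weak} admissibility of the triple, not strong admissibility; strong admissibility is what makes $\CF^-$ well-defined in each $\Spin^c$ structure, which is a different issue.
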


    \begin{proof}
        By Lemma~\ref{lem:perturbed-coincide},
        it suffices to show this for the unperturbed maps $F^-_{W_1,\frt}$ and $F^+_{W_2,\fru}$.
        Note that $F^-_{W_1,\frt}$ has image in $\HF^-_{\red}(N)$ for every $\frt \in \Spin^c(W_1)$.
        Let $d \in \N$ be such that $U^d \cdot \HF^-_{\red}(N) = \{0\}$.
        If $1$ is the generator of $\HF^-(S^3)$, then
        \[
        F^-_{W_1,\frt}(1) \not\in U^d \cdot \HF^-_{\red}(N) = \{0\}
        \]
        only for finitely many $\frt \in \Spin^c(W_1)$ by \cite[Theorem~3.3]{OSTriangles},
        and since $\HF^-_{\red}(N, \frs) \neq 0$ only for finitely many $\frs \in \Spin^c(N)$. The same argument works for $F_{W_2,\fru;\omegas|_{W_2}}^+$.
    \end{proof}

Recall from the introduction that, if $\ve{b}=(b_1,\dots, b_n)$ is a basis of $H^2(X;\R)$, we define
\[
\Phi_{X;\ve{b}}:= \sum_{\frs \in \Spin^c(X)} \Phi_{X,\frs} \cdot
	z_1^{\langle\, i_*(\frs-\frs_0) \cup b_1, [X] \,\rangle} \cdots
	z_n^{\langle\, i_*(\frs-\frs_0) \cup b_n, [X] \,\rangle},
\]
where $\frs_0\in \Spin^c(X)$ is a choice of base $\Spin^c$ structure.   If $H^2(X)$ is torsion-free,
then $\Phi_{X; \ve{b}}$ completely encodes the map $\frs \mapsto \Phi_{X,\frs}$.
We now give a slight reformulation of $\Phi_{X;\ve{b}}$,
which is well suited for proving Theorem~\ref{thm:1}:

\begin{prop}\label{prop:perturbed-mixed-invariant}
  Suppose $X$ is a closed, oriented 4-manifold with $b_2^+(X)>1$,
  and $N$ is an admissible cut, dividing $X$ into cobordisms
  $W_1$ and $W_2$. Suppose  $\ve{b}=(b_1,\dots, b_n)$ is an
  $n$-tuple of classes in $H^2(X;\R)$, represented
  by 2-forms $\omegas=(\omega_1,\dots, \omega_n)$ that
  vanish on $N$. Write $\omegas_1 = \omegas|_{W_1}$ and $\omegas_2 = \omegas|_{W_2}$.
  Then the maps $F_{W_2;\omegas_2}^+$ and $F_{W_1;\omegas_1}^-$ are well-defined,
  and satisfy
  \begin{equation}\label{eq:perturbed-mixed-invariant}
  \Phi_{X;\ve{b}}\doteq \left \langle \left(F_{W_2;\omegas_2}^+ \circ
  \delta^{-1} \circ F_{W_1;\omegas_1}^-\right)(1), \Theta_+ \right \rangle.
  \end{equation}
  \end{prop}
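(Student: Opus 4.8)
The plan is to reduce the perturbed expression on the right-hand side of \eqref{eq:perturbed-mixed-invariant} to the unperturbed mixed invariant expression, term by term in the $\Spin^c$ decomposition, and then match the exponents of the Novikov variables $z_1,\dots,z_n$ against the definition of $\Phi_{X;\ve{b}}$. First I would record that the maps $F_{W_1;\omegas_1}^-$ and $F_{W_2;\omegas_2}^+$ are well-defined: since $\omegas$ vanishes on $N$, Lemma~\ref{lem:finiteness} guarantees that only finitely many $\Spin^c$ structures contribute to the $\HF^-$ side, so the sum defining the total map over $\Spin^c(W_1)$ is finite, and the $\HF^+$ side is unproblematic; combined with Lemma~\ref{lem:perturbed-coincide} applied to each of $W_1$ and $W_2$ (whose boundary pieces $S^3$ and $N$ carry $\omegas$ restricting to something exact, since $[\omegas|_N]=0$ and $H^1(S^3)=0$), we get
\[
F_{W_1;\omegas_1}^- \doteq \sum_{\frt \in \Spin^c(W_1)} z_1^{\langle i_*(\frt - \frt_0)\cup[\omega_1], [W_1,\d W_1]\rangle}\cdots z_n^{\langle i_*(\frt-\frt_0)\cup[\omega_n],[W_1,\d W_1]\rangle} \cdot F_{W_1,\frt}^-,
\]
and similarly for $W_2$, for suitable base $\Spin^c$ structures $\frt_0$ on $W_1$ and $\fru_0$ on $W_2$ obtained by restricting $\frs_0$.

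Next I would substitute these expansions into the composite $F_{W_2;\omegas_2}^+ \circ \delta^{-1} \circ F_{W_1;\omegas_1}^-$, noting that $\delta^{-1}$ carries no perturbation data and commutes with the scalar $z^x$ factors. Applying the composite to $1 \in \HF^-(S^3)$ and pairing with $\Theta_+$, only those pairs $(\frt,\fru)$ with $\frt|_N = \fru|_N$ contribute (the reduced groups decompose over $\Spin^c(N)$ and $\delta^{-1}$ respects this splitting), and for such a pair the composite $F_{W_2,\fru}^+\circ \delta^{-1}\circ F_{W_1,\frt}^-$ evaluated on $1$ and paired with $\Theta_+$ is precisely the $\bF_2$-coefficient appearing in the definition of $\Phi_{X,\frs}$ for the unique $\frs \in \Spin^c(X)$ restricting to $\frt$ on $W_1$ and $\fru$ on $W_2$ — here using that $N$ is an admissible cut, so $H^2(X)\to H^2(W_1)\oplus H^2(W_2)$ is injective and such gluings of $\Spin^c$ structures are unique. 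The upshot is that the right-hand side of \eqref{eq:perturbed-mixed-invariant} equals, up to $\doteq$,
\[
\sum_{\frs \in \Spin^c(X)} \Phi_{X,\frs} \cdot \prod_{k=1}^n z_k^{\langle i_*(\frs|_{W_1} - \frt_0)\cup[\omega_k],[W_1,\d W_1]\rangle + \langle i_*(\frs|_{W_2}-\fru_0)\cup[\omega_k],[W_2,\d W_2]\rangle}.
\]

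The final step is to identify the exponent of $z_k$ with $\langle i_*(\frs - \frs_0)\cup b_k, [X]\rangle$. This is the one genuinely geometric point, and I expect it to be the main (though not difficult) obstacle: one needs a Mayer--Vietoris / additivity-of-relative-cup-products argument showing that for a class $c \in H^2(X;\R)$ (here $c = i_*(\frs - \frs_0)$, represented by a 2-form vanishing near $N$ after adjusting by the construction in Lemma~\ref{lem:cut}) and $b_k$ represented by $\omega_k$ vanishing on $N$,
\[
\langle c \cup b_k, [X]\rangle = \langle (c|_{W_1}) \cup (\omega_k|_{W_1}), [W_1,\d W_1]\rangle + \langle (c|_{W_2})\cup(\omega_k|_{W_2}), [W_2,\d W_2]\rangle,
\]
with $c|_{W_i} = i_*(\frs|_{W_i} - \frs_0|_{W_i})$. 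Since $\omega_k$ vanishes on $N = \d W_1 = -\d W_2$, each relative cup product is well-defined, and the identity follows from writing $[X]$ as the sum of the fundamental classes of $W_1$ and $W_2$ glued along $N$, together with the compatibility of cup products with restriction; the vanishing of $\omega_k$ on $N$ is exactly what makes the boundary contributions cancel. Because $\doteq$ absorbs any overall power of the $z_k$'s, the freedom in the choice of base $\Spin^c$ structures $\frt_0$, $\fru_0$ (versus $\frs_0$ and its restrictions) is harmless, and we conclude $\Phi_{X;\ve{b}} \doteq \langle (F_{W_2;\omegas_2}^+\circ\delta^{-1}\circ F_{W_1;\omegas_1}^-)(1), \Theta_+\rangle$ as claimed.
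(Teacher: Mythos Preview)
Your proposal is correct and follows essentially the same approach as the paper: expand $F_{W_1;\omegas_1}^-$ and $F_{W_2;\omegas_2}^+$ via Lemma~\ref{lem:perturbed-coincide} (the paper calls this a ``straightforward adaptation'' since the forms vanish on $N$ but not literally on the $S^3$ ends), substitute into the composite, and match exponents using the additivity identity $\langle i_*(\frt-\frt_0)\cup[\omega_i],[W_1,\d W_1]\rangle + \langle i_*(\fru-\fru_0)\cup[\omega_i],[W_2,\d W_2]\rangle = \langle i_*(\frs-\frs_0)\cup[\omega_i],[X]\rangle$. You supply more justification for this last identity than the paper does, but the overall structure is identical.
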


\begin{proof}
Well-definedness of $F_{W_1;\omegas_1}^-$ and
$F_{W_2;\omegas_2}^+$ follows from Lemma~\ref{lem:finiteness},
so we focus on equation~\eqref{eq:perturbed-mixed-invariant}.

Let $\frs_0$ be a fixed element of $\Spin^c(X)$, and let $\frt_0 = \frs_0|_{W_1}$
and $\fru_0 = \frs_0|_{W_2}$.
Since $\omegas_1$ and $\omegas_2$ vanish on $N$, we apply a straightforward
adaptation of Lemma~\ref{lem:perturbed-coincide} from the single- to the multi-variable setting 
to obtain
\begin{equation}
\begin{split}
F_{W_1;\omegas_1}^-&\doteq \sum_{\frt \in \Spin^c(W_1)}
 z_1^{\langle i_*(\frt-\frt_0)\cup [\omega_1], [W_1, \d W_1] \rangle}
 \cdots z_n^{\langle i_*(\frt-\frt_0)\cup [\omega_n], [W_1, \d W_1] \rangle}
  \cdot F_{W_1,\frt}^-, \quad \text{and}\\
F_{W_2;\omegas_2}^+& \doteq
\sum_{\fru \in \Spin^c(W_2)} z_1^{\langle i_*(\fru-\frs_0)\cup [\omega_1], [W_2, \d W_2] \rangle}
\cdots z_n^{\langle i_*(\fru-\fru_0)\cup [\omega_n], [W_2, \d W_2] \rangle} \cdot F_{W_2,\fru}^+.
\end{split}
\label{eq:expand-perturbed-plus-minus-maps}
\end{equation}
Equation~\eqref{eq:perturbed-mixed-invariant} is obtained by
inserting equation~\eqref{eq:expand-perturbed-plus-minus-maps}
into the right-hand side of equation~\eqref{eq:perturbed-mixed-invariant},
and using the fact that, if $\frs\in \Spin^c(X)$ restricts to
 $\frt\in \Spin^c(W_1)$ and $\fru\in \Spin^c(W_2)$, then
\[
\langle i_*(\frt-\frt_0)\cup [\omega_i], [W_1,\d W_1] \rangle
+\langle i_*(\fru-\fru_0)\cup [\omega_i], [W_2,\d W_2] \rangle
=\langle i_*(\frs-\frs_0)\cup [\omega_i], [X] \rangle. \qedhere
\]
\end{proof}

\begin{rem} In light of Proposition~\ref{prop:perturbed-mixed-invariant},
it is natural to view $\Phi_{X;\ve{b}}$ as a perturbed version of the mixed invariant.
\end{rem}

	\section{Fintushel--Stern knot surgery and concordance surgery}
	\label{sec:proof-of-concordance-surgery-formula}
	
	Fintushel and Stern~\cite{FSKnotSurgery} described an operation
	on a 4-manifold $X$ called \emph{knot surgery}. Given a knot $K$ in $S^3$ and an embedded torus $T$ in $X$
	with
	zero self-intersection, we define the 4-manifold
	\[
	X_0 := X \setminus N(T)
	\]
	with boundary $\bT^3$. A neighborhood of $T$ can be
	identified with $T \times D^2$. We pick any orientation-preserving
	diffeomorphism $\phi \colon \d(T \times D^2) \to S^1 \times \d N(K)$ such that
	$\phi_*([ \{p\} \times \d D^2]) = [\{q\} \times \ell_K]$, where $\ell_K$ is a
	Seifert longitude on $\d N(K)$, while $p \in T$
	and $q \in S^1$. We let
	\[
	X_K := X_0 \cup_{\phi} (S^1 \times (S^3 \setminus N(K)))
	\]
	be the result of knot surgery on $X$ using $K$ and $T$.
	Note that there is some ambiguity in the choice of $\phi$, so we write $X_K$
	for any 4-manifold constructed in this way. It is straightforward to see that
	$H^*(X_K)$ and $H^*(X)$ are canonically isomorphic.
	
	Fintushel and Stern described a generalization of this operation called
	\emph{concordance surgery}; see Akbulut~\cite{Akbulut}.
	Let $K$ be a knot in a homology 3-sphere $Y$ (note that Akbulut only considered
	$Y = S^3$).
	Given a self-concordance $\cC = (I \times Y, A)$ from $(Y, K)$ to itself, we can
	construct a 4-manifold $X_{\cC}$, as follows. We take the annulus $A$, and glue its ends
	together to form a 2-torus $T_{\cC}$ embedded in $S^1 \times Y$.
    The quotient map $I \times Y \to S^1 \times Y$ is given by $(t,y) \mapsto (e^{2\pi i t}, y)$
    for $t \in I$ and $Y \in Y$.
    After removing a neighborhood of $T_{\cC}$, we get a 4-manifold $W_{\cC}$ with
	boundary $\bT^3$. We pick any orientation-preserving diffeomorphism
	$\phi \colon \d X_0 \to \d N(T_{\cC})$ that sends
	$[\{p\}\times \d D^2 ]$ to $[\{1\} \times \ell_K]$.
	We write $X_{\cC}$ for any manifold constructed as the union
	\[
	X_{\cC} := X_0 \cup_{\phi} W_{\cC}.
	\]
	It is easy to see that $H^*(X_{\cC})$ and $H^*(X)$ are canonically isomorphic.
	
	If $\cC = (I \times Y, A)$ is a self-concordance of the knot $K$ in $Y$,
	and $a$ is a pair of parallel arcs on $A$ connecting the two components of $\d
	A$, then
	there is an induced map on knot Floer homology
	\[
	\hat{F}_{\cC,a} \colon \HFKh(Y, K) \to \HFKh(Y, K),
	\]
	described by the first author~\cite{JCob}.
	The map $\hat{F}_{\cC,a}$ preserves the Alexander and Maslov gradings
	according to Marengon and the first
	author~\cite{JMComputeCobordismMaps}*{Theorem~5.18},
	and is non-vanishing when $Y = S^3$ by~\cite{JMConcordance}*{Theorem~1.2}.
	
	Note that the group $\HFKh(Y, K)$ only becomes natural once we
	choose a pair $P$ of
	basepoints on $K$, which we suppress from the notation. We require $\d a$ to be
	disjoint from~$P$, and also to link $\d a$.
	We define $\Lef_z(\cC)$ to be the polynomial
	\[
	\Lef_z(\cC) := \sum_{i \in \Z} \Lef \left(\hat{F}_{\cC,a}|_{\HFKh(Y, K,i)}
	\colon
	\HFKh(Y, K, i) \to \HFKh(Y, K, i) \right) \cdot z^i
	\]
	for any pair of parallel arcs $a$ connecting
	the two boundary components of $\cC$.
	Although the map $\hat{F}_{\cC,a}$ depends on the arcs $a$, we have the
	following:
	
	\begin{lem}
		The graded Lefschetz number of $\hat{F}_{\cC,a}$
		is independent of the choice of arcs $a$.
	\end{lem}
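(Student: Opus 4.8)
The plan is to show that changing the pair of arcs $a$ modifies the concordance map $\hat{F}_{\cC,a}$ only by composition with a fixed unipotent, bigrading-preserving automorphism of $\HFKh(Y,K)$ that commutes with $\hat{F}_{\cC,a}$, and then to observe that composition with such an automorphism leaves the trace on each bigraded summand unchanged, so the graded Lefschetz number is unaffected.

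First I would record that, since the concordance surface is an annulus $A$ whose boundary circles are the two ends of the concordance, any two pairs of parallel arcs connecting these circles with endpoints in the required position relative to $P$ are isotopic after finitely many full twists; in other words, the admissible decorations form a torsor over $\Z$, generated by the full twist. Thus it suffices to treat the case in which $a'$ is obtained from $a$ by a single full twist. Let $\tau$ denote the product concordance $(I\times Y,\,I\times K)$ of $(Y,K)$ decorated by a pair of parallel arcs winding once around. Both $(\cC,a)\circ\tau$ and $\tau\circ(\cC,a)$ have underlying surface isotopic to $A$ and dividing arcs whose common winding number is one more than that of $a$; hence both are isotopic, as decorated concordances, to $(\cC,a')$. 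Applying the functoriality of the sutured concordance maps~\cite{JCob} to these two descriptions yields
\[
\hat{F}_{\cC,a'}=\hat{F}_{\cC,a}\circ\varsigma=\varsigma\circ\hat{F}_{\cC,a},\qquad\text{where }\varsigma:=\hat{F}_\tau\colon\HFKh(Y,K)\to\HFKh(Y,K).
\]
In particular $\varsigma$ commutes with $\hat{F}_{\cC,a}$, and it preserves the Alexander and Maslov gradings, by the grading statement for concordance maps~\cite{JMComputeCobordismMaps} applied to $\tau$.

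The key input is that $\varsigma$ is unipotent. Geometrically, $\tau$ is the trace of dragging the marked points of $(Y,K)$ once around $K$, so $\varsigma$ is the corresponding basepoint-moving automorphism of $\HFKh(Y,K)$, and such a map squares to the identity --- this can be seen from Sarkar's analysis of basepoint-moving maps in link Floer homology, or directly in the sutured cobordism formalism using the trace and cotrace cobordisms of~\cite{JuhaszZemkeContactHandles}. Hence $N:=\varsigma-\id$ satisfies $N^2=0$ over $\bF_2$; it commutes with $\hat{F}_{\cC,a}$ and preserves the bigrading. Therefore $N\circ\hat{F}_{\cC,a}$ is nilpotent and bigrading-preserving, so its trace on each Maslov-graded summand of $\HFKh(Y,K,i)$ vanishes. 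Writing $\hat{F}_{\cC,a'}=\hat{F}_{\cC,a}+N\circ\hat{F}_{\cC,a}$ and taking the alternating sum of traces over the Maslov grading within each Alexander grading $i$, we obtain $\Lef\bigl(\hat{F}_{\cC,a'}|_{\HFKh(Y,K,i)}\bigr)=\Lef\bigl(\hat{F}_{\cC,a}|_{\HFKh(Y,K,i)}\bigr)$ for every $i$, and hence $\Lef_z(\cC)$ does not depend on $a$.

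The main obstacle is the identification of $\varsigma$ as a basepoint-moving automorphism together with the proof that it is unipotent: this requires unwinding how the dividing set on the concordance surface enters the construction of $\hat{F}_{\cC,a}$ in~\cite{JCob} and matching conventions carefully, so that ``inserting a full twist in the arcs'' really corresponds to dragging both marked points once around $K$. Once this identification is in hand, the remaining trace bookkeeping is elementary, as above.
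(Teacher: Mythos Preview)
Your proposal is correct and follows essentially the same route as the paper: reduce to a single Dehn twist of the arcs, use that the resulting twist map $\varsigma=r_*$ commutes with $\hat F_{\cC,a}$ (by pushing the twist to either end of the annulus), and conclude that $\hat F_{\cC,a'}-\hat F_{\cC,a}$ is nilpotent and hence has vanishing trace in each grading. The paper makes the nilpotency explicit via the formula $r_*=\id+\Phi\Psi$ with $\Phi^2=\Psi^2=0$ and $\Phi\Psi=\Psi\Phi$ (Sarkar for $Y=S^3$, Zemke~\cite{ZemQuasi} in general), which gives $(\Phi\Psi)^2=0$; your statement $\varsigma^2=\id$ is the equivalent repackaging over $\bF_2$, though your alternative suggestion that this follows ``directly \dots\ using the trace and cotrace cobordisms of~\cite{JuhaszZemkeContactHandles}'' is not substantiated and should be dropped in favor of the Sarkar/Zemke citation.
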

	
	\begin{proof}
		Changing the arcs $a$ by a proper isotopy that does not cross the basepoints
		$P$ does not change the cobordism map $\hat{F}_{\cC,a}$. Hence, it suffices to
		show that the Lefschetz number is unchanged by applying a Dehn twist to $a$
		along one of
		the boundary components of the annulus $A$. The action of a Dehn twist
		on $\hat{\HFK}(Y, K)$ was computed by Sarkar~\cite{SarkarMovingBasepoints}
		when $Y = S^3$,
		and by the second author~\cite{ZemQuasi}*{Theorem~B} for a null-homologous
		knot in a general
		3-manifold~$Y$. If $r_*$ denotes the action of a single Dehn twist, then
		\[
		r_*=\id+\Phi\Psi,
		\]
		where $\Phi$ and $\Psi$ are two endomorphisms of $\HFKh(Y, K)$ that satisfy
		\[
		\Phi^2 = \Psi^2 = 0 \text{, } \Phi \Psi = \Psi\Phi.
		\]
		
		Since a Dehn twist on an annulus may be pulled to either boundary component,
		it follows that, if $a'$ differs from $a$ by
		a single Dehn twist along one end of the annulus, then
		\[
		\hat{F}_{\cC,a'} = \hat{F}_{\cC,a} \circ (\id + \Phi \Psi)=(\id+\Phi\Psi)\circ
		\hat{F}_{\cC,a}.
		\]
		Consequently, the map
		$\hat{F}_{\cC,a} \circ (\Phi\Psi)$ is nilpotent, so has Lefschetz number 0 in
		each Alexander grading.
	\end{proof}
	
\begin{lem}\label{lem:symmetry}
  The graded Lefschetz number $\Lef_z(\cC)$ is symmetric with respect to
  the conjugation $z \mapsto z^{-1}$.
\end{lem}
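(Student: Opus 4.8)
The plan is to deduce the symmetry of $\Lef_z(\cC)$ from two ingredients: the conjugation symmetry of knot Floer homology, which reverses the Alexander grading, and the equivariance of the concordance cobordism map of \cite{JCob} under this symmetry; the preceding lemma will then absorb a harmless change of decoration.

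First I would recall the conjugation symmetry of knot Floer homology. The Alexander grading negates under conjugation of relative $\Spin^c$ structures, and on the chain level this is realized by the tautological identification of a doubly-pointed Heegaard diagram $(\Sigma,\as,\bs,w,z)$ for $(Y,K)$ with the diagram obtained by interchanging the basepoints $w$ and $z$. This yields, for each $i\in\Z$, a natural isomorphism
\[
\iota_i\colon \HFKh(Y,K,i)\xrightarrow{\cong} \HFKh(Y,K,-i);
\]
see \cite{OSKnots}*{Section~3.5}.

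Next I would use that the concordance maps of \cite{JCob} are equivariant with respect to $\iota$. Letting $\bar a$ denote the decoration obtained from $a$ under the basepoint interchange that implements conjugation, the handle-by-handle construction of $\hat F_{\cC,a}$ is assembled from data that is symmetric under relabeling the two basepoints, so it should give
\[
\iota\circ \hat F_{\cC,a}=\hat F_{\cC,\bar a}\circ \iota
\]
as endomorphisms of $\HFKh(Y,K)$ (over $\bF_2$ there are no signs to worry about, and the Lefschetz number is a homotopy invariant, so it suffices to work up to homotopy). Restricting to Alexander grading $i$, this exhibits $\hat F_{\cC,a}|_{\HFKh(Y,K,i)}$ and $\hat F_{\cC,\bar a}|_{\HFKh(Y,K,-i)}$ as conjugate via the isomorphism $\iota_i$, so they have equal Lefschetz numbers. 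Since $\bar a$ is again a pair of parallel arcs joining the two ends of the annulus — indeed, after the evident symmetry of the annulus one may take $\bar a=a$ — the preceding lemma shows that $\hat F_{\cC,\bar a}$ and $\hat F_{\cC,a}$ have the same Lefschetz number in each Alexander grading. Combining these equalities,
\[
\Lef\bigl(\hat F_{\cC,a}|_{\HFKh(Y,K,i)}\bigr)=\Lef\bigl(\hat F_{\cC,a}|_{\HFKh(Y,K,-i)}\bigr)
\]
for every $i\in\Z$, which is exactly invariance of $\Lef_z(\cC)$ under $z\mapsto z^{-1}$.

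The step I expect to be the main obstacle is making the equivariance $\iota\circ \hat F_{\cC,a}=\hat F_{\cC,\bar a}\circ \iota$ precise: one must check through the construction in \cite{JCob} that conjugation commutes with each elementary cobordism map (up to the controlled relabeling of basepoints and dividing regions), and confirm that the induced grading reversal is the honest involution $i\mapsto -i$, with no surviving Maslov or Alexander shift. These points are formal once the relative $\Spin^c$ decomposition of $\HFKh$ and the construction of $\hat F_{\cC,a}$ are unwound, but that is where the care is needed.
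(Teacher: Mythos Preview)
Your proposal is correct and follows essentially the same approach as the paper: both use the conjugation symmetry of knot Floer homology together with the equivariance of the concordance cobordism map under this symmetry. The paper's proof simply cites these two ingredients, invoking \cite{OSKnots}*{Proposition~3.10} for the first and \cite{ZemkeConnectedSums}*{Theorem~1.3} for the second; the equivariance you flag as the main obstacle is thus already established in the literature, so you need not unwind the construction of \cite{JCob} yourself.
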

	
\begin{proof}
The proof follows easily from the conjugation symmetry of the knot Floer
homology groups  \cite{OSKnots}*{Proposition~3.10}, as well as the
corresponding symmetry of the knot cobordism maps
\cite{ZemkeConnectedSums}*{Theorem~1.3}.
\end{proof}
	
	If $X$ is a closed, oriented 4-manifold with a smoothly embedded 2-torus $T$
	such that $[T] \neq 0 \in H_2(X;\R)$, then we can pick a basis
    $\ve{b} = (b_1,\dots, b_n)$ of $H^2(X;\R)$ such that
	\begin{equation}
	\langle\, b_1, [T] \,\rangle = 1 \text{ and } \langle\, b_i, [T] \,\rangle = 0 \text{ for } i > 1.
	\label{eq:basisof2-forms}
	\end{equation}
	This induces a basis of $H^2(X_\cC;\R)$ that we also denote by $\ve{b}$.
	We restate our main theorem.
	
	\begin{customthm}{\ref{thm:1}}\label{thm:akbulutconcordanceformula}
		Let $X$ be a closed, oriented 4-manifold such that $b_2^+(X) \ge 2$.
		Suppose that $T$ is a smoothly embedded 2-torus in $X$ with trivial
		self-intersection,
		such that $[T] \neq 0 \in H_2(X;\R)$. Furthermore, let $\ve{b} =
		(b_1,\dots, b_n)$ be a basis of $H^2(X; \R)$ satisfying equation~\eqref{eq:basisof2-forms}.
		If $\cC$ is a self-concordance of $(Y,K)$, where $Y$ is a homology 3-sphere,
		then
		\[
		\Phi_{X_{\cC}; \ve{b}} = \Lef_{z_1}(\cC) \cdot \Phi_{X; \ve{b}}.
		\]
	\end{customthm}
	
	In order to prove Theorem~\ref{thm:akbulutconcordanceformula}, we need to
	perform several computations.
	Let $\cC$ be a self-concordance of a knot~$K$ in the homology 3-sphere~$Y$.
	On the torus $T_{\cC} \subset S^1 \times Y$, we
	pick a pair of dividing curves, each intersecting
	$\{1\} \times K$ exactly once. Such dividing curves
	are determined up to Dehn twists about $\{1\}\times K$.
	The dividing set specifies an
	isotopically unique, positive, $S^1$-invariant
	contact structure~$\xi_{\cC}$ on~$\bT^3 = -\d N(T_{\cC})$,
	by the work of Lutz \cite{LutzS1InvariantContactStructures}.
	Note that this contact structure is positive with respect
	to the boundary orientation from $W_{\cC}$.
	
	\begin{prop}\label{prop:computehatmapsusingsuturedmaps}
		Let $\omega_{\cC}$ be a closed 2-form on the 4-manifold $W_{\cC}$,
		Poincar\'e dual to $\{1\} \times \cS_K$, where $\cS_K$ is a Seifert surface
		for the knot $K$.
		If we view $W_{\cC}$ as a cobordism from $-\bT^3$ to $\emptyset$,
        and write $\tau_\cC = \omega_\cC|_{\partial W_\cC}$, then
		\[
		\hat{F}_{W_{\cC}; \omega_{\cC}}\left(\hat{c}(\xi_{\cC}; \tau_{\cC})\right) \doteq
		\Lef_z(\cC),
		\]
		as an element of $\HFh(\emptyset; \Lambda) \cong \Lambda$, where
        $\hat{c}\left(\xi_{\cC}; \tau_{\cC}\right) \in \HFh(-\bT^3; \Lambda_{\tau_{\cC}})$
        is the contact class of $\xi_{\cC}$ twisted by $\tau_{\cC}$.
	\end{prop}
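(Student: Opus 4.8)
The plan is to reduce the computation of $\hat{F}_{W_\cC;\omega_\cC}$ applied to the twisted contact class to a sutured Floer computation over the knot complement, and then to recognize the answer as a graded Lefschetz trace. First I would use the description of $W_\cC$ as obtained from $S^1 \times Y(K)$ by a "mapping torus of the concordance" construction: removing $N(T_\cC)$ from $S^1\times Y$ leaves a manifold $W_\cC$ with boundary $\bT^3$, and this manifold is naturally the mapping torus of the self-concordance $\cC$ viewed as a cobordism from $Y(K)$ to itself. Concretely, $W_\cC$ decomposes as the union of a cobordism coming from $\cC$ and the "pair of pants"/trace cobordism gluing the two ends of the knot complement together around a circle. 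The sutured TQFT of \cite{JCob}, together with the trace and cotrace cobordism computations of \cite{JuhaszZemkeContactHandles}*{Theorem~1.1}, then lets us identify $\hat{F}_{W_\cC;\omega_\cC}$ evaluated on the contact class with the composition $\tr \circ \hat{F}_{\cC;\omega_\cC} \circ \cotr$, where $\cotr$ produces the contact class. The key point is that taking this trace of an endomorphism of a $\Z$-graded vector space, weighted by the $z$-powers recording the Alexander grading, is literally the graded Lefschetz number.

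The steps, in order, would be: (1) Identify $W_\cC$, as a cobordism from $-\bT^3$ to $\emptyset$, with the composition of (a) a cobordism $\cW_\cC$ from $-\bT^3$ to $S^1\times_\phi$ (a manifold built from two copies of $Y(K)$), or more precisely express $W_\cC$ via the contact-handle decomposition coming from $\cC$; the relevant statement is that $\bT^3 = -\d N(T_\cC)$ carries the $S^1$-invariant contact structure $\xi_\cC$, and capping it off corresponds to a trace cobordism on sutured Floer homology. (2) Use the naturality and composition law for perturbed sutured cobordism maps, Proposition~\ref{prop:composition-law}, to write $\hat F_{W_\cC;\omega_\cC}(\hat c(\xi_\cC;\tau_\cC)) \doteq (\text{trace map}) \circ (\hat F_{\cC;\omega_\cC}) \circ (\text{cotrace/contact-class insertion})$, where $\hat F_{\cC;\omega_\cC}$ is the perturbed sutured cobordism map of the self-concordance on $\SFH(Y(K);\Lambda_{\omega_\cC})\cong \HFKh(Y,K)\otimes\Lambda$. (3) Apply Lemma~\ref{lem:Alexander-grading}: the perturbation $\omega_\cC = \omega_{\cS_K}$, being Poincaré dual to $\{1\}\times\cS_K$, weights the generator $\xs$ by $z^{-A(\xs)}$; hence on the summand $\HFKh(Y,K,i)$ the perturbation contributes a scalar $z^{-i}$ (up to an overall $z^\a$ ambiguity, which is harmless in the projective category). (4) Combine: the trace of $\hat F_{\cC}$ restricted to Alexander grading $i$, times $z^{i}$ (after fixing signs/normalizations via the conjugation symmetry of Lemma~\ref{lem:symmetry}), summed over $i$, is exactly $\Lef_z(\cC)$. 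The independence from the decorating arcs $a$ and from the choice of dividing curves on $T_\cC$ follows from the preceding lemmas in this section.

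I would be careful in step (2) about \emph{which} trace/cotrace cobordisms appear and about the role of the contact structure $\xi_\cC$: the contact class $\hat c(\xi_\cC;\tau_\cC)$ should correspond, under the sutured TQFT, precisely to the image of the cotrace cobordism map (this is the content of identifying $S^1$-invariant contact structures on $\bT^3$ with the relevant gluing), and this is where \cite{JuhaszZemkeContactHandles}*{Theorem~1.1} is indispensable — it identifies the trace and cotrace maps with the canonical (co)trace on the underlying vector space, which is what turns "compose and cap off" into "take a trace." The main obstacle, and the step requiring the most care, is step (2): making the identification of $W_\cC$ with the appropriate composition of sutured cobordisms completely rigorous, tracking the contact structures on the dividing hypersurfaces, and checking that the perturbing $2$-forms restrict compatibly so that the composition law Proposition~\ref{prop:composition-law} (together with Lemma~\ref{lem-modify-2-form-on-interior} to adjust $\omega$ away from the cutting hypersurfaces) actually applies. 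Once that bookkeeping is done, steps (3) and (4) are essentially formal: plug in Lemma~\ref{lem:Alexander-grading}, take the graded trace, and read off $\Lef_z(\cC)$.
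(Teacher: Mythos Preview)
Your proposal is correct and follows essentially the same approach as the paper: view $W_\cC$ as a sutured cobordism from $\emptyset$ to $\emptyset$, decompose it using the trace and cotrace cobordisms for $Y(K)$ together with the concordance cobordism, invoke \cite{JuhaszZemkeContactHandles}*{Theorem~1.1} to identify the trace/cotrace maps, use Lemma~\ref{lem:Alexander-grading} to see the perturbation records the Alexander grading, and read off the graded Lefschetz number (fixing the sign of the exponent via Lemma~\ref{lem:symmetry}). The only refinement the paper makes explicit is to insert an extra identity-cobordism factor $\mathsf{Id}_{Y(K)\sqcup -Y(K)}$ and, via Lemma~\ref{lem-modify-2-form-on-interior}, concentrate the entire perturbation there so that the concordance map itself and the trace/cotrace are unperturbed---this is exactly the bookkeeping you flag in your ``main obstacle,'' and it resolves it cleanly.
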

	
	\begin{proof}
		We consider the sutured manifold cobordism $\cW_{\cC} := (W_{\cC}, \bT^3, [\xi_{\cC}])$
		from the empty sutured manifold to itself. In Section~\ref{sec:cob-construct}, we define the sutured cobordism
		map as the composition of the contact gluing map for gluing
		$(\bT^3,\xi_{\cC})$ to the empty sutured manifold and perturbed by $\tau_{\cC}$,
        followed by 4-dimensional 1-, 2-, and 3-handle maps.
        The composition of the handle maps is the perturbed cobordism map $\hat{F}_{W_{\cC}; \omega_{\cC}}$
        induced by the cobordism $W_{\cC}$ from $\bT^3$ to $\emptyset$, as defined by Ozsv\'ath and Szab\'o~\cite{OSTriangles}. 
        Since $\bT^3$ is a closed 3-manifold, 
        the gluing map sends the generator of $\SFH(\emptyset; \Lambda) \cong \Lambda$ to
        the perturbed contact element $\hat{c}(\xi_{\cC}; \tau_{\cC})$.
        Consequently, the perturbed sutured cobordism map $F_{\cW_{\cC}; \omega_{\cC}}$ satisfies
		\[
		F_{\cW_{\cC}; \omega_{\cC}}(1) \doteq \hat{F}_{W_{\cC}; \omega_{\cC}} 
        \left(\hat{c}(\xi_{\cC}; \tau_{\cC})\right).
		\]

		Let us write $Y(K)$ for the sutured manifold obtained by adding
		two meridional sutures to $Y \setminus N(K)$.
		We decompose $\cW_{\cC}$ as
        \[
        \Cap_{Y(K)} \circ \textsf{Id}_{Y(K) \sqcup -Y(K)} \circ (\cW(\cC, a) \sqcup \textsf{Id}_{-Y(K)}) \circ \Cup_{Y(K)},
        \]
        where
		\begin{itemize}
			\item $\Cup_{Y(K)}$ is the cotrace cobordism from $\emptyset$ to $Y(K) \sqcup -Y(K)$,
			\item $\cW(\cC, a)$ is the sutured manifold cobordism from $Y(K)$ to itself
			complementary to the decorated concordance $(\cC, a)$, and $\textsf{Id}_{-Y(K)}$ is the identity
            cobordism of $-Y(K)$,
			\item $\textsf{Id}_{Y(K) \sqcup -Y(K)}$ is the identity cobordism of $Y(K) \sqcup -Y(K)$, and
			\item $\Cap_{Y(K)}$ is the trace cobordism from $Y(K) \sqcup -Y(K)$ to $\emptyset$.
		\end{itemize}
        Since $\cW_{\cC}$ is a sutured cobordism from $\emptyset$ to $\emptyset$, it
        follows from  Lemma~\ref{lem-modify-2-form-on-interior} that replacing
        $\omega_{\cC}$ with $\omega_{\cC} + d\eta$ for a 1-form $\eta$  only changes
        $F_{\cW_{\cC};\omega_{\cC}}(1)$ by an overall factor of $z^x$. Hence, we may
        assume that	the 2-form $\omega_{\cC}$ restricts trivially to $\Cup_{Y(K)}$,
        $\cW(\cC, a) \sqcup \textsf{Id}_{-Y(K)}$, and
		$\Cap_{Y(K)}$. Its restriction $\omega'$ to $\textsf{Id}_{Y(K) \sqcup -Y(K)}$
		is Poincar\'{e}-Lefschetz dual to $\{\tfrac{1}{2}\}\times \cS_K$,
		for a Seifert surface $\cS_K \subset Y(K)$.
		
		By Lemma~\ref{lem:Alexander-grading}, and since
        $\textsf{Id}_{Y(K) \sqcup -Y(K)}$ is a disjoint union of two product cobordisms, we have
		\[
		F_{\textsf{Id}_{Y(K) \sqcup -Y(K)}; \omega'}(\xs \otimes \ys) = z^{-A(\ve{x})} \cdot (\x \otimes \y),
		\]
        up to an overall factor of $z^x$ for some $x \in \R$.
		By \cite{JuhaszZemkeContactHandles}*{Theorem~1.1}, we know that $\Cup_{Y(K)}$ and
		$\Cap_{Y(K)}$ induce the canonical cotrace and trace maps, respectively.
		It follows that
		\[
        (F_{\Cap_{Y(K)};0} \circ F_{\textsf{Id}_{Y(K) \sqcup -Y(K)}; \omega'}
        \circ  F_{\cW(\cC, a) \sqcup \textsf{Id}_{-Y(K)};0} \circ F_{\Cup_{Y(K)};0})(1)
        \]
        is the graded Lefschetz number $\Lef_{z^{-1}}(\hat{F}_{\cC,a})$.
        By Lemma~\ref{lem:symmetry}, this coincides with the graded Lefschetz number
        $\Lef_{z}(\hat{F}_{\cC,a})$, completing the proof.
	\end{proof}
	
	The special case of the unknot $U$ and the trivial concordance $(I \times S^3, I \times U)$ is important.
	In this case, the dividing set on the torus $S^1 \times U \subset S^1 \times
	S^3$ determines an $S^1$-invariant, positive contact
	structure $\xi_0$ on $\bT^3 = -\d N(S^1 \times U)$. Consider the 4-manifold
	\[
	W_0 = S^1 \times (S^3 \setminus N(U)) \iso S^1 \times S^1 \times D^2.
	\]
	
	\begin{cor}\label{cor:computehatmapsusingsuturedmaps}
		Let $\omega_0$ be a closed 2-form on the 4-manifold $W_0$,
		such that $[\omega_0]$ is Poincar\'e dual to $\{(1,1)\} \times D^2$.
		If we view $W_0$ as a cobordism from $-\bT^3$ to $\emptyset$, and write $\tau_0 = \omega_0|_{\partial W_0}$, then
		\[
		\hat{F}_{W_0; \omega_0}\left(\hat{c}(\xi_0; \tau_0)\right) \doteq 1,
		\]
		as an element of $\HFh(\emptyset; \Lambda) \cong \Lambda$.
	\end{cor}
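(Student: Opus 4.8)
The plan is to obtain Corollary~\ref{cor:computehatmapsusingsuturedmaps} as the special case of Proposition~\ref{prop:computehatmapsusingsuturedmaps} corresponding to the trivial self-concordance of the unknot. Set $Y = S^3$, $K = U$, and let $\cC = (I \times S^3, I \times U)$ be the product self-concordance. Gluing the ends of the annulus $I \times U$ yields the torus $T_\cC = S^1 \times U \subset S^1 \times S^3$, and excising a tubular neighborhood gives $W_\cC = S^1 \times (S^3 \setminus N(U)) \cong S^1 \times S^1 \times D^2 = W_0$. The first step is simply to record these identifications.

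Next I would check that the contact and cohomological data match. The dividing set on $T_\cC$ consisting of two curves, each meeting $\{1\} \times U$ once, is exactly the dividing set defining $\xi_0$ on $\bT^3 = -\partial N(T_\cC)$, so $\xi_\cC = \xi_0$, and hence $\tau_\cC = \omega_\cC|_{\partial W_\cC}$ and $\tau_0 = \omega_0|_{\partial W_0}$ represent the same cohomology class. A Seifert surface $\cS_U$ of the unknot is a disk, and under $S^3 \setminus N(U) \cong S^1 \times D^2$ the surface $\{1\} \times \cS_U$ becomes $\{(1,1)\} \times D^2 \subset S^1 \times S^1 \times D^2$; thus the class Poincar\'e dual to $\{1\} \times \cS_U$, which is $[\omega_\cC]$, equals $[\omega_0]$. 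Since $W_0$ is a cobordism from $-\bT^3$ to $\emptyset$, Lemma~\ref{lem-modify-2-form-on-interior}, together with the insensitivity of the twisted contact class to exact changes of the boundary form, shows that $\hat{F}_{W_0;\omega}(\hat{c}(\xi_0;\tau))$ is unchanged up to a factor of $z^x$ when $\omega$ is replaced by any closed form representing the same relative class; so we may use $\omega_0$ and $\tau_0$ in place of $\omega_\cC$ and $\tau_\cC$.

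With these identifications in hand, Proposition~\ref{prop:computehatmapsusingsuturedmaps} gives $\hat{F}_{W_0;\omega_0}(\hat{c}(\xi_0;\tau_0)) \doteq \Lef_z(\cC)$. For the product concordance of the unknot, $\hat{F}_\cC$ is the identity of $\HFKh(S^3, U)$, as noted in the discussion following Corollary~\ref{cor:not-diffeomorphic}; hence $\Lef_z(\cC)$ is the graded Euler characteristic of $\HFKh(S^3, U)$, namely $\Delta_U(z) = 1$. Therefore $\hat{F}_{W_0;\omega_0}(\hat{c}(\xi_0;\tau_0)) \doteq 1$, which is the assertion.

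The one step that requires care is the bookkeeping in the second paragraph: verifying that the abstractly defined data $W_0$, $\xi_0$, $\tau_0$, and $[\omega_0]$ in the statement of the corollary genuinely coincide with those produced by running the concordance-surgery construction on the trivial concordance of $U$, and tracking orientations and the chosen basepoints through the identification $S^3 \setminus N(U) \cong S^1 \times D^2$. This is purely a matter of unwinding definitions; there is no analytic or homological-algebra difficulty, since the substantive computation is already contained in the proof of Proposition~\ref{prop:computehatmapsusingsuturedmaps}.
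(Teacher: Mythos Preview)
Your proposal is correct and follows exactly the approach the paper intends: the corollary is stated without proof in the paper, being simply the specialization of Proposition~\ref{prop:computehatmapsusingsuturedmaps} to the trivial self-concordance of the unknot, for which $\Lef_z(\cC)=\Delta_U(z)=1$. Your additional care in verifying that the data $(W_0,\xi_0,[\omega_0])$ match those arising from the concordance construction is more than the paper itself supplies, but is entirely in the spirit of the argument.
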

	
	A choice of dividing sets on $S^1 \times U$ in $S^1\times S^3$ and $T_{\cC}$ in
	$S^1 \times Y$ induces a diffeomorphism between $S^1 \times U$ and $T_{\cC}$
	that maps $\{1\} \times U$ to $\{1\} \times K$, well-defined up to
	isotopy. We can extend this diffeomorphism to a $D^2$-bundle map from
	$(S^1 \times U) \times D^2$ to $T_{\cC} \times D^2$. We write $\bT^3$ for
	both $-\partial N(S^1 \times U)$ and $-\partial N(T_{\cC})$, identified via
	the restriction of such a diffeomorphism. Furthermore, the contact structures
	$\xi_{0}$ and $\xi_{\cC}$ are identified by this diffeomorphism, and hence we
	will write $\xi$ for both. Similarly, the 2-forms $\tau_0 = \omega_0|_{\bT^3}$
    and $\tau_\cC = \omega_{\cC}|_{\bT^3}$ are identified,
	so we write $\tau \in \Omega^2(\bT^3)$ for both.
	
Note that $\Spin^c(W_0)\iso \Spin^c(W_{\cC})\iso \Z$. We write $\frt_k\in
\Spin^c(W_0)$ for the $\Spin^c$ structure with
\[
 c_1(\frt_k)= 2k\cdot \PD [\{1\}\times \cS_U],
\]	
where $\cS_U$ is a Seifert surface for $U$ in $S^3\setminus N(U)$, and we are
using Poincar\'{e} duality
\[
 H_2(W_0, \d W_0) \iso H^2(W_0).
\]
Similarly, we write $\frt_k'\in \Spin^c(W_{\cC})$ for the $\Spin^c$ structure
satisfying $c_1(\frt_k')=2k\cdot \PD[\{1\}\times \cS_K]$, where $\cS_K$ is a
Seifert surface for $K$ in $Y\setminus N(K)$.
	
	\begin{cor}\label{cor:mapsonplus1}
		As maps from $\HF^+(-\bT^3;\Lambda_{\tau})$ to $\HF^+(\emptyset; \Lambda) \cong
		\Lambda$, we have
		\[
		F^+_{W_{\cC}, \frt_0'; \omega_{\cC}} \doteq
		\Lef_{z}(\cC) \cdot F^+_{W_0, \frt_0; \omega_0}.
		\]
        Furthermore,
		$F^+_{W_0, \frt_k; \omega_0}$ and $F^+_{W_{\cC}, \frt_k'; \omega_{\cC}}$
		vanish for every $k \in \Z \setminus \{0\}$.
	\end{cor}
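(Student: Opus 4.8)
The plan is to deduce both statements from Proposition~\ref{prop:computehatmapsusingsuturedmaps} and Corollary~\ref{cor:computehatmapsusingsuturedmaps}, together with the fact that $\HF^+(-\bT^3;\Lambda_\tau)$ is a free $\Lambda$-module of rank one, concentrated in the torsion $\Spin^c$ structure of $\bT^3$. Those two results compute the total perturbed cobordism maps $\hat F_{W_{\cC};\omega_{\cC}}$ and $\hat F_{W_0;\omega_0}$ applied to the twisted contact class $\hat c(\xi;\tau)\in\HFh(-\bT^3;\Lambda_\tau)$; to transfer this information to $\HF^+$ I would use the natural map $\iota_*\colon\HFh\to\HF^+$ from the exact triangle relating the two theories. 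This map commutes with cobordism maps, is an isomorphism on the empty manifold, and sends $\hat c(\xi;\tau)$ to the twisted $\HF^+$-contact class $c^+(\xi;\tau)\in\HF^+(-\bT^3;\Lambda_\tau)$.

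First I would dispose of the vanishing claims. By the compatibility of Poincar\'e--Lefschetz duality with the connecting homomorphism, the restriction of $c_1(\frt_k)$ to $\bT^3$ equals $2k$ times the Poincar\'e dual in $\bT^3$ of $\d[\{1\}\times\cS_U]=[\{1\}\times\ell_U]$, and likewise for $\frt_k'$ with $\cS_K$ and $\ell_K$. Since $\ell_U$ and $\ell_K$ are longitudes, their classes are nonzero in $H_1(\bT^3)$, so $\frt_k|_{\bT^3}$ and $\frt_k'|_{\bT^3}$ are non-torsion for every $k\neq 0$; choosing an embedded torus in $\bT^3$ that pairs nontrivially with the relevant class and applying the adjunction inequality for the perturbed theory then gives $\HF^+(-\bT^3,\frt_k|_{\bT^3};\Lambda_\tau)=0=\HF^+(-\bT^3,\frt_k'|_{\bT^3};\Lambda_\tau)$, and hence $F^+_{W_0,\frt_k;\omega_0}=0$ and $F^+_{W_{\cC},\frt_k';\omega_{\cC}}=0$ for all $k\neq 0$. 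The adjunction inequality also shows that $\bT^3$ carries nonzero perturbed Floer homology only in the torsion $\Spin^c$ structure; since $\frt_0$ and $\frt_0'$ restrict to it while no $\frt_k$ or $\frt_k'$ with $k\neq 0$ does, it follows that on $\HFh(-\bT^3;\Lambda_\tau)$ the total maps $\hat F_{W_0;\omega_0}$ and $\hat F_{W_{\cC};\omega_{\cC}}$ coincide with their summands $\hat F_{W_0,\frt_0;\omega_0}$ and $\hat F_{W_{\cC},\frt_0';\omega_{\cC}}$. Consequently Corollary~\ref{cor:computehatmapsusingsuturedmaps} and Proposition~\ref{prop:computehatmapsusingsuturedmaps} yield $\hat F_{W_0,\frt_0;\omega_0}(\hat c(\xi;\tau))\doteq 1$ and $\hat F_{W_{\cC},\frt_0';\omega_{\cC}}(\hat c(\xi;\tau))\doteq\Lef_z(\cC)$.

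For the displayed identity, observe that $\tau$ represents a nonzero class in $H^2(\bT^3)$, namely the one obtained by restricting a weak symplectic filling of $(\bT^3,\xi)$ by $W_0\iso S^1\times S^1\times D^2$ to its boundary. By Ozsv\'ath and Szab\'o's computation of the twisted Heegaard Floer homology of $\bT^3$~\cite{genusbounds}, for this $\tau$ one has $\HF^+(-\bT^3;\Lambda_\tau)\iso\Lambda$. Applying $\iota_*$ to the two relations at the end of the previous paragraph, and using that $\iota_*$ commutes with cobordism maps, is an isomorphism on the empty manifold, and carries $\hat c(\xi;\tau)$ to $c^+(\xi;\tau)$, we obtain $F^+_{W_0,\frt_0;\omega_0}(c^+(\xi;\tau))\doteq 1$ and $F^+_{W_{\cC},\frt_0';\omega_{\cC}}(c^+(\xi;\tau))\doteq\Lef_z(\cC)$. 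In particular $c^+(\xi;\tau)$ is a nonzero element of the rank-one free $\Lambda$-module $\HF^+(-\bT^3;\Lambda_\tau)$, hence a generator; since a $\Lambda$-linear map out of a cyclic $\Lambda$-module is determined by its value on a generator, the last two relations force $F^+_{W_{\cC},\frt_0';\omega_{\cC}}\doteq\Lef_z(\cC)\cdot F^+_{W_0,\frt_0;\omega_0}$, which is the assertion.

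I expect the main obstacle to be the input cited in the third paragraph: that for this particular $\tau$ the twisted $\HF^+$ of $\bT^3$ collapses to a free rank-one $\Lambda$-module — this is exactly where the geometric origin of $\tau$, as the boundary restriction of the weak filling $W_0$, is used — together with the identification $c^+(\xi;\tau)=\iota_*(\hat c(\xi;\tau))$ of the two twisted contact classes, which should follow routinely from the functoriality of the exact triangle and the construction of the contact class via the gluing map. By comparison, the Poincar\'e-duality bookkeeping for the $\Spin^c$ restrictions and the perturbed adjunction inequality on $\bT^3$ are standard.
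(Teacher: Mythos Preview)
Your proof is correct and follows essentially the same approach as the paper: transfer the $\HFh$ computations of Proposition~\ref{prop:computehatmapsusingsuturedmaps} and Corollary~\ref{cor:computehatmapsusingsuturedmaps} to $\HF^+$ via $\iota_*$, then exploit that $\HF^+(-\bT^3;\Lambda_\tau)\cong\Lambda$ is rank one and supported in the torsion $\Spin^c$ structure. The only differences are cosmetic: the paper cites Ai--Peters~\cite{AiPetersTwisted}, Lekili~\cite{LekiliBrokenFibrations}, and Wu~\cite{WuPerturbedFibered} for the rank-one computation (rather than \cite{genusbounds}), and deduces the vanishing for $k\neq 0$ directly from the support statement rather than via a separate perturbed adjunction argument.
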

	
	\begin{proof}
		The contact element
		\[
		c^+(\xi; \tau) \in \HF^+(-\bT^3; \Lambda_{\tau})
		\]
		was defined by Ozsv\'ath and Szab\'o~\cite{OSContactStructures}
		as the image of $\hat{c}(\xi; \tau)$ under the natural map
		\[
		\iota_* \colon \HFh(-\bT^3;\Lambda_\tau) \to \HF^+(-\bT^3;\Lambda_{\tau}).
		\]
		Since $\iota_*$ commutes with the perturbed cobordism maps for $W_0$ and
		$W_{\cC}$ on $\HFh$ and $\HF^+$,
		we have
		\[
		F_{W_{\cC}; \omega_{\cC}}^+\left(c^+(\xi; \tau)\right) \dot{=} \Lef_z(\cC)
		\]
		by Proposition~\ref{prop:computehatmapsusingsuturedmaps}, and
		\[
		F_{W_0; \omega_0}^+\left(c^+(\xi; \tau)\right) \dot{=} 1
		\]
		by Corollary~\ref{cor:computehatmapsusingsuturedmaps}. Hence
		$c^+(\xi; \tau) \neq 0$, and
		\begin{equation}
		F_{W_{\cC}; \omega_{\cC}}^+\left(c^+(\xi, \tau)\right) \doteq
		\Lef_z(\cC)\cdot F_{W_0; \omega_0}^+\left(c^+(\xi, \tau)\right).
		\label{eq:equalityonc+(xi)}
		\end{equation}
		
		Next, we use the well-known fact that, if $\tau$ is any non-vanishing, closed
		2-form on $-\bT^3$, then
		\[
		\HF^+(-\bT^3;\Lambda_{\tau})\iso \Lambda,
		\]
		and $\HF^+(-\bT^3;\Lambda_{\tau})$ is supported in the torsion
		$\Spin^c$ structure on $-\bT^3$; see Ai and
		Peters~\cite{AiPetersTwisted}*{Theorem~1.3}, Jabuka and Mark~\cite[Theorem~10.1]{Jabuka-Mark},
		Lekili~\cite{LekiliBrokenFibrations}*{Theorem~14}, and
		Wu~\cite{WuPerturbedFibered}.
		It follows that $F_{W_{\cC}; \omega_{\cC}}^+$ and $F_{W_{0}; \omega_0}^+$,
		whose domains are thus rank~1 over $\Lambda$, must be constant multiples of each other.
		Equation~\eqref{eq:equalityonc+(xi)} and the fact that $c^+(\xi; \tau) \neq 0$
		now establish that the ratio is $\Lef_z(\cC)$, up to an overall factor of $z^x$.		
		
		Finally, the maps in the $\Spin^c$
		structures $\frt_k$ and $\frt_k'$ for $k \in \Z \setminus \{0\}$
		vanish because they have trivial domain. In particular,
		\[
		F_{W_{\cC}; \omega_{\cC}}^+ = F_{W_{\cC},\frt_0'; \omega_{\cC}}^+
		\text{ and } F_{W_0; \omega_0}^+ = F_{W_0,\frt_0; \omega_0}^+,
		\]
		completing the proof.
	\end{proof}
	
	\begin{cor}\label{cor:mapsonplus2}
		If $\omegas = (\omega_1, \dots, \omega_n)$ is a collection of closed 2-forms on
		$X$ satisfying
        \[
        \int_T \omega_1 = 1 \text{ and } \int_T \omega_i = 0 \text{ for } i > 1,
        \]
		and $\omegas' = (\omega_1', \dots, \omega_n')$ is the induced collection on
		$X_{\cC}$ under the canonical isomorphism $H^2(X_{\cC};\R) \iso H^2(X;\R)$, then
		\[
		F_{W_{\cC},\frt_0'; \omegas'|_{W_{\cC}}}^+ \dot{=} \Lef_{z_1}(\cC)
		\cdot F^+_{W_0,\frt_0; \omegas|_{W_0}},
		\]
		and both maps vanish for all other $\Spin^c$ structures.
	\end{cor}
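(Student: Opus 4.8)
The plan is to deduce this from the single-variable statement Corollary~\ref{cor:mapsonplus1}, by checking that the extra $2$-forms $\omega_2,\dots,\omega_n$ restrict cohomologically trivially to $W_0$ and $W_\cC$, so that they contribute nothing beyond an extension of scalars from $\Lambda$ to the $n$-variable Novikov ring $\Lambda_n$. First I would record the relevant cohomology. Since $W_0 = S^1\times(S^3\setminus N(U))\cong T^2\times D^2$, we have $H^2(W_0;\R)\cong\R$, detected by the pairing with $[T^2\times\{0\}]$; under the canonical identification $H^2(X;\R)\cong H^2(X_U;\R)$ this homology class is $[T]$, so $\langle\,[\omega_i|_{W_0}],[T]\,\rangle = \int_T\omega_i$. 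Similarly, the fact that $\Spin^c(W_\cC)\cong\Z$ with $c_1(\frt_k') = 2k\cdot\PD[\{1\}\times\cS_K]$ forces $H^2(W_\cC;\R)\cong\R$, detected by the pairing with $[T_\cC]=[T]$, whence $\langle\,[\omega_i'|_{W_\cC}],[T]\,\rangle = \int_T\omega_i$ as well. Therefore, for $i>1$ we get $[\omega_i|_{W_0}] = 0 = [\omega_i'|_{W_\cC}]$, while $[\omega_1|_{W_0}] = [\omega_0]$ and $[\omega_1'|_{W_\cC}] = [\omega_\cC]$, where $\omega_0$ and $\omega_\cC$ are the $2$-forms appearing in Corollary~\ref{cor:mapsonplus1} and Proposition~\ref{prop:computehatmapsusingsuturedmaps}.

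Since only the classes $b_i$ enter the statement, I am free to choose the $2$-form representatives; I would pick them so that $\omega_1|_{W_0} = \omega_0$, $\omega_1'|_{W_\cC} = \omega_\cC$, and $\omega_i|_{W_0} = 0 = \omega_i'|_{W_\cC}$ for every $i>1$ --- concretely, replacing any given representative $\omega_i$ by $\omega_i - d\tilde\rho$, where $\tilde\rho$ extends to $X_U$, resp.\ $X_\cC$, a primitive $\rho$ on $W_0$, resp.\ $W_\cC$, of the exact form $\omega_i|_{W_0}-\omega_0$, resp.\ $\omega_i'|_{W_\cC}-\omega_\cC$ (with $\omega_0,\omega_\cC$ read as $0$ when $i>1$); by Lemma~\ref{lem-modify-2-form-on-interior-closed} and the naturality of the perturbed maps the asserted identity does not depend on this choice. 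With these choices, the perturbed complexes, $4$-dimensional handle maps and contact gluing maps defining $F^+_{W_0,\frt_0;\omegas|_{W_0}}$ and $F^+_{W_\cC,\frt_0';\omegas'|_{W_\cC}}$ are obtained from the corresponding single-variable data for $\omega_0$ and $\omega_\cC$ by the base change $\Lambda\hookrightarrow\Lambda_n$ (the new variables $z_2,\dots,z_n$ appear with exponent $\int_{\tD(\phi)}0 = 0$), and since $\Lambda$ is a field this passes to homology, giving
\[
F^+_{W_0,\frt_0;\omegas|_{W_0}} = F^+_{W_0,\frt_0;\omega_0}\otimes_\Lambda\Lambda_n, \qquad F^+_{W_\cC,\frt_0';\omegas'|_{W_\cC}} = F^+_{W_\cC,\frt_0';\omega_\cC}\otimes_\Lambda\Lambda_n,
\]
and likewise the maps in the $\Spin^c$ structures $\frt_k$, $\frt_k'$ with $k\neq 0$ are the base changes of the single-variable maps, which vanish by Corollary~\ref{cor:mapsonplus1}. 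Finally, Corollary~\ref{cor:mapsonplus1} gives $F^+_{W_\cC,\frt_0';\omega_\cC}\doteq\Lef_z(\cC)\cdot F^+_{W_0,\frt_0;\omega_0}$; applying $-\otimes_\Lambda\Lambda_n$ and renaming $z$ as $z_1$ yields $F^+_{W_\cC,\frt_0';\omegas'|_{W_\cC}}\doteq\Lef_{z_1}(\cC)\cdot F^+_{W_0,\frt_0;\omegas|_{W_0}}$, together with the vanishing of both maps in all other $\Spin^c$ structures.

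The main obstacle is the cohomological bookkeeping: one must be sure that $H^2(W_0;\R)$ and $H^2(W_\cC;\R)$ are of rank one, and that the restriction map from the ambient closed $4$-manifold is detected by the pairing with $[T]$, so that the hypothesis $\int_T\omega_i = 0$ for $i>1$ genuinely renders $\omega_i$ cohomologically trivial on the surgered-in piece. For $W_0$ this is a direct K\"unneth computation; for $W_\cC$ it follows from the description of $\Spin^c(W_\cC)$ recalled in the text. Beyond this, the argument is formal, resting only on Corollary~\ref{cor:mapsonplus1} and on the fact that perturbing by a $2$-form vanishing on the relevant cobordism does not change the cobordism map.
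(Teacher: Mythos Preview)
Your proposal is correct and follows essentially the same approach as the paper: both argue that $[\omega_i|_{W_0}]$ and $[\omega_i'|_{W_\cC}]$ vanish for $i>1$, choose representatives that actually vanish there, identify the multi-variable perturbed maps as the single-variable maps tensored with $\Lambda_n$, and then invoke Corollary~\ref{cor:mapsonplus1}. You supply more detail on why the extra classes restrict trivially (the paper simply asserts it and refers to the proof of Lemma~\ref{lem:cut} for adjusting the representatives), but the strategy is the same.
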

	
	\begin{proof}
		Let the 1-variable Novikov ring $\Lambda$ act on the $n$-variable Novikov ring $\Lambda_n$
		in the variables $z_1,\dots,z_n$ via multiplication by the first variable.
        We write $\Lambda_{\omegas}$ for $\Lambda_n$ viewed as a module over $\bF_2[H_2(M)]$ via 
        the formula $e^a \cdot z_i^x = z_i^{x + \int_a \omega_i}$ for $a \in H_2(M)$ and $x \in \R$.
		Since the classes $[\omega_2], \dots, [\omega_n]$ vanish on $W_0$ and $[\omega_2'], \dots,
		[\omega_n']$ vanish on $W_{\cC}$, arguing as in the proof of
		Lemma~\ref{lem:cut}, we may assume the 2-forms $\omega_2,\dots, \omega_n$ and
		$\omega_2',\dots, \omega_n'$ have been chosen to vanish on $W_0$ and $W_{\cC}$.
		Hence, we obtain a canonical isomorphism
		\[
		\HF^+(-\bT^3;\Lambda_{\omegas|_{-\bT^3}})\iso \HF^+(-\bT^3;\Lambda_\tau)\otimes_{\Lambda} \Lambda_n.
		\]
		Immediately from the definitions, we obtain that, with respect to this decomposition,
		\[
		F_{W_0, \frt_k;
		\omegas|_{W_0}}^+=F_{W_0, \frt_k;
		\omega_1|_{W_0}}^+\otimes \id_{\Lambda_n},
		\]
		and similarly for $F_{W_{\cC}, \frt_k'; \omegas'|_{W_{\cC}}}^+$.
		The main  result now follows from Corollary~\ref{cor:mapsonplus1}.
		\end{proof}
			
	We can now prove Theorem~\ref{thm:akbulutconcordanceformula}.
	
	\begin{proof}[Proof of Theorem~\ref{thm:akbulutconcordanceformula}]
		As before, let $X_0 = X \setminus N(T)$.
		Since $b_2^+(X) \ge 2$, by analyzing the Mayer--Vietoris sequence for $X = X_0
		\cup N(T)$,
		it is easy to see that $b_2^+(X_0) \ge 1$. Hence, there is a surface~$Q$
		of positive self-intersection in the complement of~$T$.
		Let $N$ denote the boundary of a tubular neighborhood of~$Q$.
		The manifold $N$ is an admissible cut of $X$ by \cite[Example~8.4]{OSTriangles}.
        We write $W_1 = N(Q)$ and $W_2 = X \setminus \Int(N(Q))$.

        By Lemma~\ref{lem:cut}, there are 2-forms $\omegas = (\omega_1, \dots, \omega_n)$
        such that $[\omega_i] = b_i$ and $\omega_i|_N = 0$. Furthermore, we can arrange
        that $\omega_1|_{N(T)} = \omega_0$ and $\omega_i|_{N(T)} = 0$ for $i > 1$.
        We let $\omegas' = (\omega_1', \dots, \omega_n')$ be an $n$-tuple of forms on $X_{\cC}$
        such that $\omega_1'|_{N(T)} = \omega_{\cC}$ and $\omega_i'|_{N(T)} = 0$ for $i > 1$,
        while $\omega_i'|_{X_0} = \omega_i|_{X_0}$ for $i \in \{1,\dots, n\}$.	

        By Proposition~\ref{prop:perturbed-mixed-invariant},
        \[
        \Phi_{X; \ve{b}} =
        \left\langle\, \left(F_{W_2; \omegas|_{W_2}}^+ \circ \delta^{-1}
        \circ F_{W_1; \omegas|_{W_1}}^-\right)(1), \Theta_+ \,\right\rangle.
        \]
        We now apply the composition law, Proposition~\ref{prop:composition},
        to the splitting $W_2 = W_0 \cup_{\T^3} W'$, where $W_0 = N(T)$ and
        $W' = W_2 \setminus \Int(N(T))$, to obtain that
        \[
        F_{W_2; \omegas|_{W_2}}^+ \doteq F_{W_0; \omegas|_{W_0}}^+ \circ F_{W'; \omegas|_{W'}}^+.
        \]
        Similarly, if $W_2' := W_\cC \cup_{\T^3} W'$, then we have
        \[
        \Phi_{X_\cC; \ve{b}} =
        \left\langle\, \left(F_{W_2'; \omegas'|_{W_2'}}^+ \circ \delta^{-1}
        \circ F_{W_1; \omegas'|_{W_1}}^-\right)(1), \Theta_+ \,\right\rangle,
        \]
        where
        \[
        F_{W_2'; \omegas'|_{W_2'}}^+ \doteq F_{W_\cC; \omegas'|_{W_\cC}}^+ \circ F_{W'; \omegas'|_{W'}}^+.
        \]
        By construction of $\omegas'$, we have $\omegas'|_{W_1} = \omegas|_{W_1}$ and
        $\omegas'|_{W'} = \omegas|_{W'}$. Hence, it follows from Corollary~\ref{cor:mapsonplus2} that
        \begin{equation}
        \Phi_{X_\cC; \ve{b}} \doteq \Lef_{z_1}(\cC) \cdot \Phi_{X; \ve{b}}. \label{eq:almost-concordance-formula}
        \end{equation}
        Equality in equation~\eqref{eq:almost-concordance-formula}
         can be established using the conjugation symmetry of the Ozsv\'{a}th--Szab\'{o}
          4-manifolds invariants \cite{OSTriangles}*{Theorem~3.6}.
        \end{proof}

\subsection{Concordance surgery and diffeomorphism types of 4-manifolds}
\label{sec:diffeomorphism-types}

As an application of Theorem~\ref{thm:1},  we prove
Corollary~\ref{cor:not-diffeomorphic}, which states that $X$ and $X_{\cC}$
are not diffeomorphic if $\Phi_{X;\ve{b}}\neq 0$ and $\Lef_{z}(\cC)\neq 1$:

	\begin{proof}[Proof of Corollary~\ref{cor:not-diffeomorphic}]
    Choose a basis $\ve{b} = (b_1,\dots, b_n)$ of $H_2(X;\R)$ that is induced by a basis
	of $H^2(X)/\Tors$. In this situation, the invariant $\Phi_{X;\ve{b}}$
	takes values in the integral group ring $\bF[\Z^n]$. It is convenient to use the group ring notation
	\[
	e^{(a_1,\dots, a_n)}:=z_1^{a_1}\cdots z_n^{a_n},	
	\]
	where $(a_1, \dots, a_n) \in \Z^n$. If $\ve{b}=(b_1,\dots, b_n)$
    is an $n$-tuple of cohomology classes, we abbreviate
	\[
	\langle i_*(\frs-\frs_0)\cup \ve{b},[X]\rangle :=
    (\langle i_*(\frs-\frs_0)\cup b_1,[X]\rangle, \dots, \langle i_*(\frs-\frs_0) \cup b_n,[X]\rangle).
	\]
	
    Performing a change of basis to Theorem~\ref{thm:1}, we obtain
	\begin{equation}\label{eq:slightly-generalized-formula}
    \Phi_{X_{\cC};\ve{b}} = \Lef_{e^{\langle \ve{b},[T] \rangle}}(\cC) \cdot \Phi_{X;\ve{b}}.	
	\end{equation}

On the other hand, if $\phi\colon X_{\cC}\to X$ were an orientation preserving diffeomorphism, then
\begin{equation}
\Phi_{X,\frs} = \Phi_{X_{\cC},\phi^*(\frs)}
\label{eq:diffeo-invariance}
\end{equation}
 for all $\frs$. Hence
\begin{equation}
    \begin{split}
    \Phi_{X;\ve{b}}&:=\sum_{\frs\in \Spin^c(X)}  e^{\langle i_*(\frs-\frs_0)\cup \ve{b},[X]\rangle} \cdot \Phi_{X,\frs}\\
    &=\sum_{\frs\in \Spin^c(X)} e^{\langle \phi^*i_*(\frs-\frs_0)\cup \phi^*(\ve{b}),[X_{\cC}]\rangle} \cdot \Phi_{X_{\cC},\phi^*(\frs)}\\
    &=\sum_{\frs\in \Spin^c(X_{\cC})} e^{\langle i_*(\frs-\phi^*(\frs_0))\cup \phi^*(\ve{b}),[X_{\cC}]\rangle} \cdot \Phi_{X_{\cC},\frs}\\
    &\doteq \sum_{\frs\in \Spin^c(X_{\cC})} e^{\langle i_*(\frs-\frs_0)\cup \phi^*(\ve{b}),[X_{\cC}]\rangle} \cdot \Phi_{X_{\cC},\frs}\\
    &= e^{M(\phi^*)^t} \cdot \sum_{\frs\in \Spin^c(X_{\cC})} e^{\langle i_*(\frs-\frs_0)\cup \ve{b},[X_{\cC}]\rangle} \cdot \Phi_{X_{\cC},\frs}\\
    &= e^{M(\phi^*)^t} \cdot \Phi_{X_{\cC};\ve{b}}.
    \end{split}
    \label{eq:effect-of-diffeo}
\end{equation}
Here, $M(\phi^*)$ denotes the element of $\GL_n(\Z)$ induced by $\phi^*$ after
identifying $H^2(X)/\Tors$ and $H^2(X_{\cC})/\Tors$ with $\Z^n$
via the basis $\ve{b}$, and $M(\phi^*)^t$ denotes its transpose.
Also, we are writing $e^{M(\phi^*)^t}$ for the endomorphism of
$\bF[\Z^n]$ given by $e^{M(\phi^*)^t} e^{\ve{a}}=e^{M(\phi^*)^t\cdot \ve{a}}$,
where we view $\ve{a}$ as a column vector.

Equation~\eqref{eq:effect-of-diffeo} is justified as follows.
The first equality is a definition.
The second equality follows from equation~\eqref{eq:diffeo-invariance}, and the naturality of cohomology.
The third equality follows from rearranging the sum.
The fourth equality follows since $\Phi_{X;\ve{b}}$ is
invariant, up to overall multiplication by a monomial,
of the choice of base $\Spin^c$ structure $\frs_0$.
The fifth equality can be computed directly, and the final equality again holds by definition.

The ring $\bF[\Z^n]$ is a UFD, since it is the localization of the polynomial
ring $\bF[z_1,\dots, z_n]$ at monomials. Furthermore, the units are
exactly the monomials. The map $e^{M(\phi^*)^t}$ preserves the number of
irreducible factors since $e^{M(\phi^*)^t}(f\cdot g)=(e^{M(\phi^*)^t} \cdot
f)(e^{M(\phi^*)^t}\cdot g)$, the map $e^{M(\phi^*)^t}$ sends monomials to
monomials, and $e^{M(\phi^*)^t}$ is invertible.
	
In particular, if $\Lef_{z}(\cC)\neq 1$ and $\Phi_{X;\ve{b}}\neq 0$,
equation~\eqref{eq:slightly-generalized-formula} implies that
$\Phi_{X_{\cC};\ve{b}}$ has more irreducible factors than $\Phi_{X;\ve{b}}$,
while equation~\eqref{eq:effect-of-diffeo} implies they have the same number,
a contradiction.
\end{proof}

\section{Naturality of perturbed sutured Floer homology}
\label{sec:naturality}
		
This section is devoted to defining transition maps on perturbed sutured Floer
homology for naturality, and proving Theorem~\ref{thm:naturality-perturbed}.

\subsection{Changing the 2-form}
\label{sec:change-2-form}
	
We first describe transition maps for changing the 2-form by a boundary. Unlike
the transition maps for changing the Heegaard diagrams,
we usually do not want to view sutured Floer homology as a transitive system
over closed 2-forms which represent the same cohomology class. Nonetheless, the
transition maps for changing the 2-form are convenient to define.

Let $\cH$ be an admissible diagram of the balanced sutured manifold $(M,\gamma)$,
and let $\omega$ and $\omega'$ be closed cohomologous 2-forms on $M$.
Suppose $\eta$ is a 1-form such that $d\eta=\omega'-\omega$. Then we may
	define a chain isomorphism
	\[
	\Psi_{\omega\to \omega';\eta}\colon \CF_J(\cH;\Lambda_{\omega})\to
	\CF_J(\cH;\Lambda_{\omega'})
	\]
	via the formula
	\[
	\Psi_{\omega\to \omega';\eta}(z^{x} \cdot \xs)=z^{x+\int_{\gamma_{\xs}} \eta} \cdot \xs,
	\]
    where we obtain $\gamma_{\xs}$ by connecting $\xs$ to the centers of the
    disks $D_{\sca}$
    and $D_{\scb}$ along radii. We orient $\gamma_{\xs}$ from $D_{\sca}$ to $D_{\scb}$.
	The map $\Psi_{\omega\to \omega';\eta}$ is a chain map by Stokes' theorem,
	and is an isomorphism since $\Psi_{\omega' \to \omega; -\eta}$ is its inverse.
	
	\begin{lem}\label{lem:independence-rep-omega-spinc}
		When restricted to a single $\Spin^c$ structure, the map
		$\Psi_{\omega\to\omega';\eta}$
		is independent of the 1-form $\eta$ satisfying $d \eta=\omega'-\omega$, up to
		an overall factor of $z^x$.
	\end{lem}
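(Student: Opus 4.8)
The plan is to reduce the statement to the elementary fact that two generators lying in the same relative $\Spin^c$ structure are joined by a $1$-cycle that is null-homologous in $M$, so that the closed $1$-form $\eta-\eta'$ integrates to the same number along the radial arcs attached to each of them. To set up, let $\eta$ and $\eta'$ both satisfy $d\eta=d\eta'=\omega'-\omega$, and put $\zeta:=\eta-\eta'$, a closed $1$-form on $M$. Since $\Psi_{\omega\to\omega';\eta}$ merely rescales each generator, it respects the $\Spin^c$ decomposition of $\CF_J(\cH;\Lambda_\omega)$, and from the defining formula, for every generator $\xs$ with $\frs(\xs)=\ufrs$ we have
\[
\Psi_{\omega\to\omega';\eta}(z^x\cdot\xs)=z^{\int_{\gamma_{\xs}}\zeta}\cdot\Psi_{\omega\to\omega';\eta'}(z^x\cdot\xs).
\]
Hence, as maps $\CF_J(\cH,\ufrs;\Lambda_\omega)\to\CF_J(\cH,\ufrs;\Lambda_{\omega'})$, the two transition maps differ by rescaling the generator $\xs$ by $z^{\int_{\gamma_{\xs}}\zeta}$, and it suffices to show that this exponent is independent of $\xs$ as $\xs$ ranges over the generators with $\frs(\xs)=\ufrs$. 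The common value is then the exponent $x$ of the statement; note that it is allowed to depend on $\cH$, $J$, $\omega$, $\omega'$, $\eta$, $\eta'$, and $\ufrs$, but not on the generator, which is exactly what ``up to an overall factor of $z^x$'' means.

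For the key step, take two generators $\xs$ and $\ys$ with $\frs(\xs)=\frs(\ys)=\ufrs$. The paths $\gamma_{\xs}$ and $\gamma_{\ys}$ share the same endpoints, namely the centers of the compressing disks $D_{\sca}$ and $D_{\scb}$, so $\gamma_{\xs}-\gamma_{\ys}$ is an honest $1$-cycle in $M$. By the construction of the relative $\Spin^c$ grading in \cite{JDisks}, its homology class is the sutured analogue $\epsilon(\xs,\ys)\in H_1(M)$ of the usual obstruction class in closed Heegaard Floer homology, and it is Poincar\'e--Lefschetz dual to $\frs(\xs)-\frs(\ys)\in H^2(M,\partial M)$; equivalently, whenever $\pi_2(\xs,\ys)\neq\emptyset$ one has $\gamma_{\xs}-\gamma_{\ys}=-\partial\tD(\phi)$ for any $\phi\in\pi_2(\xs,\ys)$. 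Either way, $\frs(\xs)=\frs(\ys)$ forces $\epsilon(\xs,\ys)=0$ in $H_1(M;\Z)$, hence also in $H_1(M;\R)$. Since $\zeta$ is closed, Stokes' theorem yields
\[
\int_{\gamma_{\xs}}\zeta-\int_{\gamma_{\ys}}\zeta=\int_{\gamma_{\xs}-\gamma_{\ys}}\zeta=\langle[\zeta],\epsilon(\xs,\ys)\rangle=0,
\]
which is precisely what was needed.

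The one place that requires care is the identification in the second paragraph: one must match the coning construction used above to build $\tD$ with Juh\'asz's original definition of $\frs(\xs)$, in order to see that the homology class of the radial loop $\gamma_{\xs}-\gamma_{\ys}$ really is the difference of the two relative $\Spin^c$ structures. I expect this to be routine once the orientation conventions---for $\gamma_{\xs}$, oriented from $D_{\sca}$ to $D_{\scb}$, and for $\partial\tD$---are pinned down; the rest of the argument is immediate from the explicit formula for $\Psi_{\omega\to\omega';\eta}$ together with Stokes' theorem.
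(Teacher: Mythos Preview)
Your proof is correct and follows essentially the same approach as the paper: reduce to a closed $1$-form $\zeta$, use that $\ufrs(\xs)=\ufrs(\ys)$ is equivalent to $\gamma_{\xs}-\gamma_{\ys}$ being null-homologous (the paper phrases this as $\gamma_{\xs}-\gamma_{\ys}=\partial S$ for an integral $2$-chain $S$, and later cites \cite{JDisks}*{Lemma~4.7} for the precise identification you flag), and conclude by Stokes' theorem. Your caution about matching the coning construction to the relative $\Spin^c$ assignment is reasonable but not an obstacle; the paper simply asserts this equivalence.
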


	\begin{proof}
        It is sufficient to show that, if $\eta$ is a closed 1-form,
		then $\Psi_{\omega\to \omega;\eta}$ is equal to
		overall multiplication by $z^x$
		for some $x\in \R$,  when restricted to a single $\Spin^c$ structure.
		Hence, it is
		sufficient to show that, if $\underline{\frs}(\xs)=\underline{\frs}(\ys)$ and
		$d\eta=0$, then
		\[
		\int_{\gamma_{\xs}} \eta=\int_{\gamma_{\ys}} \eta.
		\]
		The condition that $\underline{\frs}(\xs)=\underline{\frs}(\ys)$ is equivalent
		to the
		condition that the integral 1-cycle $\gamma_{\xs} -\gamma_{\ys}$ is $\d S$,
		for some integral 2-chain $S$. By Stokes' theorem,
		\[
		\int_{\gamma_{\xs} -\gamma_{\ys}} \eta=\int_{S} d \eta=0,
		\]
		completing the proof.
	\end{proof}
	
	In general, the map $\Psi_{\omega\to \omega';\eta}$
	is not independent of $\eta$
	when working with multiple $\Spin^c$ structures at once,
	even if $[\omega]=[\omega']=0$; see
	Remark~\ref{rem:failure-transitive-system-over-2-forms}.

	\subsection{Change of almost complex structure maps}
	\label{sec:change-of-almost-complex-structure}	
	
	Suppose $\cH$ is an admissible diagram of $(M,\g)$. If $J$ and $J'$ are
	two
	cylindrical almost complex structures on $\Sigma\times I\times \R$,
	there is a standard Floer theoretic construction that gives a transition map
	from $\CF_{J}(\cH;\Lambda_{\omega})$ to $\CF_{J'}(\cH;\Lambda_{\omega})$;
    see Lipshitz~\cite[Section~9]{LipshitzCylindrical}.
	Pick a generic almost complex structure $\tilde{J}$ on
	$\Sigma\times I\times \R$ such that
	\[
	\tilde{J}=J\quad  \text{on}\quad \Sigma\times I\times (-\infty,a]
	\]
	and
	\[
	\tilde{J}=J'\quad \text{on} \quad \Sigma\times I\times [b,\infty),
	\]
	where $a\ll 0$ and $b\gg 0$. Define
	\[
	\Psi_{J\to J'}\colon \CF_J(\cH;\Lambda_{\omega})\to
	\CF_{J'}(\cH;\Lambda_{\omega})
	\]
	via the formula
	\[
	\Psi_{J\to J'}(z^x\cdot \xs)=\sum_{\ys\in \bT_{\sca}\cap \bT_{\scb}}
	\sum_{\substack{\phi\in \pi_2(\xs,\ys)\\
			\mu(\phi)=0}} \left( |\cM_{\tilde{J}}(\phi)| \mod 2\right) \cdot
	z^{x+A_{\omega}(\phi)}\cdot \ys.
	\]	

	\begin{lem}
		The map $\Psi_{J\to J'}$ is a chain map, and is independent of $\tilde{J}$, up
		to chain homotopy.
	\end{lem}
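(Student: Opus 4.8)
The plan is to run the standard continuation-map argument from Heegaard Floer theory, the one genuinely new point being the bookkeeping of the Novikov exponents. First I would record the key additivity property of the perturbation functional: if $\phi_1 \in \pi_2(\xs,\ys)$ and $\phi_2 \in \pi_2(\ys,\zs)$, then $\cD(\phi_1 * \phi_2) = \cD(\phi_1) + \cD(\phi_2)$ as domains on $\Sigma$, and since the coning construction is linear, $\tD(\phi_1 * \phi_2) = \tD(\phi_1) + \tD(\phi_2)$; hence $A_\omega(\phi_1 * \phi_2) = A_\omega(\phi_1) + A_\omega(\phi_2)$. This is exactly what makes the $z$-weights compatible with concatenation of holomorphic curves.

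To see that $\Psi_{J\to J'}$ is a chain map, I would examine, for each pair $\xs$, $\zs$ and each $\phi \in \pi_2(\xs,\zs)$ with $\mu(\phi)=1$, the one-dimensional moduli space $\cM_{\tilde J}(\phi)$. Since $\tilde J$ has no $\R$-translation symmetry, this is a compact $1$-manifold with boundary, whose ends — by the Gromov compactness and gluing analysis imported verbatim from the unperturbed setting — are in bijection with broken configurations of one of two types: a curve in $\cM_J(\phi_1)/\R$ with $\mu(\phi_1)=1$ followed by a curve in $\cM_{\tilde J}(\phi_2)$ with $\mu(\phi_2)=0$, where $\phi_1 * \phi_2 = \phi$; or a curve in $\cM_{\tilde J}(\phi_1)$ with $\mu(\phi_1)=0$ followed by a curve in $\cM_{J'}(\phi_2)/\R$ with $\mu(\phi_2)=1$. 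The count of ends is $0 \bmod 2$; weighting the resulting identity by $z^{x+A_\omega(\phi)}$, invoking the additivity above to regroup, and summing over $\phi$ and $\zs$, yields $\partial_{J'}\circ\Psi_{J\to J'} = \Psi_{J\to J'}\circ\partial_J$. Admissibility of $\cH$ ensures, as in Section~\ref{sec:perturbed-complexes}, that the relevant sums are finite in each relative $\Spin^c$ class, and together with the lower bound this places on $A_\omega$, that they define genuine elements of $\Lambda$, so $\Psi_{J\to J'}$ really is a $\Lambda$-module map.

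For independence of $\tilde J$ up to chain homotopy, suppose $\tilde J_0$ and $\tilde J_1$ both interpolate from $J$ to $J'$. I would choose a generic path $(\tilde J_t)_{t\in[0,1]}$ joining them with each $\tilde J_t$ equal to $J$ near $\R = -\infty$ and to $J'$ near $\R = +\infty$, and set
\[
h(z^x\cdot\xs) = \sum_{\zs}\sum_{\substack{\phi\in\pi_2(\xs,\zs)\\ \mu(\phi)=-1}}\left(\big|\{(t,u): u\in\cM_{\tilde J_t}(\phi)\}\big| \bmod 2\right)\cdot z^{x+A_\omega(\phi)}\cdot\zs.
\]
Analyzing the compactified one-dimensional parametrized moduli spaces $\bigcup_{t}\{t\}\times\cM_{\tilde J_t}(\phi)$ for $\mu(\phi)=0$, the boundary at $t=0$ and $t=1$ recovers the two transition maps, while the degenerate ends recover $\partial_{J'}\circ h + h\circ\partial_J$, the $z$-weights again matching by additivity of $A_\omega$. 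This gives $\Psi^{\tilde J_0}_{J\to J'} \simeq \Psi^{\tilde J_1}_{J\to J'}$.

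The only point requiring care beyond citing the unperturbed theory is this consistency of the Novikov exponents: the additivity of $A_\omega$ under juxtaposition of discs, and the interaction of admissibility with the resulting bound on $A_\omega$ guaranteeing that the weighted counts land in $\Lambda$. All the hard analysis — transversality, compactness, gluing — is identical to the construction in Section~\ref{sec:perturbed-complexes} and to \cite{LipshitzCylindrical}, so I would quote it rather than reproduce it.
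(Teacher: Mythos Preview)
Your proposal is correct and follows essentially the same approach as the paper: analyze the ends of index~1 $\tilde{J}$-moduli spaces for the chain-map statement, and construct the chain homotopy by counting index~$-1$ curves in a one-parameter family $(\tilde{J}_t)_{t\in I}$ for the independence statement. Your version is more explicit than the paper's about the additivity of $A_\omega$ under concatenation and the finiteness issues, but the argument is the standard continuation-map one in both cases.
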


	\begin{proof} The claim that $\Psi_{J\to J'}$ is a chain map is proven by
		counting the ends of the moduli spaces of index 1, $\tilde{J}$-holomorphic
		curves. The claim that $\Psi_{J\to J'}$
		is independent of $\tilde{J}$ is proven
		by taking two generic choices $\tilde{J}_0$ and $\tilde{J}_1$, and connecting
		them via a path $(\tilde{J}_z)_{t\in I}$. A chain homotopy between the map
		which counts $\tilde{J}_0$-holomorphic curves and the map which counts
		$\tilde{J}_1$-holomorphic curves is given by counting index $-1$ curves that
		are $\tilde{J}_t$-holomorphic for some $t\in I$.
	\end{proof}	
	
	\subsection{Perturbed stabilization maps}
	\label{sec:stabilization}
	Suppose that $\cH=(\Sigma,\as,\bs)$ is an admissible diagram of $(M,\gamma)$, and
	$\cH'=(\Sigma',\as\cup \{\alpha'\}, \bs\cup \{\beta'\})$ is a stabilization of
	$\cH$; i.e., there is a 3-ball $B$ in $\Int (M)$ such that
	\begin{enumerate}
		\item $B\cap \Sigma$ is a disk and $B\cap \Sigma'$ is a punctured 2-torus
		that contains the curves $\alpha'$ and $\beta'$,
		and is disjoint from $\as\cup \bs$,
		\item $\Sigma \setminus B = \Sigma' \setminus B$, and
		\item $\alpha'$ and $\beta'$ intersect transversely at a single point $c$.
	\end{enumerate}
	
	The stabilization map
	\[
	\sigma \colon \CF(\cH)\to \CF(\cH')
	\]
	is given by $\sigma(\xs)=\xs\times c.$ According to \cite{OSDisks}*{Theorem~10.2},
    for a sufficiently stretched almost complex structure, the map $\sigma$ is a chain map.
    See Lipshitz~\cite[Section~12]{LipshitzCylindrical} for the corresponding result in the 
    cylindrical reformulation. We define the perturbed stabilization map
	\[
	\sigma\colon \CF(\cH;\Lambda_{\omega})\to \CF(\cH';\Lambda_{\omega})
	\]
	via the formula $\sigma(z^x\cdot \xs)=z^x \cdot (\xs\times c).$

	\begin{lem}\label{lem:perturbed-stabilization-chain-map}
		For a sufficiently stretched almost complex structure,
		the perturbed stabilization map $\sigma\colon \CF(\cH;\Lambda_{\omega})\to
		\CF(\cH';\Lambda_{\omega})$ is a chain map.
	\end{lem}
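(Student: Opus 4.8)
The plan is to deduce the perturbed statement from the unperturbed one of Ozsv\'ath and Szab\'o, the only additional point being a comparison of the $\omega$-areas $A_\omega(\phi)$ under stabilization. Since $\alpha'$ and $\beta'$ intersect in the single point $c$, every generator of $\cH'$ has the form $\xs\times c$, so $\sigma$ is in fact an isomorphism of $\Lambda$-modules, namely the relabeling $\xs\mapsto \xs\times c$; it also respects the splitting into relative $\Spin^c$ structures, since $c$ lies in a ball and contributes trivially. Writing $\d$ and $\d'$ for the perturbed differentials on $\CF(\cH;\Lambda_\omega)$ and $\CF(\cH';\Lambda_\omega)$, the lemma is equivalent to the identity
\[
\sum_{\substack{\phi'\in\pi_2(\xs\times c,\,\ys\times c)\\ \mu(\phi')=1}}\left(|\cM(\phi')/\R|\bmod 2\right)z^{A_\omega(\phi')}\;=\;\sum_{\substack{\phi\in\pi_2(\xs,\ys)\\ \mu(\phi)=1}}\left(|\cM(\phi)/\R|\bmod 2\right)z^{A_\omega(\phi)}
\]
for every pair of generators $\xs,\ys$, and for a sufficiently stretched almost complex structure.

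For the unperturbed data, I would invoke the proof of \cite{OSDisks}*{Theorem~10.2}: for a sufficiently stretched almost complex structure it provides a bijection between the index-$1$ classes $\phi'$ on $\cH'$ with nonempty moduli space and the index-$1$ classes $\phi$ on $\cH$ with nonempty moduli space, preserving the mod-$2$ count of the quotient moduli spaces, and under which $\cD(\phi')$ agrees with $\cD(\phi)$ away from the stabilization ball $B$, via the identification $\Sigma\setminus B=\Sigma'\setminus B$. In particular, the coefficients in the two sums above already match, so it remains only to check that the exponents agree, i.e.\ that $A_\omega(\phi')=A_\omega(\phi)$ for corresponding classes.

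The main step is therefore to compare the coned $2$-chains $\tD(\phi)$ and $\tD(\phi')$ in $M$. Arrange the compressing disks $D_{\sca},D_{\scb}$ for $\cH$, together with their radial foliations, to be disjoint from $B$, and take the new compressing disks $D_{\alpha'},D_{\beta'}$ bounded by $\alpha'$ and $\beta'$ to lie inside $B$. Away from $B$ the domains and all of the coning data for $\cH$ and $\cH'$ coincide, so $\tD(\phi')-\tD(\phi)$ is a $2$-chain supported in $B$. It is moreover a $2$-cycle: by construction $\d\tD(\phi')=\gamma_{\ys\times c}-\gamma_{\xs\times c}$ and $\d\tD(\phi)=\gamma_{\ys}-\gamma_{\xs}$, and the contributions of $c$ to $\gamma_{\xs\times c}$ and to $\gamma_{\ys\times c}$ --- the radial arcs from $c$ to the centers of $D_{\alpha'}$ and $D_{\beta'}$ --- are identical, so they cancel in the difference and leave $\d\tD(\phi')=\d\tD(\phi)$. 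A $2$-cycle supported in the ball $B$ bounds a $3$-chain $V$ in $B$, and since $\omega$ is closed, Stokes' theorem gives
\[
A_\omega(\phi')-A_\omega(\phi)=\int_{\d V}\omega=\int_V d\omega=0.
\]

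Combining these points gives $\sigma\circ\d=\d'\circ\sigma$, proving the lemma. The analytic content --- the neck-stretching identification of moduli spaces --- is already contained in \cite{OSDisks}, so the only genuinely new work is the bookkeeping of the coned domains in the third paragraph. I expect this to be the main (though modest) obstacle: one has to confirm that the natural identifications of Heegaard surface, compressing disks, and radial arcs under stabilization really do exhibit $\tD(\phi')-\tD(\phi)$ as a null-homologous cycle inside the ball $B$, so that the closedness of $\omega$ forces the $\omega$-areas to agree.
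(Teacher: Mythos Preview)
Your proposal is correct and follows essentially the same approach as the paper: reduce to the unperturbed result of Ozsv\'ath--Szab\'o for the moduli counts, then show $A_\omega(\phi')=A_\omega(\phi)$ by arguing that $\tD(\phi')-\tD(\phi)$ is null-homologous inside the ball $B$ and applying Stokes' theorem. The only cosmetic difference is that the paper names the bounding $3$-chain explicitly as a sum of solid tori, whereas you invoke $H_2(B)=0$ abstractly; both arguments give the same conclusion.
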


	\begin{proof}
	 If $\phi\in \pi_2(\xs,\ys)$
	is a class on $\cH$, there is a unique class $\phi'\in \pi_2(\xs\times
	c,\ys\times c)$ whose domain agrees with $\phi$ on $\Sigma\setminus B$. The
	class $\phi'$ has the same Maslov index as $\phi$. Ozsv\'{a}th and Szab\'{o}
	showed that, if $\mu(\phi)=1$, and if the almost complex structure on $\Sigma'$
	is sufficiently stretched, then
	\begin{equation}\label{eq:stabilization}
	|\cM(\phi)/\R| \equiv |\cM(\phi')/\R| \mod{2}.
	\end{equation}

		We  note that the 2-chain $\tilde{\cD}(\phi')$ only differs from
		$\tilde{\cD}(\phi)$ in the 3-ball $B$. Furthermore, there is an
		integral 3-chain $C_3$ (a sum of solid tori) such that
		\[
		\tilde{\cD}(\phi')+\d C_3=\tilde{\cD}(\phi).
		\]
		Hence $A_{\omega}(\phi')=A_{\omega}(\phi)$, so
equation~\eqref{eq:stabilization} implies that
		 $\sigma$ is a chain map on the perturbed complex.
	\end{proof}
	
	\subsection{Perturbed isotopy maps}
	\label{sec:perturbed-diffeos}
	
	Suppose that $(\phi_t)_{t \in I}$ is an isotopy of $M$, satisfying
	 $\phi_0=\id_M$. For convenience, let us assume that $\phi_t$ is constant
	for $t$ in a neighborhood of $\d I$. If
	$\cH = (\S,\as,\bs)$ is an admissible diagram for $(M,\gamma)$, write
	$\cH'$ for the diagram obtained by pushing forward $\Sigma$  along
	$\phi_1$. Let $J$ be a cylindrical almost complex structure on $\Sigma\times
	I \times \R$, and let $J'$ denote its pushforward along $\phi_1$.
    Given a choice of compressing disks $D_{\sca}$ and $D_{\scb}$ for $\cH$,
    we use $\phi_1(D_{\sca})$ and $\phi_1(D_{\scb})$ for $\cH'$.
	
	If $\xs \in \T_{\sca} \cap \T_{\scb}$ is an
	intersection point on $\cH$, let $\gamma_{\xs}$
	denote the 1-chain obtained by coning the
	points of $\xs$ into $U_{\a}$ and $U_{\b}$,
	and let $\Gamma_{\xs,\phi_t}$ denote the 2-chain in $M$ obtained by sweeping out
	$\gamma_{\xs}$ under $\phi_t$.
	We define
	\[
	(\phi_t)_*\colon \CF_J(\cH;\Lambda_\omega)\to \CF_{J'}(\cH';\Lambda_{\omega})
	\]
	via the formula
	\[
	z^x \cdot \xs\mapsto z^{x+\int_{\Gamma_{\xs,\phi_t}} \omega} \cdot \phi_1(\xs).
	\]
	Stokes' theorem can be used to show that $(\phi_t)_*$ is a chain map.
	We define the transition map for the isotopy $(\phi_t)_{t \in I}$ from $\cH$ to
	its image $\cH'$ to be $(\phi_t)_*$.
	
	\begin{rem}\label{rem:compressing-disks-transition-map}
    As a special case of the above construction,
	when $\phi_t$ fixes the Heegaard surface pointwise for all $t$,
	the map $(\phi_t)_*$ induces a map for
	transitioning between collections of compressing
	disks that are related by an ambient isotopy fixing $\Sigma$ pointwise.
	A similar construction gives a map for transitioning
	between collections of compressing disks that are instead only
	isotopic as maps from $D^2$ into $Y$, relative to $\d D^2$.
	The construction also adapts to give a transition map for changing the
	choice of radial foliation on the disks.
	\end{rem}

	The map $(\phi_t)_*$ depends only on $\phi_1$, in the following sense:
	
	\begin{lem}\label{lem:no-monodromy-spinc}
		Suppose that $(\phi_t)_{t\in I}$ and $(\psi_t)_{t\in I}$
		are two isotopies of $(M,\gamma)$, such that
		$\phi_0 = \psi_0 = \id_{(M,\gamma)}$, and $\phi_1 = \psi_1$. Then
		$(\phi_t)_* \doteq (\psi_t)_*$ on each $\Spin^c$
		structure. If $[\omega]=0$, then $(\phi_t)_* \doteq (\psi_t)_*$
		on all of $\CF_J(\cH;\Lambda_\omega)$.
	\end{lem}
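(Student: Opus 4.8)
The plan is to exploit that $\phi_1=\psi_1$, so that $(\phi_t)_*$ and $(\psi_t)_*$ act by the same map on generators, $\xs\mapsto\phi_1(\xs)=\psi_1(\xs)$, and differ only in the power of $z$ they attach. Concretely, on each generator $\xs\in\T_{\sca}\cap\T_{\scb}$ one gets
\[
(\phi_t)_*(z^x\cdot\xs)=z^{e(\xs)}\cdot(\psi_t)_*(z^x\cdot\xs),\qquad e(\xs):=\int_{\Gamma_{\xs,\phi_t}}\omega-\int_{\Gamma_{\xs,\psi_t}}\omega=\int_{\Delta_{\xs}}\omega ,
\]
where $\Delta_{\xs}:=\Gamma_{\xs,\phi_t}-\Gamma_{\xs,\psi_t}$ is a $2$-chain in $M$. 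The lemma thus reduces to two statements about the real number $e(\xs)$: that it depends only on $\ufrs(\xs)$ in general, and that it is independent of $\xs$ altogether when $[\omega]=0$.

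First I would analyse $\partial\Delta_{\xs}$. The part of $\partial\Gamma_{\xs,\phi_t}$ coming from $t=0$ cancels the corresponding part of $\partial\Gamma_{\xs,\psi_t}$, because $\phi_0=\psi_0=\id_M$; likewise the parts coming from $t=1$ cancel because $\phi_1=\psi_1$. What remains is the $1$-chain swept out by $\partial\gamma_{\xs}$ under the two isotopies. Since $\gamma_{\xs}$ runs from the centres of the disks of $D_{\sca}$ to the centres of the disks of $D_{\scb}$, its boundary is the signed sum of those centres, which does not depend on $\xs$ (the dependence on $\xs$ only enters through a permutation of the disks). Hence $\partial\Delta_{\xs}$ is the same for every generator, and for any two generators $\xs$, $\ys$ the $2$-chain $\Delta_{\xs}-\Delta_{\ys}$ is a cycle in $M$.

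For the first claim, suppose $\ufrs(\xs)=\ufrs(\ys)$. As in the proof of Lemma~\ref{lem:independence-rep-omega-spinc}, this means $\gamma_{\xs}-\gamma_{\ys}=\partial S$ for an integral $2$-chain $S$. Writing $\Gamma_{c,\phi_t}$ for the chain swept out by a chain $c$ under $\phi_t$ (so that $\Gamma$ is linear in $c$), the prism identity $\partial\,\Gamma_{S,\phi_t}=\pm\Gamma_{\partial S,\phi_t}+\phi_1(S)-\phi_0(S)$, together with its analogue for $\psi_t$, yields after subtraction and the substitutions $\phi_0=\psi_0$, $\phi_1=\psi_1$,
\[
\Delta_{\xs}-\Delta_{\ys}=\Gamma_{\partial S,\phi_t}-\Gamma_{\partial S,\psi_t}=\pm\,\partial\bigl(\Gamma_{S,\phi_t}-\Gamma_{S,\psi_t}\bigr),
\]
so $\Delta_{\xs}-\Delta_{\ys}$ is exact. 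Since $\omega$ is closed, Stokes gives $e(\xs)-e(\ys)=\int_{\Delta_{\xs}-\Delta_{\ys}}\omega=0$, so $e$ is constant on each $\Spin^c$ summand, whence $(\phi_t)_*\doteq(\psi_t)_*$ there. For the second claim, if $[\omega]=0$ write $\omega=d\eta$; then for any two generators, since $\Delta_{\xs}-\Delta_{\ys}$ is a $2$-cycle, $e(\xs)-e(\ys)=\int_{\Delta_{\xs}-\Delta_{\ys}}d\eta=0$, so $e$ is globally constant and $(\phi_t)_*\doteq(\psi_t)_*$ on all of $\CF_J(\cH;\Lambda_\omega)$.

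The only real obstacle is careful bookkeeping of orientation conventions: confirming that the $t=0$ and $t=1$ boundary contributions to $\Delta_{\xs}$ genuinely cancel with the signs fixed in the definition of $(\phi_t)_*$, that $\partial\gamma_{\xs}$ is $\xs$-independent (so that $\Delta_{\xs}-\Delta_{\ys}$ is a cycle), and that the prism identity is applied with consistent signs so that the $\pm$ ambiguity drops out of the difference. No geometric input is needed beyond $\phi_1=\psi_1$ and the homological description of relative $\Spin^c$ structures already used in Lemma~\ref{lem:independence-rep-omega-spinc}.
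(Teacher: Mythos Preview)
Your proof is correct and follows essentially the same approach as the paper: both arguments use the prism identity on the $3$-chain swept out by a $2$-chain $S$ with $\partial S=\gamma_{\xs}-\gamma_{\ys}$ (the paper writes this as $S_{\phi_t}$, you as $\Gamma_{S,\phi_t}$), together with $\phi_1=\psi_1$, to show the relevant difference of integrals vanishes via Stokes. Your framing via $e(\xs)$ and $\Delta_{\xs}$, and your explicit separation of the ``$\Delta_{\xs}-\Delta_{\ys}$ is a cycle'' step, is a slightly cleaner organization of the same computation.
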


    \begin{proof}
		Suppose that $\xs$ and $\ys$ are two intersection points that
		represent the same $\Spin^c$ structure. This is
		equivalent to the condition that $\gamma_{\xs}  - \gamma_{\ys} = \d S$ for
		some integral 2-chain $S$ in $M$. The isotopies $\phi_t$
		and $\psi_t$ applied to $S$ sweep
		out 3-chains $C_{\phi_t}$ and $C_{\psi_t}$. We have
		\begin{equation}
		\d C_{\phi_t} = \Gamma_{\xs,\phi_t} - \Gamma_{\ys,\phi_t} - S + \phi_1(S),
        \label{eq:boundary-S-phi-t}
		\end{equation}
		and a similar formula holds for $\d C_{\psi_t}$. Integrating $d \omega = 0$ on
		$C_{\phi_t}$ and $C_{\psi_t}$, and using equation~\eqref{eq:boundary-S-phi-t}
		and Stokes' theorem, we obtain
        \begin{equation}
		\int_{\Gamma_{\xs,\phi_t}}\omega
		-\int_{\Gamma_{\ys,\phi_t}}\omega=\int_{\Gamma_{\xs,\psi_t}}\omega-\int_{\Gamma_{\ys,\psi_t}}\omega.
		\label{eq:integrals-independent-path}
		\end{equation}
		Equation~\eqref{eq:integrals-independent-path} implies that
		$(\psi_t)_*$ and $(\phi_t)_*$ differ only by an overall factor of $z^x$,
        when restricted to a single $\Spin^c$ structure.

        Suppose now that $[\omega]=0$, and let $\xs$
        and $\ys$ be any two intersection points.
        Since $\gamma_{\xs} - \gamma_{\ys}$ is a 1-cycle,
		$(\Gamma_{\xs,\phi_t} - \Gamma_{\ys,\phi_t}) -
		(\Gamma_{\xs,\psi_t} -\Gamma_{\ys,\psi_t})$
		is a 2-cycle, so $\omega$ integrates to zero over it,
        and equation~\eqref{eq:integrals-independent-path} follows.		
	\end{proof}
		
	Let $\phi$ be an automorphism of $(M,\g)$.
    If $\cH = (\S,\as,\bs)$ is an admissible diagram of $(M,\g)$ with
    a cylindrical almost complex structure $J$ on $\S \times I \times \R$,
    and $\cH' = \phi(\cH)$ and $J' = \phi_*(J)$ are their pushforwards,
    then there is a tautological chain isomorphism
	\[
	\phi_*^{\mathrm{taut}} \colon \CF_J(\cH; \Lambda_{\omega})\to
	\CF_{J'}(\cH'; \Lambda_{\phi_*(\omega)}),
	\]
	obtained by sending $z^x\cdot \xs$ to $z^x\cdot \phi(\xs)$.
	If $\phi_*(\omega)=\omega$, we have the following relation between the
	tautological map and the map from naturality:
	
	\begin{lem}\label{lem:tautological-map-naturality}
        If $(\phi_t)_{t\in I}$ is an isotopy of
		$(M,\g)$ such that $\phi_0=\id$ and $(\phi_1)_*(\omega) = \omega$, then
		\[
		(\phi_t)_* \doteq (\phi_1)^{\mathrm{taut}}_*
		\]
		on each $\Spin^c$ structure.
	\end{lem}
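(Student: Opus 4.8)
The plan is to compare the two chain isomorphisms $\CF_J(\cH;\Lambda_\omega)\to \CF_{J'}(\cH';\Lambda_\omega)$ generator by generator, and to reduce the claim to an equality of $\omega$-integrals over 2-chains built from the cones $\gamma_{\xs}$. First I would recall the formulas for the two maps. The naturality map $(\phi_t)_*$ sends $z^x\cdot\xs \mapsto z^{x+\int_{\Gamma_{\xs,\phi_t}}\omega}\cdot\phi_1(\xs)$, where $\Gamma_{\xs,\phi_t}$ is the 2-chain swept out by $\gamma_{\xs}$ under the isotopy; the tautological map $(\phi_1)^{\mathrm{taut}}_*$ sends $z^x\cdot\xs\mapsto z^x\cdot\phi_1(\xs)$. (Here I use that $(\phi_1)_*(\omega)=\omega$, so the target complex for the tautological map is literally $\CF_{J'}(\cH';\Lambda_\omega)$, and also that, by Remark~\ref{rem:compressing-disks-transition-map}, the compressing disks $\phi_1(D_{\sca}),\phi_1(D_{\scb})$ used for $\cH'$ are exactly the pushforwards of those for $\cH$, so the cone $\gamma_{\phi_1(\xs)}$ on $\cH'$ equals $\phi_1(\gamma_{\xs})$.) Thus, on a single $\Spin^c$ structure, proving $(\phi_t)_*\doteq (\phi_1)^{\mathrm{taut}}_*$ amounts to showing that $\int_{\Gamma_{\xs,\phi_t}}\omega$ is independent of $\xs$ up to an additive constant, i.e., that $\int_{\Gamma_{\xs,\phi_t}}\omega - \int_{\Gamma_{\ys,\phi_t}}\omega$ depends only on the $\Spin^c$ structure and not on the representatives $\xs,\ys$.

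The key computation is then a Stokes'-theorem argument essentially identical to the proof of Lemma~\ref{lem:no-monodromy-spinc}. If $\xs$ and $\ys$ represent the same $\Spin^c$ structure, then $\gamma_{\xs}-\gamma_{\ys}=\d S$ for some integral 2-chain $S$ in $M$. Sweeping $S$ out under the isotopy $(\phi_t)$ produces a 3-chain $S_{\phi_t}$ with boundary
\[
\d S_{\phi_t} = \Gamma_{\xs,\phi_t}-\Gamma_{\ys,\phi_t} - S + \phi_1(S),
\]
exactly as in equation~\eqref{eq:boundary-S-phi-t}. Integrating $d\omega=0$ over $S_{\phi_t}$ and applying Stokes' theorem gives
\[
\int_{\Gamma_{\xs,\phi_t}}\omega - \int_{\Gamma_{\ys,\phi_t}}\omega = \int_{S}\omega - \int_{\phi_1(S)}\omega = \int_{S}\omega - \int_{S}\phi_1^*\omega = \int_{S}\bigl(\omega - \phi_1^*\omega\bigr) = 0,
\]
where the last equality uses the hypothesis $(\phi_1)_*(\omega)=\omega$, equivalently $\phi_1^*\omega=\omega$. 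Hence the exponent $\int_{\Gamma_{\xs,\phi_t}}\omega$ is the same for all $\xs$ in a fixed $\Spin^c$ structure, so $(\phi_t)_*$ and $(\phi_1)^{\mathrm{taut}}_*$ agree up to an overall factor of $z^x$ on that $\Spin^c$ structure, which is the assertion $(\phi_t)_*\doteq(\phi_1)^{\mathrm{taut}}_*$.

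The only subtlety — and the step I would be most careful about — is the bookkeeping identifying the target complexes and the cones: one must check that with the pushforward choices of almost complex structure and compressing disks, $(\phi_1)^{\mathrm{taut}}_*$ really does land in $\CF_{J'}(\cH';\Lambda_\omega)$ (using $\phi_1^*\omega=\omega$) and that $\gamma_{\phi_1(\xs)}=\phi_1(\gamma_{\xs})$, so that the two maps have the same source and target and differ only in the $z$-power. Once that is set up, the heart of the argument is just the displayed Stokes' computation, and there is no genuine obstacle; it is a direct variant of Lemma~\ref{lem:no-monodromy-spinc}. Note that, unlike in Lemma~\ref{lem:no-monodromy-spinc}, there is no statement here for all of $\CF_J(\cH;\Lambda_\omega)$ when $[\omega]=0$, because the tautological map is genuinely a different normalization on distinct $\Spin^c$ summands; we only claim the $\Spin^c$-restricted equivalence.
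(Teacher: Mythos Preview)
Your proposal is correct and follows essentially the same approach as the paper: reduce to showing that $\int_{\Gamma_{\xs,\phi_t}}\omega$ is constant on each $\Spin^c$ structure, then use the 3-chain $S_{\phi_t}$ swept out by a bounding 2-chain $S$ with $\d S=\gamma_{\xs}-\gamma_{\ys}$, apply Stokes' theorem to obtain $\int_{\Gamma_{\xs,\phi_t}}\omega-\int_{\Gamma_{\ys,\phi_t}}\omega=\int_S\omega-\int_{\phi_1(S)}\omega$, and conclude using $(\phi_1)_*\omega=\omega$. Your extra bookkeeping about cones and target complexes is more explicit than the paper's version but amounts to the same argument.
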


    \begin{proof}
        By definition, $(\phi_t)_*(z^x\cdot \xs)=
		z^{x+\int_{\Gamma_{\xs,\phi_t}} \omega} \cdot \xs$, where
		$\Gamma_{\xs,\phi_t}$ is the 2-chain swept out by
		$\gamma_{\xs}$ under $\phi_t$.
		Hence, it is sufficient to show that,
		if $\xs$ and $\ys$ represent the same $\Spin^c$ structure, then
		\[
		\int_{\Gamma_{\xs},\phi_t} \omega=\int_{\Gamma_{\ys},\phi_t} \omega.
		\]
        As in the proof of Lemma~\ref{lem:no-monodromy-spinc},
        write $S$ for a 2-chain such that $\d S=\gamma_{\xs}-\gamma_{\ys}$.
        By equation~\eqref{eq:boundary-S-phi-t},
        and since $d \omega = 0$, we have
        \[
        \int_{\Gamma_{\xs,\phi_t}} \omega - \int_{\Gamma_{\ys,\phi_t}}
        \omega = \int_S \omega
        - \int_{\phi_1(S)} \omega.
        \]
        Since $(\phi_1)_*(\omega) = \omega$, we have
        $\int_{\phi_1(S)} \omega =
        \int_{\phi_1(S)} (\phi_1)_* (\omega) = \int_S \omega$,
        and the result follows.
    \end{proof}

\subsection{Monodromy}

	In this section, we give several examples which illustrate
	the existence of monodromy around loops
	of Heegaard diagrams.
	
	\begin{example}\label{rem:projective-monodromy}
		Suppose $D_{\sca,t}$ for $t \in I$ is a path of compressing disks
		that moves just one of the compressing disks
		$D_i$. 	 Further, assume that the center of $D_i$
		traces out a small loop in $U_{\sca}$ that bounds a disk $D_0$.
		Following Remark~\ref{rem:compressing-disks-transition-map},
		by modifying the transition maps for isotopies,
		the path $D_{\sca,t}$ induces a transition map.
		Write $\gamma_{\xs,t}$ for the 1-chain obtained by coning $\xs$
		using $D_{\sca,t}$,
		and write $\Gamma_{\xs}$ for the 2-chain
		swept out by $\gamma_{\xs,t}$ for $t \in I$. Then
		$\Gamma_{\xs}\cup D_0$ is a closed 2-chain, which is a boundary since
		$H_2(U_{\sca})=\{0\}$. Hence, the monodromy of the transition maps
		around the loop $D_{\sca,t}$ is overall
		multiplication by
		\[
		z^{\int_{\Gamma_{\xs}} \omega}=z^{-\int_{D_0} \omega},
		\]
		which may be non-zero.
	\end{example}	
	
	We now show that the perturbed isotopy maps
	can have projectively non-trivial monodromy
	over loops of Heegaard diagrams if we consider multiple $\Spin^c$ structures
	simultaneously.
	
	\begin{lem}\label{lem:monodromy}
        Suppose that $\cH$ is an admissible diagram for
		$(M,\gamma)$ and $(\phi_t)_{t\in I}$ is an isotopy of $M$ such that
		$\phi_0=\phi_1=\id_{(M,\gamma)}$. Let
		\[
		f\colon H_1(M)\to H_2(M)
		\]
		denote the composition
        $H_1(M)\to H_2(M\times S^1)\to H_2(M)$,
		where the first map is obtained via the cross product
		with the fundamental class of $S^1$, and the
		second map is induced by $\phi_t$.
		If $\underline{\frs}_0 \in \Spin^c(M,\g)$ is a fixed $\Spin^c$
		structure, then the isotopy map (summed over all $\Spin^c$ structures)
		\[
		(\phi_t)_*\colon \CF(\cH; \Lambda_{\omega})\to \CF(\cH; \Lambda_{\omega})
		\]
		is projectively equivalent to the map
		\[
		\xs\mapsto z^{\int_{f (\PD[\underline{\frs}(\xs)-\underline{\frs}_0])}
			\omega}\cdot \xs.
		\]
	\end{lem}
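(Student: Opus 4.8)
The plan is to evaluate the power of $z$ that $(\phi_t)_*$ attaches to each generator, and to identify the difference of these powers on two generators $\xs$ and $\ys$ with the period of $\omega$ over $f$ applied to the $\Spin^c$ difference of $\xs$ and $\ys$. Since $\phi_0=\phi_1=\id_{(M,\gamma)}$, the target diagram $\cH'=\phi_1(\cH)$, the pushed-forward almost complex structure, and the pushed-forward compressing disks all agree with the original data, so $(\phi_t)_*$ is an endomorphism of $\CF_J(\cH;\Lambda_\omega)$, and the formula of Section~\ref{sec:perturbed-diffeos} becomes
\[
(\phi_t)_*(z^x\cdot\xs)=z^{\,x+\int_{\Gamma_{\xs,\phi_t}}\omega}\cdot\xs,
\]
where $\Gamma_{\xs,\phi_t}$ is the $2$-chain swept out by the cone $\gamma_{\xs}$ under $\phi_t$. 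Thus it suffices to prove that, for any intersection points $\xs$ and $\ys$,
\begin{equation}\label{eq:monodromy-difference}
\int_{\Gamma_{\xs,\phi_t}}\omega-\int_{\Gamma_{\ys,\phi_t}}\omega=\int_{f(\PD[\underline{\frs}(\xs)-\underline{\frs}(\ys)])}\omega .
\end{equation}
Granting \eqref{eq:monodromy-difference}, I would fix one generator $\xs_0$ and set $x_0:=\int_{\Gamma_{\xs_0,\phi_t}}\omega-\int_{f(\PD[\underline{\frs}(\xs_0)-\underline{\frs}_0])}\omega$; then, applying \eqref{eq:monodromy-difference} with $\ys=\xs_0$ and using linearity of $\PD$, of $f$, and of $\int(-)\,\omega$, one gets $\int_{\Gamma_{\xs,\phi_t}}\omega=x_0+\int_{f(\PD[\underline{\frs}(\xs)-\underline{\frs}_0])}\omega$ for every $\xs$. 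Hence $(\phi_t)_*=z^{x_0}\cdot(\text{the asserted map})$, i.e.\ the two maps are projectively equivalent.

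To establish \eqref{eq:monodromy-difference}, first note that $\partial\gamma_{\xs}$ is the formal difference of the centers of the compressing disks $D_{\scb}$ and $D_{\sca}$, which is independent of $\xs$; hence $c:=\gamma_{\xs}-\gamma_{\ys}$ is an integral $1$-cycle, and $[c]=\PD[\underline{\frs}(\xs)-\underline{\frs}(\ys)]$ in $H_1(M)$, exactly as in the proof of Lemma~\ref{lem:independence-rep-omega-spinc}. Because $\phi_t$ is constant near $\partial I$ (and equal to $\id$ there, as $\phi_0=\phi_1=\id$), the assignment $(p,e^{2\pi i t})\mapsto\phi_t(p)$ defines a smooth map $\Phi\colon M\times S^1\to M$ restricting to $\id_M$ on $M\times\{1\}$, and by construction $f$ is the composite $H_1(M)\xrightarrow{\ \times[S^1]\ }H_2(M\times S^1)\xrightarrow{\ \Phi_*\ }H_2(M)$. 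By definition of the sweep-out, $\Gamma_{\xs,\phi_t}=\Phi_*(\gamma_{\xs}\times S^1)$, so by linearity $\Gamma_{\xs,\phi_t}-\Gamma_{\ys,\phi_t}=\Phi_*(c\times S^1)$ as singular $2$-chains; since $c$ is a $1$-cycle and $S^1$ is closed, $c\times S^1$ is a $2$-cycle, hence $\Phi_*(c\times S^1)$ is a $2$-cycle representing $\Phi_*([c]\times[S^1])=f([c])=f(\PD[\underline{\frs}(\xs)-\underline{\frs}(\ys)])$. As $\omega$ is closed, $\int_{\Phi_*(c\times S^1)}\omega$ depends only on this homology class, which is \eqref{eq:monodromy-difference}.

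The one step that needs genuine care is the identification in the previous paragraph: checking that the geometric ``sweep-out of $\gamma_{\xs}$ under $\phi_t$'' really is the push-forward $\Phi_*(\gamma_{\xs}\times S^1)$ of a product chain, that $\Phi$ is well-defined and smooth, and --- most importantly for getting the statement right --- that the orientation of the $S^1$-factor matches the fundamental class used to define $f$, so that no sign enters \eqref{eq:monodromy-difference}. Everything else is Stokes' theorem together with the standard identification of $\Spin^c$ differences with homology classes of cone differences, already invoked in the preceding lemmas. An essentially equivalent route, which may read more naturally next to Lemmas~\ref{lem:no-monodromy-spinc} and~\ref{lem:tautological-map-naturality}, is to introduce the closed $1$-form $\theta:=\int_{S^1}\Phi^*\omega\in\Omega^1(M)$ obtained by integration along the fiber, observe $\int_{\Gamma_{\xs,\phi_t}}\omega=\int_{\gamma_{\xs}}\theta$ by Fubini, and note $\langle[\theta],[c]\rangle=\langle[\omega],f([c])\rangle$; I would use whichever phrasing is cleaner in context.
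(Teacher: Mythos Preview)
Your proof is correct and follows essentially the same approach as the paper: both reduce to showing that $\int_{\Gamma_{\xs,\phi_t}}\omega-\int_{\Gamma_{\xs_0,\phi_t}}\omega=\int_{f(\PD[\underline{\frs}(\xs)-\underline{\frs}_0])}\omega$ by identifying $\gamma_{\xs}-\gamma_{\xs_0}$ with $\PD[\underline{\frs}(\xs)-\underline{\frs}_0]$ and recognizing the difference of sweep-outs as a cycle representing $f$ of this class. You are somewhat more explicit than the paper in unpacking why $\Gamma_{\xs,\phi_t}-\Gamma_{\ys,\phi_t}=\Phi_*(c\times S^1)$ realizes $f([c])$, and in handling the case where $\underline{\frs}_0$ is not assumed to be $\underline{\frs}(\xs_0)$ for a generator (the paper simply takes $\underline{\frs}_0=\underline{\frs}(\xs_0)$, which suffices since the claim is only up to projective equivalence). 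One small point: the identification $[\gamma_{\xs}-\gamma_{\ys}]=\PD[\underline{\frs}(\xs)-\underline{\frs}(\ys)]$ is not quite what is proven in Lemma~\ref{lem:independence-rep-omega-spinc} (which only uses the vanishing case); the paper cites \cite{JDisks}*{Lemma~4.7} for the full statement.
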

	
	\begin{proof}
        As in the proof of Lemma~\ref{lem:no-monodromy-spinc}, let
        $\Gamma_{\xs,\phi_t}$ denote the 2-chain obtained by
        sweeping out $\gamma_{\xs}$ under $\phi_t$.
		Let $\xs_0$ be some fixed intersection point on $\cH$,
        and let $\underline{\frs}_0=\underline{\frs}(\xs_0)$. If $\xs$ is an
		arbitrary intersection point, then
		\[
		\PD[\underline{\frs}(\xs)-\underline{\frs}_0]=\gamma_{\xs}-\gamma_{\xs_0}
		\]
		by \cite{JDisks}*{Lemma~4.7}.
		The claim now follows from the computation
		\begin{align*}
		\int_{\Gamma_{\xs,\phi_t}} \omega-\int_{\Gamma_{\xs_0,\phi_t}}\omega &=
		\int_{\Gamma_{\xs,\phi_t}-\Gamma_{\xs_0,\phi_t}}\omega\\
		&=\int_{f(\gamma_{\xs}-\gamma_{\xs_0})} \omega\\
		&=\int_{f(\PD[\underline{\frs}(\xs)-\underline{\frs}_0])} \omega. \qedhere
		\end{align*}
	\end{proof}
	
	\begin{example}\label{ex:T^3}
		Let $D\subset \bT^2$ be a closed disk, and set $M=(\bT^2\setminus \Int
		(D))\times S^1$. Let the sutures $\g\subset \d M$ be the images of two points
		in $\d D$
		under the action of $S^1$. The $S^1$-action induces a loop $\phi_t$ of
		automorphisms of $(M, \gamma)$
		based at $\id_{(M,\g)}$. The map $f$ is non-zero in this case, and hence
		$(\phi_t)_*$
        is projectively non-trivial when considered over the whole chain complex
		by Lemma~\ref{lem:monodromy}.
	\end{example}

	\subsection{Perturbed triangle maps}
	\label{sec:perturbed-triangle-maps}
	Suppose $(\Sigma,\as,\bs)$ is an admissible diagram for $(M,\gamma)$,
    and $\as'$ is obtained from $\as$ by a sequence of handleslides and
	isotopies.
    Suppose further that $(\Sigma,\as',\as,\bs)$ is admissible.
	Then there is an unperturbed holomorphic triangle map
	\[
	F_{\sca',\sca,\scb} \colon \CF(\Sigma,\as',\as) \otimes \CF(\Sigma,\as,\bs)\to
	\CF(\Sigma,\as',\bs).
	\]
	
	Pick compressing disks $D_{\sca'}$, $D_{\sca}$, and $D_{\scb}$ for $\as'$,
	$\as$,
	and $\bs$, respectively. Note that, since $U_{\sca}=U_{\sca'}$, the disks
	$D_{\sca}$ and $D_{\sca'}$ are compressing disks for the same handlebody.
	If $\psi \in \pi_2(\xs,\ys,\zs)$ is a homology class
	of triangles, we may cone
	the domain of $\psi$ along the compressing disks to obtain a 2-chain
	$\tilde{\cD}(\psi)$ in $M$.
    By integrating $\omega$ over $\tilde{\cD}(\psi)$, we obtain a real number
	$A_\omega(\psi)$. Hence, we obtain a perturbed version of the triangle map
	\[
	F_{\sca',\sca,\scb;\omega} \colon
	\CF(\Sigma,\as',\as; \Lambda_{\omega|_{U_{\sca}}}) \otimes
	\CF(\Sigma,\as,\bs; \Lambda_{\omega})\to \CF(\Sigma,\as',\bs;
	\Lambda_{\omega}).
	\]
	Some care is required in interpreting $\CF(\Sigma,\as',\as; \Lambda_{\omega|_{U_{\sca}}})$, as its definition
	differs slightly from the other two complexes. If $\xs,$ $\ys\in \bT_{\sca'}\cap \bT_{\sca}$
	and $\phi\in \pi_2(\xs,\ys)$, we cone the class $\phi$ in $U_{\sca}$, using the compressing disks
	$D_{\sca}$ and $D_{\sca'}$. We define $A_{\omega}(\phi)$
	as the integral of $\omega$ over this 2-chain in $U_{\sca}$.
	
	Since $H^2(U_{\a})=0$, we may write $\omega|_{U_{\a}}=d\eta$, for some 1-form
	$\eta\in \Omega^1(U_{\a})$. There is a chain isomorphism
	\[
	\Psi_{0\to \omega|_{U_{\a}};\eta}\colon \CF(\Sigma,\as',\as)\otimes \Lambda\to
	\CF(\Sigma,\as',\as;\Lambda_{\omega|_{U_{\sca}}}),	
	\]
	whose construction is analogous to the one in Section~\ref{sec:change-2-form}.
	The complex $\CF(\Sigma,\as',\as)$ contains a cycle $\Theta_{\sca',\sca}$ whose
	homology class is the top-graded generator of $\SFH(\Sigma,\as',\as)$. The
	cycle
	$\Theta_{\sca',\sca}$ is unique up to adding a boundary.
	We define
	\begin{equation}
	\Theta_{\sca',\sca}^\omega := \Psi_{0\to
	\omega|_{U_{\sca}};\eta}(\Theta_{\sca',\sca}\otimes
	1_\Lambda)\in \CF(\Sigma,\as',\as,\Lambda_{\omega|_{U_{\sca}}}).
	\label{eq:top-degree-generator}
	\end{equation}
	A simple modification of Lemma~\ref{lem:independence-rep-omega-spinc} implies that
	$[\Theta^{\omega}_{\sca',\sca}]$ is independent of $\eta$, up to overall
	multiplication by $z^x$.
			
	If the triple $(\Sigma,\as',\as,\bs)$ is admissible, then the transition map
	\[
	\Psi_{\sca\to \sca'}^{\scb} \colon \CF(\Sigma,\as,\bs; \Lambda_{\omega})\to
	\CF(\Sigma,\as',\bs; \Lambda_{\omega})
	\]
	is defined via the formula
	\[
	\Psi_{\sca\to \sca'}^{\scb}(-) :=
	F_{\sca',\sca,\scb;\omega}(\Theta_{\sca',\sca}^\omega,-).
	\]
	
	If $(\Sigma,\as',\as,\bs)$ is not admissible, we define $\Psi_{\sca\to
	\sca'}^{\scb}$ by picking a collection $\as''$ such that the triples
	$(\Sigma,\as',\as'',\bs)$ and $(\Sigma,\as'',\as,\bs)$ are both admissible, and
	setting $\Psi_{\sca\to \sca'}^{\scb}$ to be the composition
	of the triangle maps for  $(\Sigma,\as',\as'',\bs)$ and
	$(\Sigma,\as'',\as,\bs)$.
	A similar construction works for changes of the beta-curves.	
	
	If $(\Sigma,\as,\bs)$ and $(\Sigma,\as',\bs')$ are two admissible diagrams,
	then we define a transition map
	\begin{equation}
	\Psi_{\sca\to \sca'}^{\scb\to \scb'}:=\Psi_{\sca\to\sca'}^{\scb'}\circ
	\Psi_{\sca}^{\scb\to \scb'}.\label{eq:def-transition-alpha-and-beta}
	\end{equation}
	As in the unperturbed setting, the right-hand
	side of equation~\eqref{eq:def-transition-alpha-and-beta}
	is chain homotopic to $\Psi_{\a'}^{\b\to \b'}\circ \Psi_{\a\to \a'}^{\b}$.
	A chain homotopy may be constructed by
	counting holomorphic quadrilaterals. More generally,
	an associativity argument gives the following:

    \begin{prop}\label{prop:associativity}
		The transition map $\Psi_{\sca\to \sca'}^{\scb\to \scb'}$ is well-defined up
		to
		chain homotopy and overall multiplication by $z^x$. Furthermore,
		\[
		\Psi^{\scb'\to \scb''}_{\sca'\to \sca''}\circ \Psi_{\sca\to \sca'}^{\scb\to
			\scb'}\dotsim
		\Psi_{\sca\to \sca''}^{\scb\to \scb''}.
		\]
	\end{prop}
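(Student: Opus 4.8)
The plan is to run the classical associativity/composition argument for Heegaard Floer transition maps (as in \cite{OSDisks} and \cite{JTNaturality}), carrying the Novikov weights $z^{A_\omega(\cdot)}$ along at every step. The one new ingredient that makes this mechanical is that $A_\omega$ is \emph{additive under juxtaposition of domains}: if a class $\psi$ of triangles (or rectangles) splices to $\psi'$ along a shared edge, the coned $2$-chains add, so $A_\omega(\psi*\psi')=A_\omega(\psi)+A_\omega(\psi')$, and likewise when a triangle is spliced to a bigon. Hence every moduli-space degeneration used in the unperturbed proof remains valid, with each resulting term weighted by $z^{A_\omega}$ of its domain; in particular the perturbed triangle maps $F_{\sca',\sca,\scb;\omega}$ are chain maps, by the same four-sided degeneration of index-$1$ triangle moduli that proves $\partial^2=0$ for the perturbed complex in Section~\ref{sec:perturbed-complexes}.

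\textbf{Well-definedness.} I would first treat $\Psi^{\scb}_{\sca\to\sca'}$. It depends on the almost complex structure, on the intermediate collection $\as''$ needed when $(\Sigma,\as',\as,\bs)$ is not admissible, on the cycle $\Theta_{\sca',\sca}$ representing the top generator, and on the $1$-form $\eta$ with $d\eta=\omega|_{U_\sca}$ used to form $\Theta^\omega_{\sca',\sca}$. Independence of $\eta$ up to a factor $z^x$ is a small modification of Lemma~\ref{lem:independence-rep-omega-spinc}. Independence of $\Theta_{\sca',\sca}$ holds because two choices differ by a boundary $\partial\theta$, and $F_{\sca',\sca,\scb;\omega}(\partial\theta,-)$ is null-homotopic via $F_{\sca',\sca,\scb;\omega}(\theta,-)$. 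Independence of the almost complex structure and of $\as''$ is the usual continuation/pentagon argument with $z^{A_\omega}$-weights inserted. The beta case is symmetric, so $\Psi_{\sca\to\sca'}^{\scb\to\scb'}=\Psi_{\sca\to\sca'}^{\scb'}\circ\Psi_{\sca}^{\scb\to\scb'}$ is well-defined up to chain homotopy and $z^x$.

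\textbf{Composition law.} I would decompose the desired relation into three building blocks, each proved by counting holomorphic quadrilaterals with a varying conformal parameter: (i) an $\as$-change commutes (up to $\dotsim$) with a $\bs$-change, from the rectangle count on $(\Sigma,\as',\as,\bs,\bs')$; (ii) two successive $\as$-changes compose, $\Psi_{\sca'\to\sca''}^{\scb}\circ\Psi_{\sca\to\sca'}^{\scb}\dotsim\Psi_{\sca\to\sca''}^{\scb}$, from the rectangle count on $(\Sigma,\as'',\as',\as,\bs)$ together with the fact that $F_{\sca'',\sca',\sca;\omega}(\Theta^\omega_{\sca'',\sca'},\Theta^\omega_{\sca',\sca})$ is a cycle whose homology class equals that of $\Theta^\omega_{\sca'',\sca}$ up to a unit $z^x$; and (iii) the symmetric statement for $\bs$-changes. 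Block (ii) is where the perturbation is handled most transparently: $U_\sca=U_{\sca'}=U_{\sca''}$ is a single handlebody with $H^2=0$, so after writing $\omega|_{U_\sca}=d\eta$ and transporting the $\Theta^\omega$ elements back along the isomorphism $\Psi_{0\to\omega|_{U_\sca};\eta}$ used in their definition \eqref{eq:top-degree-generator}, the claim reduces to the unperturbed identity $F_{\sca'',\sca',\sca}(\Theta_{\sca'',\sca'},\Theta_{\sca',\sca})\simeq\Theta_{\sca'',\sca}$, picking up only the $z^x$ ambiguity from $\eta$. Rearranging $\Psi_{\sca'\to\sca''}^{\scb''}\circ\Psi_{\sca'}^{\scb'\to\scb''}\circ\Psi_{\sca\to\sca'}^{\scb'}\circ\Psi_{\sca}^{\scb\to\scb'}$ with (i)--(iii) into $\Psi_{\sca\to\sca''}^{\scb''}\circ\Psi_{\sca}^{\scb\to\scb''}=\Psi_{\sca\to\sca''}^{\scb\to\scb''}$ then completes the proof.

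\textbf{Main obstacle.} The genuine subtlety is working in the projective category: the quadrilateral degenerations naturally express a chain homotopy's boundary as a \emph{sum} of weighted compositions, but sums of morphisms are not defined in the projectivized homotopy category of Example~\ref{ex:categories}. I would therefore carry out the whole argument at the chain level — fixing honest representatives of all triangle and rectangle maps and of the $\Theta^\omega$-cycles, repeating the unperturbed degeneration arguments verbatim with $z^{A_\omega}$-weights attached, and only passing to $\dotsim$ at the end. A secondary bookkeeping point is that the intermediate collections $\as''$, $\bs''$ forced by admissibility must be chosen compatibly across all the quadrilateral diagrams simultaneously; this is routine but, as in the unperturbed naturality proof, needs to be spelled out.
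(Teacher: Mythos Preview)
Your proposal is correct and follows exactly the approach the paper indicates: the paper does not give a detailed proof of this proposition, but simply asserts that ``as in the unperturbed setting, an associativity argument gives'' the result, which is precisely the quadrilateral/pentagon degeneration argument with $z^{A_\omega}$-weights that you outline. Your discussion of the projectivity subtlety and the need to work at the chain level before passing to $\dotsim$ is a useful elaboration that the paper leaves implicit.
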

	
	\subsection{Compatibility of the triangle and isotopy maps}

	We now address compatibility of the maps induced by isotopies with the maps
	induced by counting holomorphic triangles.
	
	Let $(\Sigma,\as,\bs)$ be an admissible diagram, and $(\as_t)_{t\in I}$ a
	small Hamiltonian isotopy with $\as_0=\as$, which extends smoothly over $t\in
	\R$
    and is constant outside $I$.
	Then there is a continuation map
	\[
	\Gamma_{\sca_t,J;\omega}\colon \CF_J(\Sigma,\as_0,\bs;\Lambda_{\omega})
	\to \CF_J(\Sigma,\as_1,\bs;\Lambda_{\omega})
	\]
	that counts index-0 $J$-holomorphic curves with boundary on the cylinders
	\[
	C_{\sca_t}:=\{(p,0,t): p \in \as_t, t\in \R\}\quad \text{and} \quad
	C_{\scb}:=\{(p,1,t): p \in \bs, t\in \R\},
	\]
	weighted by their $\omega$-area.
	The cylinder  $C_{\scb}$ is Lagrangian for the product symplectic form, while
	$C_{\sca_t}$ is Lagrangian with respect to a symplectic form that has been
	deformed slightly near $\Sigma\times \{0\}\times \R$; see
	\cite{LOTSpectralSequenceII}*{Equation~(3.25)}.
	Finiteness of the counts contributing to $\Gamma_{\sca_t, J;\omega}$ follows from the work of
	Ozsv\'{a}th and Szab\'{o} \cite{OSDisks}*{Lemma~7.4}, using the admissibility
	assumption on $(\Sigma,\as,\bs)$.
	
	Compatibility of the triangle and continuation maps is given by the following
	lemma, adapted from the work of Lipshitz~\cite{LipshitzCylindrical}*{Section~11}:

	\begin{lem}\label{lem:continuation=triangle}
        Suppose that $(\Sigma,\as,\bs)$ is
		an admissible diagram for $(M,\gamma)$, and $\as'$ is a obtained from $\as$ by a
		small Hamiltonian isotopy $\as_t$ (for some symplectic form on $\Sigma$), such that
		$|\alpha_i'\cap \alpha_j|=2\delta_{ij}$, where $\delta_{ij}$ denotes the
		Kronecker delta. Let $J$ denote a cylindrical almost complex structure on
		$\Sigma\times
		I\times \R$, and let $\Gamma_{\sca_t,J;\omega}\colon \CF_J(\Sigma,\as,\bs;\Lambda_{\omega})\to
		\CF_J(\Sigma,\as',\bs;\Lambda_{\omega})$ denote the continuation map. Then
		\[
		\Gamma_{\sca_t,J;\omega}(-)\dotsim
		F_{\sca',\sca,\scb;\omega}(\Theta_{\sca',\sca}^\omega,-).
		\]
	\end{lem}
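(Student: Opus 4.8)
The plan is to adapt Lipshitz's proof of the unperturbed identity $\Gamma_{\sca_t, J} \simeq F_{\sca',\sca,\scb}(\Theta_{\sca',\sca}, -)$ from \cite{LipshitzCylindrical}*{Section~11}, carrying the $\omega$-weights along at every stage. Recall the shape of that argument: one embeds both maps into a one-parameter family of moduli problems that degenerates a holomorphic triangle carrying the incoming label $\Theta_{\sca',\sca}$ into a holomorphic curve with boundary on the moving cylinder $C_{\sca_t}$ interpolating between $\as$ and $\as'$, together with a trivial component at the $\sca'\sca$-vertex; a gluing theorem supplies the converse, Gromov compactness identifies the breaking, and the mod~$2$ count of the ends of the index~$-1$ moduli spaces over the family produces the chain homotopy. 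The analytic input --- transversality for the family and Gromov compactness --- is exactly as in \cite{LipshitzCylindrical}, and the Novikov variable enters only as an inert coefficient. Thus the only genuinely new content is checking that the $\omega$-weights match under this degeneration, which I now outline.

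First I would fix compressing disks $D_{\sca}$, $D_{\sca'}$, $D_{\scb}$, taking $D_{\sca'}$ to be the image of $D_{\sca}$ under an extension of the Hamiltonian isotopy $\as_t$, and use them both to cone triangle domains $\psi \in \pi_2(\Theta_{\sca',\sca}, \ys, \xs)$ to 2-chains $\tD(\psi)$ and to cone continuation-disk domains $\phi$ to 2-chains $\tD(\phi)$, so that the two maps weight $\psi$ by $z^{A_\omega(\psi)}$ and $\phi$ by $z^{A_\omega(\phi)}$, where $A_\omega = \int_{\tD(-)}\omega$. The heart of the matter is the following domain identity: if a triangle class $\psi$ degenerates to (equivalently, is glued from) a continuation disk $\phi$, then $\tD(\psi) = \tD(\phi) + C + \partial B$, where $C$ is a \emph{fixed} 2-chain --- the coned domain of the trivial triangle at $\Theta_{\sca',\sca}$, which is supported in $U_{\sca}$ --- and $B$ is some 3-chain in $M$. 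Since $d\omega = 0$, this gives $A_\omega(\psi) = A_\omega(\phi) + \int_C\omega$, with $\int_C\omega$ depending on neither $\psi$ nor $\phi$. This is a purely homological statement about domains and follows from the same bookkeeping as in the unperturbed case \cite{LipshitzCylindrical}; the only care needed is in matching the coning conventions for a triangle domain and for a continuation-disk domain near the $\sca'\sca$-vertex.

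To finish, recall from equation~\eqref{eq:top-degree-generator} that $\Theta_{\sca',\sca}^\omega = \Psi_{0\to\omega|_{U_{\sca}};\eta}(\Theta_{\sca',\sca}\otimes 1_\Lambda)$ is $\Theta_{\sca',\sca}$ with each term reweighted by $z^{\int_\gamma\eta}$ (for $d\eta = \omega|_{U_{\sca}}$), and that by the variant of Lemma~\ref{lem:independence-rep-omega-spinc} noted after that equation this reweighting is well-defined up to an overall factor $z^x$. Combining this normalization with the displayed domain identity, the contribution of a matched pair $(\psi,\phi)$ to $F_{\sca',\sca,\scb;\omega}(\Theta_{\sca',\sca}^\omega, -)$ and to $\Gamma_{\sca_t, J;\omega}$ coincide up to a single global factor $z^x$; since the mod~$2$ curve counts coincide by the degeneration/gluing correspondence, the two maps agree up to $z^x$ on the chain level, and the $\omega$-weighted count of the index~$-1$ ends over the family supplies the chain homotopy, giving $\Gamma_{\sca_t, J;\omega}(-) \dotsim F_{\sca',\sca,\scb;\omega}(\Theta_{\sca',\sca}^\omega, -)$. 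Were one proving this from scratch, the main obstacle would be the degeneration-and-gluing analysis of the interpolating family; since that is imported wholesale from \cite{LipshitzCylindrical}, the substantive step here is the area-additivity identity $\tD(\psi) = \tD(\phi) + C + \partial B$, whose only subtlety is the bookkeeping of the coning conventions.
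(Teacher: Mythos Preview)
Your proposal is correct and follows essentially the same route as the paper: both adapt Lipshitz's degeneration argument from \cite{LipshitzCylindrical}*{Section~11} and check that the $\omega$-weights are carried along correctly. The one place where the paper is more explicit is in the treatment of what you call the ``trivial component at the $\sca'\sca$-vertex'': rather than asserting a domain identity $\tD(\psi)=\tD(\phi)+C+\partial B$ with $C$ fixed, the paper introduces Lipshitz's monogon map $M_{\sca_t;\omega}\colon \Lambda\to \CF(\Sigma,\as',\as;\Lambda_{\omega|_{U_\sca}})$, so that the deformation gives $\Gamma_{\sca_t,J;\omega}\simeq F_{\sca',\sca,\scb;\omega}(M_{\sca_t;\omega}(1),-)$, and then invokes \cite{LipshitzCylindrical}*{Lemma~11.8} plus a model computation to conclude $M_{\sca_t;\omega}(1)=z^x\cdot \Theta_{\sca',\sca}^\omega$. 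Your ``fixed $C$'' is really the coned domain of a monogon class, and a priori there are many such classes; what makes your argument work is that any two differ by a periodic domain on $(\Sigma,\as',\as)$, which cones to a closed $2$-chain in $U_\sca$ and hence a boundary (since $H_2(U_\sca)=0$), so all monogon classes at $\Theta_{\sca',\sca}$ have the same $\omega$-area. It would strengthen your write-up to make this step explicit rather than folding it into the phrase ``fixed $2$-chain.''
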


	\begin{proof}
        The proof is an adaptation of the proof of the result in the
		unperturbed setting~\cite{LipshitzCylindrical}*{Proposition~11.4}. Lipshitz's
		proof considers the moduli space of holomorphic \emph{monogons}
		associated to the isotopy $\as_t$, which are maps from a
		Riemann surface $S$ to $\Sigma\times [0,\infty)\times \R$, which have
		punctures asymptotic to an intersection point
		$\xs\in \bT_{\sca'}\cap \bT_{\sca}$, and have boundary mapping to the cylinder
		\[
		C_{\sca_t}:=\{(p,0,t):p\in \as_t,t\in \R\}.
		\]		

		Following Lipshitz's proof, a deformation of the almost complex structure on
		$\Sigma\times I\times \R$ gives a chain homotopy between
		$\Gamma_{\sca_t,J;\omega}$
		and the composition
		\[
		F_{\sca',\sca,\scb; \omega}(M_{\sca_t;\omega}(1), -),
		\]
		where $M_{\sca_t;\omega}$ is a map from $\Lambda$ to
		$\CF(\Sigma,\as',\as;\Lambda_{\omega|_{U_{\sca}}})$
		that sums over the count of index 0
		monogons at all intersection points $\xs\in \bT_{\sca'}\cap \bT_{\sca}$. If $\xs\in
		\bT_{\sca'}\cap \bT_{\sca}$ is an intersection
		point and $\phi\in \pi_2(\xs)$ is a
		class of monogons, then $\phi$ may be coned along a family of compressing
		disks
		$D_{\sca_t}$ to obtain a 2-chain $\tilde{\cD}(\phi)$,
		on which we may integrate
		$\omega$. According to \cite{LipshitzCylindrical}*{Lemma~11.8}, there are no
		index 0 classes $\phi\in \pi_2(\xs)$ with holomorphic representatives unless
		$\xs=\Theta_{\sca',\sca}$. Furthermore, a model computation involving a
		stabilized
		diagram of $S^3$ can be used to show that $M_{\sca_t;\omega}(1)=z^x\cdot
		\Theta_{\sca',\sca}^{\omega}$ for some $x\in \R$. We refer the reader to
		\cite{LipshitzCylindrical}*{Proposition~11.4} for more details on the model
		computation.
	\end{proof}
	
	Next, we consider a diffeomorphism $\phi\colon \Sigma\to \Sigma$, which is near
	$\id_{\Sigma}$, and is the time 1 flow of a Hamiltonian vector field
	for some symplectic form on $\Sigma$. Write $\phi_t$ for the time $t$
	flow of this Hamiltonian vector field. In particular, $\phi_1 = \phi$.
	By extending $\phi_t$ to an isotopy of $M$, we obtain an isotopy map
	$(\phi_t)_*$ on the perturbed
	Floer homology, as in Section~\ref{sec:perturbed-diffeos}.
	
	\begin{prop}\label{prop:tautological=triangle}
      Suppose $(\Sigma,\as,\bs)$
	  is an admissible diagram for a sutured manifold $(M,\g)$, which
	  is equipped with a closed 2-form $\omega$,  and $\phi_t\colon \Sigma \to \Sigma$
	  is the flow of a Hamiltonian vector field (for some symplectic form on $\Sigma$),
	  as above. Write $\as_t = \phi_t(\as)$ and $\bs_t = \phi_t(\bs)$.
      Then the perturbed isotopy map $(\phi_t)_*$ satisfies
	  \[
	  (\phi_t)_*\dotsim \Psi_{J\to \phi_* (J)} \circ \Psi_{\sca\to\sca_1}^{\scb \to \scb_1}.
	  \]
	\end{prop}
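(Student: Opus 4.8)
The plan is to realize both sides as continuation maps for suitable paths of Heegaard data and then to compare them by conjugating with a fiberwise isotopy of $\Sigma\times I\times\R$. First I would rewrite the right-hand side in terms of continuation maps. By Lemma~\ref{lem:continuation=triangle} and its analogue for moving the $\bs$-curves, the triangle transition map $\Psi_{\sca\to\sca_1}^{\scb\to\scb_1}$ is projectively chain homotopic to a composition of continuation maps for the Hamiltonian isotopies $\as_t$ and $\bs_t$, performed at the fixed almost complex structure $J$; and the change-of-almost-complex-structure map $\Psi_{J\to\phi_*(J)}$ is itself a continuation map, for a path of almost complex structures at the fixed diagram $(\Sigma,\as_1,\bs_1)$. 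Concatenating these three continuation maps, and using that a perturbed continuation map depends only on the homotopy class, rel endpoints, of the underlying path in the space of pairs (diagram, almost complex structure) -- with the Novikov weights controlled by the Stokes argument in the proof of Lemma~\ref{lem:no-monodromy-spinc} -- I would conclude that the right-hand side is $\dotsim$ to a single continuation map
\[
\widetilde\Gamma\colon \CF_J(\Sigma,\as,\bs;\Lambda_\omega)\longrightarrow \CF_{\phi_*(J)}(\Sigma,\as_1,\bs_1;\Lambda_\omega)
\]
associated to the diagonal path $s\mapsto\big((\Sigma,\phi_s(\as),\phi_s(\bs)),\,(\phi_s)_*(J)\big)$, $s\in I$. (That the ``move $\as$, then move $\bs$, then change the almost complex structure'' path is homotopic rel endpoints to this diagonal path follows by interpolating the two schedules and using contractibility of the space of almost complex structures.)

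Next I would compute $\widetilde\Gamma$ by conjugating with the diffeomorphism $\Phi$ of $\Sigma\times I\times\R$ that, over a point $t\in\R$, acts as $\phi_{\rho(t)}$ on the $\Sigma$-factor and as the identity on the $I\times\R$-factor, where $\rho\colon\R\to I$ is a fixed monotone function equal to $0$ near $-\infty$ and to $1$ near $+\infty$, chosen compatibly with the cut-off used to define $\widetilde\Gamma$. Since each $\phi_s$ is Hamiltonian, $\Phi$ preserves the split symplectic form; it carries the $\R$-invariant Lagrangian cylinders $C_{\as}$, $C_{\bs}$ to the moving cylinders used in $\widetilde\Gamma$, and the $\R$-invariant almost complex structure $J$ to the time-dependent one underlying $\widetilde\Gamma$. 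Hence $\Phi$ sets up a bijection between the index $0$ curves counted by $\widetilde\Gamma$ and the index $0$, $J$-holomorphic curves for the $\R$-invariant data $(C_{\as},C_{\bs},J)$ with the matching asymptotics; for generic $J$ the latter consists of nothing but the constant strips (a non-constant such curve would lie in a moduli space carrying a free $\R$-action, and so could not be rigid). It follows that $\widetilde\Gamma$ sends $z^x\cdot\xs$ to $z^{x+w(\xs)}\cdot\phi_1(\xs)$, where $w(\xs)$ is the $\omega$-area of the coned $2$-chain in $M$ attached to the image under $\Phi$ of the constant strip at $\xs$.

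Finally I would identify this weight. The curve that $\Phi$ produces from the constant strip at $\xs$ has $\Sigma$-component $\phi_{\rho(t)}(\xs)$ over each point $t$; coning its shadow off along the correspondingly transported compressing disks $\phi_{\rho(t)}(D_{\sca})$ and $\phi_{\rho(t)}(D_{\scb})$ produces exactly the $2$-chain swept out by the cone $\gamma_{\xs}$ under $\phi_s$, $s\in I$, namely $\Gamma_{\xs,\phi_t}$ from the definition of the perturbed isotopy map in Section~\ref{sec:perturbed-diffeos}. Thus $w(\xs)=\int_{\Gamma_{\xs,\phi_t}}\omega$, so $\widetilde\Gamma(z^x\cdot\xs)=z^{x+\int_{\Gamma_{\xs,\phi_t}}\omega}\cdot\phi_1(\xs)=(\phi_t)_*(z^x\cdot\xs)$ up to an overall factor of $z^x$. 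Combined with the first step, this gives $(\phi_t)_*\dotsim\Psi_{J\to\phi_*(J)}\circ\Psi_{\sca\to\sca_1}^{\scb\to\scb_1}$.

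The step I expect to be the main obstacle is the reassembly in the first paragraph: one must verify that gluing the three continuation maps into the single diagonal continuation $\widetilde\Gamma$ -- that is, the homotopy invariance of \emph{perturbed} continuation maps -- costs only an overall power of $z$, and that all auxiliary choices (compressing disks, cut-off profiles, and the loop of almost complex structures being implicitly filled in) are matched on the two sides. As in the proofs of Lemmas~\ref{lem:independence-rep-omega-spinc} and~\ref{lem:no-monodromy-spinc}, this reduces to the fact that the difference of two admissible coned $2$-chains with the same boundary behaviour is closed, so that integrating $d\omega=0$ over a filling pins the ambiguity down to a single factor $z^x$.
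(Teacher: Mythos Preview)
Your approach is essentially the same as the paper's, run in the opposite direction. The paper first identifies $(\phi_t)_*$ with the ``diagonal'' continuation map $\Gamma_{\sca_t,\scb_t,\tilde J;\omega}$ via the fiberwise conjugation $\Phi(x,s,t)=(\phi_t(x),s,t)$, then uses a concrete neck-stretching (translating $C_{\sca_t}$ down by $\tau$ and $\tilde J$ up by $\tau$, sending $\tau\to\infty$) to split this into $\Psi_{J\to\phi_*(J)}\circ\Gamma_{\scb_t,J}\circ\Gamma_{\sca_t,J}$, and finally invokes Lemma~\ref{lem:continuation=triangle}. Your ``homotopy invariance of perturbed continuation maps'' is exactly the abstract principle of which this stretching is the concrete instance, so the substantive content is identical.

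One small technical correction: your claim that $\Phi$ preserves the split symplectic form is not quite right. Pulling back $\omega_\Sigma+ds\wedge dt$ picks up a cross term $\rho'(t)\,dH\wedge dt$ from the $t$-dependence of $\phi_{\rho(t)}$, so $\Phi$ is not a symplectomorphism. What is actually needed (and what the paper uses) is that for $\phi_t$ sufficiently small the pushed-forward almost complex structure $\tilde J=\Phi_*J$ remains tamed by the product symplectic form, which suffices for compactness and transversality; the bijection of moduli spaces then follows from $\Phi$ being a diffeomorphism carrying the relevant Lagrangian cylinders and almost complex structures to one another, with no appeal to symplecticity of $\Phi$ itself.
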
	

	\begin{proof}
		The first step is to interpret the isotopy map $(\phi_t)_*$
		as a continuation map. Consider the two cylinders
		$C_{\sca_t}$ and $C_{\scb_t}$, where
		$\as_t$ and $\bs_t$ are the images of $\as$
		and $\bs$ under $\phi_t$. Let $\tilde{J}$
		denote the almost complex structure
		on $\Sigma\times I\times \R$ obtained by pushing forward
		a generic cylindrical almost complex structure $J$ along the map
		$\Phi(x,s,t) = (\phi_t(x),s,t)$.
        For $\phi_t$ sufficiently small, $\tilde{J}$
        will be tamed by a product symplectic form, and
		achieve transversality at index zero holomorphic curves with boundary on
		$C_{\sca_t}$ and $C_{\scb_t}$.
		Hence, if $\Gamma_{\sca_t,\scb_t,\tilde{J};\omega}$ denotes
		the map that counts index zero $\tilde{J}$-holomorphic curves  with boundary
		on $C_{\sca_t}$ and $C_{\scb_t}$, we have
		\begin{equation}
		\Gamma_{\sca_t,\scb_t,\tilde{J};\omega}(\xs)=(\phi_t)_*(\xs).
		\label{eq:tautological=continuation}
		\end{equation}	
		
		We now consider a 1-parameter family of cylinders $C_{\sca_t^\tau}$, $C_{\scb_t^\tau}$,
        and almost complex structures $\tilde{J}^\tau$ for $\tau\in [0,\infty)$,
		as follows. The cylinder $C_{\sca_t^\tau}$ is obtained by translating $C_{\sca_t}$
		downward in the $\R$-direction by $\tau$ units. The cylinder $C_{\scb_t^\tau}$ coincides
		with $C_{\scb_t}$ for all $\tau$. The almost complex structure $\tilde{J}^\tau$ is
		obtained by translating $\tilde{J}$ upward in the $\R$-direction by $\tau$ units.
		
		A chain homotopy $H$ is defined by
		counting index $-1$, $\tilde{J}^\tau$-holomorphic curves
		with boundary on $C_{\sca_t^\tau}$ and $C_{\scb_t^\tau}$ for $\tau\in [0,\infty)$,
		weighted by their $\omega$-area.
		Applying Gromov compactness to the moduli space
        of index 0, $\tilde{J}^\tau$-holomorphic curves
        with boundary on $C_{\sca_t^\tau}$ and $C_{\scb_t^\tau}$
        for $\tau \in [0,\infty)$, we obtain that
		\begin{equation}
		\Gamma_{\sca_t,\scb_t,\tilde{J};\omega} +
		\Psi_{J\to \phi_*(J)} \circ \Gamma_{\scb_t,J} \circ
		\Gamma_{\sca_t,J} = \d \circ H + H \circ \d.
        \label{eq:tautological=triple-composition}
		\end{equation}
        Indeed, at $\tau = 0$, we obtain $\Gamma_{\sca_t,\scb_t,\tilde{J}}$.
        At $\tau \to \infty$, we obtain
        $\Psi_{J\to \phi_*(J)} \circ \Gamma_{\scb_t,J} \circ \Gamma_{\sca_t,J}$.
        The only other way a curve may break is for a
        family to split into an index $-1$ curve, giving $H$,
        and an index 1 curve, giving $\d$.
		Combining equations~\eqref{eq:tautological=continuation}
        and~\eqref{eq:tautological=triple-composition} with
		Lemma~\ref{lem:continuation=triangle}, the result follows.
	\end{proof}

	\subsection{Proof of naturality}
	\label{sec:proof-naturality}
	
	We now prove Theorem~\ref{thm:naturality-perturbed}, naturality of the
	perturbed invariants:
	
	\begin{proof}[Proof of Theorem~\ref{thm:naturality-perturbed}]
		Our proof follows the framework of \cite{JTNaturality}. Suppose that
		$(M,\gamma)$ is a balanced sutured manifold with a closed 2-form $\omega$. We define a
		directed graph $\cG_{(M,\gamma)}$, as follows. The vertices of $\cG_{(M,\gamma)}$
		consist of \emph{isotopy diagrams} of $(M,\gamma)$; i.e., tuples $(\Sigma,A,B)$
		consisting of an embedded Heegaard surface $\Sigma$, and isotopy classes $A$ and $B$ of
		attaching curves. If $\cH = (\Sigma,\as,\bs)$ is a Heegaard diagram, we write $[\cH]$ for
		the induced isotopy diagram.
				
		If $H_1$ and $H_2$ are two isotopy diagrams, we define the set of edges in
		$\cG_{(M,\gamma)}$ connecting $H_1$ and $H_2$ to be the union
		\begin{equation}
		\cG_{(M,\gamma)}(H_1,H_2):=\cG_{\alpha}(H_1,H_2)\cup \cG_{\beta}(H_1,H_2)\cup
		\cG_{\stab}(H_1,H_2)\cup \cG^0_{\diff}(H_1,H_2),
        \label{eq:G-M-gamma-def}
		\end{equation}
		as follows. The set $\cG_{\alpha}(H_1,H_2)$ consists of a single arrow if
		$H_1$ and $H_2$ share the same Heegaard surface, have isotopic beta-curves, and
		have alpha-curves that are related by a sequence of handleslides and
		isotopies. The set $\cG_{\alpha}(H_1,H_2)$ is empty otherwise. The set
		$\cG_{\beta}(H_1,H_2)$ is defined similarly. The set $\cG_{\stab}(H_1,H_2)$
		has	a single arrow if $H_1$ and $H_2$ are related by a stabilization or destabilization,
        and is empty otherwise. Finally, $\cG^0_{\diff}(H_1,H_2)$ is the set of all automorphisms
		of	$(M,\gamma)$ which move $H_1$ to $H_2$, and are isotopic to the identity of
		$(M,\gamma)$.
		Write $\cG_{\alpha}$ for the union over all pairs $(H_1,H_2)$ of
		$\cG_{\alpha}(H_1,H_2)$, and define $\cG_{\beta}$, $\cG_{\stab}$, and
		$\cG_{\diff}^0$ similarly.
		
		If $H$ is an isotopy diagram, write $\SFH(H;\Lambda_\omega)$ for the
		projective transitive system of $\Lambda$-modules,
		indexed by pairs $(\cH,J)$, where $\cH = (\S,\as,\bs)$
		is an admissible Heegaard diagram with $[\cH]=H$, and $J$ is a generic almost
		complex structure on $\S \times I \times \R$. The transition maps may be constructed using the
		holomorphic triangle maps, as in Section~\ref{sec:perturbed-triangle-maps},
		as well as change of almost complex structure maps from
		Section~\ref{sec:change-of-almost-complex-structure}.
		Propositions~\ref{prop:associativity} and ~\ref{prop:tautological=triangle}
		imply that this gives a projective transitive system of
		$\Lambda$-modules.		
		
		We consider the following cycles in $\cG_{(M,\g)}$:
		\begin{enumerate}[ref=L-\arabic*, label=(L-\arabic*)]
			\item\label{loop-1} A loop formed by a stabilization followed by a
			destabilization.
			\item\label{loop-2} A rectangular subgraph
			\[
			\begin{tikzcd} H_1\arrow[r, "e"]\arrow[d,"f"] & H_2\arrow[d, "g"]\\
			H_3\arrow[r, "h"]& H_4
			\end{tikzcd}
			\]
			of $\cG_{(M,\gamma)}$, where one of the following hold:
			\begin{enumerate}[ref=R-\arabic*, label=(R-\arabic*)]
				\item Both $e$, $h \in \cG_\alpha$ and $f$, $g\in \cG_{\beta}$.
				\item Either $e$, $h \in \cG_{\alpha}$, or $e$, $h \in \cG_{\beta}$.
				Furthermore,
				$f$, $g \in \cG_{\stab}$.
				\item Either $e$, $h \in \cG_{\alpha}$, or $e$, $h\in \cG_{\beta}$.
				Furthermore,
				$f$, $g \in \cG_{\diff}^0$.
				\item The edges $e$, $f$, $g$, $h$ are all in $\cG_{\stab}$. Furthermore,
				$e$ and $h$
				correspond to stabilizing in a 3-ball $B$, while $f$ and $g$ correspond to
				stabilizing in a 3-ball $B'$, and $B \cap B'=\emptyset$.
				\item Both $e$, $h \in \cG_{\stab}$, while $f$, $g \in \cG_{\diff}^0$.
				Furthermore,
				$f$ and $g$ may be induced by the same diffeomorphism $\phi$ of
				$(M,\gamma)$,
				and the stabilization 3-ball for $e$ is pushed forward to the stabilization
				3-ball
				for $h$ by $\phi$.
			\end{enumerate}
			\item\label{loop-3} A loop formed by an edge in $\cG_{\diff}^0(H,H)$.
			\item\label{loop-4} A simple handleswap loop; see Figure~\ref{fig:handleswap},
			which is \cite{JTNaturality}*{Figure~4}, for a depiction.
		\end{enumerate}
		
        \begin{figure}
        \includegraphics{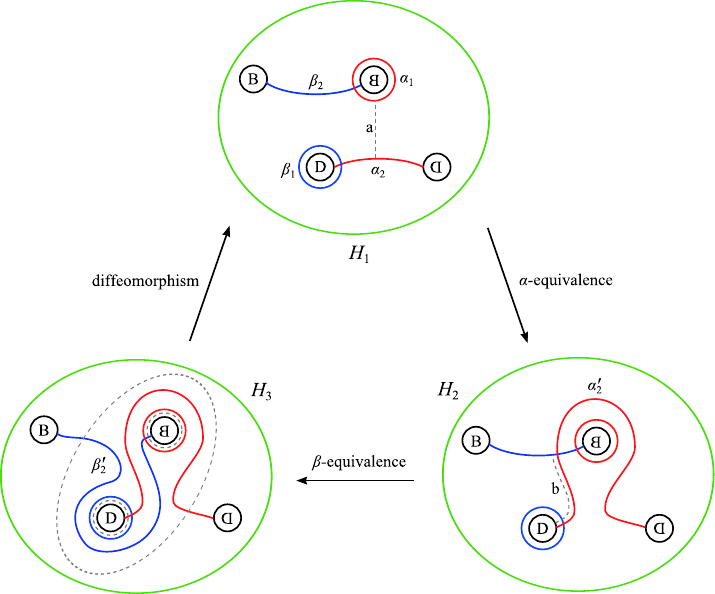}
        \caption{A simple handleswap, which is a loop of diagrams
        consisting of an $\alpha$-handleslide, a $\beta$-handleslide, and a diffeomorphism.
        The green curve is the boundary of the punctured genus two surface $P$ that is obtained
        by identifying the circles marked with corresponding letters (namely, $B$ and $D$).
        We draw the $\alpha$-curves in red and the $\beta$-curves in blue.}
        \label{fig:handleswap}
        \end{figure}
			
		Commutativity of the transition maps along the loops \eqref{loop-1}--\eqref{loop-4}
        correspond to the axioms for a \emph{strong Heegaard invariant} \cite{JTNaturality}*{Definition~2.32}.
		According to \cite{JTNaturality}*{Theorem~2.38}, it suffices to
		prove that the perturbed transition maps have no monodromy around loops~\eqref{loop-1}--\eqref{loop-4}.
		
		As in Remark~\ref{rem:system-of-systems}, to define a projectively transitive
		system indexed by all pairs $(\cH,J)$, it is sufficient to define a morphism
		of transitive systems for each edge of $\cG_{(M,\gamma)}$, and show that there is
		only projective monodromy around loops \eqref{loop-1}--\eqref{loop-4}.
				
		We define chain maps for edges in $\cG_{\alpha}(H_1,H_2)$ and
		$\cG_{\beta}(H_1,H_2)$  to be triangle maps, as described in
		Section~\ref{sec:perturbed-triangle-maps}. Chain maps for stabilizations are
		described in Section~\ref{sec:stabilization}. Maps for edges in
		$\cG^0_{\diff}(H_1,H_2)$ are defined in Section~\ref{sec:perturbed-diffeos}.
		It is straightforward to see that these chain maps induce morphisms of transitive systems
		between the transitive systems associated to each isotopy diagram.
		
		The main subtlety compared to the unperturbed setting is that the map
		associated to a diffeomorphism $\phi$ in $\cG^0_{\diff}(H_1,H_2)$ is defined
		with an auxiliary choice of an isotopy $\phi_t$ connecting $\phi$ to
		$\id_{(M,\gamma)}$. The induced map $\phi$ is only well-defined as a
		projective map when restricted to each $\Spin^c$ structure by
		Lemma~\ref{lem:no-monodromy-spinc}, or when $[\omega]=0$. See
		Remark~\ref{lem:monodromy} for an example illustrating the subtlety.	
		
		We now verify that the monodromy around Loops~\eqref{loop-1}--\eqref{loop-4}
		is of projective type. The monodromy around loops of type~\eqref{loop-1} is
		clearly trivial. Similarly to the unperturbed setting,
		associativity of the holomorphic
		triangle maps,  Proposition~\ref{prop:associativity}, implies that loops
		of type~\eqref{loop-2} induce projectively trivial monodromy. Loops of
		type~\eqref{loop-3} induce projectively trivial monodromy by
		Lemma~\ref{lem:no-monodromy-spinc} and Proposition~\ref{prop:tautological=triangle},
        when restricted to individual $\Spin^c$ structures, or when $[\omega]=0$.
        The main claim follows once we verify that there is only
		projective monodromy around simple handleswap loops \eqref{loop-4},
		which is verified in Lemma~\ref{lem:perturbed-handleswap} below.
	\end{proof}	
	
	\begin{lem}\label{lem:perturbed-handleswap}
        Suppose $(M,\g)$ is a balanced sutured manifold, with a closed
        2-form $\omega$, and $\ufrs \in \Spin^c(M,\g)$. Suppose further that
		\[
		\begin{tikzcd}[row sep=small]
		\cH_1\arrow[dr,"e_\alpha"]\\
		&\cH_2\arrow[dl,"e_{\beta}"]\\
		\cH_3\arrow[uu,"\phi_1"]
		\end{tikzcd}
		\]
		is a simple handleswap loop, where $\cH_1$, $\cH_2$, and $\cH_3$ are admissible diagrams
        of $(M,\g)$, and $e_\alpha \in \cG_\alpha$, $e_{\beta} \in \cG_\beta$,
        and $(\phi_t)_{t\in I}$ is an isotopy with $\phi_0 = \id_{(M,\gamma)}$. Then
		\[
		(\phi_t)_*\circ \Psi_{e_{\beta}} \circ \Psi_{e_{\alpha}} \dotsim
        \id_{\CF(\cH_1,\ufrs;\Lambda_{\omega})}.
		\]
		The same statement holds for the total complex $\CF(\cH_1;\Lambda_{\omega})$ if $[\omega]=0$.
	\end{lem}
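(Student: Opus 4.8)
The plan is to reduce to the unperturbed handleswap invariance established in \cite{JTNaturality}, by arranging that the $2$-form $\omega$ contributes no $z$-weights to any of the three transition maps in the loop. Note first that, unlike loops of type~\eqref{loop-3}, the handleswap diffeomorphism $\phi_1$ is not isotopic to a Hamiltonian flow on $\Sigma$, so Proposition~\ref{prop:tautological=triangle} does not reduce $(\phi_t)_*$ to holomorphic triangle maps; instead I would exploit that a simple handleswap is ``local''. Precisely, by definition of a simple handleswap there is a compact, codimension-zero submanifold $N\subset M$, diffeomorphic to a $3$-ball, which contains the punctured genus-two surface $P$ of Figure~\ref{fig:handleswap} together with compressing disks for all attaching curves supported in $P$, and such that $\cH_1$, $\cH_2$, $\cH_3$ agree outside $N$, the handleslides $e_\alpha$ and $e_\beta$ are supported in $N$ and may be taken small, and $\phi_1$ is supported in $N$.

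The first step is to localize the $2$-form. Since $H^2(N;\R)=0$, the restriction $\omega|_N$ is exact, so I would choose a $1$-form $\eta$ on $M$ with $d\eta=\omega$ on $N$ and set $\omega':=\omega-d\eta$, so that $\omega'|_N=0$ and $[\omega']=[\omega]$. A Stokes'-theorem argument in the spirit of the proofs of Lemmas~\ref{lem:independence-rep-omega-spinc} and~\ref{lem:no-monodromy-spinc} shows that the change-of-$2$-form isomorphisms $\Psi_{\omega\to\omega';\eta}$ of Section~\ref{sec:change-2-form} conjugate the perturbed handleswap transition maps $\Psi_{e_\alpha}$, $\Psi_{e_\beta}$, $(\phi_t)_*$ for $\omega$ to those for $\omega'$, up to an overall factor $z^x$ on each $\Spin^c$ structure (the discrepancies are integrals of $\eta$ over fixed chains, e.g.\ the cones of the $\Theta$-generators, which are the same at every stage of the loop). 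When $[\omega]=0$ one may instead take $d\eta=\omega$ globally, so that $\omega'=0$; then, using the $[\omega]=0$ clauses of the lemmas of Sections~\ref{sec:change-2-form} and~\ref{sec:perturbed-diffeos}, this conjugation holds on all of $\CF(\cH_1;\Lambda_\omega)$ up to a single factor $z^x$. Thus it suffices to treat the case $\omega|_N=0$ (respectively $\omega=0$, for the total-complex statement).

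Now suppose $\omega|_N=0$. Because the handleslides are supported in $P$ and small, the only triangles contributing to $\Psi_{e_\alpha}$ and $\Psi_{e_\beta}$ are the small ones, whose coned domains $\tilde{\cD}(\psi)$ lie in $N$; hence $A_\omega(\psi)=0$, and (absorbing the normalizing factor from $\Theta^\omega_{\sca',\sca}$ into an overall $z^x$) these maps equal the unperturbed triangle maps, extended $\Lambda$-linearly. Since $\phi_1$ is supported in $N$ and $\omega$ vanishes there, $(\phi_1)_*\omega=\omega$, so Lemma~\ref{lem:tautological-map-naturality} identifies $(\phi_t)_*$, up to an overall $z^x$, with the tautological chain isomorphism $z^x\cdot\xs\mapsto z^x\cdot\phi_1(\xs)$, which is the unperturbed diffeomorphism map extended $\Lambda$-linearly (and when $\omega=0$ this identification is exact, on the whole complex). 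Therefore the perturbed composition $(\phi_t)_*\circ\Psi_{e_\beta}\circ\Psi_{e_\alpha}$ equals, up to an overall $z^x$, the $\Lambda$-linear extension of the unperturbed handleswap composition, which is chain homotopic to the identity by the handleswap invariance proved in \cite{JTNaturality}. This yields $(\phi_t)_*\circ\Psi_{e_\beta}\circ\Psi_{e_\alpha}\dotsim\id_{\CF(\cH_1,\ufrs;\Lambda_\omega)}$, and the analogous statement for $\CF(\cH_1;\Lambda_\omega)$ when $[\omega]=0$.

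The part I expect to require the most care is making the localization precise: pinning down $N$ as a $3$-ball that carries all of the compressing-disk data, and checking that the handleslides and $\phi_1$ can genuinely be confined to it (this is really a statement about the combinatorics of the simple handleswap in \cite{JTNaturality}); and, for the total-complex version, the bookkeeping that upgrades the per-$\Spin^c$ ``up to $z^x$'' statements to a single overall $z^x$. That this last upgrade fails without the hypothesis $[\omega]=0$ is exactly the phenomenon underlying Lemma~\ref{lem:monodromy} and Example~\ref{ex:T^3}.
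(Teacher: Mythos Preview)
Your localization step is fine, but the claim that ``the only triangles contributing to $\Psi_{e_\alpha}$ and $\Psi_{e_\beta}$ are the small ones, whose coned domains $\tilde{\cD}(\psi)$ lie in $N$'' is not correct, and this is where the argument breaks. The $\alpha$-edge in a simple handleswap goes from $\cH_1=\cH\#\cH_1'$ to $\cH_2=\cH\#\cH_2'$; the $\alpha$-curves on the big piece $\Sigma$ are \emph{identical} in $\cH_1$ and $\cH_2$, so to form an admissible Heegaard triple for this handleslide one must first Hamiltonian-translate the $\as$-curves on $\Sigma$ to nearby curves $\as^H$. The transition map $\Psi_{e_\alpha}$ then counts index-$0$ holomorphic triangles on $(\Sigma\#\Sigma_0,\as^H\cup\as_2',\as\cup\as_1',\bs\cup\bs_1')$, and these triangles have domain spread over all of $\Sigma$, not just the handleswap region $P\subset N$. (Even the ``canonical'' small triangles live on $\Sigma$, and in general other index-$0$ classes on $\Sigma$ contribute as well.) Consequently, after your conjugation to $\omega'$ with $\omega'|_N=0$, the $\omega'$-areas $A_{\omega'}(\psi\#\psi_0)=A_{\omega'}(\psi)$ are still nonzero: the perturbed transition maps do \emph{not} become the unperturbed ones extended $\Lambda$-linearly.

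What the paper does instead is exactly what's needed to control the $\Sigma$-part: stretch the neck to get a tensor-product decomposition $\CF(\cH_i,\ufrs;\Lambda_\omega)\cong\CF(\cH,\ufrs;\Lambda_\omega)\otimes\langle\ve{c}_i\rangle$, observe that $A_\omega(\psi\#\psi_0)=A_\omega(\psi)+A_\omega(\psi_0)$ with $A_\omega(\psi_0)$ independent of the class $\psi_0$ on the genus-$2$ piece (doubly periodic domains there cone to closed $2$-chains in a $3$-ball), and conclude that the perturbed $\Psi_{e_\alpha}$ is projectively $(\Gamma_{\sca_t}\circ\Psi^{\scb}_{\sca\to\sca^H})_{\text{perturbed}}\otimes(\ve{c}_1\mapsto\ve{c}_2)$. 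The first tensor factor is then $\dotsim\id$ by Lemma~\ref{lem:continuation=triangle}. Your conjugation to $\omega'|_N=0$ is harmless but does not avoid this step; it only replaces ``$A_\omega(\psi_0)$ is constant'' by ``$A_{\omega'}(\psi_0)=0$'', which is no stronger for the purpose at hand. To fix your argument, you would need to invoke the neck-stretching tensor decomposition anyway, at which point it becomes the paper's proof.
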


	\begin{proof}
        By definition, the diagrams $\cH_1$, $\cH_2$, and $\cH_3$ are all
		2-fold stabilizations of a fixed diagram $\cH=(\Sigma,\as,\bs)$. If $i\in
		\{1,2,3\}$,
		write $\cH_{i}'=(\Sigma_0,\as_i',\bs_i',p_0)$ for the
		genus 2 portion of $\cH_i$ in the handleswap region. With this notation, we
		think of
		$\cH_i$ as $\cH\# \cH_{i}'$, where the connected sum
		is taken at $p_0\in \Sigma_0$ and a point $p\in \Sigma$. The diagrams $\cH_i'$
		are all genus 2 diagrams for
		$S^3$. Note that
		\[
		\bs_2'=\bs_1'\quad \text{\and} \quad \as_3'=\as_2'.
		\]		
		
		The map $\Psi_{e_{\alpha}}$ may be computed as the composition of a triangle map
		for an alpha-handleslide, followed by a continuation map to move the
		alpha-curves on $\cH$ back to their original position. Similarly, the map
		$\Psi_{e_{\beta}}$ may be computed as the composition of the
		triangle map for a beta-handleslide, followed by a
		continuation map to move the beta-curves
		 on $\cH$ back to their original position.
		The
		map $(\phi_t)_*$ is the isotopy map
		described in Section~\ref{sec:perturbed-diffeos}.
		
		For a sufficiently stretched almost complex structure $J(T)$ along the connected sum tube of $\S \# \S_0$,
        the proof of stabilization invariance implies
		that the unperturbed complex for $\cH_i$ decomposes as a tensor product:
		\begin{equation}
		\CF_{J(T)}(\cH_i)\iso \CF_{J}(\cH)\otimes_{\bF_2}
		\langle \ve{c}_i \rangle,\label{eq:unperturbed-tensor-product}
		\end{equation}
		where $\{\ve{c}_i\}=\bT_{\sca_i'}\cap \bT_{\scb_i'}$,
		and $\langle \ve{c}_i \rangle$
		denotes the 1-dimensional vector space over $\bF_2$, generated by $\ve{c}_i$,
        for $i \in \{1,2,3\}$.
		
		In the unperturbed setting, handleswap invariance
		\cite{JTNaturality}*{Theorem~9.30}
		is proven by showing
		\begin{equation}
		\Psi_{e_{\alpha}} = \left(\Gamma_{\sca_{t}, J}\circ \Psi_{\sca\to
		\sca^H}^{\scb} \right) \otimes (\ve{c}_1\mapsto \ve{c}_2)
        \label{eq:unperturbed-tensor-product-maps}
		\end{equation}
		with respect to the chain isomorphism of
		equation~\eqref{eq:unperturbed-tensor-product}, where
		$\as^H$ is a small Hamiltonian translate of $\as$, and $\as_{t}$
		is a Hamiltonian isotopy moving $\as^H$ back to $\as$. 	
		A similar tensor product description holds for the unperturbed version of
		$\Psi_{e_{\beta}}$.	
	
		For the perturbed versions, an extension of
		Lemma~\ref{lem:perturbed-stabilization-chain-map}
		to genus 2 stabilizations gives an analog of equation~\eqref{eq:unperturbed-tensor-product}
		to the perturbed setting, namely:
		\begin{equation}
		\CF_{J(T)}(\cH_i,\ufrs; \Lambda_{\omega})\iso \CF_{J}(\cH,\ufrs;
		\Lambda_\omega)\otimes_{\bF_2} \langle
		\ve{c}_i\rangle.\label{eq:perturbed-tensor-product}
		\end{equation}
		We now show that a similar tensor product decomposition as in
		equation~\eqref{eq:unperturbed-tensor-product-maps}  holds for the
		perturbed versions of $\Psi_{e_{\sca}}$ and $\Psi_{e_{\scb}}$.
		
        Firstly, if $\psi\# \psi_0$ is a class of triangles on
		$(\Sigma\# \Sigma_0,\as^H\cup \as_2',\as\cup \as_1', \bs\cup \bs_1')$, then
		\begin{equation}
		A_\omega(\psi\#
		\psi_0)=A_\omega(\psi)+A_{\omega}(\psi_0).\label{eq:omega-areas-handleswap}	
		\end{equation}
		According to the proof of \cite{JTNaturality}*{Proposition~9.31}, for a
		sufficiently stretched almost complex structure, all index 0 triangles $\psi\#
		\psi_0$ that are counted by $\Psi_{\sca\cup \sca_1'\to \sca^H\cup
		\sca_2'}^{\scb\cup\scb_1'}$ have $\mu(\psi)=0$. Furthermore, if $\mu(\psi)=0$,
		then
		\begin{equation}
		|\cM(\psi)|\equiv \sum_{\substack{\psi_0\in
		\pi_2(\Theta_{\sca_2',\sca_1'},\ve{c}_1,\ve{c}_2)\\ n_{p_0}(\psi_0)=n_p(\psi)}}
		|\cM(\psi\# \psi_0)| \mod{2}.\label{eq:congurent-counts-handleswap}
		\end{equation}
		
	Next, we claim that $A_{\omega}(\psi_0)$ is independent of the triangle
	class $\psi_0\in \pi_2(\Theta_{\sca_2',\sca_1'},\ve{c}_1,\ve{c}_2)$. This is
	established by observing that any two classes in
	$\pi_2(\Theta_{\sca_2',\sca_1'},\ve{c}_1,\ve{c}_2)$ differ by a sum of doubly
	periodic domains. Doubly periodic domains on $\cH_i'$ cone to closed 2-chains in
	$C_2(S^3)$, and hence do not affect the $\omega$-area, so $A_{\omega}(\psi_0)$
	is independent of the triangle class.  A similar claim holds for triangles in
	$\pi_2(\ve{c}_2,\Theta_{\scb_1',\scb'_3},\ve{c}_3)$.
		
	Combining equations~\eqref{eq:omega-areas-handleswap},
	\eqref{eq:congurent-counts-handleswap}, and the independence of
	$A_{\omega}(\psi_0)$ from $\psi_0$,
	we obtain that the perturbed transition maps satisfy
	\begin{equation}
	\begin{split}
	&(\phi_t)_*\circ \Psi_{e_{\scb}}\circ \Psi_{e_{\sca}}\\ \doteq &(\phi_t)_*
	\circ \left( \left(\Gamma_{\scb_{t}} \circ \Psi_{\sca}^{\scb\to \scb^H} \right)\otimes
    (\ve{c}_2\mapsto  \ve{c}_3)\right)\circ \left( \left(\Gamma_{\sca_{t}}\circ
	\Psi_{\sca\to \sca^H}^{\scb} \right)\otimes ( \ve{c}_1\mapsto  \ve{c}_2)\right),
	\end{split}
	\label{eq:expanded-handle-swap-loop}
	\end{equation}	
	with respect to the tensor product decomposition from
	equation~\eqref{eq:perturbed-tensor-product}.

	Since the isotopy $\phi_t$ is supported in the 3-ball of the handleswap, it
	follows that
	\begin{equation}
	(\phi_t)_*\doteq \id_{\CF(\cH,\ufrs;\Lambda_{\omega})} \otimes ( \ve{c}_3\mapsto
	\ve{c}_1).
	\label{eq:expanded-diffeo-map}
	\end{equation}
	Furthermore, by Lemma~\ref{lem:continuation=triangle},
	\begin{equation}
	\Gamma_{\scb_{t}}\circ \Psi_{\sca}^{\scb\to \scb^H}\dotsim
    \id_{\CF(\cH,\ufrs;\Lambda_\omega)}\quad \text{and}\quad \Gamma_{\sca_{t}}\circ
    \Psi_{\sca\to \sca^H}^{\scb}\dotsim\id_{\CF(\cH,\ufrs;\Lambda_\omega)}.
    \label{eq:handleswap-continuation=triangle}
	\end{equation}
    Combining equations~\eqref{eq:expanded-handle-swap-loop},
    \eqref{eq:expanded-diffeo-map}, and~\eqref{eq:handleswap-continuation=triangle}
    yields the main statement.
	\end{proof}

	\section{Perturbed sutured cobordism maps}	
    \label{sec:cob-construct}	
	
    In this section, we define the perturbed sutured cobordism maps,
    and prove that they are well-defined in Proposition~\ref{prop:perturbedcobordismmapswelldefined}.
    Furthermore, we prove the composition law, Proposition~\ref{prop:composition-law},
    the effect of changing the 2-form on the cobordism, Lemma~\ref{lem-modify-2-form-on-interior},
    and finally compare the perturbed and unperturbed maps when the 2-form vanishes on the
    boundary in Lemma~\ref{lem:perturbed-coincide}.

	\subsection{The perturbed contact gluing map}
	
	We now describe a perturbed version of the Honda--Kazez--Mati\'c contact
	gluing map~\cite{HKMTQFT}.
	Suppose $(M,\g)$ is a sutured submanifold of $(M',\g')$ (i.e., $M$ is a submanifold with boundary
    of $M'$ such that $M \subset \Int(M')$), $\omega$ and $\omega'$
	are closed 2-forms on $M$ and $M'$, respectively, such that
	$\omega = \omega'|_M$, and $\xi$ is a co-oriented contact structure on $M' \setminus \Int(M)$.
	Let $\underline{\frs}$ be a $\Spin^c$ structure on $M$ represented by a
	non-vanishing vector field $v$, and let $\underline{\frs}'$ be the $\Spin^c$
	structure
	on $M'$ obtained by gluing $v$ to $\xi^\perp$. We will define a gluing map
	\[
	\Phi_{\xi;\omega} \colon \SFH(-M,-\g,\underline{\frs};\Lambda_{\omega})\to
	\SFH(-M',-\g',\underline{\frs}';\Lambda_{\omega'}),
	\]
	by adapting the construction from the unperturbed setting. Our description
	will use the reformulation of the gluing map given in
	\cite{JuhaszZemkeContactHandles} using contact handles.
	See \cite{JuhaszZemkeContactHandles}*{Definition~3.11} for background on
	contact handles in this setting.
	
\begin{rem}
We require $M'$ to have no closed components, though we allow $M'\setminus \Int(M)$
to have what Honda, Kazez, and Mati\'{c} refer to as \emph{isolated components},
which are components of $M'\setminus \Int(M)$ that are disjoint from $\d M'$. These
are permitted since the construction from \cite{JuhaszZemkeContactHandles} had a
contact 3-handle map, which was not present in \cite{HKMTQFT}.
\end{rem}	
	
	On Heegaard diagrams, adding a contact 0-handle has the effect of adding a disk
	$D$
	to the Heegaard surface, with no alpha or beta-curves. The contact 0-handle map
	is the canonical chain isomorphism between $\CF(\Sigma,\as,\bs)$ and
	$\CF(\Sigma\sqcup D,\as,\bs)$. This extends to the perturbed setting via the
	formula
	\[
	\Phi_{\xi;\omega}(z^x\cdot \xs)=z^x\cdot \xs,
	\]
	 for any closed 2-form on the 0-handle.
	
	Adding a contact 1-handle has the effect of attaching a band to the boundary of the
	Heegaard surface. The contact 1-handle map is the canonical chain
	isomorphism between $\CF(\Sigma,\as,\bs)$ and $\CF(\Sigma\cup B,\as,\bs)$, which
	extends to a map on the perturbed complexes with no complications.
	
	The contact 2-handle map is slightly more involved. The effect on diagrams
	is to add a band and a pair of new curves, $\alpha$ and $\beta$, which have a
	single intersection point $c$ in the band.  See
	\cite{JuhaszZemkeContactHandles}*{Figure~3.11} for the precise configuration.
	The contact 2-handle map is defined via the formula
	\[
	\Phi_{\xi;\omega}(z^x\cdot \xs)=z^x\cdot \xs\times c.
	\]
    To see that this is a chain map
	on the perturbed complexes, note that  all disks counted by
	$\d(\ve{x} \times c)$ have homology class of the form $\phi \#
	e_{c}$, where $\phi \in \pi_2(\xs,\ys)$ is a homology class, and
	$e_{c}$ is the constant class at $c$. However, $A_{\omega'}(\phi \#
	e_{c}) = A_{\omega}(\phi)$. Hence, the contact 2-handle map is a chain map
    on the perturbed complexes.
	
	Finally, a contact 3-handle is attached along
    a boundary component $S^2\subset \d M$ which is a 2-sphere
	with a single suture $s$. Then pick a diagram $(\Sigma,\as\cup \{\alpha_0\},\bs\cup
	\{\beta_0\})$, where $\alpha_0$ and $\beta_0$ are parallel
	to the boundary component of
	$\Sigma$ corresponding to $s$, and intersect each other in a pair of points.
	The contact 3-handle map is obtained by filling $s\subset \d \Sigma$ with a
	disk $D$, and setting
	\[
	\Phi_{\xi;\omega}(z^x \cdot \xs\times \theta)=\begin{cases} z^x \cdot \xs&
	\text{ if } \theta=\theta^-,\\
	0& \text{ if } \theta=\theta^+,
	\end{cases}	
	\]
	where $\{\theta^+,\theta^-\}=\alpha_0\cap \beta_0$, with relative grading $\mu(\theta^+, \theta^-) = 1$
	induced by the Maslov index on $(-\Sigma,\as\cup \{\alpha_0\}, \bs\cup
	\{\beta_0\})$. (The formula is the same as the 4-dimensional 3-handle map).
	Note that the contact 3-handle map is only defined if $\d M$ has at least one other
	boundary component. Furthermore, we must either choose $(\Sigma,\as\cup
	\{\alpha_0\}, \bs\cup \{\beta_0\})$ so that $\alpha_0$ and $\beta_0$ are
	adjacent to another component of $\d \Sigma$, or we must stretch the almost
	complex structure along a circle bounding $\alpha_0$ and $\beta_0$. We focus on
	the case when $(\Sigma,\as\cup \{\alpha_0\}, \bs\cup \{\beta_0\})$ has been
	chosen so that $\d \Sigma$ has an additional boundary component adjacent to
	$\alpha_0$ and $\beta_0$. (The more general case requires using a holomorphic
	degeneration argument \cite{OSLinks}*{Proposition~6.5}, but follows similarly.)
	In this situation, an
	index 1 class on $(-\Sigma,\as\cup \{\alpha_0\}, \bs\cup \{\beta_0\})$
	with holomorphic representatives has one of the following forms:
	\begin{itemize}
		\item $\phi\#e_\theta$, where $\phi$ is an index 1 class on $(-\Sigma\cup D,
		\as,\bs)$, with zero multiplicity on $D$, and $e_\theta$ is the constant class at $\theta\in \alpha_0\cap
		\beta_0$.
		\item $e_{\xs}\# \phi_0$, where $\phi_0$ is one of the two bigons between
	$\alpha_0$ and $\beta_0$.
	\end{itemize}
	To see that  the contact 3-handle map is a chain map, it suffices to show that
	the two
	bigons have the same $\omega$-area. The difference of the bigons is a periodic
	domain, which cones to a 2-sphere bounding the $S^2$ boundary
	component of $\d M$ which is filled in by the 3-handle. Since $\omega$ extends
	over the contact 3-handle, $\omega$
	must integrate to zero on this 2-sphere, and hence have equal area on
	the cones of the two bigons.
	
	As in the unperturbed case,
	the composition of the contact handle maps for a canceling pair of contact $i$
	and $i+1$
	handles coincides with the transition map from naturality (up to an overall
	factor of $z^x$);
	see \cite{JuhaszZemkeContactHandles}*{Figures~3.13, 3.14}.
	By following our contact handle proof of invariance of the contact gluing map
	in the unperturbed case \cite{JuhaszZemkeContactHandles}*{Theorem~3.14}, it
	follows that the perturbed contact gluing map is well-defined up to an overall
	factor of $z^x$, when restricted to each $\Spin^c$ structure on $(M,\g)$.
	Furthermore, if $[\omega']=0$, then the gluing map is well-defined on all
	$\Spin^c$ structures, up to an overall factor of $z^x$.

\subsection{Perturbed maps for cylinders}
    \label{sec:cylinders}
	
	We now define the 4-dimensional cobordism maps for $W=I\times M$,
	 equipped with a closed 2-form $\omega$.
	
	Recall that a sutured manifold cobordism is called
	 \emph{special} if it is a product along the
    boundary, with an $I$-invariant contact structure compatible with the
    dividing sets; see \cite[Definition~5.1]{JCob}.	
	Suppose that $\cW=(W,Z,[\xi])\colon (M_0,\g_0)\to (M_1,\g_1)$
	is a special cobordism which is equipped with a Morse function $f$
	with no critical points, and let $v$ be a gradient-like vector field for $f$.
	
	To define the map for $\cW$, we first pick an admissible diagram
	$\cH_0=(\Sigma_0,\as_0,\bs_0)$ for $(M_0,\g_0)$.
	The flow of $v$ induces a diffeomorphism between $M_0$ and $M_1$,
	and we write $\cH_1=(\Sigma_1,\as_1,\bs_1)$ for the push-forward of $\cH_0$
	under this diffeomorphism.
	If $\xs\in \bT_{\sca_0}\cap \bT_{\scb_0}$, we write $v_*(\ve{x})\in
	\bT_{\sca_1}\cap \bT_{\scb_1}$ for the corresponding intersection point.
	Write $\Gamma_{\xs}$ for the 2-chain traced out by the flow of $v$ applied to
	$\gamma_{\xs}\subset M_0$.
	
	We define the perturbed cylinder map
	\[
	F_{\cW;\omega,(f,v)}\colon \CF(\cH_0;\Lambda_{\omega|_{M_0}})\to
	\CF(\cH_1;\Lambda_{\omega|_{M_1}})	
	\]
	via the formula
	\begin{equation}
	F_{\cW;\omega,(f,v)}(z^x\cdot \xs)=z^{x+\int_{\Gamma_{\xs}} \omega}\cdot
	v_*(\xs).\label{eq:perturbed-map-for-cylinders}
	\end{equation}
	As in Remark~\ref{rem:1-map-determines-morphism}, for a choice of diagram
	$\cH_0$ of $(M_0,\g_0)$ and $\ufrs\in \Spin^c(M_0,\g_0)$, equation~\eqref{eq:perturbed-map-for-cylinders} gives a
	morphism of transitive systems from $\CF(M_0,\g_0, \ufrs;\Lambda_{\omega|_{M_0}})$ to
	$\CF(M_1,\g_1, v_*(\ufrs);\Lambda_{\omega|_{M_1}})$.

	\begin{lem}\label{lem:cylinder-maps-well-defined}
	Suppose that $\cW=(W,Z,[\xi])$ is a special cobordism with a Morse function $f$
	with no critical points and gradient-like vector field $v$.
	\begin{enumerate}
	\item \label{lem:no-crit-points-1} The map $F_{\cW,\omega;(f,v)}$ is a chain
	map.
	\item \label{lem:no-crit-points-2} The induced morphism of transitive systems
	is independent of the choice of Heegaard diagram $\cH_0$ for $(M_0,\g_0)$.
	\item \label{lem:no-crit-points-3} The induced morphism of transitive systems
	is independent of $v$.
	\end{enumerate}
	\end{lem}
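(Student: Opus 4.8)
The plan is to treat $F_{\cW;\omega,(f,v)}$ as the cobordism analogue of the perturbed isotopy maps of Section~\ref{sec:perturbed-diffeos} and to mimic, step by step, the arguments there and in Section~\ref{sec:perturbed-triangle-maps}. For part~\eqref{lem:no-crit-points-1}, since $f$ has no critical points the flow of $v$ is a diffeomorphism $\Phi_v\colon M_0\to M_1$ carrying $\cH_0$ to $\cH_1$; pushing forward the chosen almost complex structure along $\Phi_v$ yields a bijection $\phi\mapsto\phi'$ between classes $\phi\in\pi_2(\xs,\ys)$ on $\cH_0$ and $\phi'\in\pi_2(v_*\xs,v_*\ys)$ on $\cH_1$, preserving the Maslov index and, for $\mu=1$, the mod $2$ count $|\cM(\phi)/\R|$. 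Comparing $\partial\circ F_{\cW;\omega,(f,v)}$ with $F_{\cW;\omega,(f,v)}\circ\partial$ then reduces to the $\omega$-area identity
\[
\int_{\Gamma_{\xs}}\omega + A_{\omega|_{M_1}}(\phi') = A_{\omega|_{M_0}}(\phi) + \int_{\Gamma_{\ys}}\omega,
\]
which I would prove by flowing the coned $2$-chain $\tD(\phi)\subset M_0$ along $v$: the resulting $3$-chain in $W$ has boundary built from $\tD(\phi')$, $\tD(\phi)$, and the $2$-chains $\Gamma_{\xs},\Gamma_{\ys}$ swept out by $\gamma_{\xs},\gamma_{\ys}$ (compare equation~\eqref{eq:boundary-S-phi-t}), so $d\omega=0$ and Stokes' theorem give the displayed relation once signs are fixed. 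This is exactly what makes $F_{\cW;\omega,(f,v)}$ a chain map.

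For part~\eqref{lem:no-crit-points-2}, by Remark~\ref{rem:1-map-determines-morphism} it suffices to show that, for two admissible diagrams $\cH_0,\cH_0'$ of $(M_0,\g_0)$ with pushforwards $\cH_1,\cH_1'$ under $\Phi_v$, the square formed by the two cylinder maps and the transition maps $\Psi_{\cH_0\to\cH_0'}$, $\Psi_{\cH_1\to\cH_1'}$ commutes up to chain homotopy and an overall factor of $z^x$. Since the transition maps are generated by handleslide edges, stabilizations, and $\cG^0_{\diff}$-moves, and $\Phi_v$ carries each elementary move on $\cH_0$ to the corresponding elementary move on $\cH_1$ with matching compressing disks, it is enough to check compatibility move by move, exactly as in the proof of Theorem~\ref{thm:naturality-perturbed}. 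For stabilizations this is immediate, since both the cylinder map and $\sigma$ merely append the new intersection point $c$ and leave $\omega$-areas unchanged (compare Lemma~\ref{lem:perturbed-stabilization-chain-map}); for $\cG^0_{\diff}$-moves it follows from Lemma~\ref{lem:no-monodromy-spinc} and Lemma~\ref{lem:tautological-map-naturality}; for handleslide edges one argues as in Proposition~\ref{prop:tautological=triangle} and Lemma~\ref{lem:continuation=triangle}, using that $\Phi_v$ pushes forward the entire triple diagram and that the $\omega$-areas of coned triangle domains match up by the same Stokes' theorem argument as above.

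For part~\eqref{lem:no-crit-points-3}, I would use that the set of gradient-like vector fields for $f$ compatible with the special cobordism structure is convex — a convex combination of such vector fields is again gradient-like — hence any two choices $v_0,v_1$ are joined by a path $v_t$. The diffeomorphism $\Phi_{v_1}\circ\Phi_{v_0}^{-1}$ of $M_1$ is then isotopic to $\id_{M_1}$ via $\Phi_{v_t}\circ\Phi_{v_0}^{-1}$, and tracing through the definition in equation~\eqref{eq:perturbed-map-for-cylinders} shows that $F_{\cW;\omega,(f,v_1)}$ differs from $F_{\cW;\omega,(f,v_0)}$ by post-composition with the perturbed isotopy map of this isotopy. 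By Lemma~\ref{lem:no-monodromy-spinc}, that isotopy map is the identity up to an overall factor of $z^x$ on each $\Spin^c$ structure, so $v_0$ and $v_1$ induce the same morphism of projective transitive systems.

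The main obstacle is the handleslide case of part~\eqref{lem:no-crit-points-2}: showing that the cylinder map commutes with the perturbed holomorphic triangle maps requires genuine analysis of the $\omega$-areas of coned triangle domains, of the kind underlying Proposition~\ref{prop:tautological=triangle}, rather than the purely formal Stokes' theorem bookkeeping that suffices everywhere else in the proof.
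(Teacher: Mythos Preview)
Your outline for part~\eqref{lem:no-crit-points-1} matches the paper exactly. For part~\eqref{lem:no-crit-points-2}, the paper also reduces to checking individual Heegaard moves (it does only the beta-handleslide case and leaves the rest to the reader), but your assessment of that case is off: it is \emph{not} harder than the rest, and Proposition~\ref{prop:tautological=triangle} and Lemma~\ref{lem:continuation=triangle} are not what is needed. The paper's argument is precisely the ``formal Stokes' theorem bookkeeping'' you say suffices elsewhere. Given a triangle class $\psi\in\pi_2(\xs,\Theta_{\scb_0,\scb_0'},\zs)$ on $(\Sigma_0,\as_0,\bs_0,\bs_0')$, one flows the coned $2$-chain $\tilde{\cD}(\psi)\subset M_0$ along $v$ to obtain a $3$-chain $C_3\subset W$ with
\[
\d C_3=\Phi(\{1\}\times\tilde{\cD}(\psi))-\Phi(\{0\}\times\tilde{\cD}(\psi))+\Gamma_{\zs}-\Gamma_{\xs}-\Gamma_{\Theta_{\scb_0,\scb_0'}},
\]
and observes that the area discrepancy between the two compositions is $\int_{\Gamma_{\Theta_{\scb_0,\scb_0'}}}\omega$, which is independent of $\xs$ and $\zs$. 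No holomorphic analysis beyond the unperturbed triangle map is required.

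Part~\eqref{lem:no-crit-points-3} contains a genuine error. Your invocation of Lemma~\ref{lem:no-monodromy-spinc} is wrong: that lemma compares two isotopies with the \emph{same} endpoint $\phi_1=\psi_1$, whereas here there is a single isotopy $\phi_t=\Phi_{v_t}\circ\Phi_{v_0}^{-1}$ with $\phi_1\neq\id_{M_1}$ in general, so $(\phi_t)_*$ is not the identity. The correct statement is that $(\phi_t)_*$ \emph{is} (a representative of) the transition map from the pushforward diagram under $v_0$ to the pushforward diagram under $v_1$ in the projective transitive system for $(M_1,\g_1)$; hence, once you know
\[
F_{\cW;\omega,(f,v_1)}\doteq(\phi_t)_*\circ F_{\cW;\omega,(f,v_0)},
\]
the two maps induce the same morphism of transitive systems by Remark~\ref{rem:1-map-determines-morphism}. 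Moreover, the displayed relation does not follow merely ``by tracing through the definition'': it requires another Stokes argument. The paper builds $\hat{\Phi}\colon I\times I\times M_0\to W$ from the family $(v_t)$, takes the $3$-chain $\hat{\Phi}(I\times I\times\gamma_{\xs})$, and uses Stokes to rewrite the area discrepancy as $\int_{I\times I\times\d\gamma_{\xs}}\hat{\Phi}^*\omega$, which is independent of $\xs$ because $\d\gamma_{\xs}$ (the centers of the compressing disks) is.
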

	\begin{proof} Claim~\eqref{lem:no-crit-points-1}, that $F_{\cW,\omega;(f,v)}$
	is a chain map, follows from Stokes' theorem.
	
We now consider claim~\eqref{lem:no-crit-points-2}, that the morphism induced
by $F_{\cW,\omega;(f,v)}$ is independent of $\cH_0$. This amounts to showing
that the maps $F_{\cW,\omega;(f,v)}$ commute with the transition maps for
changing diagrams, up to an overall factor of $z^x$. We focus on the case when
we have two diagrams for $(M_0,\g_0)$ that are related by a single
beta-handleslide or isotopy. We leave verification of
claim~\eqref{lem:no-crit-points-2} for other Heegaard moves to the reader.
	
Suppose that $(\Sigma_0,\as_0,\bs_0,\bs_0')$ is an admissible Heegaard triple
for a beta-handleslide or isotopy in $(M_0,\g_0)$. Set
$\cH_0=(\Sigma_0,\as_0,\bs_0)$ and
$\cH_0'=(\Sigma_0,\as_0,\bs'_0)$.
Let $\cH_1$ and $\cH_1'$ denote their images
in $M_1$ under the flow of $v$.

It is sufficient to consider the claim when the
top-graded generator of $\SFH(\Sigma_0,\bs_0,\bs_0')$ is represented by a
single intersection point $\Theta_{\scb_0,\scb_0'}\in \bT_{\scb_0}\cap
\bT_{\scb_0'}$, since a general beta-isotopy or handleslide may
be decomposed into a sequence of beta-isotopies and handleslides which each satisfy this condition.
		
Let $\psi\in \pi_2(\xs,\Theta_{\scb_0,\scb_0'},\zs)$ be a homology class of
triangles, where $\xs \in \bT_{\sca_0} \cap \bT_{\scb_0}$ and $\zs \in \bT_{\sca_0}
\cap \bT_{\scb_0'}$. Let $\Phi\colon I\times M_0\to W$ denote the flow of $v/v(f)$.
Let $C_3\subset W$ be the 3-chain $\Phi(I\times \tilde{\cD}(\psi))$, where
$\tilde{\cD}(\psi) \subset M_0$ is the 2-chain constructed
		in Section~\ref{sec:perturbed-triangle-maps}. Since
		\begin{equation}
		\d C_3=\Phi\left(\{1\}\times \tilde{\cD}(\psi)\right) -\Phi\left(\{0\}\times
		\tilde{\cD}(\psi)\right) +
		\Gamma_{\zs} -\Gamma_{\xs} -\Gamma_{\Theta_{\scb_0,\scb'_0}},
		\label{eq:3-chain-difference-triangle-maps}
		\end{equation}
		it follows that $\omega$ evaluates trivially on the sum of the 2-chains on the
		right-hand side of
		equation~\eqref{eq:3-chain-difference-triangle-maps}.	The quantities
		$\int_{\Phi(\{0\} \times \tilde{\cD}(\psi))} \omega$ and $\int_{\Phi(\{1\}
		\times \tilde{\cD}(\psi))} \omega$ are the area contributions of
		$\Psi_{\cH_0\to \cH_0'}(\xs)$ and $\Psi_{\cH_1\to \cH_1'}(v_*(\xs))$,
		respectively. The
		quantity $\int_{\Gamma_{\zs}} \omega$ is the area contribution of
        $F_{\cW;\omega, (f,v)}(\zs)$, and $\int_{\Gamma_{\xs}} \omega$ is the
		area contribution of $F_{\cW;\omega,(f,v)}(\xs)$. Hence
		\[
		F_{\cW;\omega,(f,v)}(\Psi_{\cH_0\to\cH_0'}(\xs))
		=z^{-\int_{\Gamma_{\Theta_{\scb_0,\scb'_0}}}\omega}\cdot
		\Psi_{\cH_1\to \cH_1'}(F_{\cW;\omega,(f,v)}(\xs)).
		\]
		Since $\int_{\Gamma_{\Theta_{\scb_0,\scb_0'}}}\omega$ is independent of $\xs$ and
		$\zs$, the result follows.	
	
	We now consider claim~\eqref{lem:no-crit-points-3}, independence from the
gradient-like vector field. Any two $v$ may be connected by a 1-parameter family
$(v_t)_{t\in I}$. As before, let $\cH_0=(\Sigma_0,\as_0,\bs_0)$ denote a diagram
for $(M_0,\g_0)$. For $t\in I$, let $\Phi_t\colon I\times M_0\to W$ denote the
flow of $v_t/v_t(f)$.
	
	Write $\phi_t\colon M_1\to M_1$ for the diffeomorphism $(\Phi_t\circ
	\Phi_0^{-1})|_{M_1}$.
	Claim~\eqref{lem:no-crit-points-3} amounts to showing
	\begin{equation}
	F_{\cW;\omega,(f,v_1)}\doteq (\phi_t)_* \circ F_{\cW;\omega,(f,v_0)},
	\label{eq:perturbed-isotopy-cylinder}
	\end{equation}
	where $(\phi_t)_*$ denotes the isotopy map from
	Section~\ref{sec:perturbed-diffeos}.

	Let $\Gamma_{\xs,t}$ denote the 2-chain
	$\Phi_t(I\times \gamma_{\xs})\subset W$, and let $\Gamma_{\xs}'\subset M_1$
	denote the 2-chain swept out by $\Phi_t(\{1\}\times \gamma_{\xs})$ as $t$ ranges
	over $I$. Equation~\eqref{eq:perturbed-isotopy-cylinder} amounts to showing that
		\begin{equation}
		\int_{\Gamma_{\xs,1}}\omega -\int_{\Gamma_{\xs,0}}\omega-\int_{\Gamma'_{\xs}}
		\omega\label{eq:isotopy-gradient-integral}
		\end{equation}
		is independent of $\xs$.
		
		Write $\hat{\Phi}\colon I\times I\times M_0\to W$ for the map
		$\hat{\Phi}(t,s,x)=\Phi_t(s,x)$.
		Let $C_3$ be the 3-chain defined  by applying $\hat{\Phi}$ to $I\times I\times
		\gamma_{\xs}$.
		Equation~\eqref{eq:isotopy-gradient-integral} is equal to
		$\int_{\d (I\times I)\times \gamma_{\xs}} \hat{\Phi}^*(\omega)$. Since
		$\int_{C_3} d \omega=0$, Stokes' theorem implies that
		equation~\eqref{eq:isotopy-gradient-integral} is equal to
		$\int_{I\times I\times \d \gamma_{\xs}} \hat{\Phi}^*(\omega).$ Since $\d
		\gamma_{\xs}$ is independent of $\xs$, it follows that the quantity in
		equation~\eqref{eq:isotopy-gradient-integral} is also independent of $\xs$,
		completing the proof.
	\end{proof}
	
    We are now ready to prove Lemma~\ref{lem:Alexander-grading}.

    \begin{proof}[Proof of Lemma~\ref{lem:Alexander-grading}]
        By construction, the map $F_{I\times Y(K);\omega_{\cS_K}}$ sends
		$z^x\cdot \xs$ to $z^{x+\int_{\Gamma_{\xs}} \omega_{S_K}}\cdot \xs$, where
		$\Gamma_{\xs}=I\times \gamma_{\xs}$. It is sufficient to show that
		\begin{equation}
		\int_{\Gamma_{\xs}-\Gamma_{\ys}}
		\omega_{\cS_K} =
		-A(\xs,\ys),\label{eq:desired-Alexander-grading}
		\end{equation}
		where $A(\xs,\ys)$ is the relative Alexander grading.
		
		Since $\omega_{\cS_K}$ is the Poincar\'{e}--Lefschetz dual of $\{\tfrac{1}{2}\}\times \cS_K$, we have
		\begin{equation*}
		\int_{\Gamma_{\xs}-\Gamma_{\ys}}
		\omega_{\cS_K} =\# (\gamma_{\xs}-\gamma_{\ys})\cap \cS_K.
		\end{equation*}
		If $\phi\in \pi_2(\xs,\ys)$ is a class of disks, then, by definition,
		\[
		A(\xs,\ys)=n_{z}(\phi)-n_w(\phi).		
		\]
		On the other hand,
		\[
		\d \tilde{\cD}(\phi)=\gamma_{\ys}-\gamma_{\xs}.		
		\]
		Using the Leibniz rule for intersections, we have
		\begin{equation}
		\begin{split}
		\# (\gamma_{\xs}-\gamma_{\ys})\cap \cS_{K}&=-\# (\d \tilde{\cD}(\phi) \cap \cS_K)\\
		&=-\#(\tilde{\cD}(\phi)\cap \d \cS_K).
		\end{split}
		\label{eq:integral=alexander-grading-1}
		\end{equation}
		Since $\d \cS_K=K$, equation~\eqref{eq:integral=alexander-grading-1} gives
		\[
	   \#(\gamma_{\xs}-\gamma_{\ys})\cap \cS_K=-\#( \tilde{\cD}(\phi)\cap K),		
		\]
		which is $-(n_z(\phi)-n_w(\phi))=-A(\xs,\ys)$, because, by convention, $K$ intersects $\Sigma$
		positively at $z$ and negatively at~$w$.
	\end{proof}	
	
	\begin{rem}\label{rem:failure-transitive-system-over-2-forms}
        In Lemma~\ref{lem:independence-rep-omega-spinc},  we described a
		transition map $\Psi_{\omega\to \omega';\eta}$
		for changing between cohomologous closed 2-forms $\omega$ and $\omega'$ when
		$d\eta=\omega'-\omega$,
		though the map was only independent of $\eta$ when restricted to a fixed
        $\Spin^c$ structure.
		Lemma~\ref{lem:Alexander-grading} is a perfect example of why this is important.
		The 2-form $\omega_{S_K}$ is a boundary on $I\times Y(K)$. Write $\omega_{\cS_K} = d \eta$,
		and write $\eta_i := \eta|_{\{i\}\times Y(K)}$. Note that $\omega_{\cS_K}$
        restricts trivially to $\{i\}\times Y(K)$ for $i\in \{0,1\}$.
        An easy Stokes' theorem argument shows that the diagram
		\begin{equation}
		\begin{tikzcd}[column sep=1.8 cm]\SFH(Y(K);\Lambda_{0})\arrow[d, "F_{I\times
		Y(K); \omega_{S_K}}"] \arrow[r,"\Psi_{0\to d \eta_0;\eta_0}"]&
		\SFH(Y(K);\Lambda_{d
		\eta_0})\arrow[d, "F_{I\times Y(K);0}"]\\
		\SFH(Y(K);\Lambda_{0})& \SFH(Y(K);\Lambda_{d \eta_1})\arrow[l,"\Psi_{d
		\eta_1\to 0;-\eta_1}"]
		\end{tikzcd}
		\label{eq:erroneous-conclusion}
		\end{equation}
		commutes up to an overall factor of $z^x$. Hence $F_{I\times
        Y(K);\omega_{\cS_K}}\doteq \Psi_{0\to 0; \eta_0-\eta_1}$, but this does not
        imply that $F_{I\times Y(K);\omega_{S_K}}\doteq \id,$ since
        Lemma~\ref{lem:independence-rep-omega-spinc} only applies if we restrict to a
        single $\Spin^c$ structure.
	\end{rem}

	\subsection{Perturbed 1-handle and 3-handle maps}
    \label{sec:1-handles}
	
	We now describe the cobordism maps for 1-handles and 3-handles.
    We focus on 1-handles, since the 3-handle maps are algebraically dual.
	
	Suppose that	
	\[
	\cW_1=(W_1,Z_1,[\xi_1])\colon (M_0,\g_0)\to (M_1,\g_1)
	\]
	 is a special cobordism with a Morse function $f$ that
	has a single index 1 critical point $p_0$. Let $v$ be a gradient-like vector
    field for $f$. We use $f$ and $v$ as auxiliary data to construct the cobordism map
    for  $\cW_1$.

	The stable manifold of $v$ at $p_0$ intersects $M_0$ in two points, $p_1$ and
	$p_2$. Let $\cH_0=(\Sigma_0,\as_0,\bs_0)$ be an admissible diagram for $(M_0,\g_0)$, such that
	$p_1$, $p_2\in \Sigma_0 \setminus (\as_0 \cup \bs_0)$. Let $D_1$ and $D_2$ be two small
	disks in $\Sigma_0$, centered at $p_1$ and $p_2$. The flow of $v$ induces an
	embedding of $\Sigma_0 \setminus (D_1 \cup D_2)$ into $M_1$.
	
	A Heegaard diagram $(\Sigma_1,\as_1,\bs_1)$ for $(M_1,\g_1)$
	is constructed as follows. The surface $\Sigma_1$ is obtained by connecting
    the boundary components of
	the image of $\Sigma_0 \setminus (D_1\cup D_2)$ under the flow of $v$ with an
    annulus in the 1-handle
	region. The attaching curves $\as_1$ and $\bs_1$ are given by $\as_1\cup \{\a\}$
	and $\bs_1\cup \{\b\}$,
	where $\a$ and $\b$ are contained in the 1-handle annulus, intersect
	transversely,  are
	homologically essential therein, and satisfy $|\a\cap\b|=2$. Write $\a\cap
	\b=\{\theta^+,\theta^-\}$, where $\theta^+$ has the larger relative Maslov grading.
	
	If $\xs\in \bT_{\sca_0}\cap \bT_{\scb_0}$, write $v_*(\xs)$ for the corresponding
	tuple of points on $\Sigma_1$. A set of compressing disks in $M_0$ may be
	pushed forward under the flow of $v$. By adding two disks in the 1-handle region,
	we naturally obtain a set of compressing disks in $(M_1,\g_1)$.
	If $\xs\in \bT_{\sca_0}\cap
	\bT_{\scb_0}$,
	write $\Gamma_{\xs}\subset W_1$ for the 2-chain traced out by applying the flow of
	$v$ to $\gamma_{\xs}\subset M_0$. We define the perturbed 1-handle map $F_{\cW_1;\omega, (f,v)}$ as
	\[
	F_{\cW_1;\omega, (f,v)}(z^x\cdot \xs):=z^{x+\int_{\Gamma_{\xs}} \omega}\cdot
	v_*(\xs)\times \theta^+.
	\]
	
\begin{lem} \label{lem:1-handles}
Suppose that $\cW_1=(W_1,Z_1,[\xi_1])\colon (M_0,\g_0)\to (M_1,\g_1)$ is a
special cobordism and $(f,v)$ is a Morse function and gradient-like vector field
on $W_1$ with a single index 1 critical point.
\begin{enumerate}
\item\label{lem:1-handle-claim-1} For an almost complex structure sufficiently
stretched on the two boundary components of the 1-handle annulus, the map
$F_{\cW_1;\omega,(f,v)}$ is a chain map.
\item \label{lem:1-handle-claim-2}The morphism of transitive systems induced by
$F_{\cW_1;\omega,(f,v)}$ is independent of the Heegaard diagram for
$(M_0,\g_0)$.
\item \label{lem:1-handle-claim-3}The morphism of transitive systems induced by
$F_{\cW_1;\omega,(f,v)}$ is independent of $v$.
\end{enumerate}
\end{lem}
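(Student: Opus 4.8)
The plan is to imitate the proof of Lemma~\ref{lem:cylinder-maps-well-defined}, carrying the auxiliary intersection point $\theta^+$ in the $1$-handle annulus through each step. Two elementary observations absorb everything new. (i) Coning through the $1$-handle region is $\omega$-neutral: if $\phi$ is a class on the destabilized diagram and $\phi\#e_{\theta^+}$ denotes the induced class on $\cH_1$, then $\tilde{\cD}(\phi\#e_{\theta^+})$ differs from the flow-image under $v$ of $\tilde{\cD}(\phi)$ only inside the $1$-handle annulus region, and this difference is the boundary of an integral $3$-chain (a union of solid tori) supported there; hence $A_{\omega}(\phi\#e_{\theta^+})=A_{\omega}(\phi)$. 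This is the $1$-handle analogue of Lemma~\ref{lem:perturbed-stabilization-chain-map}. (ii) The two bigons in the $1$-handle annulus joining $\theta^+$ and $\theta^-$ carry equal $\omega$-area: their difference is a periodic domain whose cone is the belt $2$-sphere of the $1$-handle, which bounds the cocore $3$-disk inside $W_1$, so it has zero $\omega$-area --- exactly as in the argument that the contact $3$-handle map is a chain map.

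For part~\eqref{lem:1-handle-claim-1}, I would first recall from \cite{JCob} that, for an almost complex structure sufficiently stretched along the two boundary circles of the $1$-handle annulus, the only index~$1$ classes from $v_*(\xs)\times\theta^+$ with holomorphic representatives are the classes $\phi\#e_{\theta^+}$ for $\phi$ an index~$1$ class on the destabilized diagram, together with the two bigons at $\theta^+$, and that $|\cM(\phi\#e_{\theta^+})/\R|\equiv|\cM(\phi)/\R|\mod 2$. Observation~(ii) makes the two bigon terms cancel over $\Lambda_{\omega}$, and observation~(i), combined with the bookkeeping relation among $\Gamma_{\xs}$, $\Gamma_{\ys}$, and $\tilde{\cD}(\phi)$ obtained by sweeping $\tilde{\cD}(\phi)$ along the flow of $v$ (as in the chain-map part of Lemma~\ref{lem:cylinder-maps-well-defined}), shows that the $\phi\#e_{\theta^+}$ terms carry the $\omega$-weights required for $\partial\circ F_{\cW_1;\omega,(f,v)}=F_{\cW_1;\omega,(f,v)}\circ\partial$, by Stokes' theorem.

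For part~\eqref{lem:1-handle-claim-2}, I would reduce, as in Lemma~\ref{lem:cylinder-maps-well-defined}\eqref{lem:no-crit-points-2}, to two diagrams for $(M_0,\g_0)$ related by a single beta-handleslide or isotopy, pick an admissible Heegaard triple, cone its triangle classes in $M_0$, and sweep them along the flow $\Phi$ of $v/v(f)$ to form a $3$-chain $C_3\subset W_1$ as in equation~\eqref{eq:3-chain-difference-triangle-maps}; then $\int_{C_3}d\omega=0$, Stokes' theorem, observation~(i) (which makes the relevant triangle map on $\cH_1$ equal, up to an overall $z^x$, to the tensor product of the triangle map on $\cH_0$ with $\id$ on the span of $\theta^+$), and the fact that $\int_{\Gamma_{\Theta_{\scb_0,\scb_0'}}}\omega$ does not depend on the intersection points, together give commutativity of $F_{\cW_1;\omega,(f,v)}$ with the transition maps up to a factor $z^x$; the alpha-side moves, stabilizations, and diffeomorphisms isotopic to the identity are handled identically. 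For part~\eqref{lem:1-handle-claim-3}, I would join two gradient-like vector fields $v_0$, $v_1$ by a path $(v_t)_{t\in I}$, set $\phi_t=(\Phi_t\circ\Phi_0^{-1})|_{M_1}$ with $\Phi_t$ the flow of $v_t/v_t(f)$, and prove $F_{\cW_1;\omega,(f,v_1)}\doteq(\phi_t)_*\circ F_{\cW_1;\omega,(f,v_0)}$ with $(\phi_t)_*$ the isotopy map of Section~\ref{sec:perturbed-diffeos}; by observation~(i) the point $\theta^+$ rides along without contributing area, and the claim reduces, exactly as in Lemma~\ref{lem:cylinder-maps-well-defined}\eqref{lem:no-crit-points-3}, to the independence of $\int_{\Gamma_{\xs,1}}\omega-\int_{\Gamma_{\xs,0}}\omega-\int_{\Gamma'_{\xs}}\omega$ from $\xs$, which follows from Stokes' theorem applied to $\hat{\Phi}(I\times I\times\gamma_{\xs})$ since $\partial\gamma_{\xs}$ is independent of $\xs$.

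The main obstacle is not in the $\omega$-bookkeeping --- which is governed entirely by observations~(i) and~(ii) --- but in importing the holomorphic-curve input from \cite{JCob}: the neck-stretching classification of index~$1$ disks and triangles in the $1$-handle-stabilized diagram, and the Gromov-compactness arguments behind the $\mod 2$ congruences. Once those are taken as known, all three parts become formally identical to the corresponding parts of Lemma~\ref{lem:cylinder-maps-well-defined}.
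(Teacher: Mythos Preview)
Your proposal is correct and follows essentially the same approach as the paper. Observation~(ii) is exactly the paper's argument for part~\eqref{lem:1-handle-claim-1}, and parts~\eqref{lem:1-handle-claim-2} and~\eqref{lem:1-handle-claim-3} are handled in the paper just as you outline, by tracking areas through the unperturbed arguments via Stokes' theorem.

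One point in part~\eqref{lem:1-handle-claim-3} deserves more care. Your map $\phi_t=(\Phi_t\circ\Phi_0^{-1})|_{M_1}$ is not globally defined: the flow $\Phi_t$ of $v_t/v_t(f)$ is only defined on the complement of the descending manifold of the index~$1$ critical point, so it does not yield a diffeomorphism $M_0\to M_1$. The paper fixes this by choosing an isotopy $\phi_t$ of $M_0$ that keeps the moving feet of the $1$-handle inside $\Sigma_0\setminus(\as_0\cup\bs_0)$, and a separate isotopy $\psi_t$ of $M_1$ that agrees with the flow-image outside the $1$-handle region; the commutative square to verify then has $(\phi_t)_*$ on top and $(\psi_t)_*$ on the bottom. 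Relatedly, the contribution of $\theta^+$ does not literally vanish: sweeping $\gamma_{\theta^+}$ under $\psi_t$ produces an additional term $\int_{\Gamma_{\theta^+,\psi_t}}\omega$ in the area difference. The point is not that this term is zero (observation~(i) does not apply here, since this is a swept cone rather than a coned disk class), but that it is independent of $\xs$, which is all that is needed for projective commutativity.
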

	
	\begin{proof}
	    The proof of claim~\eqref{lem:1-handle-claim-1}, that $F_{\cW_1;\omega,(f,v)}$
        is a chain map, relies on the same holomorphic degeneration
        argument used in the unperturbed setting. See
        \cite{OSTriangles}*{Section~4.3} for the original proof, as well as
		\cite{JCob}*{Section~7}, or \cite{ZemGraphTQFT}*{Section~8} for versions of the proof
		in several related contexts.
		In the perturbed setting, one must also check that
		the cones of the two bigons in the 1-handle region are assigned the same $\omega$-area.
		Note that the difference between these two bigon classes is a periodic
		domain, which cones off to a 2-sphere $S$ that is homotopic to the belt sphere
		of the 4-dimensional 1-handle. Since $\omega$ is defined on all of $W$ (in particular,
        on the co-core of the 1-handle), we must have $\int_{S}\omega=0$.

        To prove claim~\eqref{lem:1-handle-claim-2}, that the morphism of
        transitive systems induced by $F_{\cW_1,\omega; (f,v)}$ is independent of the
        Heegaard diagram $\cH_0$, one repeats the standard proof of the well-definedness
        of the 1-handle maps \cite{OSTriangles}*{Theorem~4.10}, while keeping track of
        areas as in the proof of Lemma~\ref{lem:cylinder-maps-well-defined}.

        Claim~\eqref{lem:1-handle-claim-3}, independence from $v$, is proven as follows.
		Suppose that $(v_t)_{t\in I}$ is a path of gradient-like vector fields.
		We can pick an isotopy $\phi_t$ of $M_0$, and an admissible diagram
		$(\Sigma_0,\as_0,\bs_0)$ for $(M_0,\g_0)$ such that the stable manifold of the
	    critical point of $f$ is contained in $\phi_t(\Sigma_0\setminus (\as_0\cup \bs_0))$
		for all $t$. We can choose an isotopy $\psi_t$ of $(M_1,\g_1)$ such that the
		image of $\phi_t(\Sigma_0)$ under the flow of $v_t$ coincides with $\psi_t(\Sigma_1)$
        outside the 1-handle region. Write $(\Sigma_0',\as_0',\bs_0')$ for the
        image of $(\Sigma_0,\as_0,\bs_0)$ under $\phi_1$, and write
        $(\Sigma_1',\as_1',\bs_1')$ for the image of $(\Sigma_1,\as_1,\bs_1)$ under $\psi_t$.
		
		It suffices to show that the following diagram commutes, up to overall
		multiplication by $z^x$:
		\begin{equation}
		\begin{tikzcd}\CF(\Sigma_0,\as_0,\bs_0;\Lambda_{\omega|_{M_0}})\arrow[r, "(\phi_t)_*"]\arrow[d,
		"F_{\cW_1;\omega,(f,v_0)}"] &
        \CF(\Sigma_0',\as_0',\bs_0';\Lambda_{\omega|_{M_0}})\arrow[d,
		"F_{\cW_1;\omega,(f,v_1)}"]\\
		\CF(\Sigma_1,\as_1,\bs_1;\Lambda_{\omega|_{M_1}})\arrow[r, "(\psi_t)_*"] & \CF(\Sigma_1',
		\as_1',\bs_1';\Lambda_{\omega|_{M_1}}).
		\end{tikzcd}
		\label{eq:commutative-diagram-1-handle-vector-field}
		\end{equation}

    We  define
    \[
    \hat{\Phi}\colon I\times I\times \gamma_{\xs}\to W_1,
    \]
    where $\hat{\Phi}(t,s, x)$ is the time $s$ flow of $\phi_t(x)$ under $v_t/v_t(f)$.
    Consider the 3-chain $C_3 = \Phi(I \times I \times \gamma_{\xs})$ in $W_1$.
    Then we have
    \begin{equation}
	\d C_3= \hat{\Phi}(\d (I\times I)\times \gamma_{\xs})+\hat{\Phi}(I\times
    I\times \d \gamma_{\xs}). \label{eq:1-handle-C3}
	\end{equation}
	
    Write $\gamma_{\theta^+}\subset M_1$ for 1-chain obtained by coning
    $\theta^+$ into the two handlebodies,
    and let $\Gamma_{\theta^+,\psi_t}\subset M_1$ denote the 2-chain
    swept out by the family $(\psi_t(\gamma_{\theta_+}))_{t\in I}$.
	By definition, the difference in area contributions from the two length 2 paths
    in equation~\eqref{eq:commutative-diagram-1-handle-vector-field} is
	\begin{equation}
    \int_{\d(I\times I)\times \gamma_{\xs}} \hat{\Phi}^*
    (\omega)+\int_{\Gamma_{\theta^+,\psi_t}}
    \omega.\label{eq:1-handle-diff-across-paths}
	\end{equation}
    Applying Stokes' theorem to equation~\eqref{eq:1-handle-C3}, we see that
    equation~\eqref{eq:1-handle-diff-across-paths} is equal to
    \[
    -\int_{I\times I\times \d \gamma_{\xs}}
    \hat{\Phi}^*(\omega)+\int_{\Gamma_{\theta^+,\psi_t}} \omega,
    \]
    which is independent of $\xs$. It follows that
    equation~\eqref{eq:commutative-diagram-1-handle-vector-field} commutes up to an
    overall factor of $z^x$, completing the proof.
\end{proof}
		
    The perturbed 3-handle maps are dual to the 1-handle maps.
    We leave the details of the definition to the reader.

	\subsection{Perturbed 2-handle maps}
	\label{sec:2-handle}
	Suppose that
	\[
	\cW_2=(W_2,Z_2,[\xi_2])\colon (M_0,\g_0)\to (M_1,\g_1)
	\]
	 is a special cobordism equipped
	with a Morse function $f$ and gradient-like vector field $v$ such that
	$f$ has only index 2 critical points, and the stable and unstable manifolds
	of $v$ are transverse.
		
	Let $\bS_1\subset M_0$ denote the intersection of the stable manifolds of $(f,v)$
	and $M_0$. Let $(\Sigma,\as,\bs,\bs')$ be a Heegaard triple subordinate to a
	bouquet for $\bS_1$; see \cite{JCob}*{Definition~6.3}. Let
    \[
    \cW_{\sca,\scb,\scb'} = (W_{\sca,\scb,\scb'}, Z_{\sca,\scb,\scb'},
    [\xi_{\sca,\scb,\scb'}])
    \]
    be the associated sutured manifold cobordism, as described in
    \cite[Section~7]{JuhaszZemkeContactHandles}.
    The 4-manifold $W_{\sca,\scb,\scb'}$ is defined as follows. If $\Delta$ denotes a triangle
    with edges $e_{\a}$, $e_{\b}$, and $e_{\b'}$, then
    \[
	W_{\a,\b,\b'}:=(\Sigma\times \Delta)\cup
	\left(U_{\a}\times e_{\a}\cup U_{\b}\times e_{\b}\cup U_{\b'}\times e_{\b'}\right),
    \]
    where $U_{\sca}$, $U_{\scb}$, and $U_{\scb'}$ are the sutured compression bodies
    corresponding to $(\S, \as)$, $(\S, \bs)$, and $(\S, \bs')$, respectively.
    We view the 4-manifold $W_{\a,\b,\b'}$ as
    having three sutured manifold boundary components,
    $M_0$, $M_{\b,\b'}$, and $M_1$.

	From our choice of $(f,v)$, we obtain an embedding
	\[
	\Phi_{(f,v)}\colon W_{\sca,\scb,\scb'}\to W_2,
	\]
	which is well-defined up to isotopy, as follows.
	 Let $\{b_1,\dots, b_k\}\subset (0,1)$ be
	the critical values of $f$, and let $\epsilon>0$ be chosen such that
	$\epsilon<b_i<1-\epsilon$ for $i \in \{1, \dots, k\}$. Let $N(\Sigma)$ be a product neighborhood
    of $\Sigma$ in $M_0$. We can view $M_0$ as $
    U_{\sca} \cup N(\Sigma)\cup -U_{\scb}$.
    We can correspondingly view $W_{\sca,\scb,\scb'}$ as
	\[
	(N(\Sigma)\times I)\cup (U_{\sca}\times I)\cup (-U_{\scb}\times
	[0,\epsilon])\cup (-U_{\scb'}\times [1-\epsilon,1]).
	\]	
	The embedding $\Phi_{(f,v)}$ sends a point $(x,t)\in U_{\scb}\times
    [0,\epsilon]$ to the
	point $z\in W_2$ which is in the flow line of $v$ over $x\in U_{\scb}\subset M$
	and has $f(z)=t$. The embeddings on the other portions of $W_{\sca,\scb,\scb'}$ are
	defined similarly. See Figure~\ref{fig:triple} for a schematic. We note also
	that the boundary component
    $M_{\b,\b'}\subset \d \cW_{\a,\b,\b'}$ may be naturally filled in with a
     sutured manifold cobordism $\cW_{\b,\b'}$
    to obtain the sutured 2-handle cobordism $\cW_2$.
    A description of $\cW_{\b,\b'}$ may be found in
    \cite{JCob}*{Proposition~6.6} (see also \cite{JuhaszZemkeContactHandles}*{Section~8}).

        \begin{figure}[ht!]
        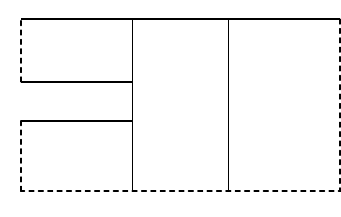
        \caption{The triple cobordism $W_{\a,\b,\b'}$.}
        \label{fig:triple}
        \end{figure}	
	
	A homology class $\psi\in \pi_2(\xs,\ys,\zs)$ on
	$(\Sigma,\as,\bs,\bs')$ induces a coned off singular 2-chain $\tilde{\cD}(\psi)$
	in $W_{\sca,\scb,\scb'}$, as follows.
	Firstly, the class $\psi$ induces a singular 2-chain $\cD_0(\psi)$ in $\Sigma\times \Delta$,
	which has boundary on
	$(\as\times e_{\sca})\cup (\bs\times e_{\scb})\cup (\bs'\times e_{\scb'})$,
	where $\d \Delta=e_{\sca}\cup e_{\scb}\cup e_{\scb'}$. The 2-chain $\cD_0(\psi)$ is
	determined, up to addition of a boundary,
	by the property its projection to $\Sigma$ is the domain of $\psi$, and that the projection onto
	$\Delta$ is degree $d$, where $d=|\as|=|\bs|=|\bs'|$.
	 We pick compressing
	disks $D_{\sca}$, $D_{\scb}$, and $D_{\scb'}$, and we let $c_{\sca}$, $c_{\scb}$, and $c_{\scb'}$
	denote the sets of center points of these compressing disks, respectively.
	We cone $\cD_0(\psi)$ into $U_{\sca}\times e_{\sca}$, $U_{\scb}\times e_{\scb}$,
	and $U_{\scb'}\times e_{\scb'}$ to obtain a 2-chain
	$\tilde{\cD}(\psi)$ in $W_{\sca,\scb,\scb'}$ that has boundary
	\[
	-\gamma_{\xs} -	\gamma_{\ys} + \gamma_{\zs}	+ c_{\sca} \times e_{\sca}
    + c_{\scb} \times e_{\scb} + c_{\scb'} \times e_{\scb'}.
	\]
	
	We define $A_\omega(\psi)$ to be the integral of
	$\Phi_{(f,v)}^* (\omega)$ over $\tilde{\cD}(\psi)$. We write $(M_{\scb, \scb'}, \gamma_{\scb, \scb'})$
    for the sutured manifold defined by the diagram $(\Sigma, \bs, \bs')$,
    and $\omega_{\scb, \scb'} = \omega|_{M_{\scb, \scb'}}$.
		
	By counting index 0 holomorphic triangles weighted with $z^{A_{\omega}(\psi)}$,
	we obtain a perturbed triangle map
	\begin{equation}
	F_{\sca,\scb,\scb';\omega}\colon
    \CF(\Sigma,\as,\bs;\Lambda_{\omega|_{M_0}})\otimes
	\CF(\Sigma,\bs,\bs';\Lambda_{\omega_{\scb,\scb'}})\to
	\CF(\Sigma,\as,\bs';\Lambda_{\omega|_{M_1}}).
	\label{eq:perturbed-2-handle-map}
	\end{equation}	
	Finally, the perturbed 2-handle map is given by the formula
	\begin{equation}\label{eq:perturbed-2-handle-map-2}
	F_{\cW_2;\omega, (f,v)}(z^x\cdot \xs) = z^x \cdot F_{\a,\b,\b';\omega}\left(\xs \otimes
	\Theta_{\scb,\scb'}^{\omega_{\scb,\scb'}}\right),
	\end{equation}
    where $\Theta_{\scb,\scb'}^{\omega_{\scb,\scb'}} \in
    \CF(\Sigma,\bs,\bs';\Lambda_{\omega_{\scb,\scb'}})$
    is defined analogously to equation~\eqref{eq:top-degree-generator}.

	The domain and codomain of $F_{\cW_2;\omega, (f,v)}$ do not form projective transitive
	systems unless we either restrict to a single $\Spin^c$ structure on $(M_0,\g_0)$
	and $(M_1,\g_1)$, or if $[\omega]|_{M_i}=0$ for $i\in \{0,1\}$.	However, if we
    fix $\ufrs_0\in \Spin^c(M_0,\g_0)$ and $\ufrs_1\in \Spin^c(M_1,\g_1)$, we obtain
    a morphism of projective transitive systems
	\[
	\pi_{\ufrs_1}\circ F_{\cW_2;\omega, (f,v)}\circ i_{\ufrs_0}\colon
    \CF(M_0,\g_0,\ufrs_0;\Lambda_{\omega|_{M_0}})\to
    \CF(M_1,\g_1;\ufrs_1,\Lambda_{\omega|_{M_1}}).
	\]
		
	\begin{lem}\label{lem:2-handles}
    Suppose that $\cW_2\colon (M_0,\g_0)\to
    (M_1,\g_1)$ is a special cobordism with a Morse function $f$ and gradient-like
    vector field $v$ with only index 2 critical points, which is Morse--Smale. Let
    $\bS_1$ denote the corresponding framed link in  $M_0$.
	\begin{enumerate}
	\item\label{lem:2-handles-claim-1} The morphism of transitive systems induced
    by $F_{\cW_2;\omega,(f,v)}$ is independent of the choice of bouquet for $\bS_1$,
    or the Heegaard triple subordinate to it.
	\item\label{lem:2-handles-claim-2} The morphism of transitive systems
    $F_{\cW_2;\omega,(f,v)}$ is independent of $v$.
	\end{enumerate}
	\end{lem}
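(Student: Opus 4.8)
The plan is to adapt the proof of well-definedness of the unperturbed $2$-handle maps from \cite{JCob} (which itself builds on \cite{OSTriangles}), together with its reformulation in \cite{JuhaszZemkeContactHandles}, while tracking $\omega$-areas by Stokes' theorem in exactly the manner of the proofs of Lemmas~\ref{lem:cylinder-maps-well-defined} and~\ref{lem:1-handles}. Throughout, the statement is to be read as an assertion about the morphisms of projective transitive systems $\pi_{\ufrs_1}\circ F_{\cW_2;\omega,(f,v)}\circ i_{\ufrs_0}$ for fixed $\ufrs_0\in\Spin^c(M_0,\g_0)$ and $\ufrs_1\in\Spin^c(M_1,\g_1)$, or about the total map when $[\omega]|_{M_0}=[\omega]|_{M_1}=0$. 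The recurring mechanism is that whenever two holomorphic triangles or quadrilaterals are juxtaposed --- along a connected-sum neck or along a fixed multi-diagram --- the corresponding coned-off $2$-chains add, so $A_\omega$ is additive; and every area discrepancy that arises depends only on the boundary data of the coned chains, namely the center points of the compressing disks and the cone of the fixed top-degree generator, and hence contributes only an overall factor of $z^x$.

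For claim~\eqref{lem:2-handles-claim-1}, I would separate the two sources of ambiguity: (i) changing the subordinate Heegaard triple $(\Sigma,\as,\bs,\bs')$ with the bouquet held fixed, and (ii) changing the bouquet for $\bS_1$. For (i), two admissible triples subordinate to a fixed bouquet are related by stabilizations and by handleslides and isotopies performed within each of the three handlebodies. Stabilizations are handled by the extension of Lemma~\ref{lem:perturbed-stabilization-chain-map} to triples. For a handleslide or isotopy of the $\alpha$-, $\beta$-, or $\beta'$-curves, the argument reduces, as in the unperturbed case, to associativity of the perturbed triangle maps --- Proposition~\ref{prop:associativity} and the quadrilateral count underlying it --- combined with the fact recorded after equation~\eqref{eq:top-degree-generator} that $[\Theta_{\scb,\scb'}^{\omega_{\scb,\scb'}}]$ is independent of auxiliary choices up to multiplication by $z^x$; additivity of $A_\omega$ on juxtaposed quadrilaterals, exactly as in equation~\eqref{eq:omega-areas-handleswap}, forces the correction terms to be overall powers of $z$. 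For (ii), changing the bouquet amounts to isotoping the embedding $\Phi_{(f,v)}\colon W_{\sca,\scb,\scb'}\to W_2$, so the pulled-back form $\Phi_{(f,v)}^*(\omega)$ changes by a form that is exact and supported away from the ends of $W_{\sca,\scb,\scb'}$, and a Stokes' theorem computation identical to the one in Lemma~\ref{lem-modify-2-form-on-interior} shows the $2$-handle map changes only by an overall factor of $z^x$.

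For claim~\eqref{lem:2-handles-claim-2}, I would join two gradient-like vector fields $v_0$ and $v_1$ by a path $(v_t)_{t\in I}$ and proceed as in the proofs of Lemma~\ref{lem:cylinder-maps-well-defined}\eqref{lem:no-crit-points-3} and Lemma~\ref{lem:1-handles}\eqref{lem:1-handle-claim-3}. After choosing compatible isotopies $\phi_t$ of $(M_0,\g_0)$ and $\psi_t$ of $(M_1,\g_1)$, the goal is commutativity up to a factor of $z^x$ of the square relating $(\psi_t)_*\circ F_{\cW_2;\omega,(f,v_0)}$ and $F_{\cW_2;\omega,(f,v_1)}\circ(\phi_t)_*$, the exact analogue of equation~\eqref{eq:commutative-diagram-1-handle-vector-field}. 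Sweeping the coned $2$-chain $\tilde{\cD}(\psi)$ --- equivalently, the cones $\gamma_{\xs}$ of the generators --- across the family $(v_t)$ produces a $3$-chain in $W_2$; integrating $d\omega=0$ over it and applying Stokes' theorem shows that the difference of the two composite $\omega$-area contributions depends only on the part of $\d\tilde{\cD}(\psi)$ other than $\gamma_{\xs}$, that is, on the center points of the compressing disks and on $\gamma_{\Theta_{\scb,\scb'}}$, and so is independent of $\xs$. This gives the desired commutativity.

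The hardest step will be the handleslide case of (i): commuting the $2$-handle map past a triple transition map requires a perturbed version of the associahedron and quadrilateral-counting argument of \cite{OSTriangles}, and one must verify that the $\omega$-areas of all the quadrilaterals involved split additively so that the corrections collapse to powers of $z$. Interwoven with this is the $\Spin^c$-refinement subtlety flagged just before Lemma~\ref{lem:2-handles}: since $F_{\cW_2;\omega,(f,v)}$ does not respect a decomposition over all $\Spin^c$ structures, the area corrections are a priori generator-dependent, and one must check, by the same cone-difference reasoning as in Lemma~\ref{lem:no-monodromy-spinc}, that they become constant after $\ufrs_0$ and $\ufrs_1$ are fixed, or when $[\omega]|_{M_0\cup M_1}=0$. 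Once those two points are secured, the remaining moves --- stabilizations, bouquet changes, and changes of $v$ --- are routine adaptations of the cited unperturbed arguments, with the area bookkeeping already modeled on Lemmas~\ref{lem:cylinder-maps-well-defined} and~\ref{lem:1-handles}.
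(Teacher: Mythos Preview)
Your outline for claim~\eqref{lem:2-handles-claim-1} is essentially the paper's approach: both changes of subordinate triple and of bouquet are handled by associativity of the perturbed triangle maps, following \cite{OSTriangles}*{Proposition~4.6, Lemma~4.8} and \cite{JCob}*{Theorem~6.9}. Your separation into (i) and (ii) is fine, but the mechanism you propose for (ii) is not right: changing the bouquet does not merely isotope the embedding $\Phi_{(f,v)}$, since the domain $W_{\sca,\scb,\scb'}$ itself depends on the subordinate triple, and the triangle counts themselves change. In practice two bouquets give triples related by handleslides and isotopies of the attaching curves, so (ii) reduces to (i) and is again handled by associativity, not by Lemma~\ref{lem-modify-2-form-on-interior}.

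The genuine gap is in claim~\eqref{lem:2-handles-claim-2}. A generic path $(v_t)_{t\in I}$ of gradient-like vector fields for $f$ is Morse--Smale at all but finitely many $t$, and at those exceptional times the framed link $\bS_1^t\subset M_0$ undergoes a handleslide among its components. Your Stokes' theorem argument --- sweeping $\tilde{\cD}(\psi)$ through the family and comparing via isotopy maps $(\phi_t)_*$ and $(\psi_t)_*$ --- is exactly what the paper does on the Morse--Smale subintervals, but it cannot cross a handleslide: there is no single triple $(\Sigma,\as,\bs,\bs')$ subordinate to a bouquet for both $\bS_1^{t_0-\epsilon}$ and $\bS_1^{t_0+\epsilon}$, so the continuous sweeping that produces your $3$-chain breaks down. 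The paper treats each handleslide time separately, by choosing curves $\bar{\bs}$, $\bar{\bs}'$ on the same surface (obtained from $\bs$, $\bs'$ by handleslides and isotopies) so that $(\Sigma,\as,\bar{\bs},\bar{\bs}')$ is subordinate to a bouquet for $\bS_1^{t_0+\epsilon}$, and then uses associativity of the perturbed triangle maps to identify the two $2$-handle maps computed with the embedding $\Phi_{t_0-\epsilon}$. Your analogy with Lemmas~\ref{lem:cylinder-maps-well-defined} and~\ref{lem:1-handles} is misleading precisely here, since for index $0$ and $1$ critical points no such handleslide phenomenon occurs.
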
		

	\begin{proof}
    The proof of claim~\eqref{lem:2-handles-claim-1} is similar to
	the original proof given by Ozsv\'{a}th and Szab\'{o}
	\cite{OSTriangles}*{Proposition~4.6, Lemma~4.8}, and follows from associativity
	of the perturbed holomorphic triangle maps. See also \cite{JCob}*{Theorem~6.9}
	for a more detailed explanation of the argument in the sutured setting.
	
	Independence from $v$, claim~\eqref{lem:2-handles-claim-2}, is proven as
	follows. The space of gradient-like
	vector fields of $f$ is connected. Suppose $(v_t)_{t\in I}$ is a path of
	gradient-like vector fields. Let $\bS_1^t$ denote the intersection of the
	stable manifolds of $v_t$ with $M_0$. Generically, $v_t$ is
	Morse--Smale at all but finitely many $t$, at which time a handleslide amongst two of the
	components of $\bS_1^t$ occurs.
	
	We break $I$ into two types of subintervals: $[a,b]$, where $(f,v_t)$ is
	Morse--Smale for all $t\in [a,b]$; and $[t_0-\epsilon,t_0+\epsilon]$, where
	$\epsilon>0$ is small, and a handleslide occurs at $t_0$.
	
    For the first type of subinverval $[a,b]$, let $(\Sigma,\as,\bs,\bs')$ be subordinate
    to a bouquet for $\bS_1^a$. Let $(\phi_t)_{t\in [a,b]}$ be an isotopy of $M_0$,
    such that $\phi_a=\id_{M_0}$, and the diagram
    \[
    (\Sigma_t,\as_t,\bs_t,\bs'_t) := \phi_t(\Sigma,\as,\bs,\bs') \subset M_0
    \]
    is subordinate to $\bS_1^t$.

    Using the abbreviation $\Phi_t$ for $\Phi_{(f,v_t)}$, we obtain a family of
    embeddings $(\Phi_t)_{t\in [a,b]}$ of $W_{\a,\b,\b'}$ into $W_2$.
    Let $\psi_t\colon M_1\to M_1$ denote the map $(\Phi_t\circ
    \Phi_a^{-1})|_{M_1}$. We claim that the following diagram commutes up to an
    overall factor of $z^x$:
	\begin{equation}
	\begin{tikzcd}
	\CF(\Sigma_a,\as_a,\bs_a;\Lambda_{\omega|_{M_0}})\arrow[r,"(\phi_t)_*"]
	\arrow[d, "F_{\cW_2;\omega,(f,v_a)}"]
	& \CF(\Sigma_b,\as_b,\bs_b;\Lambda_{\omega|_{M_0}})
	\arrow[d,"F_{\cW_2;\omega, (f,v_b)}"]\\
	\CF(\Sigma_a,\as_a,\bs_b';\Lambda_{\omega|_{M_1}})\arrow[r,"(\psi_t)_*"]&
	\CF(\Sigma_b,\as_b,\bs_b';\Lambda_{\omega|_{M_1}}).
	\end{tikzcd}
	\label{eq:2-handle-isotopy-diagram}
	\end{equation}
	Suppose $\psi\in \pi_2(\xs,\Theta_{\scb,\scb'},\zs)$ is a homology class of
    triangles on $(\Sigma,\as,\bs,\bs')$, where $\xs \in \T_{\sca} \cap \T_{\scb}$
    and $\zs \in \T_{\sca} \cap \T_{\scb'}$.
    Write $\Gamma_{\xs,\phi_t}\subset M_0$ and $\Gamma_{\zs,\psi_t}\subset M_1$ for the
    2-chains swept out by $\gamma_{\xs}$ and $\gamma_{\zs}$ by $\phi_t$ and
    $\psi_t$ for $t \in [a,b]$, respectively. Commutativity of
    equation~\eqref{eq:2-handle-isotopy-diagram} up to an overall factor of $z^x$
    amounts to showing that the integral of $\omega$ over
	\begin{equation}\label{eqn:C3}
	\Phi_a(\tilde{\cD}(\psi))-\Phi_b(\tilde{\cD}(\psi))+\Gamma_{\zs,\psi_t}-\Gamma_{\xs,\phi_t}
	\end{equation}
	is independent of $\psi$, $\xs$, and $\zs$.
	
	The family $\Phi_t$ induces a map $\hat{\Phi}\colon [a,b]\times
    W_{\sca,\scb,\scb'}\to W_2$, and we let $C_3\subset W_2$ be the 3-chain
    $\hat{\Phi}([a,b]\times \tilde{\cD}(\psi))$.	
	Stokes' theorem applied to $\d C_3$ implies that the integral of $\omega$ over
    the 2-chain in equation~\eqref{eqn:C3} is equal to the integral of $\omega$ over
	\begin{equation}
	\Gamma_{\Theta_{\scb,\scb'},\Phi_t}+C_{\sca,\scb,\scb'},	
    \label{eq:2-handle-ind-of-xzpsi}
	\end{equation}
	where $\Gamma_{\Theta_{\scb,\scb'},\Phi_t}$ is the 2-chain
    $\hat{\Phi}([a,b]\times \gamma_{\Theta_{\scb,\scb'}})$, and $C_{\sca,\scb,\scb'}$
    is defined as follows. Let $c_{\sca}\subset U_{\sca}$ be the union of the
    centers of the alpha compressing disks, and let $e_{\sca}$ denote the alpha side
    of the triangle $\Delta$ used to build $W_{\sca,\scb,\scb'}$. Let $c_{\scb}$,
    $c_{\scb'}$, $e_{\scb}$, and $e_{\scb'}$ be defined similarly. Then
    $C_{\sca,\scb,\scb'}$ is the image under $\hat{\Phi}$ of $[a,b]\times
    (c_{\sca}\times e_{\sca}\cup c_{\scb}\times e_{\scb}\cup c_{\scb'}\times
    e_{\scb'})$.
    Since equation~\eqref{eq:2-handle-ind-of-xzpsi} is independent of $\xs$, $\zs$,
    and $\psi$, it follows that equation~\eqref{eq:2-handle-isotopy-diagram}
    commutes up to an overall factor of $z^x$.	
		
    Next, we consider the case when the subinterval of $I$ is of the form
    $[t_0-\epsilon,t_0+\epsilon]$, where a handleslide amongst the components of
    $\bS^t_1$ occurs at $t=t_0$. Adapting the proof of Ozsv\'{a}th
	and Szab\'{o} \cite{OSTriangles}*{Lemma~4.14}, we may pick a Heegaard triple
	$(\Sigma,\as,\bs,\bs')$ subordinate to a bouquet for
	$\bS_{1}^{t_0-\epsilon}$, such that there are attaching curves $\bar{\bs}$ and
	$\bar{\bs}'$ on $\Sigma$, where $\bar{\bs}$ is obtained from $\bs$
    and $\bar{\bs}'$ is obtained from $\bs'$ via a
	sequence of handleslides and isotopies, and
	$(\Sigma,\as,\bar{\bs},\bar{\bs}')$ is subordinate to a bouquet for
	$\bS_1^{t_0+\epsilon}$. The 4-manifold $W_{\sca,\scb,\scb'}$ is unchanged by
	isotopies and handleslides of the attaching curves.
	A straightforward associativity argument
	shows that the two morphisms constructed with the embedding $\Phi_{t_0-\epsilon}$
    and either of the triples $(\Sigma,\as,\bs,\bs')$ or $(\Sigma,\as,\bar{\bs},\bar{\bs}')$
    coincide. Similarly, the previous argument shows that the two morphisms computed using the triple
    $(\Sigma,\as,\bar{\bs},\bar{\bs}')$ and either of the embeddings $\Phi_{t_0-\epsilon}$ or
    $\Phi_{t_0+\epsilon}$ coincide, completing the proof.
	\end{proof}

	\subsection{Defining the \texorpdfstring{$\Spin^c$}{Spinc} restricted cobordism maps}
	\label{sec:def-spinc-restricted}
	
	In this section, we define the $\Spin^c$ restricted versions of the perturbed
    sutured cobordism maps.
	Suppose that
	\[
	\cW=(W,Z,[\xi])\colon (M_0,\g_0)\to (M_1,\g_1)
	\]
	 is a cobordism of
	sutured manifolds equipped
    with a closed 2-form $\omega$ on $W$. We remove a collection of tight
	3-balls from $Z$, adding them to $M_0$ or $M_1$, so that $M_0\cup Z$ has
    no closed components, and so that each component of $W$ intersects $M_0$ and
    $M_1$ non-trivially.

	We can decompose $\cW$ as $\cW^{s}\circ \cW^{\d}$, where $\cW^{\d}$ consists of
    $I\times (M_0\cup Z)$,
	viewed as a cobordism from $M_0$ to $M_0\cup Z$, and $\cW^s$ consists of $W$,
    viewed as a special cobordism from $M_0\cup Z$ to $M_1$.
	
	We choose a self-indexing Morse function $f$ on $\cW^s$, with no
    index 0 and 4 critical points, and a gradient-like vector field $v$ for $f$.
    The pair $(f, v)$ induces a decomposition
	\[
	 \cW^s = \cW_3 \circ \cW_2 \circ \cW_1,
	\]
	where $\cW_i=(W_i,Z_i,[\xi_i])$ is a special cobordism that contains the index
    $i$ critical points of $f$.

	Suppose $\ufrs_0\in \Spin^c(M_0, \gamma_0)$ and $\ufrs_1\in \Spin^c(M_1,\gamma_1)$. The
	$\Spin^c$ structure $\ufrs_1$ extends uniquely over $\cW_3$. Write
	$\underline{\fru}$ for its restriction to the incoming boundary of $\cW_3$. We
	define
	\begin{equation}
	\pi_{\ufrs_1}\circ F_{\cW;\omega} \circ
    i_{\ufrs_0}:=F_{\cW_3;\omega|_{W_3}}\circ
	\pi_{\underline{\fru}}\circ  F_{\cW_2;\omega|_{W_2}}\circ
	F_{\cW_1;\omega|_{W_1}}\circ \Phi_{\xi;\omega|_{M_0\cup Z}}\circ i_{\ufrs_0}
    \label{eq:spinc-restricted-cobordism-definition}
	\end{equation}
	where we have suppressed the dependence of the  map $F_{\cW_i;\omega|_{W_i}}$
    on the Morse function $f|_{W_i}$. There is no dependence on the gradient-like vector
    field $v|_{W_i}$ according to Lemmas~\ref{lem:cylinder-maps-well-defined}, ~\ref{lem:1-handles},
    and~\ref{lem:2-handles}.
	
    We now prove the $\Spin^c$ restricted perturbed cobordism maps are well-defined:	
	
    \begin{proof}[Proof of Part~\eqref{prop:well-def:restricted}
     of Proposition~\ref{prop:perturbedcobordismmapswelldefined}]
    The proof is similar to the proof
    of the corresponding claim in the unperturbed setting; see
    \cite{OSTriangles}*{Section~4.4} and
	\cite{JCob}*{Theorem~8.2}. Given two Morse functions $f_0$ and $f_1$ on $W$,
	viewed as a special cobordism from $M_0\cup Z$ to $M_1$,
    one may pick a generic path $(f_t)_{t\in I}$ of smooth functions
	that are Morse at all but finitely many $t$ and connect $f_0$ to $f_1$.
    Furthermore, using Cerf theory, one may assume that there are no index 0 or 4 critical
    points, and that critical
	points of index $i$ for $i \in \{2,3\}$ have values greater than the values of
    critical points of
	index less than $i$. Furthermore, at the finitely many $t$
	where $f_t$ fails to be Morse, an index 1/2 or 2/3 birth-death singularity occurs.
	
	If $f_t$ is Morse for every $t \in [a, b] \subset [0, 1]$, the decompositions of $\cW^s$
	as $\cW_1 \circ \cW_2 \circ \cW_3$ corresponding to $f_a$ and $f_b$ are isotopic, so
    adaptations of Lemmas~\ref{lem:1-handles} and \ref{lem:2-handles} show
	that the composition is unchanged, up to an overall factor of $z^x$.
	
	Invariance under index 1/2 birth-death follows from Ozsv\'{a}th and
	Szab\'{o}'s holomorphic triangle computation \cite{OSTriangles}*{Lemma~4.16},
	with extra attention paid to areas. Invariance under index 2/3 birth-deaths
	follows by the same argument.	
    \end{proof}

\subsection{Defining the total cobordism map}
\label{sec:def-general-cob}

In this section, we define the total perturbed cobordism map $F_{\cW;\omega}$,
when $[\omega]$ restricts trivially to $M_0$ and $M_1$. This addresses
Part~\eqref{prop:well-def:total} of
Proposition~\ref{prop:perturbedcobordismmapswelldefined}.

As a first step, if $[\omega]$ restricts trivially to $M_1$, and $\ufrs_0\in
\Spin^c(M_0,\g_0)$, we may define the partially $\Spin^c$ restricted  map
$F_{\cW;\omega}\circ i_{\ufrs_0}$  by omitting $\pi_{\underline{\fru}}$ from
equation~\eqref{eq:spinc-restricted-cobordism-definition}.

This strategy does not extend to the case when $[\omega]|_{M_0}=0$, since we
also need $[\omega]|_{M_0\cup Z}=0$ for the gluing map to be well-defined.
Instead, when $[\omega]$ restricts trivially to $M_0$ and $M_1$, we make an alternate
construction. Pick an open collar
neighborhood $N\subset W$ of $M_0$. Set
\[
\cN = (\overline{N},Z \cap \overline{N}, [\xi|_{\overline{N}}]),
\]
which we view as a sutured manifold cobordism from $(M_0,\g_0)\cup (-M_0,\g_0)$ to the empty set.
Let us write
\[
\tilde{\cW} = (W\setminus N, Z\setminus N, [\xi|_{Z\setminus N}]).
\]
We view $\tilde{\cW}$ as a cobordism from the empty set
to $(-M_0,\g_0)\cup (M_1,\g_1)$. See Figure~\ref{fig:triple}.

The previous case gives a map
\begin{equation}
F_{\tilde{\cW};\omega|_{W\setminus N}}\colon \Lambda \to
\SFH(-M_0,\g_0;\Lambda_{\omega|_{M_0}})\otimes
\SFH(M_1,\g_1;\Lambda_{\omega|_{M_1}}).
\label{eq:dualized}
\end{equation}
Implicitly, we are pre-composing with the map $i_{\ufrs_0}$,
where $\ufrs_0$ is the unique $\Spin^c$ structure on the empty set.
We define the total cobordism map $F_{\cW;\omega}$ via the formula
\begin{equation}
F_{\cW;\omega}:=\left( F_{\cN, \omega|_{\overline{N}}}\otimes \id_{\SFH(M_1)} \right)\circ
 \left(\id_{\SFH(M_0)}\otimes
 F_{\tilde{\cW};\omega|_{W\setminus N}} \right).
\label{eq:alternate-def-cobordism-map}
\end{equation}

\begin{figure}[ht!]
\begingroup%
  \makeatletter%
  \providecommand\color[2][]{%
    \errmessage{(Inkscape) Color is used for the text in Inkscape, but the package 'color.sty' is not loaded}%
    \renewcommand\color[2][]{}%
  }%
  \providecommand\transparent[1]{%
    \errmessage{(Inkscape) Transparency is used (non-zero) for the text in Inkscape, but the package 'transparent.sty' is not loaded}%
    \renewcommand\transparent[1]{}%
  }%
  \providecommand\rotatebox[2]{#2}%
  \newcommand*\fsize{\dimexpr\f@size pt\relax}%
  \newcommand*\lineheight[1]{\fontsize{\fsize}{#1\fsize}\selectfont}%
  \ifx\svgwidth\undefined%
    \setlength{\unitlength}{133.03657806bp}%
    \ifx\svgscale\undefined%
      \relax%
    \else%
      \setlength{\unitlength}{\unitlength * \real{\svgscale}}%
    \fi%
  \else%
    \setlength{\unitlength}{\svgwidth}%
  \fi%
  \global\let\svgwidth\undefined%
  \global\let\svgscale\undefined%
  \makeatother%
  \begin{picture}(1,0.60488769)%
    \lineheight{1}%
    \setlength\tabcolsep{0pt}%
    \put(0,0){\includegraphics[width=\unitlength,page=1]{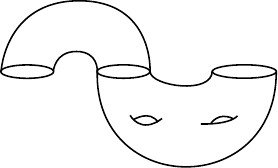}}%
    \put(0.65717459,0.07352821){\color[rgb]{0,0,0}\makebox(0,0)[t]{\lineheight{1.25}\smash{\begin{tabular}[t]{c}$\tilde{\cW}$\end{tabular}}}}%
    \put(0.21908183,0.47073952){\color[rgb]{0,0,0}\makebox(0,0)[lt]{\lineheight{1.25}\smash{\begin{tabular}[t]{l}$\cN$\end{tabular}}}}%
    \put(0.08705049,0.25314278){\color[rgb]{0,0,0}\makebox(0,0)[t]{\lineheight{1.25}\smash{\begin{tabular}[t]{c}$M_0$\end{tabular}}}}%
    \put(0.46382383,0.25314278){\color[rgb]{0,0,0}\makebox(0,0)[t]{\lineheight{1.25}\smash{\begin{tabular}[t]{c}$-M_0$\end{tabular}}}}%
    \put(0.88420568,0.3938601){\color[rgb]{0,0,0}\makebox(0,0)[t]{\lineheight{1.25}\smash{\begin{tabular}[t]{c}$M_1$\end{tabular}}}}%
  \end{picture}%
\endgroup%

\caption{Decomposing $\cW$ into $\cN$ and $\tilde{\cW}$.}
\label{fig:triple}
\end{figure}

If $[\omega]$ restricts trivially to $M_0\cup Z$ and $M_1$,  then we may also
define the total perturbed cobordism map by removing the projections and
inclusions of $\Spin^c$ structures from
equation~\eqref{eq:spinc-restricted-cobordism-definition}. We claim
that this more direct construction coincides
with the construction given in equation~\eqref{eq:alternate-def-cobordism-map}.
To see this, we note that if $\cW=(W,Z,[\xi])$ is a sutured manifold
cobordism which decomposes as the composition of two cobordisms,
$\cW_1=(W_1,Z_1,[\xi_1])$ and
$\cW_2=(W_2,Z_2,[\xi_2])$, and $\omega$ is a 2-form such that
$[\omega|_{M_0\cup Z}] = 0$ and $[\omega|_{M_1}] = 0$,
then the original proof of the sutured cobordism
composition law
\cite[Theorem~11.3]{JCob} (see also \cite{OSTriangles}*{Theorem~3.4}), adapts to show that
\begin{equation}
F_{\cW;\omega}\doteq F_{\cW_2;\omega_2}\circ F_{\cW_1;\omega_1},
 \label{eq:composition-law-when-2-form-vanishes}
\end{equation}
where the maps in equation~\eqref{eq:composition-law-when-2-form-vanishes}
are defined using the construction in
equation~\eqref{eq:spinc-restricted-cobordism-definition}. When $[\omega]$ restricts trivially to $M_0\cup Z$ and $M_1$,  equation~\eqref{eq:alternate-def-cobordism-map} may be interpreted as a composition
satisfying these hypotheses, so the composition law of equation~\eqref{eq:composition-law-when-2-form-vanishes}
implies that equation~\eqref{eq:alternate-def-cobordism-map}
coincides with the construction obtained by removing the
$\Spin^c$ restrictions from equation~\eqref{eq:spinc-restricted-cobordism-definition}.

\subsection{The composition law}
\label{sec:composition-law}

We now sketch a proof of the composition law,
Proposition~\ref{prop:composition-law}.

\begin{proof}[Proof of Proposition~\ref{prop:composition-law}] We focus on
part~\eqref{prop:comp-law-total}, as part~\eqref{prop:comp-law-restricted}
follows from a simple modification.
Assume, as in the statement, that $\cW=(W,Z,[\xi])$ is a sutured cobordism from
$(M_0,\g_0)$ to $(M_2,\g_2)$, which decomposes into
$\cW_1=(W_1,Z_1,[\xi_1])$ and $\cW_2=(W_2,Z_2,[\xi_2])$ that meet along a
sutured manifold $(M_1,\g_1)$. We are interested in the case when
$[\omega]$ restricts trivially to $M_0$, $M_1$, and $M_2$.

As a first step, we claim that, via the same argument that gives
equation~\eqref{eq:composition-law-when-2-form-vanishes},
if $[\omega]$ restricts trivially to $M_1 \cup Z_2$ and $M_2$, then
\begin{equation}
F_{\cW;\omega} \circ i_{\ufrs}\doteq F_{\cW_2;\omega_2}\circ F_{\cW_1;\omega_1}\circ i_{\ufrs},
\label{eq:restricted-composition-law}
\end{equation}
where $\ufrs\in \Spin^c(M_0,\g_0)$, and the maps $F_{\cW;\omega} \circ i_{\ufrs}$,
$F_{\cW_2;\omega_2}$, and $F_{\cW_1;\omega_1}\circ i_{\ufrs}$ are defined
using the appropriate modification of
equation~\eqref{eq:spinc-restricted-cobordism-definition}.

We now claim that the restricted composition law stated in  equation~\eqref{eq:restricted-composition-law}
implies the full version of part~\eqref{prop:comp-law-total}
of  Proposition~\ref{prop:composition-law}. We recall
that the full version of Proposition~\ref{prop:composition-law}
involves the maps defined in equation~\eqref{eq:alternate-def-cobordism-map}.
Following the construction of Section~\ref{sec:def-general-cob}, we decompose
$\cW_1$ into sutured manifold cobordisms $\cN_1$ and $\tilde{\cW}_1$,
and we decompose $\cW_2$ into $\cN_2$ and $\tilde{\cW}_2$. We give $\cW$
the analogous decomposition into $\cN_1$ and $\tilde{\cW} := \tilde{\cW}_1 \cup \cN_2 \cup \tilde{\cW}_2$;
see Figure~\ref{fig:composition_law}.

\begin{figure}[ht!]
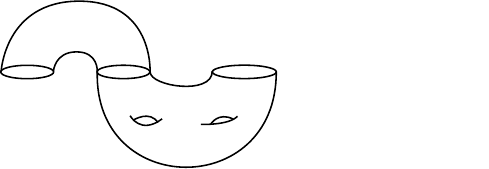
\caption{Decomposing $\cW = \cW_2 \circ \cW_1$ into $\cN_1$ and $\tilde{\cW} = \tilde{\cW}_1 \cup \cN_2 \cup \tilde{\cW}_2$.}
\label{fig:composition_law}
\end{figure}

Using the definition from equation~\eqref{eq:alternate-def-cobordism-map}, we have
\begin{equation}
\begin{split}
F_{\cW_2 ; \omega_2} \circ  F_{\cW_1 ; \omega_1}
 : =& \left( F_{ \cN_2 , \omega|_{\overline{N}_2}} \otimes \id_{\SFH(M_2)} \right)
 \circ \left( \id_{\SFH(M_1)} \otimes  F_{\tilde{\cW}_2 ; \omega|_{W_2\setminus N_2} }  \right)\\
\circ& \left( F_{\cN_1, \omega|_{\overline{N}_1}}\otimes \id_{\SFH(M_1)} \right)
\circ \left( \id_{\SFH(M_0)} \otimes  F_{ \tilde{ \cW }_1 ; \omega|_{ W_1 \setminus N_1 } } \right).
\end{split}
\label{eq:expanded-cobordism-map-turned-around}
\end{equation}
By commuting tensor factors,
we see that the right-hand side of
equation~\eqref{eq:expanded-cobordism-map-turned-around} coincides with
the composition of $F_{\cN_1;\omega|_{\overline{N}_1}}\otimes \id_{\SFH(M_2)}$ and
\begin{equation}
\left(\id_{\SFH(M_0)}\otimes \id_{ \SFH( - M_0 ) } \otimes  F_{ \cN_2 ; \omega|_{\overline{N}_2}}
\otimes \id_{ \SFH( M_2 ) } \right)
\circ \left(\id_{\SFH(M_0)}\otimes   F_{ \tilde{\cW}_1 ; \omega|_{W_1\setminus N_1}}
\otimes  F_{ \tilde{ \cW }_2 ; \omega|_{ W_2 \setminus N_2}} \right).
\label{eq:expanded--cobordism-composition}
\end{equation}
The hypotheses stated for the restricted version of the composition law from equation~\eqref{eq:restricted-composition-law} are satisfied for decomposing $\tilde{\cW}$ into the composition of $\mathsf{Id}_{-M_0} \sqcup \cN_2\sqcup \mathsf{Id}_{M_2}$ and $\tilde{\cW}_1\sqcup \tilde{\cW}_2$ (note that we are implicitly precomposing with the $i_{\ufrs_0}$, where $\ufrs_0$ is the unique $\Spin^c$ structure on the empty set). Hence equation~\eqref{eq:expanded--cobordism-composition} coincides
 with $\id_{\SFH(M_0)}\otimes F_{\tilde{\cW}; \omega|_{W\setminus N_1}}$.
 It follows that equation~\eqref{eq:expanded-cobordism-map-turned-around}
coincides with
\[
\left( F_{ \cN_1, \omega|_{N_1} } \otimes \id_{ \SFH( M_2 ) } \right)
\circ \left( \id_{ \SFH(M_0) }
\otimes  F_{ \tilde{ \cW } ; \omega|_{W\setminus N_1}}  \right)
\]
which is the definition of $F_{\cW;\omega}$ in equation~\eqref{eq:alternate-def-cobordism-map}.
\end{proof}

\subsection{Changing the 2-form on $W$}
\label{sec:change-2-formW}

We now prove Lemma~\ref{lem-modify-2-form-on-interior}.

\begin{proof}[Proof of Lemma~\ref{lem-modify-2-form-on-interior}]
We investigate equation~\eqref{eq:erroneous-conclusion} from
Remark~\ref{rem:failure-transitive-system-over-2-forms}.  Suppose that
$\cH_1,\dots, \cH_n$ is a sequence of sutured Heegaard diagrams such that
\begin{itemize}
\item $\cH_1$ is a diagram for $(M_0,\g_0)$ and $\cH_n$ is a diagram for $(M_1,\g_1)$,
\item $\cH_{i+1}$ is obtained from $\cH_{i}$ by either an elementary Heegaard
move, the contact gluing map, or
is the result of applying a 1-handle, 2-handle, or 3-handle map.
\end{itemize}

Consider the case when $\cH_{i}$ and $\cH_{i+1}$ are diagrams for the boundaries
of the 2-handle submanifold $\cW_2=(W_2,Z_2,[\xi_2])$ of $\cW$. Furthermore,
assume $\cH_{i}$ and $\cH_{i+1}$ are subdiagrams of a triple which is
subordinate to a bouquet for a framed link in the incoming boundary of $W_2$.
Write $\hat{\omega}_2$ for the restriction of $\omega$ to $\cW_2$. Write
$\omega_i$ and $\omega_{i+1}$ for the restrictions of $\omega$ to the manifolds
defined by $\cH_i$ and $\cH_{i+1}$, respectively.
Define $\hat{\eta}_{2}$, $\eta_i$, and $\eta_{i+1}$ similarly. An
argument using Stokes' theorem implies that the following diagram commutes up to
an overall factor of $z^x$:
\[
\begin{tikzcd}[column sep=2.3cm]
\CF(\cH_{i};\Lambda_{\omega_i})\arrow[d, "F_{\cW_2;\hat{\omega}_2}"]
\arrow[r,"\Psi_{\omega_i \to \omega_i+d \eta_i;\eta_i}"]&
	\CF(\cH_i;\Lambda_{\omega_i+d\eta_i})\arrow[d, "F_{\cW_2;
\hat{\omega}_2+d\hat{\eta}_2}"]\\
\CF(\cH_{i+1},\Lambda_{\omega_i})\arrow[r,"\Psi_{\omega_{i+1}+d
\eta_{i+1};\eta_{i+1}}"]
& \CF(\cH_{i+1};\Lambda_{\omega_{i+1}+d\eta_{i+1}}).
\end{tikzcd}
\]

In an analogous manner, we may relate  $\cH_{i}$ and $\cH_{i+1}$ by a similar
commutative square when $\cH_{i+1}$ is obtained from $\cH_i$ by an elementary
Heegaard move, or a 1-handle or 3-handle attachment. Stacking the $n-1$
projectively commutative squares, we obtain that the square
\[
\begin{tikzcd}[column sep=2.3cm]
\CF(\cH_1;\Lambda_{\omega_1})\arrow[d, "F_{\cW;\omega}"]
\arrow[r,"\Psi_{\omega_1\to \omega_1+d \eta_1;\eta_1}"]&
	\CF(\cH_1;\Lambda_{\omega_1+d\eta_1})\arrow[d, "F_{\cW; \omega+d\eta}"]\\
\CF(\cH_n,\Lambda_{\omega_n})\arrow[r,"\Psi_{\omega_n\to \omega_n+d \eta_n; \eta_n}"]
& \CF(\cH_{n};\Lambda_{ \omega_n+d\eta_{n}})
\end{tikzcd}
\]
commutes, up to an overall factor of $z^x$. Since $\eta|_{M_0} = \eta_1 = 0$ and
$\eta|_{M_1} = \eta_n = 0$, the maps $\Psi_{\omega_1\to \omega_1+d\eta_1;\eta_1}$ and
$\Psi_{\omega_n\to \omega_n+d\eta_n;\eta_n}$ are the identity, completing the
proof.
\end{proof}

\subsection{Perturbed and unperturbed cobordism maps}
\label{sec:normalization}

We are finally ready to prove Lemma~\ref{lem:perturbed-coincide}.

\begin{proof}[Proof of Lemma~\ref{lem:perturbed-coincide}]
    Let us write $W = W_1 \cup W_2 \cup W_3$, where $W_i$ is the $i$-handle part of $W$.
    Let $(\S, \as, \bs, \bs', w)$ be a triple subordinate to a bouquet for the 2-handles of $W$,
    and write $W_{\sca, \scb, \scb'}$ for the corresponding portion of $W_2$. In particular,
    $W_0 := W_2 \setminus \Int(W_{\sca,\scb,\scb'})$ is a boundary connected sum of copies of $S^1 \times D^3$.
    As $H^2(W_1, Y_0; \R) = 0$ and $H^2(W_3, Y_1; \R) = 0$, the restriction maps $H^2(W_1; \R) \to H^2(Y_0; \R)$
    and $H^2(W_3; \R) \to H^2(Y_1; \R)$ are both injective. Furthermore, $H^2(W_0; \R) = 0$.
    Hence, since $\omega|_{\d W} = 0$, we have $[\omega|_{W_0}] = 0$, $[\omega|_{W_1}] = 0$, and $[\omega|_{W_3}] = 0$.
    So there is a 1-form $\eta$ on $W$ such that $\eta|_{\d W} = 0$,
    and $\omega - d\eta$ vanishes on $W \setminus \Int(W_{\sca,\scb,\scb'})$;
    compare the proof of Lemma~\ref{lem:cut}.
    By Lemma~\ref{lem-modify-2-form-on-interior-closed}, we have
    \[
    F^\circ_{W, \frS; \omega} \dot{=} F^\circ_{W, \frS; \omega - d\eta}.
    \]
    Hence, we may assume that $\omega$ vanishes on $W_0$, $W_1$, and $W_3$.
    With this assumption, the maps $F^\circ_{W_1, \frS|_{W_1}; \omega|_{W_1}}$
    and $F^\circ_{W_3, \frS|_{W_3}; \omega|_{W_3}}$ are unperturbed.
    Furthermore,
    \[
    \langle i_*(\frs - \frs_0) \cup [\omega], [W, \d W] \rangle =
    \langle i_*(\frs|_{W_2} - \frs_0|_{W_2}) \cup [\omega|_{W_2}], [W_2, \d W_2] \rangle.
    \]
    So, without loss of generality, we can assume that $W = W_2$.

    Let $\xs$, $\xs' \in \T_{\sca} \cap \T_{\scb}$ and $\ys$, $\ys' \in \T_{\sca} \cap \T_{\scb'}$.
    Furthermore, let $\psi \in \pi_2(\xs, \ys, \Theta_{\scb,\scb'})$ and
    $\psi' \in \pi_2(\xs', \ys', \Theta_{\scb,\scb'})$
    be homology classes of triangles, where $\Theta_{\scb,\scb'} \in \T_{\scb} \cap \T_{\scb'}$.
    Note that
    \[
    \HF^\circ(\S,\bs,\bs'; \Lambda_{\omega|_{\d W_0}}) = \HF^\circ(\S,\bs,\bs')\otimes \Lambda,
    \]
    since $\omega|_{\d W_0} = 0$.
    Then, the coned-off domain $\tilde{\cD}(\psi) - \tilde{\cD}(\psi')$ represents the Poincar\'e dual of
    $\frs_w(\psi) - \frs_w(\psi') \in H^2(W_2)$. Hence
    \[
    A_\omega(\psi) - A_\omega(\psi') = \int_{\tilde{\cD}(\psi)} \omega - \int_{\tilde{\cD}(\psi')} \omega =
    \langle\, i_*(\frs_w(\psi) - \frs_w(\psi')) \cup [\omega], [W,\d W] \,\rangle,
    \]
    and equation~\eqref{eqn:normalization} follows.
\end{proof}

\bibliographystyle{custom}
\bibliography{biblio}
	
\end{document}